\newtheorem{definition}{Definition}[section]
\newtheorem{theorem}[definition]{Theorem}
\newtheorem{lemma}[definition]{Lemma}
\newtheorem{proposition}[definition]{Proposition}
\newtheorem{remark}[definition]{Remark}
\newcommand{\nc}{\newcommand}
\nc{\qed}{\mbox{}\nolinebreak\hfill \rule{2mm}{2mm}} 
\nc{\weak}{\rightharpoonup}
\nc{\weakstar}{\stackrel{\ast}{\rightharpoonup}} 
\nc{\proof}{{\bf Proof: }} 
\renewcommand{\div}{{{\mathrm{div}}_x}\,}
\newcommand{\vrho}{\varrho}
\nc{\modular}[1]{{\stackrel{ #1}{\longrightarrow\,}}}
\def\bbbone{{\mathchoice {\rm 1\mskip-4mu l}
{\rm 1\mskip-4mu l} {\rm 1\mskip-4.5mu l} {\rm 1\mskip-5mu l}}}
\def\tens#1{\pmb{\mathsf{#1}}}
\def\vec#1{\boldsymbol{#1}}
\newcommand{\vr}		{\vrho}
\newcommand{\vre}		{\vr_\varepsilon}
\newcommand{\vrez}	{\vr_{0,\varepsilon}}
\newcommand{\vret}		{\wtilde{\vr}_\ep}
\newcommand{\ue}		{\vec{u}_\varepsilon}
\newcommand{\uez}		{\vec{u}_{0,\varepsilon}}
\newcommand{\ep}		{\varepsilon}
\newcommand{\temp}	{\vartheta}
\newcommand{\tem}		{\vartheta_\varepsilon}
\newcommand{\temz}	{\vartheta_{0,\varepsilon}}
\newcommand{\tems}	{\overline{\vartheta}}
\newcommand{\q}		{\vec{q}}
\newcommand{\n}		{\vec{n}}
\newcommand{\ess} 	{{\rm{ess}}}
\newcommand{\res}		{{\rm{res}}}
\newcommand{\dx}		{\,{\rm d}x}
\newcommand{\dt}		{\, {\rm d}t}
\newcommand{\dxdt}	{\, {\rm d}x{\rm d}t}
\newcommand{\U}		{\vec{U}}
\def\bbbone{{\mathchoice {\rm 1\mskip-4mu l}
{\rm 1\mskip-4mu l} {\rm 1\mskip-4.5mu l} {\rm 1\mskip-5mu l}}}
\renewcommand{\bbbone}{\mathds{1}}
\newcommand{\mbb}{\mathbb}
\newcommand{\mc}{\mathcal}
\newcommand{\mf}{\mathfrak}
\newcommand{\veps}{\varepsilon}
\newcommand{\vtheta}{\vartheta}
\newcommand{\what}{\widehat}
\newcommand{\wtilde}{\widetilde}
\newcommand{\vphi}{\varphi}
\newcommand{\oline}{\overline}
\newcommand{\ra}{\rightarrow}
\newcommand{\g}{\gamma}
\newcommand{\z}{\zeta}
\newcommand{\de}{\delta}
\newcommand{\lan}{\langle}
\newcommand{\ran}{\rangle}
\newcommand{\e}{\vec{e}}
\newcommand{\R}{\mathbb{R}}
\newcommand{\N}{\mathbb{N}}
\newcommand{\Z}{\mathbb{Z}}
\newcommand{\B}{\mathbb{B}}
\newcommand{\T}{\mathbb{T}^1}
\newcommand{\TT}{\mathbb{T}}
\newcommand{\h}{\mathbb{H}}
\renewcommand{\div}{{\rm div}\,}
\newcommand{\curl}{{\rm curl}\,}
\newcommand{\divh}{{\rm div}_h}
\newcommand{\curlh}{{\rm curl}_h}
\newcommand{\Supp}{{\rm Supp}\,}
\def\d{\partial}
\def\div{{\rm div}\,}
\title{\LARGE \bf{A multi-scale problem for\\ 
viscous heat-conducting fluids in fast rotation
}}
\author{ \textsl{Daniele Del Santo}$\,^1\;$, $\;$\textsl{Francesco Fanelli}$\,^2\;$, $\;$\textsl{Gabriele Sbaiz}$\,^{1,2}\;$,
$\;$\textsl{Aneta Wr\'oblewska-Kami\'nska}$\,^{3}$ \vspace{.2cm} \\
\footnotesize{$\,^1\;$ \textsc{Universit\`a degli Studi di Trieste}, \textit{Dipartimento di Matematica e Geoscienze},} \\
\footnotesize{Via Valerio 12/1, 34127 Trieste, Italy} \vspace{0.1cm} \\
\footnotesize{$\,^2\;$ \textsc{Univ. Lyon, Universit\'e Claude Bernard Lyon 1}, CNRS UMR 5208, \textit{Institut Camille Jordan},} \\
\footnotesize{43 blvd. du 11 novembre 1918, F-69622 Villeurbanne cedex, France} \vspace{0.1cm} \\
\footnotesize{$\,^3\;$ \textsc{Polish Academy of Sciences}, \textit{Institute of Mathematics,}} \\ {\footnotesize ul.\'Sniadeckich 8, 00-656 Warszawa, Poland}
\vspace{.2cm} \\
\footnotesize{\ttfamily{delsanto@units.it}$\,,\quad$  \ttfamily{fanelli@math.univ-lyon1.fr}$\,,\quad$\ttfamily{gabriele.sbaiz@phd.units.it}$\,,\quad$
\ttfamily{a.wroblewska@impan.pl}} \vspace{.1cm}
}
\date{\small \today}
\begin{document}
\maketitle

\abstract{In the present paper, we study the combined incompressible and fast rotation limits for the full Navier-Stokes-Fourier system with Coriolis, centrifugal and gravitational forces,
in the regime of small Mach, Froude and Rossby numbers and for general ill-prepared initial data.
We consider both the isotropic scaling (where all the numbers have the same order of magnitude) and the multi-scale case (where some effect is predominant with respect to the others).
In the case when the Mach number is of higher order than the Rossby number, we prove that the limit dynamics is described by an incompressible Oberbeck-Boussinesq system, where the velocity
field is horizontal (according to the Taylor-Proudman theorem), but vertical effects on the temperature equation are not negligible.
Instead, when the Mach and Rossby numbers have the same order of magnitude, and in absence of the centrifugal force, we show convergence to a quasi-geostrophic equation for a stream function
of the limit velocity field, coupled with a transport-diffusion equation for a new unknown, which links the target density and temperature profiles.

The proof of the convergence is based on a compensated compactness argument. The key point is to identify some compactness properties hidden in the system of acoustic-Poincar\'e waves.
Compared to previous results, our method enables first of all to treat the whole range of parameters in the multi-scale problem, and also
to consider a low Froude number regime with the somehow critical choice $Fr\,=\,\sqrt{Ma}$, where $Ma$ is the Mach number. This allows us to capture
some (low) stratification effects in the limit.}

\paragraph*{\small 2010 Mathematics Subject Classification:}{\footnotesize 35Q86 
(primary);
35B25, 
76U05, 
35B40, 
76M45 
(secondary).}

\paragraph*{\small Keywords:} {\footnotesize Navier-Stokes-Fourier system; Coriolis force; singular perturbation problem;  multi-scale limit; low Mach, Froude and Rossby numbers; compensated compactness.}

\section{Introduction}

In this paper, we are interested in the description of the dynamics of large-scale flows, like e.g. geophysical flows in the atmosphere and in the ocean.
From the physical viewpoint, three are the main features that the mathematical model has to retain (see \cite{C-R} and \cite{Val}, for instance):
(almost) incompressibility of the flow, stratification effects
(i.e. density variations, essentially due to the gravity) and the action of a strong Coriolis force (due to the fast rotation of the ambient system).
The importance of these effects is ``measured'' by introducing, correspondingly, three positive adimensional parameters: the Mach number $Ma$, linked with incompressibility,
the Froude number $Fr$, linked with stratification, and the Rossby number $Ro$, related to fast rotation.
Saying that the previous attributes are predominant in the dynamics corresponds to assuming that the values of those parameters are very small.
This is also the point of view that we adopt throughout all this paper.

In order to get a more realistic model, capable to capture also heat transfer processes in the dynamics, we focus on the full $3$-D Navier-Stokes-Fourier system with Coriolis,
centrifugal and gravitational forces, which we set in the physical domain
\begin{equation} \label{eq:domain}
\Omega\,:=\,\R^2\times\,]0,1[\,.
\end{equation}
Denote by $\vrho ,\, \vtheta\geq 0$ the density and the absolute temperature of the fluid, respectively, and by $\vec{u}\in \mathbb{R}^3$ its velocity field: in its
non-dimensional form, the system can be written (see e.g. \cite{F-N}) as
\begin{equation} \label{eq_i:NSF}
\begin{cases}
	\partial_t \vrho + \div (\vrho\vec{u})=0\  \\[2ex]
	\partial_t (\vrho\vec{u})+ \div(\vrho\vec{u}\otimes\vec{u}) + \dfrac{\e_3 \times \vrho\vec{u}}{Ro}\,  +    \dfrac{1}{Ma^2} \nabla_x p(\vrho,\vtheta) 
=\div \mbb{S}(\vtheta,\nabla_x\vec{u}) + \dfrac{\vrho}{Ro^2} \nabla_x F  + \dfrac{\vrho}{Fr^2} \nabla_x G  \\[2ex]
	 \partial_t \bigl(\vrho s(\vrho, \vtheta)\bigr)  + \div \bigl(\vrho s (\vrho,\vtheta)\vec{u}\bigr) + \div\left(\dfrac{\q(\vtheta,\nabla_x \vtheta )}{\vtheta} \right)
	= \sigma\,,
\end{cases}
\end{equation}
where $p$ denotes the pressure of the fluid, 
the functions $s,\vec{q},\sigma$ are the specific entropy, the heat flux and the
entropy production rate respectively, and $\mbb{S}$ is the viscous stress tensor, which satisfies Newton's rheological law.
We refer to Paragraphs \ref{sss:primsys} and \ref{sss:structural} below for details.
The two gradients $\nabla_xF$ and $\nabla_xG$ represent respectively the centrifugal and gravitational forces, while the term $\e_3 \times \vrho \vec{u}$ takes into account the Coriolis force.
Here $\e_3=(0,0,1)$ denotes the unit vector directed along the vertical axis and the symbol $\times$ stands for the usual external product of vectors in $\mathbb{R}^3$.
This is a very simple form of the Coriolis force, which is however physically well-justified at mid-latitudes (see e.g. \cite{C-R} and \cite{Ped} for details). In addition, this approximate
model already enables to capture several interesting physical phenomena related to the dynamics of geophysical flows: the so-called \emph{Taylor-Proudman theorem}, the formation
of \emph{Ekman layers} and the propagation of \emph{Poincar\'e waves}. We refer to \cite{C-D-G-G} for a more in-depth discussion. In the present paper, we deal only with the first and third
issues, which we will comment more in detail below. On the contrary, for simplicity of presentation, we avoid boundary layer effects, by imposing \emph{complete-slip} boundary conditions
(see conditions \eqref{bc1-2} and \eqref{bc3} below).

In \eqref{eq_i:NSF}, one can recognise the presence of the Mach, Froude and Rossby numbers. On the other hand, we have neglected other important characteristic numbers, such as
the Strouhal, Reynolds and P\'eclet numbers, which we have set equal to $1$.
Motivated by the initial discussion, our goal is to study the regime where $Ma$, $Fr$ and $Ro$ are small: given a parameter $\veps\in\,]0,1]$, we set
\begin{equation} \label{eq_i:scales}
Ro=\veps \, , \quad Ma=\veps^m \quad \text{and}\quad Fr=\veps^{m/2}\,, \qquad\qquad \mbox{ for some }\quad m\geq 1\,.
\end{equation}
This means that we will perform the incompressible, fast rotation and mild stratification limits all together. Moreover, the presence of the additional parameter $m$ allows us to
consider different regimes: the one where some effect is predominant with respect to the others (for $m>1$, which entails the presence of multiple scales in the system),
and the one where all the forces act at the same scale and their effects are in balance in the limit (when $m=1$).

\medbreak
This work enters into a general (and nowadays classical) research programme, consisting in taking singular limits for systems of PDEs related to fluid mechanics.
Concerning specifically models for geophysical flows, the study goes back to the pioneering works \cite{B-M-N_EMF}, \cite{B-M-N_AA}, \cite{B-M-N_IUM} of Babin, Mahalov and Nikolaenko
for the (homogeneous) incompressible Navier-Stokes equations with Coriolis force. We refer to \cite{C-D-G-G} for an overview of the (broad) literature on this subject. 
The case of compressible flows was considered for the first time in a general setting by Feireisl, Gallagher and Novotn\'y in \cite{F-G-N} for the barotropic Navier-Stokes
system (see also \cite{B-D-GV} for a preliminary study and \cite{G-SR_Mem} for the analysis of equatorial waves).

In the compressible case, because of both physical considerations and technical difficulties, it is natural to combine a low Rossby number regime (fast rotation limit) with a low Mach number regime
(incompressible limit). This opens the scenario to possible multi-scale analysis: if in \cite{F-G-N} the scaling focused on the specific choice $Ro=Ma=\veps$, i.e. $m=1$ in \eqref{eq_i:scales} above,
a more general instance was considered in \cite{F-G-GV-N} by the same authors together with G\'erard-Varet. There, the system under consideration was the same barotropic Navier-Stokes system
as in \cite{F-G-N}, with the addition of the centrifugal force term. Afterwards, Feireisl and Novotn\'y continued the multi-scale analysis for the same system, by considering
the effects of a low stratification (without the centrifugal force term, yet), see \cite{F-N_AMPA}, \cite{F-N_CPDE}.
In this context, we also mention \cite{F_MA} dealing with the Navier-Stokes-Korteweg system, \cite{F-L-N} which is, to the best of our knowledge, the only work concerning strongly stratified fluids
(the result holds for well-prepared data only), and \cite{F_2019} which deals with the case where the Mach number is large with respect to the Rossby number. 

The analysis for models presenting also heat transfer is much more recent, and has begun in work \cite{K-M-N} by Kwon, Maltese and Novotn\'y. In that paper, the authors considered a multi-scale
problem for the full Navier-Stokes-Fourier system with Coriolis and gravitational forces (in particular, $F=0$ therein). Notice that the scaling adopted in \cite{K-M-N} consists in taking
$$
Fr=\veps^n\,,\qquad\qquad\mbox{ with }\qquad m/2>n\geq1\,.
$$
In particular, $m$ has to be taken strictly larger than $2$, and the case $n=m/2$ was left open. Similar restrictions on the parameters can be found in \cite{F-N_CPDE}
for the barotropic model, and have to be ascribed to the techniques used for proving convergence, which are based on relative energy/relative entropy estimates
(notice that an even larger restriction for $m$ appears in \cite{F-G-GV-N}). On the other hand, it is worth
noticing that the relative energy methods allow to get a precise rate of convergence and to consider also inviscid and non-diffusive limits (one does not dispose of a uniform bound
on $\nabla_x\vtheta$ and on $\nabla_x\vec{u}$).
The isotropic scaling for the full system (i.e. the case $m=1$) was handled in the subsequent work \cite{K-N} by Kwon and Novotn\'y, by resorting to similar techniques of analysis
(see also \cite{K-L-S} for the case of the compressible MHD system in $2$-D). Notice however
that, in that work, the gravitational term is not penalised at all.

\medbreak
The main motivation of the present work is to shed some light on the multi-scale problem, by focusing on the full Navier-Stokes-Fourier system introduced in \eqref{eq_i:NSF}.
Our main concern is to remove the various restrictions on the different parameters, which appear to be a purely technical artefact. 
More precisely, as pointed out in \eqref{eq_i:scales}, we will consider the whole range of values of $m\geq1$, and we will perform the somehow critical choice $n=m/2$ for the Froude number
(see also \cite{F-N}, \cite{F-Scho} in this respect). Of course, we are still in a regime of low stratification, since $Ma/Fr\ra0$, but having $Fr=\sqrt{Ma}$ 
allows us to capture some additional qualitative properties on the limit dynamics, with respect to previous works.
In addition, we will add to the system the centrifugal force term $\nabla_x F$ (in the spirit of \cite{F-G-GV-N}), which is a source of technical troubles, due to its unboundedness.
Let us now comment all these issues in detail.

First of all, in absence of the centrifugal force, namely when $F=0$, we are able to perform incompressible, mild stratification and fast rotation limits for the \emph{whole range}
of values of $m\geq 1$, in the framework of \emph{finite energy weak solutions} to the Navier-Stokes-Fourier system \eqref{eq_i:NSF} and for general \emph{ill-prepared initial data}.
In the case $m>1$, the incompressibility and stratification effects are predominant with respect to the Coriolis force: then we prove convergence to the well-known \emph{Oberbeck-Boussinesq system},
giving a rigorous justification to this approximate model in the context of fast rotating fluids.
We point out that the target velocity field is $2$-dimensional, according to the celebrated Taylor-Proudman theorem in geophysics: in the limit of high rotation, the fluid motion tends to behave
like planar, it takes place on planes orthogonal to the rotation axis (i.e. horizontal planes in our model) and is essentially constant along the vertical direction. We refer to
\cite{C-R}, \cite{Ped} and \cite{Val} for more details on the physical side. Notice however that, although the limit dynamics is purely horizontal, the limit density and temperature variations,
$R$ and $\Theta$ respectively, appear to be stratified: this is the main effect of taking $n=m/2$ for the Froude number in  \eqref{eq_i:scales}.
This is also the main qualitative property which is new here, with respect to the previous studies.

When $m=1$, instead, all the forces act at the same scale, and then they balance each other asymptotically for $\veps\ra0$. As a result, the limit motion is described by the so-called
\emph{quasi-geostrophic equation} for a suitable function $q$, which is linked to $R$ and $\Theta$ (respectively, the target density and temperature variations) and to the gravity,
and which plays the role of a stream function for the limit velocity field. This quasi-geostrophic equation is coupled with a scalar transport-diffusion equation for a new quantity
$\Upsilon$, mixing  $R$ and $\Theta$. This is in the spirit of the result in \cite{K-N}, but once again, here we capture also gravitational effects in the limit, so that we cannot say
anymore that $R$ and $\Theta$ (and then $\Upsilon$) are horizontal; on the contrary, and somehow surprisingly, $q$ and the target velocity $\vec U$ are horizontal.

At this point, let us make a couple of remarks. First of all, we mention that, as announced above, we are able to add to the system the effects of the centrifugal force $\nabla_x F$.
Unfortunately, in this case the restriction $m\geq 2$ appears (which is still less severe than the ones imposed in \cite{F-G-GV-N}, \cite{F-N_CPDE} and \cite{K-M-N}).
However, we show that such a restriction is not of technical nature, but is now structural (see Proposition \ref{p:target-rho_bound} and comments after Proposition \ref{p:prop_5.2}).
As a matter of fact, the problem lies in the analysis of the static states, and it is not clear at all, for instance, whether or not the techniques of \cite{F_2019} might be of any help here.
The result for $F\neq 0$ is analogous to the one presented above for the case $F=0$ and $m>1$: when $m>2$, the anisotropy of scaling is too large in order to
see any effect due to $F$ in the limit, and no qualitative differences will appear with respect to the instance when $F=0$; when $m=2$, instead, additional terms, related to $F$,
will appear in the Oberbeck-Boussinesq system. In any case, the analysis will be considerably more complicated,
since $F$ is not bounded in $\Omega$ (defined in \eqref{eq:domain} above) and this will demand an additional localisation procedure (already employed in \cite{F-G-GV-N}).

We also point out that the classical existence theory of finite energy weak-solutions for \eqref{eq_i:NSF} requires the physical domain to be a Lipschitz \emph{bounded} subset of $\R^3$
 (see \cite{F-N} for a comprehensive study, see \cite{Poul} for the case of Lipschitz regularity). The theory was later extended
in \cite{J-J-N} to cover the case of unbounded domains, and this might appear suitable for us in view of \eqref{eq:domain}.
Nonetheless, the notion of weak solutions developed in \cite{J-J-N} is somehow milder than the classical one (the authors speak in fact of \emph{very weak solutions}), inasmuch as the
usual weak formulation of the entropy balance, i.e. the third equation in \eqref{eq_i:NSF}, has to be replaced by an inequality in the sense of distributions.
Now, such a formulation is not convenient for us, because, when deriving the system of acoustic-Poincar\'e waves (see more details about the proof here below), we need to combine the mass conservation
and the entropy balance equations together. In particular, this requires to have true equalities, satisfied in the (classical) weak sense.
In order to overcome this problem, we resort to the technique of \emph{invading domains} (see e.g. Chapter 8 of \cite{F-N}, \cite{F-Scho} and \cite{WK}):
namely, for each $\veps\in\,]0,1]$, we solve system \eqref{eq_i:NSF}, with the choice \eqref{eq_i:scales}, in a bounded Lipschitz domain $\Omega_\veps$, where $\big(\Omega_\veps\big)_\veps$ converges
(in a suitable sense) to $\Omega$ when $\veps\ra0$, with a rate higher than the wave propagation speed (which is proportional to $\veps^{-m}$).
Such an ``approximation procedure'' will need some extra work.

In order to prove our results, and get the improvement on the values of the different parameters, we propose a unified approach, which actually works both for the case $m>1$
(allowing us to treat the anisotropy of scaling quite easily) and for the case $m=1$ (allowing us to treat the more complicate singular perturbation operator).
This approach is based on \emph{compensated compactness} arguments, firstly employed by Lions and Masmoudi in \cite{L-M} for dealing with the incompressible limit of the barotropic
Navier-Stokes equations, and later adapted  by Gallagher and Saint-Raymond in \cite{G-SR_2006} to the case of fast rotating (incompressible homogeneous) fluids. More recent applications
of that method in the context of geophysical flows can be found in \cite{F-G-GV-N}, \cite{F_JMFM}, \cite{Fan-G} and \cite{F_2019}.
The basic idea is to use the structure of the equations, in order to find special algebraic cancellations and relations which, in turn, allow to take the limit in some non-linear quantities,
namely (in our case) the Coriolis and convective terms. Still, in dealing with the latter term, one cannot avoid the presence of bilinear expressions: for taking the limit,
some strong convergence properties are required. These strong convergence properties are by no means evident, because the singular terms are responsible for strong
time oscillations (the so-called acoustic-Poincar\'e waves) of the solutions, which may finally prevent the convergence of the non-linearities.
Nonetheless, a fine study of the system for acoustic-Poincar\'e waves actually reveals compactness (for any $m\geq1$ if $F=0$, for $m\geq 2$ if $F\neq 0$) of a special quantity
$\g_\veps$, which combines (roughly speaking) the vertical averages of the momentum $\vec{V}_\veps=\vrho_\veps\vec{u}_\veps$ (of its vorticity, in fact) and of another function
$Z_\veps$, obtained as a linear combination of density and temperature variations. Similar compactness properties have been highlighted in \cite{Fan-G} for incompressible density-dependent fluids
in $2$-D (see also \cite{C-F}), and in \cite{F_2019} for treating a multi-scale problem at ``large'' Mach numbers.
In the end, the strong convergence of $\big(\g_\veps\big)_\veps$ turns out to be enough to take the limit in the convective term,
and to complete the proof of our results.

To conclude this part, let us mention that we expect the same technique to enable us to treat also the case $m=1$ and $F\neq0$ (this was the case in \cite{F-G-GV-N}, for barotropic flows).
Nonetheless, the presence of heat transfer deeply complicates the wave system, and new technical difficulties arise in the study of the singular perturbation operator and in the analysis
of the convective term. For those reasons, here we are not able to handle that case, which still remains open.

\medbreak
Let us now give an overview of the paper.
In Section \ref{s:result} we collect our assumptions and we state our main results. In Section \ref{s:sing-pert} we study the singular perturbation part of the equations,
stating uniform bounds on our family of weak solutions and establishing constraints that the limit points have to satisfy. Section \ref{s:proof} is devoted to the proof of the convergence
result for $m\geq2$ and $F\neq0$. In the last Section \ref{s:proof-1}, we prove the convergence result for $m=1$ and $F=0$, but the same argument shows convergence also for any $m>1$,
in absence of the centrifugal force.

\paragraph*{Some notation and conventions.} \label{ss:notations}

Let $B\subset\R^n$. Throughout the whole text, the symbol $\bbbone_B$ denotes the characteristic function of $B$.
The symbol $C_c^\infty (B)$ denotes the space of $\infty$-times continuously differentiable functions on $\R^n$ and having compact support in $B$. The dual space $\mc D^{\prime}(B)$ is the space of
distributions on $B$. 
Given $p\in[1,+\infty]$, by $L^p(B)$ we mean the classical space of Lebesgue measurable functions $g$, where $|g|^p$ is integrable over the set $B$ (with the usual modifications for the case $p=+\infty$).
We use also the notation $L_T^p(L^q)$ to indicate the space $L^p\big([0,T];L^q(B)\big)$ with $T>0$.
Given $k \geq 0$, we denote by $W^{k,p}(B)$ the Sobolev space of functions which belongs to $L^p(B)$ together with all their derivatives up to order $k$. When $p=2$, we alternately use the
notation $W^{k,2}(B)$ and  $H^k(B)$.
Moreover, we denote by $\mathcal{D}^{k,p}(B)$ the corresponding homogeneous Sobolev spaces, i.e. $\mathcal{D}^{k,p}(B) = \{ g \in L^1_{\rm loc}(B)\, : \, D^\alpha g \in L^p(B),\ |\alpha| = k \}$.
Recall that $\mc D^{k,p}$ is the completion of $C^\infty_c(\overline{B})$ with respect to the $L^p$ norm of gradients.
The symbol $\mc{M}^+(B)$ denotes the cone of non-negative Borel measures on $B$. For the sake of simplicity, we will omit from the notation the set $B$, that we will explicitly point out if needed.

In the whole paper, the symbols $c$ and $C$ will denote generic multiplicative constants, which may change from line to line, and which do not depend on the small parameter $\veps$.
Sometimes, we will explicitly point out the quantities that these constants depend on, by putting them inside brackets.

Let $\big(f_\veps\big)_{0<\veps\leq1}$ be a sequence of functions in a normed space $Y$. If this sequence is bounded in $Y$,  we use the notation $\big(f_\veps\big)_{\veps} \subset Y$.
 
\medbreak
As we will see below, one of the main features of our asymptotic analysis is that the limit-flow
will be \emph{two-dimensional} and \emph{horizontal} along the plane orthogonal to the rotation axis.
Then, let us introduce some notation to describe better this phenomenon.

Let $\Omega$ be a domain in $\R^3$. We will always decompose $x\in\Omega$
into $x=(x^h,x^3)$, with $x^h\in\R^2$ denoting its horizontal component. Analogously,
for a vector-field $v=(v^1,v^2,v^3)\in\R^3$, we set $v^h=(v^1,v^2)$ and we define the differential operators
$\nabla_h$ and $\div_{\!h}$ as the usual operators, but acting just with respect to $x^h$.
In addition, we define the operator $\nabla^\perp_h\,:=\,\bigl(-\d_2\,,\,\d_1\bigr)$.
Finally, the symbol $\h$ denotes the Helmholtz projector onto the space of solenoidal vector fields in $\Omega$, while $\h_h$ denotes the Helmholtz projection in $\R^2$.
Observe that, in the sense of Fourier multipliers, one has $\h_h\vec f\,=\,-\nabla_h^\perp(-\Delta_h)^{-1}\curlh\vec f$.

Moreover, since we will deal with a periodic problem in the $x^{3}$-variable, we also introduce the following decomposition: for a vector-field $X$, we write
\begin{equation} \label{eq:decoscil}
X(x)=\langle X\rangle (x^{h})+\widetilde{X}(x)\quad\qquad
 \text{ with }\quad \langle X\rangle(x^{h})\,:=\,\frac{1}{\left|\T\right|}\int_{\T}X(x^{h},x^{3})\, dx^{3}\,,
\end{equation}
where $\mbb{T}^1\,:=\,[-1,1]/\sim$ is the one-dimensional flat torus (here $\sim$ denotes the equivalence relation which identifies $-1$ and $1$)
and $\left|\T\right|$ denotes its Lebesgue measure.
Notice that $\widetilde{X}$ has zero vertical average, and therefore we can write $\widetilde{X}(x)=\d_{3}\widetilde{Z}(x)$ with $\widetilde{Z}$ having zero vertical average as well.

\subsection*{Acknowledgements}
{\small 
The work of the second author has been partially supported by the LABEX MILYON (ANR-10-LABX-0070) of Universit\'e de Lyon, within the program ``Investissement d'Avenir''
(ANR-11-IDEX-0007), by the project BORDS (ANR-16-CE40-0027-01) and by the project SingFlows (ANR-18-CE40-0027), all operated by the French National Research Agency (ANR). 
}

\section{Setting of the problem and main results} \label{s:result}

In this section, we formulate our working hypotheses (see Subsection \ref{ss:FormProb}) and we state our main results
(in Subsection \ref{ss:results}).

\subsection{Formulation of the problem} \label{ss:FormProb}

In this Subsection, we present the rescaled Navier-Stokes-Fourier system with Coriolis, centrifugal and gravitational forces, which we are going to consider in our study.
We formulate the main working hypotheses, and recall the result in \cite{F-N} which guarantees the existence of \emph{finite energy weak solutions}.

The most part of this subsection is somehow classical: unless otherwise specified, we refer to \cite{F-N} for details.
Paragraph \ref{sss:equilibrium} contains some original contributions: there, we will establish fundamental properties for the equilibrium states, under our hypotheses on
the specific form of the centrifugal and gravitational forces.

 \subsubsection{Primitive system}\label{sss:primsys}
To begin with, let us  introduce the ``primitive system'', i.e. a rescaled compressible Navier-Stokes-Fourier system with small Mach $Ma$, Rossby $Ro$ and Froude $Fr$ numbers.
In particular, given a small parameter $\veps\in\,]0,1]$ which we will let go to $0$, we introduce the scaling
$$
Ma = \ep^m\,,\quad Fr = \ep^{m/2}\,,\quad Ro = \ep\,,
$$
for some $m\geq1$. Our system consists of
the continuity equation (conservation of mass), the momentum equation, the entropy balance and the total energy balance: respectively,
\begin{align}
&	\partial_t \vre + \div (\vre\ue)=0\,, \label{ceq}\tag{NSF$_{\ep}^1$} \\
&	\partial_t (\vre\ue)+ \div(\vre\ue\otimes\ue) + \frac{1}{\ep}\,\e_3 \times \vre\ue +    \frac{1}{\ep^{2m}} \nabla_x p(\vre,\tem) 
	= \label{meq}\tag{NSF$_{\ep}^2$} \\
&	\qquad\qquad\qquad\qquad\qquad\qquad\qquad\qquad
=\div \mbb{S}(\tem,\nabla_x\ue) + \frac{\vre}{\ep^2} \nabla_x F  + \frac{\vre}{\ep^m} \nabla_x G\,, \nonumber \\
&	 \partial_t \bigl(\vre s(\vre, \tem)\bigr)  + \div \bigl(\vre s (\vre,\tem)\ue\bigr) + \div\left(\frac{\q(\tem,\nabla_x \tem )}{\tem} \right)
	= \sigma_\ep\,, \label{eiq}\tag{NSF$_{\ep}^3$} \\
&	\frac{d}{dt} \int_{\Omega_\veps} 
	\left( \frac{\ep^{2m}}{2} \vre|\ue|^2 +  \vre e(\vre,\tem) - {\ep^m} \vre G - {\ep^{2(m-1)}} \vre F \right) dx = 0\,. \label{eeq}\tag{NSF$_{\ep}^4$}
	\end{align}
The unknowns are the fluid mass density $\vre=\vre(t,x)\geq0$ of the fluid, its velocity field $\ue=\ue(t,x)\in\R^3$ and
its absolute temperature $\tem=\tem(t,x)\geq0$, $t\in \;  ]0,T[$ , $x\in \Omega_\veps$ which fills, for $\veps \in \; ]0,1]$ fixed, the bounded domain
\begin{equation}\label{dom}
	\Omega_\ep  :=    {B}_{L_\veps} (0) \times\,]0,1[\;,\qquad\qquad\mbox{ where }\qquad L_\veps\,:=\,\frac{1}{\ep^{m+ \delta}}\,L_0
	\end{equation}
for $\delta >0$ and for some $L_0>0$ fixed, and where we have denoted by ${B}_{l}(x_0)$  the Euclidean ball of center $x_0$ and radius $l$ in  $\R^2$. Namely, roughly speaking we have the property
$$\Omega_\ep\, \longrightarrow \, \Omega := \R^2 \times\,]0,1[\, \quad \mbox{ as } \ep \to 0\,.$$ 
The pressure $p$, the specific internal energy 
$e$ and the specific entropy $s$ are given scalar valued functions of $\vr$ and $\temp$ which are related through 
Gibbs' equation	
	\begin{equation}\label{gibbs}
	\temp D s = D e + p D \left( \frac{1}{\vr}\right),	
	\end{equation}
where the symbol $D$ stands for the differential with respect to the variables $\vr$ and $\vartheta$. The viscous stress tensor in \eqref{meq} is given by Newton's rheological law
	\begin{equation}\label{S}
	\mbb{S}(\tem,\nabla_x \ue) = \mu(\tem)\left( \nabla_x\ue + \nabla_x^T \ue  - \frac{2}{3}\div \ue \tens{Id} \right)
	+ \eta(\tem) \div\ue \tens{Id}\,,
	\end{equation}
for two suitable coefficients $\mu$ and $\eta$ (we refer to Paragraph \ref{sss:structural} below for the precise hypotheses), and the entropy production rate $\sigma_\ep$ in \eqref{eiq} satisfies 
	\begin{equation}\label{ss}
	 \sigma_\ep \geq \frac{1}{\tem} \left({\ep^{2m}} \mbb{S}(\tem,\nabla_x\ue) : \nabla_x \ue
	 - \frac{\q(\tem,\nabla_x \tem )\cdot \nabla_x \tem}{\tem}  \right). 
	\end{equation}
The heat flux $\q$ in \eqref{eiq} is determined by Fourier's law
	\begin{equation}\label{q}
	\q(\tem,\nabla_x \tem)= - \kappa(\tem) \nabla_x \tem ,
	\end{equation}
where $\kappa>0$ is the heat-conduction coefficient. Finally, the term $\e_3\times\vrho_\veps\vec u_\veps$ takes into account the (strong) Coriolis force acting on the fluid.

Next, let us turn our attention to centrifugal and gravitational forces, namely $F$ and $G$ respectively.	
For the existence theory of weak solutions to our system, it would be enough to assume $F\in W^{1,\infty}_{\rm loc}\bigl(\Omega\bigr)$ to satisfy
$$ 
	F(x) \geq 0, \qquad F(x^1,x^2,- x^3) = F(x^1,x^2,x^3), 
	\qquad
	 | \nabla_x F(x) | \leq c(1 + |x^h| )\qquad\qquad \mbox{ for all } x\in \Omega\,,
$$
and $G\in W^{1,\infty}\bigl(\Omega\bigr)$.
However, for the sequel (see Paragraph \ref{sss:equilibrium} below),
it is useful to give a precise form of $F$ and $G$: motivated by physical considerations, we assume that they are of the form
	\begin{equation}\label{assFG}
	F(x) = \left|x^h\right|^2\qquad\mbox{ and }\qquad G(x)= -x^3\,.
	\end{equation}

The system is supplemented  with \emph{complete slip boundary conditions}, namely
	\begin{equation}\label{bc1-2}
	\big(\ue \cdot \n_\veps\big) _{|\partial \Omega_\veps} = 0,
	\quad \mbox{ and } \quad
	\bigl([ \mbb{S} (\tem, \nabla_x \ue) \n_\veps ] \times \n_\veps\bigr)_{|\d\Omega_\veps} = 0\,,
	\end{equation}
where $\vec{n}_\veps$ denotes the outer normal to the boundary $\d\Omega_\veps$.
We also suppose that the boundary of physical space is \emph{thermally isolated}, i.e. one has
	\begin{equation}\label{bc3}
	\big(\q\cdot\n_\veps\big)_{|\partial {\Omega_\veps}}=0\,.
	\end{equation}	

\begin{remark} \label{r:speed-waves}
Let us notice that, as $\delta>0$ in \eqref{dom} and the speed of sound is proportional to $\ep^{-m}$, hypothesis \eqref{dom} guarantees that the part $\d B_{L_\veps}(0)\times\,]0,1[\,$
of the outer boundary $\d\Omega_\veps$ of $\Omega_\ep$ becomes irrelevant when one considers the behaviour of acoustic waves on some compact set of the physical space. We refer to
Subsections \ref{ss:acoustic} and \ref{ss:unifbounds_1} for details about this point. Here, we have set $\d B_{L_\veps}(0)\,=\,\left\{x^h\in\R^2\,:\;\left|x^h\right|^2=L_\veps^2\right\}$.
\end{remark}

\subsubsection{Structural restrictions} \label{sss:structural}
Now we need to  impose structural restrictions on the thermodynamical functions $p$, $e$, $s$ as well as on  
the diffusion coefficients $\mu$, $\eta$, $\kappa$. We start by setting, for some real number $a>0$,
	\begin{equation}\label{pp1}
	p(\vrho,\vtheta)= p_M(\vrho,\vtheta) + \frac{a}{3} \vtheta^{4}\,,\qquad\qquad\mbox{ where }\qquad p_M(\vrho,\vtheta)\,=\,\vtheta^{5/2} P\left(\frac{\vrho}{\vtheta^{3/2}}\right)\,.
	\end{equation}
The first component $p_M$ in \eqref{pp1} corresponds  to the standard molecular pressure of a general monoatomic gas, while the second one represents thermal radiation. Here above,
	\begin{equation}\label{pp2}
	P\in C^1 [0,\infty)\cap C^2(0,\infty),\qquad P(0)=0,\qquad P'(Z)>0\quad \mbox{ for all }\,Z\geq 0\,,
	\end{equation}
which in particular implies the positive compressibility condition
	\begin{equation}\label{p_comp}
	\partial_\vr  p (\vr,\temp)>0.
	\end{equation}
Additionally to (\ref{pp2}) we assume that
	\begin{equation}\label{pp3}
	0<\frac{ \frac{5}{3} P(Z) - P'(Z)Z }{ Z }< c \qquad\qquad\mbox{ for all }\; Z > 0\,.
	\end{equation}
The condition \eqref{pp3} means that the specific heat at constant volume is positive, namely 
	$\partial_\temp e(\vr,\temp)$ is positive and bounded, see below.
In view of  \eqref{pp3}, we have that $Z \mapsto P(Z) / Z^{5/3}$ is a decreasing function; additionally we assume
	\begin{equation}\label{pp4}
	\lim\limits_{Z\to +\infty} \frac{P(Z)}{Z^{5/3}} = P_\infty >0\,.
	\end{equation}

Accordingly to Gibbs' relation \eqref{gibbs}, the specific internal energy and the specific entropy can be written in the following forms:
$$
	e(\vr,\temp) = e_M(\vr,\temp) + a\frac{\temp^{4}}{\vr}\,, \quad\quad
	s(\vr,\temp)= S\left(\frac{\vr}{\temp^{3/2}}\right) + \frac{4}{3} a \frac{\temp^{3}}{\vr}\,,
$$
where we have set
	\begin{equation}\label{ss1s}
e_M(\vr,\temp)=\frac{3}{2} \frac{\temp^{5/2}}{\vr} P\left( \frac{\vr}{\temp^{3/2}} \right)\qquad\mbox{ and }\qquad
S'(Z) = -\frac{3}{2} \frac{ \frac{5}{3} P(Z) - Z P'(Z)}{Z^2}\quad \mbox{ for all }\, Z>0\,.
	\end{equation}

The diffusion coefficients $\mu$ (shear viscosity), $\eta$ (bulk viscosity) and $\kappa$ (heat conductivity)  are assumed to be continuously differentiable 
functions of the temperature  $\temp \in [0,\infty[\,$, satisfying the following  growth conditions for all $\temp\geq 0$:
	\begin{equation}\label{mu}
	0<\underline\mu(1+\temp) \leq \mu(\temp) \leq \overline\mu (1 + \temp),
\quad
	0 \leq \eta(\temp) \leq \overline\eta(1 + \temp), 
\quad
	0 <  \underline\kappa (1 + \temp^3) \leq \kappa(\temp) \leq \overline\kappa(1+\temp^3),
	\end{equation}
 where $\underline\mu$, $\overline\mu$, $\overline\eta$, $\underline\kappa$ and $\overline\kappa$ are positive constants. Let us remark that the above assumptions may be not optimal from the point of view of the existence theory.

\subsubsection{Analysis of the equilibrium states} \label{sss:equilibrium}

For each scaled (NSF)$_\ep$ system, the so-called \emph{equilibrium states}  consist of static density $\vret$ and constant temperature distribution  $\tems>0$
satisfying
	\begin{equation}\label{prF}
\nabla_x p(\vret,\tems) = \ep^{2(m-1)} \vret \nabla_x F + \ep^m  \vret  \nabla_x G \quad \mbox{ in } \Omega.
	\end{equation}	
We point out that, for later use, it is convenient to state \eqref{prF} on whole set $\Omega$. 	
We also notice that, \textsl{a priori}, it is not known that the target temperature has to be constant: this is a consequence that entropy production
rate $\sigma_\ep$ has to be  kept  small  and then $\nabla_x \temp_\ep$  needs to vanish as $\ep \to 0$ (see Section 4.2 of \cite{F-N} for more comments about this).

%
%

Equation \eqref{prF} identifies $\wtilde{\vrho}_\veps$ up to an additive constant: normalizing it to $0$, and taking the target density to be $1$, we get
\begin{equation} \label{eq:target-rho}
  \Pi(\vret)\,=\,\wtilde{F}_\veps\,:=\, \ep^{2(m-1)} F + \ep^m G\,,\qquad\qquad \mbox{ where }\qquad 
\Pi(\vrho) = \int_1^{\vrho} \frac{\partial_\varrho p(z,\oline{\vtheta})}{z} {\rm d}z\,.
\end{equation}

From this relation, we immediately get the following properties:
\begin{itemize}
 \item[(i)] when $m>1$, or $m=1$ and $F=0$, for any $x\in\Omega$ one has $\wtilde{\vrho}_\veps(x)\longrightarrow 1$ in the limit $\veps\ra0$;
 \item[(ii)] for $m=1$ and $F\neq0$, $\bigl(\wtilde{\vrho}_\veps\bigr)_\veps$ converges pointwise to $\wtilde{\vrho}$, where
$$
\wtilde\vrho\quad\mbox{ is a solution of the problem}\qquad \Pi\bigl(\wtilde{\vrho}(x)\bigr)\,=\,F(x)\,,\ \mbox{ with }\ x\in\Omega\,.
$$
In particular,
$\wtilde{\vrho}$ is non-constant, of class $C^2(\Omega)$ (keep in mind assumptions \eqref{pp2} and \eqref{p_comp} above) and it depends just on the horizontal variables due to \eqref{assFG}.
\end{itemize}

We are now going to study more in detail the equilibrium densities $\wtilde\vrho_\veps$.
In order to keep the discussion as general as possible, we are going to consider both cases (i) and (ii) listed above, even though our results will concern only case (i).

The first problem we have to face is that the right-hand side of \eqref{eq:target-rho} may be negative: this means that $\wtilde{\varrho}_\veps$ can go below the value $1$ in some regions of $\Omega$.
Nonetheless, the next statement excludes the presence of vacuum.
\begin{lemma} \label{l:target-rho_pos}
Let the  centrifugal force $F$ and the gravitational force $G$ be given by \eqref{assFG}.
Let $\bigl(\wtilde{\vrho}_\veps\bigr)_{0<\veps\leq1}$ be a family of static solutions to equation \eqref{eq:target-rho}
on $\Omega$.

Then, there exist an $\veps_0>0$ and a $0<\rho_*<1$ such that $\wtilde{\vrho}_\veps\geq\rho_*$ for all $\veps\in\,]0,\veps_0]$
and all $x\in\Omega$.
\end{lemma}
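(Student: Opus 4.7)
The plan is to exploit the defining relation $\Pi(\wtilde\vrho_\veps) = \wtilde F_\veps$ together with the strict monotonicity and boundary behaviour of $\Pi$, in order to invert the equation and obtain a uniform pointwise control from below on $\wtilde\vrho_\veps$.

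First I would establish the qualitative properties of $\Pi$ on $]0,+\infty[\,$. By the positive compressibility assumption \eqref{p_comp}, one has $\Pi'(\vrho)=\partial_\vrho p(\vrho,\tems)/\vrho>0$, so $\Pi$ is a $C^1$ strictly increasing function with $\Pi(1)=0$. From the explicit form \eqref{pp1} one computes $\partial_\vrho p(\vrho,\tems) = \tems\, P'\bigl(\vrho/\tems^{3/2}\bigr)$, and the assumption $P'(Z)>0$ for all $Z\geq 0$ in \eqref{pp2} (together with $P\in C^1[0,\infty[$) gives $P'(0)>0$. Consequently, for $z$ small the integrand in the definition of $\Pi$ behaves like a positive constant divided by $z$, which is non-integrable at the origin. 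This yields $\lim_{\vrho\to 0^+}\Pi(\vrho) = -\infty$. Thus $\Pi:\,]0,+\infty[\,\to\R$ is a bijection, with continuous inverse $\Pi^{-1}$ satisfying $\Pi^{-1}(0)=1$.

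Next I would bound the source term $\wtilde F_\veps$ from below, uniformly in $x\in\Omega$. Using the explicit form \eqref{assFG}, namely $F(x)=|x^h|^2\geq 0$ and $G(x)=-x^3\in\,]-1,0[\,$ on $\Omega$, we get
\[
\wtilde F_\veps(x) \,=\, \veps^{2(m-1)}|x^h|^2 \,-\, \veps^{m}x^3 \,\geq\, -\veps^{m}\qquad\text{for all }x\in\Omega,
\]
both in case (i) and in case (ii) (where $m=1$). Applying the increasing function $\Pi^{-1}$ to the identity $\Pi(\wtilde\vrho_\veps(x)) = \wtilde F_\veps(x)$ one deduces
\[
\wtilde\vrho_\veps(x) \,=\, \Pi^{-1}\bigl(\wtilde F_\veps(x)\bigr) \,\geq\, \Pi^{-1}(-\veps^{m}) \qquad\text{for all }x\in\Omega.
\]

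Finally, I would conclude by a simple continuity argument. Fix any $\rho_*\in\,]0,1[\,$; then $\Pi(\rho_*)<0$. Setting $\veps_0\,:=\,\bigl(-\Pi(\rho_*)\bigr)^{1/m}>0$, for every $\veps\in\,]0,\veps_0]$ one has $\veps^{m}\leq -\Pi(\rho_*)$, i.e.\ $-\veps^{m}\geq \Pi(\rho_*)$, hence by monotonicity of $\Pi^{-1}$
\[
\wtilde\vrho_\veps(x) \,\geq\, \Pi^{-1}(-\veps^{m}) \,\geq\, \Pi^{-1}\bigl(\Pi(\rho_*)\bigr) \,=\, \rho_*,
\]
which is the desired uniform bound. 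The only non-routine point in this argument is verifying the divergence $\Pi(0^+)=-\infty$, which is what rules out vacuum formation in the static states; everything else reduces to unwinding the definitions and using the sign information on $F$ and $G$.
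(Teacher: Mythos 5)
Your argument is correct and takes a genuinely more streamlined route than the paper's.

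The paper argues by contradiction: assuming a sequence $\wtilde\vrho_{\veps_n}(x_n)\leq 1/n$, it observes $\wtilde F_{\veps_n}(x_n)\to 0$, applies the mean value theorem to $\Pi$, and uses the positivity of $\Pi'$ to force $\wtilde\vrho_{\veps_n}(x_n)\to 1$, a contradiction. It then runs \emph{separate} arguments for the three cases $m>1$, $m=1$ with $F=0$, and $m=1$ with $F\neq 0$ (the last via an explicit minimizer at $x=(0,0,1)$ and a Taylor-type estimate). Your proof instead invokes a \emph{global} property of $\Pi$ that the paper never explicitly exploits: since $\partial_\vrho p(z,\oline\vtheta)=\oline\vtheta\,P'(z/\oline\vtheta^{3/2})\to\oline\vtheta\,P'(0)>0$ as $z\to 0^+$ (by \eqref{pp1}, \eqref{pp2}), the integrand in the definition of $\Pi$ is non-integrable at the origin, hence $\Pi(0^+)=-\infty$. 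Combined with the one-line bound $\wtilde F_\veps\geq -\veps^m$ (valid uniformly in $x\in\Omega$ and for every $m\geq 1$, $F\neq 0$ or $F=0$), strict monotonicity of $\Pi$ immediately yields $\wtilde\vrho_\veps(x)\geq\Pi^{-1}(-\veps^m)$, and the choice $\veps_0=(-\Pi(\rho_*))^{1/m}$ closes the proof. What this buys: a single, case-free argument with an \emph{explicit and constructive} pair $(\rho_*,\veps_0)$, rather than an existence statement obtained by contradiction. The paper's mean-value-theorem route is somewhat more ``local'' in spirit (it only inspects $\Pi$ near $1$) and naturally segues into the quantitative estimates of Proposition \ref{p:target-rho_bound}, but for the purposes of this lemma your approach is both shorter and cleaner.
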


\begin{proof}
Let us consider the case $m>1$ (hence $F\neq0$) first. Suppose, by contradiction, that there exists a sequence $\bigl(\veps_n,x_n\bigr)_n$ such that
$0\,\leq\,\wtilde{\vrho}_{\veps_n}(x_n)\,\leq\,1/n$. We observe that the sequence $\bigl(x_n\bigr)_n$ cannot be bounded. Indeed, relation
\eqref{eq:target-rho}, computed on $\wtilde{\vrho}_{\veps_n}(x_n)$, would immediately imply that $\wtilde{\vrho}_{\veps_n}(x_n)$ should
rather converge to $1$.
In any case, since $1/n<1$ for $n\geq2$ and $x^3\in\; ]0,1[\, $, we deduce that
$$
-\,(\veps_n)^m\,\leq\,\wtilde{F}_{\veps_n}(x_n)\,=\,(\veps_n)^{2(m-1)}\,|\,(x_n)^h\,|^2\,-\,(\veps_n)^m\,(x_n)^3\,<\,0\,,
$$
which in particular implies that $\wtilde{F}_{\veps_n}(x_n)$ has to go to $0$ for $\veps\ra0$. As a consequence, since
$\Pi(1)=0$, by the mean value theorem and \eqref{eq:target-rho} we get
$$
\wtilde{F}_{\veps_n}(x_n)\,=\,\Pi\bigl(\wtilde{\vrho}_{\veps_n}(x_n)\bigr)\,=\,\Pi'(z_n)\,\bigl(\wtilde{\vrho}_{\veps_n}(x_n)-1\bigr)\,=\,
\frac{\d_\varrho p\bigl(z_n,\oline{\vtheta}\bigr)}{z_n}\,\bigl(\wtilde{\vrho}_{\veps_n}(x_n)-1\bigr)\,\longrightarrow\,0\,,
$$
for some $z_n\in\,]\wtilde{\vrho}_{\veps_n}(x_n),1[\,\subset\,]0,1[\,$, for all $n\in\N$. In turn, this relation,
combined with \eqref{p_comp}, implies that $\wtilde{\vrho}_{\veps_n}(x_n)\to1$,
which is in contradiction with the fact that it has to be $\leq1/n$ for any $n\in\N$.

The case $m=1$ and $F=0$ can be treated in a similar way. Let us now assume that $m=1$ and $F\neq0$: relation \eqref{eq:target-rho} in this case becomes
\begin{equation} \label{eq:target_m=1}
\Pi\bigl(\wtilde{\vrho}_\veps(x)\bigr)\,=\,|\,x^h\,|^2\,-\,\veps\,x^3\,.
\end{equation}
We observe that the right-hand side of this identity is negative on the set
$\left\{0\,\leq\,|\,x^h\,|^2\,\leq\,\veps\,x^3\right\}$. By definition \eqref{eq:target-rho}, this is equivalent to having $\wtilde{\vrho}_\veps(x)\leq1$.

In particular, the smallest value of $\wtilde{\vrho}_\veps(x)$
is attained for $x^h=0$, $x^3=1$, for which $\Pi\bigl(\wtilde{\vrho}_\veps(0,0,1)\bigr)=-\veps$.
On the other hand, fixed a $x^0_\veps$ such that $|\,(x^0_\veps)^h\,|^2=\veps$ and $(x^0_\veps)^3=1$, we have
$\Pi\bigl(\wtilde{\vrho}_\veps(x_\veps^0)\bigr)=0$, and then $\wtilde{\vrho}_\veps(x^0_\veps)=1$.
Therefore, by mean value theorem again we get
\begin{align*}
-\,\veps\;=\;\Pi\bigl(\wtilde{\vrho}_\veps(0,0,1)\bigr)\,-\,\Pi\bigl(\wtilde{\vrho}_\veps(x^0_\veps)\bigr)\,&=\,
\frac{\d_\varrho p\bigl(\wtilde{\vrho}_\veps(y_\veps),\oline{\vtheta}\bigr)}{\wtilde{\vrho}_\veps(y_\veps)}\,
\bigl(\wtilde{\vrho}_\veps(0,0,1)\,-\,\wtilde{\vrho}_\veps(x^0_\veps)\bigr) \\
 &=\,\frac{\d_\varrho p\bigl(\wtilde{\vrho}_\veps(y_\veps),\oline{\vtheta}\bigr)}{\wtilde{\vrho}_\veps(y_\veps)}\,
\bigl(\wtilde{\vrho}_\veps(0,0,1)\,-\,1\bigr)\,
\end{align*}
for some suitable point $y_\veps=\bigl((x_\veps)^h,1\bigr)$ lying on the line connecting $(0,0,1)$ with $x^0_\veps$.

From this equality and the structural hypothesis \eqref{p_comp}, since $\wtilde{\vrho}_\veps(0,0,1)\,-\,1<0$ (due to the fact that $\Pi\bigl(\wtilde{\vrho}_\veps(0,0,1)\bigr)<0$),
we deduce that $\wtilde{\vrho}_\veps(y_\veps)>0$ for all $\veps>0$. On the other hand, \eqref{eq:target_m=1} says that,
for $x^3$ fixed, the function $\Pi\circ\wtilde{\vrho}_\veps$ is radially increasing on $\R^2$: then, in particular
$\wtilde{\vrho}_\veps(y_\veps)\leq\wtilde{\vrho}_\veps(x^0_\veps)=1$.

Finally, thanks to these relations and the regularity properties \eqref{pp1} and \eqref{pp2}, we see that
$$
\wtilde{\vrho}_\veps(0,0,1)\,=\,1\,-\,\veps\,
\frac{\wtilde{\vrho}_\veps(y_\veps)}{\d_\varrho p\bigl(\wtilde{\vrho}_\veps(y_\veps),\oline{\vtheta}\bigr)}
$$
remains strictly positive, at least for $\veps$ small enough.
\qed
\end{proof}

\medbreak
For simplicity, and without loss of any generality, we assume from now on that $\veps_0=1$ in Lemma \ref{l:target-rho_pos}.

Next, denoted as above $B_l(0)$ the ball in the horizontal variables $x^h\in\R^2$ of center $0$ and radius $l>0$, we define the cylinder
$$
	\mbb B_{L} := \left\{ x\in \Omega \ : \ |x^h| < L  \right\}=B_L(0)\times\, ]0,1[\, .
$$

We can now state the next boundedness property for the family $\bigl(\wtilde{\vrho}_\veps\bigr)_\veps$.
\begin{lemma} \label{l:target-rho_bound}
Let $m\geq1$. Let $F$ and $G$ satisfy \eqref{assFG}. Then, for any $l>0$, there exists a constant $C(l)>1$ such that for all $\veps\in\,]0,1]$ one has
\begin{equation} \label{est:target-rho_in}
\wtilde{\vrho}_\veps\,\leq\,C(l)\qquad\qquad\mbox{ on }\qquad \oline{\mbb{B}}_{l}\,.
\end{equation}

If $F=0$, then there exists a constant $C>1$ such that, for all $\veps\in\,]0,1]$ and all $x\in\Omega$, one has $\left|\wtilde\vrho_\veps(x)\right|\leq C$.
\end{lemma}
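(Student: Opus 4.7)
The plan is to reduce everything to a uniform bound on the right-hand side of the defining relation \eqref{eq:target-rho} and then to invert the monotone function $\Pi$. First I would observe that, by \eqref{assFG},
\begin{equation*}
\wtilde F_\ep(x)\,=\,\ep^{2(m-1)}\,|x^h|^2\;-\;\ep^m\,x^3\,,
\end{equation*}
so that, since $m\geq1$ and $\ep\in\,]0,1]$, on $\oline{\mbb B}_l$ one has $\ep^{2(m-1)}\leq1$ and $x^3\in[0,1]$, giving the uniform upper bound $\wtilde F_\ep(x)\leq l^{2}$. In the special case $F=0$ one even has $|\wtilde F_\ep(x)|\leq \ep^m\leq1$ for every $x\in\Omega$, which is the reason why the bound becomes global.

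Next I would study the growth of $\Pi$ at infinity. Using $p_M(\vr,\oline\vtheta)=\oline\vtheta^{5/2}P(\vr/\oline\vtheta^{3/2})$ from \eqref{pp1}, a direct computation yields
\begin{equation*}
\frac{\partial_\varrho p(z,\oline\vtheta)}{z}\,=\,\frac{\oline\vtheta\,P'(z/\oline\vtheta^{3/2})}{z}\,.
\end{equation*}
From \eqref{pp3} we deduce the lower bound $P'(Z)>(5/3)P(Z)/Z-c$, and combining with \eqref{pp4}, which ensures $P(Z)\geq (P_\infty/2)Z^{5/3}$ for $Z$ large, I obtain $P'(Z)\geq c_1\,Z^{2/3}$ for all $Z\geq Z_0$, for some constants $c_1>0$ and $Z_0>0$. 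Plugging this into the integrand and using the change of variable $w=z/\oline\vtheta^{3/2}$, I can estimate
\begin{equation*}
\Pi(\vr)\;\geq\;c_2\int_{A}^{\vr}z^{-1/3}\,dz\;\geq\;c_3\,\vr^{2/3}\,-\,c_4\qquad\mbox{ for all }\;\vr\geq A\,,
\end{equation*}
for some positive constants (depending only on $\oline\vtheta$ and on the structural parameters). In particular $\Pi(\vr)\to+\infty$ as $\vr\to+\infty$, while \eqref{p_comp} ensures that $\Pi$ is strictly increasing on $]0,+\infty[$ with $\Pi(1)=0$. Hence $\Pi$ admits a continuous inverse $\Pi^{-1}$ defined on the image of $\Pi$.

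Putting these pieces together, I would conclude as follows. On $\oline{\mbb B}_l$ the identity $\Pi(\wtilde\vr_\ep(x))=\wtilde F_\ep(x)\leq l^{2}$ together with the monotonicity of $\Pi$ forces
\begin{equation*}
\wtilde\vr_\ep(x)\;\leq\;\Pi^{-1}(l^{2})\;=:\;C(l)\,,
\end{equation*}
and $C(l)>1$ since $\Pi(1)=0<l^{2}$. When $F=0$ the same argument works with $l^{2}$ replaced by $1$, producing a constant $C$ independent of $x\in\Omega$. The only subtlety I anticipate is the quantitative use of \eqref{pp3}--\eqref{pp4} to extract the lower bound on $P'$ which drives the divergence of $\Pi$ at infinity; everything else is a routine inversion of a monotone scalar function.
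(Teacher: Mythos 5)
Your proof is correct and actually streamlines the paper's argument. The paper proceeds in two stages: for fixed $\veps$ it argues by contradiction (if $\wtilde\vrho_\veps(x_n)\geq n$, then $\Pi(\wtilde\vrho_\veps(x_n))$ diverges, contradicting the bound $\wtilde F_\veps\leq l^2+1$ on $\oline{\mbb B}_l$), obtaining a priori an $\veps$-dependent constant $C(\veps,l)$, and then invokes the uniform convergence of $\wtilde\vrho_\veps$ to its pointwise limit on the compact cylinder to upgrade $C(\veps,l)$ to a constant $C(l)$. You skip the second stage entirely: by observing that the upper bound $\wtilde F_\veps\leq l^2$ already holds uniformly in $\veps$ (since $\veps^{2(m-1)}\leq1$ and $-\veps^mx^3\leq0$), and by establishing that $\Pi$ is continuous, strictly increasing, and tends to $+\infty$, you directly invert to get $\wtilde\vrho_\veps\leq\Pi^{-1}(l^2)=C(l)$, manifestly independent of $\veps$. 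Your quantitative lower bound $P'(Z)\gtrsim Z^{2/3}$ from \eqref{pp3}--\eqref{pp4} is equivalent to the paper's estimate via $P(Z)/Z^2$, and the $F=0$ case is handled identically. The net effect is a cleaner, one-step proof that avoids the uniform-convergence argument the paper uses — a genuine simplification, not merely a rephrasing.
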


\begin{proof}
Let us focus on the case $m>1$ and $F\neq 0$ for a while.
In order to see \eqref{est:target-rho_in}, we proceed in two steps. First of all, we fix $\veps$ and we show that $\wtilde{\varrho}_\veps$
is bounded in the previous set. Assume it is not: there exists a sequence $\bigl(x_n\bigr)_{n}\subset \oline{\mbb{B}}_{l}$
such that $\wtilde{\varrho}_\veps(x_n)\geq n$. But then, thanks to hypothesis \eqref{pp3}, we can write
$$
\Pi\bigl(\wtilde{\varrho}_\veps(x_n)\bigr)\,\geq\,\int^n_1\frac{\d_\varrho p(z,\oline{\vtheta})}{z}{\rm\,d}z\,\geq\,C(\oline{\vtheta})\,
\int^{n/\oline{\vtheta}^{3/2}}_{1/\oline{\vtheta}^{3/2}}\frac{P(Z)}{Z^2}{\rm\,d}Z\,,
$$
and, by use of \eqref{pp4}, it is easy to see that the last integral diverges to $+\infty$ for $n\ra+\infty$. On the other hand,
on the set $\oline{\mbb{B}}_{l}$, the function $\wtilde{F}_\veps$ is uniformly bounded by the constant
$l^2+1$, and, recalling formula \eqref{eq:target-rho}, these two facts are in contradiction one with other.

So, we have proved that, $\wtilde{\varrho}_\veps\,\leq\,C(\veps,l)$ on the set $\oline{\mbb{B}}_{l}$.
But, thanks to point (i) below \eqref{eq:target-rho}, the pointwise convergence of $\wtilde{\vrho}_\veps$ to $1$ becomes uniform in the
previous set, so that the constant $C(\veps,l)$ can be dominated by a new constant $C(l)$, just depending on the fixed $l$. 

Let us now take $m=1$ and $F\neq0$. 
We start by observing that, again, the following property holds true:
for any $\veps$ and any $l>0$ fixed, one has $\wtilde{\vrho}_\veps\,\leq\,C(\veps,l)$ in $\oline{\mbb{B}}_{l}$.
Furthermore, by point (ii) below \eqref{eq:target-rho} we have that $\wtilde\vrho\in C^2(\Omega)$, and then $\wtilde\vrho$ is locally bounded:
for any $l>0$ fixed, we have $\wtilde{\vrho}\,\leq\,C(l)$ on the set $\oline{\mbb{B}}_{l}$. On the other hand, the pointwise convergence of
$\bigl(\wtilde{\vrho}_\veps\bigr)_\veps$ towards $\wtilde\vrho$ becomes uniform on the compact set $\oline{\mbb{B}}_{l}$:
gluing these facts together, we infer that, in the previous bound for $\wtilde{\vrho}_\veps$, we can replace $C(\veps,l)$ by a constant $C(l)$ which is uniform in $\veps$.

Let us now consider the case $F=0$, for any value of the parameter $m\geq1$. In this case, relation \eqref{eq:target-rho} becomes
$$
\Pi\big(\wtilde\vrho_\veps\big)\,=\,\veps^m\,G\qquad\Longrightarrow\qquad \left|\Pi\big(\wtilde\vrho_\veps\big)\right|\,\leq\,C\quad\mbox{ in }\;\Omega\,.
$$
At this point, as a consequence of the structural assumptions \eqref{pp1}, \eqref{pp3} and \eqref{pp4}, we observe that
$\Pi(z)\longrightarrow+\infty$ for $z\ra+\infty$. Then, $\wtilde\vrho_\veps$ must be uniformly bounded in $\Omega$.

This completes the proof of the lemma. \qed
\end{proof}

\medbreak
We conclude this paragraph by showing some additional bounds, which will be relevant in the sequel.
\begin{proposition} \label{p:target-rho_bound}
Let $F\neq0$. For any $l>0$, on the cylinder $\oline{\mbb{B}}_{l}$ one has, for any $\veps\in\,]0,1]$:
\begin{enumerate}[(1)]
\item $
\left|\wtilde{\vrho}_\veps(x)\,-\,1\right|\,\leq\,C(l)\,\veps^m\, \text{ if }m\geq2$; 
\item $
\left|\wtilde{\vrho}_\veps(x)\,-\,1\right|\,\leq\,C(l)\,\veps^{2(m-1)}\, \text{ if }1<m<2
$;
\item $
\left|\wtilde{\vrho}_\veps(x)\,-\,\wtilde{\varrho}(x)\right|\,\leq\,C(l)\,\veps\, \text{ if }m=1$. 
\end{enumerate}

When $F=0$ and $m\geq1$, instead, one has $\left|\wtilde{\vrho}_\veps(x)\,-\,1\right|\,\leq\,C\,\veps^m$, for a constant $C>0$ which is uniform in $x\in\Omega$ and in $\veps\in\,]0,1]$.
\end{proposition}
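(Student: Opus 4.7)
The plan is to exploit the defining relation $\Pi(\wtilde{\vrho}_\veps) = \wtilde{F}_\veps = \veps^{2(m-1)}F + \veps^m G$ together with the mean value theorem. Since $\Pi(1)=0$ and, by \eqref{p_comp}, $\Pi'(\vrho)=\d_\varrho p(\vrho,\oline{\vtheta})/\vrho>0$ is continuous and strictly positive, one can write
\[
\wtilde{F}_\veps(x)\,=\,\Pi\bigl(\wtilde{\vrho}_\veps(x)\bigr)\,-\,\Pi(1)\,=\,\Pi'(z_\veps)\,\bigl(\wtilde{\vrho}_\veps(x)-1\bigr),
\]
for some $z_\veps$ lying between $1$ and $\wtilde{\vrho}_\veps(x)$, and hence $|\wtilde{\vrho}_\veps(x)-1|=|\wtilde{F}_\veps(x)|/\Pi'(z_\veps)$. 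The three sub-cases in the statement then come from a direct comparison of the two scales $\veps^{2(m-1)}$ and $\veps^m$ in $\wtilde F_\veps$, combined with the explicit bounds $|F(x)|\le l^2$ and $|G(x)|\le 1$ on $\oline{\mbb{B}}_l$.

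More precisely, when $m\ge 2$, one has $2(m-1)\ge m$, so $|\wtilde{F}_\veps(x)|\le (l^2+1)\,\veps^m$, while for $1<m<2$ the gravitational term is negligible compared to the centrifugal one, giving $|\wtilde{F}_\veps(x)|\le (l^2+1)\,\veps^{2(m-1)}$. In both cases, point (i) after \eqref{eq:target-rho} (together with Lemmas \ref{l:target-rho_pos} and \ref{l:target-rho_bound}) guarantees $\wtilde{\vrho}_\veps\to 1$ uniformly on $\oline{\mbb{B}}_l$, hence $z_\veps$ stays in a fixed compact sub-interval of $]0,+\infty[$ and $\Pi'(z_\veps)$ remains bounded away from $0$ uniformly in $\veps$ and in $x\in\oline{\mbb{B}}_l$. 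Dividing by $\Pi'(z_\veps)$ yields estimates (1) and (2) with a constant $C(l)$ that absorbs $\inf\Pi'$ on that compact interval.

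For the case $m=1$ with $F\ne 0$, one compares $\wtilde{\vrho}_\veps$ to the limit profile $\wtilde{\vrho}$ characterised in point (ii) after \eqref{eq:target-rho}. Subtracting the two defining relations gives $\Pi\bigl(\wtilde{\vrho}_\veps\bigr)-\Pi\bigl(\wtilde{\vrho}\bigr)=\veps\,G$; applying the mean value theorem to $\Pi$ along the segment joining $\wtilde{\vrho}_\veps(x)$ and $\wtilde{\vrho}(x)$, and using that both functions are locally bounded and bounded below by $\rho_*$ on $\oline{\mbb{B}}_l$ (invoking Lemma \ref{l:target-rho_pos} and the local boundedness of $\wtilde\vrho$ following from $\wtilde\vrho\in C^2(\Omega)$), we again control $\Pi'$ uniformly from below and obtain estimate (3). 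Finally, for $F=0$, the identity becomes $\Pi(\wtilde{\vrho}_\veps)=\veps^m G$, so $|\Pi(\wtilde{\vrho}_\veps)|\le \veps^m \|G\|_{L^\infty(\Omega)}$ \emph{uniformly in $x\in\Omega$}; since $\bigl(\wtilde{\vrho}_\veps\bigr)_\veps$ is then globally bounded by the second part of Lemma \ref{l:target-rho_bound} and bounded below by $\rho_*$ by Lemma \ref{l:target-rho_pos}, the same mean-value argument closes the last assertion with a constant independent of $x$.

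The only delicate point is the uniform (in $\veps$ and $x$) lower bound on $\Pi'(z_\veps)$: this is precisely where we need to have already pinned down, on the compact set $\oline{\mbb{B}}_l$, both an $\veps$-uniform upper bound on $\wtilde{\vrho}_\veps$ (Lemma \ref{l:target-rho_bound}) and a strictly positive lower bound (Lemma \ref{l:target-rho_pos}); without either of them, $z_\veps$ could escape to $0$ or $+\infty$ and the continuity of $\Pi'$ on $]0,+\infty[$ would no longer prevent $\Pi'(z_\veps)$ from degenerating. Once those two ingredients are in hand, the proposition reduces to bookkeeping of the two small scales $\veps^{2(m-1)}$ and $\veps^m$ in $\wtilde F_\veps$.
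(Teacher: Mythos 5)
Your proof is correct and follows essentially the same route as the paper: apply the mean value theorem to $\Pi$ starting from \eqref{eq:target-rho}, then control $\Pi'(z_\veps)=\d_\varrho p(z_\veps,\oline\vtheta)/z_\veps$ uniformly on $\oline{\mbb B}_l$ via Lemmas \ref{l:target-rho_pos} and \ref{l:target-rho_bound}, and finally compare the two scales $\veps^{2(m-1)}$ and $\veps^m$ in $\wtilde F_\veps$. The handling of the $m=1$ case by subtracting the two defining relations and the $F=0$ case by the global boundedness of $G$ match the paper's argument.
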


\begin{proof}
Assume $F\neq0$ for a while. Let $m\geq2$. Thanks to the Lemma \ref{l:target-rho_bound}, the estimate on $\left|\wtilde{\vrho}_\veps(x)\,-\,1\right|$ easily
follows applying the mean value theorem to equation \eqref{eq:target-rho}, and noticing that
$$
\sup_{z\in[\rho_*,C(l)]}\left|\frac{z}{\d_\varrho p(z,\oline{\vtheta})}\right|\,<\,+\infty\,,
$$ 
on $\oline{\mbb{B}}_l$ for any fixed $l>0$.
According to the hypothesis $m\geq2$, we have $2(m-1)\geq m$. The claimed bound then follows.
The proof of the inequality for $1<m<2$ is analogous, using this time that $2(m-1)\leq m$.

In order to prove the inequality for $m=1$, we consider the equations satisfied by $\wtilde{\vrho}_\veps$ and $\wtilde{\vrho}$: we have
$$
\Pi\bigl(\wtilde{\vrho}_\veps(x)\bigr)\,=\,|\,x^h\,|^2\,-\,\veps\,x^3\qquad\qquad\mbox{ and }\qquad\qquad
\Pi\bigl(\wtilde{\vrho}(x)\bigr)\,=\,|\,x^h\,|^2\,.
$$
Now, we take the  difference and we apply the mean value theorem, finding
$$
\Pi'\big(z_\veps(x)\big)\,\big(\wtilde{\vrho}_\veps(x)\,-\,\wtilde{\varrho}(x)\big)\,=\,-\veps\,x^3\,,
$$
for some $z_\veps(x)\in\,]\wtilde{\vrho}_\veps(x),\wtilde{\varrho}(x)[\,$ (or with exchanged extreme points, depending on $x$). By Lemma \ref{l:target-rho_bound}
we have uniform (in $\veps$) bounds on the set $\oline{\mbb{B}}_{l}$, depending on $l$, for $\wtilde{\vrho}_\veps(x)$ and
$\wtilde{\varrho}(x)$: then, from the previous identity, on this cylinder we find
$$
\left|\wtilde{\vrho}_\veps(x)\,-\,\wtilde{\varrho}(x)\right|\,\leq\,C(l)\,\veps\,.
$$

The bounds in the case $F=0$ can be shown in an analogous way. We omit the details here.
The proposition is now  completely proved.
\qed
\end{proof}

\medbreak

%

From now on, we will focus on the following cases:
\begin{equation} \label{eq:choice-m}
\mbox{ either }\quad m\geq2\,,\qquad\quad \mbox{ or }\quad\qquad m\geq1\quad\mbox{ and }\quad F=0\,.
\end{equation}
Notice that in all those cases, the target density profile $\wtilde\vrho$ is constant, namely $\wtilde\vrho\equiv1$.


\subsubsection{Initial data and finite energy weak solutions} \label{sss:data-weak}

We address the singular perturbation problem described in Paragraph \ref{sss:primsys} for general \emph{ill prepared initial data}, in the framework of \emph{finite energy weak solutions},
whose theory was developed in \cite{F-N} with the suitable extension of \cite{Poul} in order to treat Lipschitz domains.
Since we work with weak solutions based on dissipation estimates and control of entropy production rate, we need to assume that the initial data are close to the equilibrium states $(\vret,\tems)$ that we have just identified.
Namely, we consider initial densities and temperatures of the following form:
	\begin{equation}\label{in_vr}
	\vrez = \vret + \ep^m \vrez^{(1)} 
	\qquad\qquad\mbox{ and }\qquad\qquad
	\temz = \tems + \ep^m \Theta_{0,\veps}\,.
	\end{equation}
For later use, let us introduce also the following decomposition of the initial densities: after setting $\wtilde r_\veps\,:=\,\bigl(\wtilde\vrho_\veps-1\bigr)/\veps^m$, we write
\begin{equation} \label{eq:in-dens_dec}
\vrho_{0,\veps}\,=\,1\,+\,\veps^m\,R_{0,\veps}\qquad\qquad\mbox{ with }\qquad
R_{0,\veps}\,=\,\vrho_{0,\veps}^{(1)}\,+\,\wtilde r_\veps\,.
\end{equation}
Notice that the $\wtilde r_\veps$'s are in fact data of the system, since they only depend on $p$, $F$ and $G$.

We suppose $\vrez^{(1)}$ and $\Theta_{0,\veps}$ to be bounded measurable functions satisfying the controls
	\begin{align}
\sup_{\veps\in\,]0,1]}\left\|  \vrez^{(1)} \right\|_{(L^2\cap L^\infty)(\Omega_\veps)}\,\leq \,c\,,\qquad 
\sup_{\veps\in\,]0,1]}\left(\left\|\Theta_{0,\veps}\right\|_{L^\infty(\Omega_\veps)}\,+\,\left\| \sqrt{\wtilde\vrho_\veps}\,\Theta_{0,\veps}\right\|_{L^2(\Omega_\veps)}\right)\,\leq\, c\,,\label{hyp:ill_data}
	\end{align}
together with the conditions
$$
\int_{\Omega_\veps}  \vrez^{(1)} \dx = 0\qquad\qquad\mbox{ and }\qquad\qquad \int_{\Omega_\veps}\Theta_{0,\veps} \dx = 0\,.
$$
As for the initial velocity fields, we will assume instead the following uniform bounds:
\begin{equation} \label{hyp:ill-vel}
 	\sup_{\veps\in\,]0,1]}\left(\left\| \sqrt{\wtilde\vrho_\veps} \vec{u}_{0,\ep} \right\|_{L^2(\Omega_\veps)}\,+\, \left\| \vec{u}_{0,\ep}  \right\|_{L^\infty(\Omega_\veps)}\right)\,  \leq\, c\,.
\end{equation}

\begin{remark} \label{r:ill_data}
In view of Lemma \ref{l:target-rho_pos}, the conditions in \eqref{hyp:ill_data} and \eqref{hyp:ill-vel} imply in particular that
$$
\sup_{\veps\in\,]0,1]}\left(\left\| \Theta_{0,\veps}\right\|_{L^2(\Omega_\veps)}\,+\,\left\| \vec{u}_{0,\ep}  \right\|_{L^2(\Omega_\veps)}\right)\,\leq\,c\,.
$$
\end{remark}


Thanks to the previous uniform estimates, up to extraction, we can assume that
\begin{equation} \label{conv:in_data}
\vrho^{(1)}_0\,:=\,\lim_{\veps\ra0}\vrho^{(1)}_{0,\veps}\;,\qquad
R_0\,:=\,\lim_{\veps\ra0}R_{0,\veps}\;,\qquad
\Theta_0\,:=\,\lim_{\veps\ra0}\Theta_{0,\veps}\;,\qquad
\vec{u}_0\,:=\,\lim_{\veps\ra0}\vec{u}_{0,\veps}\,,
\end{equation}
where we agree that the previous limits are taken in the weak-$*$ topology of $L_{\rm loc}^\infty(\Omega)\,\cap\,L_{\rm loc}^2(\Omega)$.

\medbreak


Let us specify better what we mean for \emph{finite energy weak solution} (see \cite{F-N} and \cite{Poul} for details). First of all, the equations have to be satisfied in a distributional sense:
	\begin{equation}\label{weak-con}
	-\int_0^T\int_{\Omega_\veps} \left( \vre \partial_t \varphi  + \vre\ue \cdot \nabla_x \varphi \right) \dxdt = 
	\int_{\Omega_\veps} \vrez \varphi(0,\cdot) \dx
	\end{equation}
for any $\varphi\in C^\infty_c([0,T[\,\times \overline\Omega_\veps)$;
	\begin{align}
	&\int_0^T\!\!\!\int_{\Omega_\veps}  
	\left( - \vre \ue \cdot \partial_t \vec\psi - \vre [\ue\otimes\ue]  : \nabla_x \vec\psi 
	+ \frac{1}{\ep} \, \e_3 \times (\vre \ue ) \cdot \vec\psi  - \frac{1}{\ep^{2m}} p(\vre,\tem) \div \vec\psi  \right) \dxdt \label{weak-mom} \\
	& =\int_0^T\!\!\!\int_{\Omega_\veps} 
	\left(- \mbb{S}(\vtheta_\veps,\nabla_x\vec u_\veps)  : \nabla_x \vec\psi +  \left(\frac{1}{\ep^2} \vre \nabla_x F +  \frac{1}{\ep^m} \vre \nabla_x G \right)\cdot \vec\psi \right) \dxdt 
	+ \int_{\Omega_\veps}\vrez \uez  \cdot \vec\psi (0,\cdot) \dx \nonumber
	\end{align}
for any test function $\vec\psi\in C^\infty_c([0,T[\,\times \overline\Omega_\veps; \R^3)$ such that $\big(\vec\psi \cdot \n_\veps\big)_{|\partial {\Omega_\veps}} = 0$;
	\begin{align}
	\int_0^T\!\!\!\int_{\Omega_\veps} 
	& \Bigl( - \vre s(\vre,\tem) \partial_t \varphi  -  \vre s(\vre,\tem) \ue \cdot \nabla_x \varphi \Bigr) \dxdt \label{weak-ent} \\
	& - \int_0^T\int_{\Omega_\veps}  \frac{\q(\vtheta_\veps,\nabla_x\vtheta_\veps)}{\tem} \cdot \nabla_x \varphi \dxdt - \langle \sigma_\ep; \varphi  \rangle_{ [{\cal{M}}; C]([0,T]\times \overline\Omega_\veps)}
	= \int_{\Omega_\veps} \vrez s(\vrez,\temz) \varphi (0,\cdot) \dx \nonumber
	\end{align}
for any $\varphi\in C^\infty_c([0,T[\,\times \overline\Omega_\veps)$, with $\sigma_\ep \in {\cal{M}}^+ ([0,T]\times \overline\Omega_\veps)$. 
In addition, we require that the energy identity 
\begin{align}
	\int_{\Omega_\veps}
	& \left( \frac{1}{2} \vre |\ue|^2  +  \frac{1}{\ep^{2m}} \vre e(\vre,\tem)  -  \frac{1}{\ep^2} \vre F - \frac{1}{\ep^m} \vre G \right) (t) \dx  \label{weak-eng} \\
	&=  \int_{\Omega_\veps} \left( \frac{1}{2} \vrez |\uez|^2  +  \frac{1}{\ep^{2m}} \vrez e(\vrez,\temz) -  \frac{1}{\ep^2} \varrho_{0,\ep} F  -\frac{1}{\ep^m} \varrho_{0,\ep} G  \right) \dx \nonumber
	\end{align}
holds true for almost every $t\in\,]0,T[\,$. Notice that this is the integrated version of \eqref{eeq}.

Under the previous assumptions, collected in Paragraphs \ref{sss:primsys} and \ref{sss:structural} and here above, at any \emph{fixed} value of the parameter $\veps\in\,]0,1]$,
the existence of a global in time finite energy weak solution $(\vrho_\veps,\vec u_\veps,\vtheta_\veps)$ to system (NSF)$_\veps$, related to the initial datum
$(\vrho_{0,\veps},\vec u_{0,\veps},\vtheta_{0,\veps})$, has been proved in e.g. \cite{F-N} (see Theorems 3.1 and 3.2 therein for the case
of smooth domains, see \cite{Poul} for the extension to Lipschitz domains).
Moreover, 
the following regularity of solutions $( \vre, \ue, \tem )$  can be obtained (at any fixed value of $\veps$), 
which justifies all the integrals appearing in \eqref{weak-con} to \eqref{weak-eng}: for any $T>0$ fixed, one has
	\begin{equation*}
	\vre \in C_{{\rm weak}}\big([0,T];L^{5/3}(\Omega_\ep)\big),\quad 
	\vre \in L^q\big((0,T)\times \Omega_\ep\big) \ \mbox{ for some } q>\frac{5}{3}\,,
\qquad
	\ue \in L^2\big([0,T]; W^{1,2}(\Omega_\ep;\R^3)\big)\,.
	\end{equation*}
In addition, also the mapping $t \mapsto (\vre\ue)(t,\cdot)$ is weakly continuous, and one has $(\vre)_{|t=0} = \vrez$ and $(\vre\ue)_{|t=0}= \vrez\uez$. Finally,
the absolute temperature $\tem$ is a measurable function, $\tem>0$ a.e. in $\R_+ \times \Omega_\ep$, and given any $T>0$, one has
	\begin{equation*}
	\tem \in L^2\big([0,T]; W^{1,2}(\Omega_\ep)\big)\cap L^{\infty}\big([0,T]; L^4 (\Omega_\ep)\big), \quad 
	\log \tem \in L^2\big([0,T]; W^{1,2}(\Omega_\ep)\big)\,.
	\end{equation*}

Notice that, in view of \eqref{ceq}, the total mass is conserved in time.
Since the measure of the domain $\Omega_\veps$ tends to $+\infty$, the mass conservation has to be understood in the following sense: for almost every $t\in[0,+\infty[\,$,
one has
\begin{equation} \label{eq:mass_conserv}
\int_{\Omega_\veps}\bigl(\vre(t)\,-\,\vret\bigr)\,\dx\,=\,0\,.
\end{equation}

Let us now introduce the ballistic free energy function
$$
	H_{\tems}(\vr,\temp)\,:=\,\vr \bigl( e(\vr,\temp) - \tems s(\vr,\temp)  \bigr)\,,
$$
and define the \emph{relative entropy functional} (for details, see in particular Chapters 1, 2 and 4 of \cite{F-N})
$$
\mc E\left(\rho,\theta\;|\;\wtilde\vrho_\veps,\oline\vtheta\right)\,:=\,H_{\tems}(\rho,\theta) - (\rho - \vret)\,\partial_\vrho H_{\tems}(\vret,\tems)
- H_{\tems}(\vret,\tems)\,.
$$
Combining the total energy balance \eqref{weak-eng}, the entropy equation \eqref{weak-ent} and the mass conservation \eqref{eq:mass_conserv}, we obtain 
the following total dissipation balance, for any $\veps>0$ fixed:
\begin{align}
&\hspace{-0.7cm} \int_{\Omega_\veps}\frac{1}{2}\vre|\ue|^2(t) \dx\,+\,\frac{1}{\ep^{2m}}\int_{\Omega_\veps}\mc E\left(\vrho_\veps,\vtheta_\veps\;|\;\wtilde\vrho_\veps,\oline\vtheta\right) \dx
\label{est:dissip} \\
&+\int^t_0\int_{\Omega_\veps}\left(\mbb{S}(\vtheta_\veps,\nabla_x\vec u_\veps)  : \nabla_x\vec u_\veps\,-\,\frac{\q(\vtheta_\veps,\nabla_x\vtheta_\veps)}{\tem} \cdot \nabla_x\vtheta_\veps\right)\dx\,\dt
+  \frac{\tems}{\ep^{2m}}\sigma_\ep \left[[0,t]\times \overline\Omega_\veps\right] \nonumber \\
&\qquad\qquad\qquad\qquad\qquad\qquad\qquad\qquad
\,\leq\,
\int_{\Omega_\veps}\frac{1}{2}\vrez|\uez|^2 \dx\,+\,
\frac{1}{\ep^{2m}}\int_{\Omega_\veps}\mc E\left(\vrho_{0,\veps},\vtheta_{0,\veps}\;|\;\wtilde\vrho_\veps,\oline\vtheta\right) \dx\,.
\nonumber
\end{align}

Inequality \eqref{est:dissip} will be the only tool to derive uniform estimates for the family of weak solutions we consider. As a matter of fact,
we will establish in Lemma \ref{l:initial-bound} below that, under the previous assumptions on the initial data, the quantity on the right-hand side of \eqref{est:dissip}
is uniformly bounded for any $\veps\in\,]0,1]$.

\medbreak 
To conclude this part, let us introduce an additional quantity. Since the entropy production rate is a non-negative measure, and in particular it may possess jumps, the total entropy 
$\vre s(\vre,\tem)$ may not be weakly continuous in time. To avoid 
this problem, following \cite{F-N}, we introduce a time lifting $\Sigma_\ep$ of the measure $\sigma_\ep$ by the following formula:
	\begin{equation}\label{lift0}
	\langle \Sigma_\ep , \varphi \rangle = \langle \sigma_\ep , I[\varphi] \rangle,
\quad \mbox{ where }\quad
	I [\varphi] (t,x) = \int _0^t \varphi (\tau,x) {\rm\, d}\tau\quad\mbox{for any } \varphi \in L^1(0,T; C(\overline\Omega_\veps)).
	\end{equation}
The time lifting $\Sigma_\ep$ can be identified with an abstract function 
$\Sigma_\ep \in L^{\infty}_{\rm{weak}}(0,T; {\cal{M}}^+(\overline{\Omega}_\veps))$, where the notation stands for ``weakly measurable'',
and $\Sigma_\veps$ is defined by the relation
	\begin{equation*}
	\langle \Sigma_\ep(\tau),\varphi \rangle = \lim\limits_{\delta \to 0^+} \langle \sigma_\ep , \psi_\delta\,\varphi \rangle,
\quad \mbox{ with } \quad
	\psi_\delta(t) =
	\left\{\begin{array}{cc}
	0 					& {\mbox{for }} t\in [0,\tau), \\
	\frac{1}{\delta}(t - \tau) 	& {\mbox{for }} t\in (\tau, \tau + \delta), \\
	1					& {\mbox{for }} t \geq \tau +\delta.
	\end{array}\right.
	\end{equation*}
In particular, the measure $\Sigma_\ep$ is well-defined for any $\tau\in[0,T]$, and mapping 
$\tau \to \Sigma_\ep(\tau)$ is non-increasing in the sense of measures.

Then, the weak formulation of the entropy balance can be equivalently rewritten as
	\begin{equation*} 
	\begin{split}
	& \int_{\Omega_\veps} 
	 \left[  \vre s(\vre,\tem)(\tau)\varphi(\tau)
	-   \vrez s(\vrez,\temz)\varphi(0)  \right] \dx 
	 + \langle \Sigma_\ep(\tau),\varphi(\tau) \rangle  -  \langle \Sigma_\ep(0),\varphi(0) \rangle\\
	& = \int_0^\tau  \langle \Sigma_\ep,\partial_t \varphi \rangle \dt
	+ \int_0^\tau \int_{\Omega_\veps} 
	\left( 
	 \vre s(\vre,\tem)   \partial_t \varphi  +  \vre s(\vre,\tem)\ue \cdot \nabla_x \varphi 
	 +  \frac{\q(\vtheta_\veps,\nabla_x\vtheta_\veps)}{\tem} \cdot \nabla_x \varphi 
	\right) \dxdt
	\end{split}
	\end{equation*}
for any $\varphi\in C^\infty_c([0,T]\times \overline\Omega_\veps)$, and  the mapping 
	$
	t \to \vre s(\vre,\tem)(t,\cdot) + \Sigma_\ep(t) \ 
	$
 is continuous with values in ${\cal{M}}^+(\overline{\Omega}_\veps)$, provided that 
${\cal{M}}^+$ is endowed with the weak-$*$ topology.
 

\subsection{Main results}\label{ss:results}

\medbreak
We can now state our main results. The first statement concerns the case when low Mach number effects are predominant with respect to fast rotation, i.e. $m>1$. 
For technical reasons which will appear clear in the course of the proof, when $F\neq0$ we need to take $m\geq2$. 

We also underline that the limit dynamics of $\vec{U}$ is purely horizontal (see \eqref{eq_lim_m:momentum} below) on the plane $\R^2\times \{0\}$ accordingly to the celebrated Taylor-Proudman theorem.
Nonetheless the equations that involve $R$ and $\Theta$ (see \eqref{eq_lim_m:temp} and \eqref{eq_lim_m:Boussinesq} below) depend also on the vertical variable. 
\begin{theorem}\label{th:m-geq-2}
For any $\veps\in\,]0,1]$, let $\Omega_\ep$ be the domain defined by \eqref{dom} and $\Omega = \R^2 \times\,]0,1[\,$. 
Let  $p$, $e$, $s$  satisfy  Gibbs' relation \eqref{gibbs} and structural hypothesis from \eqref{pp1} to \eqref{ss1s}, and that the diffusion coefficients $\mu$, $\eta$, $\kappa$
enjoy growth conditions  \eqref{mu}. Let $G\in W^{1,\infty}(\Omega)$ be given as in \eqref{assFG}.
Take either $m\geq 2$ and $F\in W_{loc}^{1,\infty}(\Omega)$ as in \eqref{assFG}, or $m>1$ and $F=0$. \\
For any fixed value of $\veps \in \; ]0,1]$, let initial data $\left(\vrho_{0,\veps},\vec u_{0,\veps},\vtheta_{0,\veps}\right)$ verify the hypotheses fixed in Paragraph \ref{sss:data-weak}, and let
$\left( \vre, \ue, \tem\right)$ be a corresponding weak solution to system (NSF)$_\veps$, supplemented with structural hypotheses from \eqref{S} to \eqref{q} and with boundary conditions \eqref{bc1-2} and \eqref{bc3}.
Assume that the total dissipation balance \eqref{est:dissip} is satisfied.
Let $\left(R_0,\vec u_0,\Theta_0\right)$ be defined as in \eqref{conv:in_data}.

Then one has the following convergence properties:
	\begin{align*}
	\varrho_\ep \rightarrow 1 \qquad\qquad \mbox{ in } \qquad L^{\infty}\big([0,T]; L_{\rm loc}^{2}+L_{\rm loc}^{5/3}(\Omega )\big)\,, \\
	R_\veps:=\frac{\varrho_\ep - 1}{\ep^m}  \weakstar R \qquad\qquad \mbox{ weakly-$*$ in }\qquad L^\infty\bigl([0,T]; L^{5/3}_{\rm loc}(\Omega)\bigr)\,, \\
	\vec{u}_\ep \weak \vec{U} \qquad\qquad \mbox{ weakly in }\qquad L^2\big([0,T];W_{\rm loc}^{1,2}(\Omega)\big)\,, \\
	\Theta_\veps:=\frac{\vartheta_\ep - \bar{\vartheta}}{\ep^m}  \weak \Theta \qquad\qquad \mbox{ weakly in }\qquad L^2\big([0,T];W_{\rm loc}^{1,2}(\Omega)\big)\,,
	\end{align*}	
where $\vec{U} = (\vec U^h,0)$, with $\vec U^h=\vec U^h(t,x^h)$ such that $\divh\vec U^h=0$. In addition, the triplet $\Big(\vec{U}^h,\, \, R ,\, \, \Theta \Big)$ is a weak solution
to the incompressible Oberbeck-Boussinesq system  in $\R_+ \times \Omega$:
\begin{align}
& \divh\vec U^h\,=\,0\,, \label{eq_lim_m:div} \\
& \d_t \vec U^{h}+\divh\left(\vec{U}^{h}\otimes\vec{U}^{h}\right)+\nabla_h\Gamma-\mu (\oline\vtheta )\Delta_{h}\vec{U}^{h}=\delta_2(m)\lan R\ran\nabla_{h}F\,, \label{eq_lim_m:momentum} \\
& c_p(1,\oline\vtheta)\,\Bigl(\d_t\Theta\,+\,\divh(\Theta\,\vec U^h)\Bigr)\,-\,\kappa(\oline\vtheta)\,\Delta\Theta\,=\,\oline\vtheta\,\alpha(1,\oline\vtheta)\,\vec{U}^h\cdot\nabla_h\mc G\,,
\label{eq_lim_m:temp} \\
& \nabla_{x}\Big( \d_\varrho p(1,\oline{\vtheta})\,R\,+\,\d_\vtheta p(1,\oline{\vtheta})\,\Theta \Big)\,=\,\nabla_{x}G\,+\,\delta_2(m)\,\nabla_{x}F\,, \label{eq_lim_m:Boussinesq}
\end{align}
supplemented with the initial conditions
$$
\vec{U}_{|t=0}=\h_h\left(\lan\vec{u}^h_{0}\ran\right)\qquad \text{ and }\qquad
\Theta_{|t=0}\,=\,\frac{\oline\vtheta}{c_p(1,\oline\vtheta)}\,\Big(\d_\vrho s(1,\oline\vtheta)\,R_0\,+\,\d_\vtheta s(1,\oline\vtheta)\,\Theta_0\,+\,
\alpha(1,\oline\vtheta)\,\mc G\Big)
$$
and the boundary condition $\nabla_x \Theta \cdot\vec{n}_{|\d\Omega}\,=\,0$,
where $\vec{n}$ is the outer normal to $\d\Omega\,=\,\{x_3=0\}\cup\{x_3=1\}$. \\
In \eqref{eq_lim_m:momentum}, $\Gamma$ is a distribution in $\mc D'(\R_+\times\R^2)$ and we have set $\delta_2(m)=1$ if $m=2$, $\delta_2(m)=0$ otherwise.
In \eqref{eq_lim_m:temp}, we have defined 
$$
\mc G\,:=\,G\,+\,\delta_2(m)F\,,\qquad
c_p(\vrho,\vtheta)\,:=\,\d_\vtheta e(\vrho,\vtheta)\,+\,\alpha(\vrho,\vtheta)\,\frac{\vtheta}{\vrho}\,\d_\vtheta p(\vrho,\vtheta)\,,\qquad 
\alpha(\vrho,\vtheta)\,:=\,\frac{1}{\vrho}\,\frac{\d_\vtheta p(\vrho,\vtheta)}{\d_\vrho p(\vrho,\vtheta)}\,.
$$
\end{theorem}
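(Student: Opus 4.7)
The plan is to combine uniform bounds extracted from the total dissipation balance \eqref{est:dissip} with a compensated compactness argument in the spirit of Lions--Masmoudi and Gallagher--Saint-Raymond. First I would check that the right-hand side of \eqref{est:dissip} is bounded uniformly in $\veps$: this relies on the decomposition \eqref{eq:in-dens_dec}, the initial bounds \eqref{hyp:ill_data}--\eqref{hyp:ill-vel}, and Proposition \ref{p:target-rho_bound}, which, after a Taylor expansion of $H_{\tems}$ around $(\vret,\tems)$, forces $\int\mc E(\vrho_{0,\veps},\vtheta_{0,\veps}\,|\,\vret,\tems)\dx$ to be of size $\veps^{2m}$. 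From this I would deduce the essential/residual decomposition of the density and temperature oscillations, the uniform $L^\infty_t L^{5/3}_{\rm loc}$ bound for $R_\veps$, and the $L^2_t W^{1,2}_{\rm loc}$ bounds for $\ue$ and $\Theta_\veps$. Weak compactness then yields the limits $(R,\Theta,\vec U)$ in the topologies stated in the theorem, after using Proposition \ref{p:target-rho_bound} to compare $\vret$ with the constant state $1$ (which is admissible under \eqref{eq:choice-m}).

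The second step is to identify the constraints on the limit. Dividing the momentum equation by the relevant singular powers and testing against smooth functions, the leading $\veps^{-2m}$ contribution, balanced with the gravitational and, when $m=2$, centrifugal terms, produces the Boussinesq relation \eqref{eq_lim_m:Boussinesq} (here a first-order Taylor expansion of $p(\vre,\tem)$ around $(1,\tems)$ together with the control on the residual sets is needed). The $\veps^{-1}$ Coriolis contribution forces the Taylor--Proudman structure: $\vec U$ is horizontal, $x^3$-independent, and $\divh \vec U^h = 0$. Passing to the limit in the linear terms of the scaled entropy balance \eqref{weak-ent}, combining the result with the Boussinesq relation and with the gravitational work contribution $\ue\cdot\nabla_x \mc G$ coming from the energy balance, I would then derive \eqref{eq_lim_m:temp}; here the time lifting $\Sigma_\veps$ defined in \eqref{lift0} is instrumental in handling the measure-valued entropy production.

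The main obstacle is the passage to the limit in the convective terms $\div(\vre\ue\otimes\ue)$ and $\div(\vre\,s(\vre,\tem)\,\ue)$, since $\ue$ does not converge strongly because of the fast oscillations carried by the acoustic--Poincar\'e waves. To bypass this, I would derive a wave system for the pair $(Z_\veps,\vec V_\veps)$, where $\vec V_\veps:=\vre\ue$ and $Z_\veps$ is a linear combination of $R_\veps$ and $\Theta_\veps$ chosen so that the $\veps^{-2m}$ pressure term and the gravitational force combine into a pure gradient $\veps^{-m}\nabla_x Z_\veps$. Applying the vertical averaging operator $\langle\cdot\rangle$ together with $\curlh$ to the horizontal component of the momentum equation kills the gradient, cancels the leading Coriolis term, and isolates a scalar quantity $\g_\veps$, built as a suitable linear combination of $\curlh\langle\vec V_\veps^h\rangle$ and $\langle Z_\veps\rangle$, satisfying a transport-type equation whose source terms are uniformly bounded in $L^2_t H^{-N}_{\rm loc}$ for some $N\geq 0$. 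An Aubin--Lions--Simon argument then yields strong compactness of $\g_\veps$ in $L^2_{t,\rm loc}(H^{-N'}_{\rm loc})$, which, after a horizontal/oscillating decomposition of $\ue$ and use of the Taylor--Proudman structure, is precisely what is needed to identify the weak limit of the quadratic nonlinearities: the purely oscillating contributions integrate to zero thanks to algebraic cancellations in the wave system, while the ``mean'' contributions are captured by $\g_\veps$.

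Finally, two technical issues remain orthogonal to the core argument. The invading domains $\Omega_\veps$ are handled thanks to the choice $L_\veps=\veps^{-m-\delta}L_0$ in \eqref{dom}, which, combined with Remark \ref{r:speed-waves} on the propagation speed of acoustic--Poincar\'e waves, ensures that any test function compactly supported in $\Omega$ is admissible in $\Omega_\veps$ for every $\veps$ sufficiently small, so that the lateral boundary $\d B_{L_\veps}(0)\times\,]0,1[\,$ is invisible to the limit dynamics. The unboundedness of the centrifugal potential $F(x)=|x^h|^2$, relevant only when $m=2$, is dealt with by a localisation procedure on the cylinders $\mbb{B}_l$ in the spirit of \cite{F-G-GV-N}: one exploits the local estimates of Lemma \ref{l:target-rho_bound} and Proposition \ref{p:target-rho_bound} to cut off the centrifugal contribution at a finite horizontal scale $l$, passes to the limit $\veps\to0$ at $l$ fixed, and then lets $l\to+\infty$. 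Combining all these ingredients in the weak formulations \eqref{weak-con}--\eqref{weak-eng} delivers the Oberbeck--Boussinesq system \eqref{eq_lim_m:div}--\eqref{eq_lim_m:Boussinesq} together with the prescribed initial conditions.
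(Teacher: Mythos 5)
Your proposal matches the paper's proof strategy in all essentials: uniform bounds extracted from \eqref{est:dissip} via the ballistic free energy, the Boussinesq and Taylor--Proudman constraints obtained by testing the rescaled momentum equation, the entropy balance limit giving the $\Upsilon$ equation and then \eqref{eq_lim_m:temp} via the Boussinesq relation, the wave system in $(Z_\veps,\vec V_\veps)$ built with $\mathcal A,\mathcal B$ so that the first-order Taylor terms of the pressure collapse into $\nabla_x Z_\veps$, strong compactness of $\g_\veps$ via Aubin--Lions, and the mean/oscillating splitting of $\vec W_{\veps,M}$ combined with the localisation $\chi_l$ for $F$ and the finite-propagation-speed argument for the invading domains. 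The only cosmetic inaccuracy is the final ``let $l\to+\infty$'' step: no such limit is taken in the paper, since $l$ is fixed once and for all as soon as the (compactly supported) test function is chosen.
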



\begin{remark}\label{r:lim delta theta} 
%
We notice that, after defining
$$
\Upsilon := \d_\vrho s(1,\oline{\vtheta})R + \d_\vtheta s(1,\oline{\vtheta})\,\Theta\qquad\mbox{ and }\qquad
\Upsilon_0\,:=\,\d_\vrho s(1,\oline\vtheta)\,R_0\,+\,\d_\vtheta s(1,\oline\vtheta)\,\Theta_0\,,
$$
from equation \eqref{eiq} one would get, in the limit $\veps\ra0$, the equation
\begin{equation} \label{eq:Upsilon}
\d_{t} \Upsilon +\divh\left( \Upsilon \vec{U}^{h}\right) -\frac{\kappa(\oline\vtheta)}{\oline\vtheta} \Delta \Theta =0\,,
\qquad\qquad \Upsilon_{|t=0}\,=\,\Upsilon_0\,,
\end{equation}
which is closer to the formulation of the target system given in \cite{K-M-N} and \cite{K-N}.
From \eqref{eq:Upsilon} one easily recovers \eqref{eq_lim_m:temp} by using \eqref{eq_lim_m:Boussinesq}. Formulation \eqref{eq_lim_m:temp} is in the spirit of Chapters 4 and 5 of \cite{F-N}.
\end{remark}

The case $m=1$ realizes the \emph{quasi-geostrophic balance} in the limit. Namely the Mach and Rossby numbers have the same order of magnitude, and they keep in balance in the whole asymptotic process.
The next statement is devoted to this case. Nonetheless, due to technical reasons, in this instance we have to assume $F=0$.
\begin{theorem} \label{th:m=1_F=0}
For any $\veps\in\,]0,1]$, let $\Omega_\ep$ be the domain defined by \eqref{dom} and $\Omega = \R^2 \times\,]0,1[\,$.
Let  $p$, $e$, $s$  satisfy  Gibbs' relation \eqref{gibbs} and structural hypothesis from \eqref{pp1} to \eqref{ss1s}, and that the diffusion coefficients $\mu$, $\eta$, $\kappa$
enjoy growth conditions  \eqref{mu}. Let $F=0$ and $G\in W^{1,\infty}(\Omega)$ be given as in relation \eqref{assFG}. Take $m=1$. \\
For any fixed value of $\veps$, let initial data $\left(\vrho_{0,\veps},\vec u_{0,\veps},\vtheta_{0,\veps}\right)$ verify the hypotheses fixed in Paragraph \ref{sss:data-weak}, and let
$\left( \vre, \ue, \tem\right)$ be a corresponding weak solution to system (NSF)$_\veps$, supplemented with structural hypotheses from \eqref{S} to \eqref{q} and with boundary conditions \eqref{bc1-2} and \eqref{bc3}.
Assume that the total dissipation balance \eqref{est:dissip} is satisfied.
Let $\left(R_0,\vec u_0,\Theta_0\right)$ be defined as in \eqref{conv:in_data}.

Then the convergence properties stated in the previous theorem still hold true: namely, one has
	\begin{align*}
	\varrho_\ep \rightarrow 1 \qquad\qquad \mbox{ in } \qquad L^{\infty}\big([0,T]; L_{\rm loc}^{2}+L_{\rm loc}^{5/3}(\Omega )\big)\,, \\
	R_\veps:=\frac{\varrho_\ep - 1}{\ep}  \weakstar R \qquad\qquad \mbox{ weakly-$*$ in }\qquad L^\infty\bigl([0,T]; L_{\rm loc}^{5/3}(\Omega)\bigr)\,, \\
	\vec{u}_\ep \weak \vec{U} \qquad\qquad \mbox{ weakly in }\qquad L^2\big([0,T];W_{\rm loc}^{1,2}(\Omega)\big)\,, \\
	\Theta_\veps:=\frac{\vartheta_\ep - \bar{\vartheta}}{\ep}  \weak \Theta \qquad\qquad \mbox{ weakly in }\qquad L^2\big([0,T];W_{\rm loc}^{1,2}(\Omega)\big)\,,
	\end{align*}	
where $\vec{U} = (\vec U^h,0)$, with $\vec U^h=\vec U^h(t,x^h)$ such that $\divh\vec U^h=0$. Moreover, let us introduce the real number $\mc A>0$ by the formula
\begin{equation} \label{def:A}
\mc{A}\,=\,\d_\vrho p(1,\oline{\vtheta})\,+\,\frac{\left|\d_\vtheta p(1,\oline{\vtheta})\right|^2}{\d_\vtheta s(1,\oline{\vtheta})}\,,
\end{equation}
and define
$$
\Upsilon\, := \,\d_\vrho s(1,\oline{\vtheta}) R + \d_\vtheta s(1,\oline{\vtheta})\,\Theta\qquad\mbox{ and }\qquad
q\,:=\, \d_\varrho p(1,\oline{\vtheta})R +\d_\vtheta p(1,\oline{\vtheta})\Theta -G-1/2\,.
$$
Then we have
$$
q\,=\,q(t,x^h)\,=\,\d_\varrho p(1,\oline{\vtheta})\lan R\ran\,+\,\d_\vtheta p(1,\oline{\vtheta})\lan\Theta\ran\qquad\mbox{ and }\qquad
\vec{U}^{h}=\nabla_h^{\perp} q\,.
$$
Moreover, the couple $\Big(q,\Upsilon \Big)$ satisfies (in the weak sense) the  quasi-geostrophic type system
\begin{align}
& \d_{t}\left(\frac{1}{\mc{A}}q-\Delta_{h}q\right) -\nabla_{h}^{\perp}q\cdot
\nabla_{h}\left( \Delta_{h}q\right) +\mu (\oline{\vtheta})
\Delta_{h}^{2}q\,=\,0\,, \label{eq_lim:QG}  \\
& c_p(1,\oline\vtheta)\Big(\d_{t} \Upsilon +\nabla_h^\perp q\cdot\nabla_h\Upsilon \Big)-\kappa(\oline\vtheta) \Delta \Upsilon\,=\,
\kappa(\oline\vtheta)\,\alpha(1,\oline\vtheta)\,\Delta_hq\,, \label{eq_lim:transport}
\end{align}
supplemented with the initial conditions
$$
q_{|t=0}=\curlh\lan\vec u^h_{0}\ran-\left(\d_\vrho p(1,\oline\vtheta)\lan R_0\ran+\d_\vtheta p(1,\oline\vtheta)\lan\Theta_0\ran\right)\,,\qquad
\Upsilon_{|t=0}=\d_\vrho s(1,\oline\vtheta)R_0+\d_\vtheta s(1,\oline\vtheta)\Theta_0
$$
and the boundary condition
\begin{equation} \label{eq:bc_limit_2}
\nabla_x\big(\Upsilon\,+\,\alpha(1,\oline\vtheta)\,G\big) \cdot\vec{n}_{|\d\Omega}\,=\,0\,,
\end{equation}
where $\vec{n}$ is the outer normal to the boundary $\d\Omega\,=\,\{x_3=0\}\cup\{x_3=1\}$.
\end{theorem}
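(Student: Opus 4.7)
The strategy, following the compensated compactness approach outlined in the introduction, consists of three stages: first, derive uniform bounds and extract weak limits; second, identify the structural constraints imposed by the singular penalisation (Taylor-Proudman theorem, hydrostatic and geostrophic balance); third, pass to the limit in the nonlinear convective term by exploiting the compactness of a scalar quantity $\gamma_\veps$ built from the acoustic-Poincar\'e wave system. The principal obstacle is the last step, since the singular penalisation generates fast time oscillations that a priori forbid strong convergence of $\ue$.

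\textbf{Uniform bounds and static constraints.} Combining the dissipation balance \eqref{est:dissip}, the initial bounds \eqref{hyp:ill_data}--\eqref{hyp:ill-vel}, and a standard essential/residual decomposition \`a la Feireisl-Novotn\'y, one obtains uniform controls on $R_\veps$ in $L^\infty_t(L^{5/3}_{\rm loc}+L^2_{\rm loc})$, on $\Theta_\veps$ in $L^\infty_t(L^2_{\rm loc})\cap L^2_t(W^{1,2}_{\rm loc})$, and on $\ue$ in $L^2_t(W^{1,2}_{\rm loc})$. Combined with a cut-off/invading-domain argument, these yield the weak convergences stated in the theorem. Multiplying \eqref{ceq} by $\veps$ and letting $\veps\to0$ gives $\div\vU=0$. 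Inserting the Taylor expansion of $p(\vre,\tem)$ around $(\wtilde\vrho_\veps,\oline\vtheta)$ into \eqref{meq}, using the equilibrium relation \eqref{eq:target-rho} and looking at the contribution of order $\veps^{-1}$, one obtains in the limit the geostrophic balance
\begin{equation*}
\e_3\times\vU\,+\,\nabla_x q\,=\,0\,,\qquad q\,=\,\d_\vrho p(1,\oline\vtheta)\,R+\d_\vtheta p(1,\oline\vtheta)\,\Theta-G-\tfrac{1}{2}\,.
\end{equation*}
Taking $\curl$ and $\div$ of this identity, together with the impermeability condition $\vU\cdot\n_{|\d\Omega}=0$ coming from \eqref{bc1-2}, forces $\vU=(\vU^h(t,x^h),0)$ with $\divh\vU^h=0$, $q=q(t,x^h)$ and $\vU^h=\nabla_h^\perp q$. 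Averaging the definition of $q$ in $x^3$ and using $\langle G\rangle=-1/2$ recovers the identity $q=\d_\vrho p(1,\oline\vtheta)\langle R\rangle+\d_\vtheta p(1,\oline\vtheta)\langle\Theta\rangle$.

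\textbf{Wave system and compensated compactness.} The critical step is to handle the limit of the convective term $\vre\,\ue\otimes\ue$. I would combine the continuity equation \eqref{ceq} with a linearised form of the entropy balance \eqref{eiq} to derive an acoustic-Poincar\'e system for the couple $(\vec V_\veps,Z_\veps)$, where $\vec V_\veps:=\vre\ue$ and $Z_\veps$ is the linear combination of $R_\veps$ and the rescaled entropy variation naturally associated with the squared effective sound speed $\mc A$ defined in \eqref{def:A}. In the spirit of \cite{Fan-G} and \cite{F_2019}, although both $\langle\vec V_\veps^h\rangle$ and $\langle Z_\veps\rangle$ oscillate fast in time, I expect the compensated combination
\begin{equation*}
\gamma_\veps\,:=\,\curlh\langle\vec V_\veps^h\rangle\,-\,\langle Z_\veps\rangle
\end{equation*}
to satisfy a transport-type equation whose source is uniformly bounded in a suitable negative Sobolev space. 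An Aubin-Lions argument then provides strong convergence of $\gamma_\veps$ in $L^2_{t,{\rm loc}}(H^{-s}_{\rm loc})$ for some $s>0$, with limit identified by use of the geostrophic balance as $\gamma:=\curlh\vU^h-\langle Z\rangle=-\mc A^{-1}q-\Delta_h q$.

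\textbf{Derivation of the limit system.} To derive \eqref{eq_lim:QG}, I would test the weak momentum equation against $\vec\psi=\nabla_h^\perp\varphi(t,x^h)$, so that the singular pressure and Coriolis contributions disappear. The non-oscillating part of the convective term is handled thanks to the strong compactness of $\gamma_\veps$, producing the transport term $\nabla_h^\perp q\cdot\nabla_h(\Delta_h q)$, while the purely oscillating modes contribute only gradient terms (absorbed in the distribution $\Gamma$) via the algebraic cancellations familiar from the Lions-Masmoudi and Gallagher-Saint-Raymond scheme. Taking the time derivative of the identity $\gamma=-\mc A^{-1}q-\Delta_h q$, after adding the viscous contribution $\mu(\oline\vtheta)\Delta_h^2 q$, yields \eqref{eq_lim:QG}. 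Equation \eqref{eq_lim:transport} for $\Upsilon$ is obtained from \eqref{eiq} divided by $\veps$: the strong convergence encoded in the compactness of the vertical averages allows one to pass to the limit in the entropy convection, while the diffusion term combined with the definition of $\Upsilon$ and the geostrophic balance produces the right-hand side $\kappa(\oline\vtheta)\,\alpha(1,\oline\vtheta)\,\Delta_h q$. Finally, the boundary condition \eqref{eq:bc_limit_2} follows from \eqref{bc3} after passage to the limit, and the initial conditions are identified from \eqref{conv:in_data} by testing on functions supported near $t=0$.
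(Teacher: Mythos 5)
Your outline follows the same high-level compensated-compactness strategy as the paper (uniform bounds, constraints via the singular operator, acoustic-Poincar\'e wave system, Aubin--Lions compactness for a scalar quantity $\gamma_\veps$, and a mollification/finite-propagation-speed argument to handle the invading domains). However, there are two points where the argument as written would not close.

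First, the Coriolis term does \emph{not} vanish when you test against $\vec\psi=\nabla_h^\perp\phi$, and claiming that it does leaves you with no source for the $\mc A^{-1}\d_t q$ term in \eqref{eq_lim:QG}. Since $\phi$ is $x^3$-independent, one has $\int\vre(\ue^h)^\perp\cdot\nabla_h^\perp\phi = \int\vre\ue\cdot\nabla_x\phi$, and the whole point of the isotropic regime $m=1$ is that this quantity, multiplied by $1/\veps$, is \emph{not} $o(1)$: substituting the first equation of the wave system $\veps\,\d_tZ_\veps + \mc A\,\div\vec V_\veps = \veps X_\veps$ converts it into $-\mc A^{-1}\int\lan Z_\veps\ran\d_t\phi + \mc O(\veps)$, which converges to the distributional derivative $\mc A^{-1}\d_t q$. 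Your alternative route (``take the time derivative of the identity $\gamma = \dots$'') is circular: the identity only tells you what $\gamma$ is, not what evolution equation it satisfies; the evolution comes precisely from combining the momentum equation (after killing the pressure gradient and processing the convective term) with the Coriolis contribution described above. Incidentally, in the $m=1$ case there is no Lagrange multiplier $\Gamma$ to absorb anything --- that object belongs to the $m>1$ statement.

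Second, the ansatz $\gamma_\veps := \curlh\lan\vec V_\veps^h\ran - \lan Z_\veps\ran$ is not the right compensated quantity: you need the $\mc A^{-1}$ weight, i.e.\ $\gamma_\veps := \curlh\lan\vec V_\veps^h\ran - \mc A^{-1}\lan Z_\veps\ran$. Averaging the first wave equation in $x^3$ and dividing by $\mc A$ gives $\veps\,\mc A^{-1}\d_t\lan Z_\veps\ran + \divh\lan\vec V_\veps^h\ran = \dots$, while $\curlh$ of the averaged horizontal momentum gives $\veps\,\d_t\curlh\lan\vec V_\veps^h\ran + \divh\lan\vec V_\veps^h\ran = \dots$; it is only with the $\mc A^{-1}$ factor that the singular $\divh\lan\vec V_\veps^h\ran$ terms cancel in the difference, leaving $\d_t\gamma_\veps$ bounded in $L^2_T(H^{-s}_{\rm loc})$ --- without it you retain a singular $(1-\mc A^{-1})\veps^{-1}\divh\lan\vec V_\veps^h\ran$ and the Aubin--Lions step fails. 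Relatedly, the limit should be $\gamma = \curlh\vU^h - \mc A^{-1}q = \Delta_h q - \mc A^{-1}q$, not $-\mc A^{-1}q - \Delta_h q$: with $\vU^h=\nabla_h^\perp q$ one has $\curlh\vU^h = +\Delta_h q$.
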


\begin{remark} \label{r:limit_1}
Observe that $q$ and $\Upsilon$  
can be equivalently chosen for describing the target problem. Indeed, straightforward computations show that
\begin{align*}
R\,&=\,-\,\frac{1}{\beta}\,\big(\d_\vtheta p(1,\oline\vtheta)\,\Upsilon\,-\,\d_\vtheta s(1,\oline\vtheta)\,q\,-\,\d_\vtheta s(1,\oline\vtheta)\,G\big) \\
\Theta\,&=\,\frac{1}{\beta}\,\big(\d_\vrho p(1,\oline\vtheta)\,\Upsilon\,-\,\d_\vrho s(1,\oline\vtheta)\,q\,-\,\d_\vrho s(1,\oline\vtheta)\,G\big)\,,
\end{align*}
where we have set $\beta\,=\,\d_\vrho p(1,\oline\vtheta)\,\d_\vtheta s(1,\oline\vtheta)\,-\,\d_\vtheta p(1,\oline\vtheta)\,\d_\vrho s(1,\oline\vtheta)$. In particular, equation \eqref{eq_lim:transport}
can be deduced from \eqref{eq:Upsilon}, which is valid also when $m=1$, using that expression of $\Theta$ and the fact that
$$
\beta\,=\,c_p(1,\oline\vtheta)\,\frac{\d_\vrho p(1,\oline\vtheta)}{\oline\vtheta}\,.
$$
Here we have chosen to formulate the target entropy balance equation in terms of $\Upsilon$ (as in \cite{K-N}) rather than $\Theta$ (as in Theorem \ref{th:m-geq-2} above),
because the equation for $\Upsilon$ looks simpler (indeed, the equation for $\Theta$ would make a term in $\d_tq$ appear). The price to pay is the non-homogeneous boundary
condition \eqref{eq:bc_limit_2}, which may look a bit unpleasant.
\end{remark}

As pointed out for Theorem \ref{th:m-geq-2}, we notice that, despite the function $q$ is defined in terms of $G$, the dynamics described by \eqref{eq_lim:QG} is purely horizontal.
On the contrary, dependence on $x^3$ and vertical derivatives do appear in \eqref{eq_lim:transport}.


\section{Analysis of the singular perturbation} \label{s:sing-pert}

The purpose of this section is twofold. First of all, in Subsection \ref{ss:unif-est} we establish uniform bounds and further properties for our family
of weak solutions. Then, we study the singular operator underlying to the primitive equations (NSF)$_\veps$, and determine constraints that the limit points of our family of weak solutions
have to satisfy (see Subsection \ref{ss:ctl1}).

\subsection{Uniform bounds}\label{ss:unif-est}

This section is devoted to establish uniform bounds on our family of weak solutions $\bigl(\vrho_\veps,\vec u_\veps,\vtheta_\veps\bigr)_\veps$. First of all, let us introduce some
preliminary material.

\subsubsection{Preliminaries} \label{sss:unif-prelim}
Let us recall here some basic notations and results, which we need in proving our convergence results. We refer to Sections 4, 5 and 6 of \cite{F-N} for more details.

First of all, let us introduce the Helmholtz projection $\mbb{H}_\veps[\vec{v}]$ of a vector field $\vec{v}\in L^p(\oline\Omega_\veps; \R^3)$ on the subspace of divergence-free vector fields. It is defined by 
the decomposition
$$
	\vec{v} = \mbb{H}_\veps[\vec{v}] + \nabla_x \Psi
$$
where $\Psi \in \mc D^{1,p}(\oline\Omega_\veps)$ is the unique solution of 
	$$\int_{\Omega_\veps} \nabla_x \Psi \cdot \nabla_{x} \varphi \dx = \int_{\Omega_\veps} \vec{v} \cdot \nabla_x \varphi  \dx
	\quad\mbox{for all } \varphi \in C^\infty_c (\overline{\Omega}_\veps),$$
which formally means: $\Delta \Psi = \div \vec{v}$ and 
$\big(\vec{v}  \cdot \n_\veps\big)_{|\partial \Omega_\veps } =  0$.

\medbreak
Next, let us introduce the so-called ``essential'' and ``residual'' sets. Recall that the positive constant $\rho_*$ has been defined in Lemma \ref{l:target-rho_pos}.
Following the approach of \cite{F-N}, we define
$$
	 {\cal{O}}_{\ess}\,: = \, \left[2\rho_*/3\, ,\, 2 \right]\, \times\, \left[\tems/2\,,\, 2 \tems\right]\,,\qquad  
	 {\cal{O}}_{\res}\,: =\, \,]0,+\infty[\,^2\setminus {\cal{O}}_{\ess}\,.
$$
Then, we fix a smooth function
$\mf{b} \in C^\infty_c \bigl( \,]0,+\infty[\,\times\,]0,+\infty[\, \bigr)$ such that $0\leq \mf b\leq 1, \ \mf b\equiv1$  on the set $ {\cal{O}}_{\ess}$, and
we introduce the decomposition on essential and residual part of a measurable function $h$ as follows:
	\begin{equation*}\label{ess-def}
	h = [h]_{\ess} + [h]_{\res},\qquad\mbox{ with }\quad  [ h]_{\ess} := \mf b(\vre,\tem) h\,,\quad \  [h]_{\res} = \bigl(1-\mf b(\vre,\tem)\bigr)h\,.
	\end{equation*}
We also introduce the sets $\mc{M}^\veps_{\ess}$ and $\mc{M}^\veps_{\res}$, defined as 
	$$\mc{M}^\veps_{\ess}  := \left\{ (t,x) \in\, ]0,T[\, \times\, \Omega_\veps \ : \ \bigl(\varrho_\ep(t,x),\vartheta_\ep(t,x)\bigr) \in  {\cal{O}}_{\ess} \right\}\qquad\mbox{ and }\qquad
	\mc{M}^\veps_{\res}  := \big(\,]0,T[\,\times\,\Omega_\veps\,\big) \setminus \mc{M}^\veps_{\ess}\,,$$
and their version at fixed time $t\geq0$, i.e. 
	$$\mc{M}^\veps_{\ess} [t] := \{ x \in \Omega_\veps \  : (t,x) \in \mc{M}^\veps_{\ess} \}\qquad \mbox{ and }\qquad
	\mc{M}^\veps_{\res}[t]  := \Omega_\veps \setminus \mc{M}^\veps_{\ess}[t]\,.$$

The next result, which will be useful in the next subsection, is the analogous of Lemma 5.1 in \cite{F-N} in our context. Here we need to pay attention to the fact that,
in the case $F\neq0$, our estimates for the equilibrium states (see especially Proposition \ref{p:target-rho_bound}) are not uniform on the whole $\Omega_\veps$.
\begin{lemma}\label{l:H}
Fix $m\geq1$ and let $\vret$ and $\tems$ be the static states identified in Paragraph \ref{sss:equilibrium}. Under the previous assumptions, and with the notations introduced above,
we have the following properties.

Let $F\neq0$. For all $l>0$, there exist $\veps(l)$ and positive constants $c_j\,=\,c_j(\rho_*,\oline\vtheta,l)$, with $1\leq j\leq3$, such that, for all $0<\veps\leq\veps(l)$,
the next properties hold true, for all $x\in\oline{\mbb B}_{l}$:
\begin{enumerate}[(a)]
 \item for all $(\rho,\theta)\,\in\,\mc O_{\ess}$, one has
$$
c_1\,\left(\left|\rho-\wtilde\vrho_\veps(x)\right|^2\,+\,\left|\theta-\oline\vtheta\right|^2\right)\,\leq\,\mc E\left(\rho,\theta\;|\;\wtilde\vrho_\veps(x),\oline\vtheta\right)\,\leq\,
c_2\,\left(\left|\rho-\wtilde\vrho_\veps(x)\right|^2\,+\,\left|\theta-\oline\vtheta\right|^2\right)\,;
$$
\item for all $(\rho,\theta)\,\in\,\mc O_{\res}$, one has
$$
\mc E\left(\rho,\theta\;|\;\wtilde\vrho_\veps(x),\oline\vtheta\right)\,\geq\,c_3\,.
$$
\end{enumerate}

When $F=0$, the previous constants $\big(c_j\big)_{1\leq j\leq3}$ can be chosen to be independent of $l>0$.
\end{lemma}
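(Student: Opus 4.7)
My plan is to adapt the strategy of Lemma 5.1 in \cite{F-N} to our setting, where the equilibrium density $\wtilde\vrho_\veps(x)$ depends both on $\veps$ and on $x$. The key point that needs care is the uniformity of the constants in $\veps$, which will be extracted from the quantitative control on $\wtilde\vrho_\veps$ provided by Lemmas \ref{l:target-rho_pos}--\ref{l:target-rho_bound} and Proposition \ref{p:target-rho_bound}: on $\oline{\mbb B}_l$ one has $\rho_* \leq \wtilde\vrho_\veps(x) \leq C(l)$ uniformly in $\veps$, and in fact $\wtilde\vrho_\veps \to 1$ uniformly on this compact cylinder when $\veps\to 0^+$.

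I first rewrite the relative entropy as a Taylor remainder of the ballistic free energy $H_{\oline\vtheta}$ at the equilibrium point $(\wtilde\vrho_\veps(x), \oline\vtheta)$. Using Gibbs' relation \eqref{gibbs}, a direct computation yields $\d_\vtheta H_{\oline\vtheta}(\rho, \oline\vtheta) = 0$ for every $\rho > 0$, so that
\[
\mc E\bigl(\rho,\theta\,|\,\wtilde\vrho_\veps,\oline\vtheta\bigr)\,=\,\int_0^1 (1-s)\,\bigl\langle \nabla^2 H_{\oline\vtheta}(\rho_s,\theta_s)\,(\rho-\wtilde\vrho_\veps,\theta-\oline\vtheta),\,(\rho-\wtilde\vrho_\veps,\theta-\oline\vtheta)\bigr\rangle\,ds,
\]
where $(\rho_s,\theta_s)$ is the segment joining $(\wtilde\vrho_\veps(x),\oline\vtheta)$ to $(\rho,\theta)$. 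A direct computation, combined with the assumptions \eqref{p_comp} and \eqref{pp3}, shows that the Hessian satisfies $\d_\vrho^2 H_{\oline\vtheta}(\rho,\oline\vtheta) = \d_\vrho p(\rho,\oline\vtheta)/\rho$ and $\d_\vtheta^2 H_{\oline\vtheta}(\rho,\oline\vtheta) = \rho\,\d_\vtheta e(\rho,\oline\vtheta)/\oline\vtheta$, both strictly positive and continuous in $\rho$. Hence on the compact set $[\rho_*,C(l)+1]\times[\oline\vtheta/2,2\oline\vtheta]$ the Hessian is uniformly positive definite, with eigenvalues controlled only by $\rho_*$, $\oline\vtheta$ and $l$.

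For part (a) (essential set): choose $\veps(l)$ so small that, on $\oline{\mbb B}_l$, we have $|\wtilde\vrho_\veps(x)-1|\leq 1/2$; this is possible by Proposition \ref{p:target-rho_bound}. Then for any $(\rho,\theta)\in\mc O_{\ess}$, the whole segment $(\rho_s,\theta_s)$ stays inside the compact set above, so the Taylor identity together with the two-sided control on the Hessian yields the double inequality of item (a), with constants $c_1,c_2$ depending only on $\rho_*,\oline\vtheta,l$.

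For part (b) (residual set): following the argument of Lemma 5.1 in \cite{F-N}, we split $\mc O_{\res}$ into four overlapping regions according to whether $\rho$ is near $0$, $\rho$ is large, $\theta$ is near $0$, or $\theta$ is large, and exploit the coercivity of $H_{\oline\vtheta}$, which is a consequence of the structural assumptions \eqref{pp1}--\eqref{pp4} and of the growth of the radiation term $a\theta^4/3$. On each region we get a positive infimum for $\mc E(\rho,\theta\,|\,\tilde\rho,\oline\vtheta)$; since the infima depend continuously on the parameter $\tilde\rho\in[\rho_*,C(l)+1]$, which is a compact interval, we can take their minimum to obtain a strictly positive constant $c_3(\rho_*,\oline\vtheta,l)$ valid for all $\wtilde\vrho_\veps(x)$ with $x\in\oline{\mbb B}_l$. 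Finally, when $F=0$, Proposition \ref{p:target-rho_bound} provides $|\wtilde\vrho_\veps-1|\leq C\veps^m$ on the whole of $\Omega$, so the compact interval $[\rho_*,C(l)+1]$ can be replaced by a fixed interval independent of $l$, producing constants uniform in $x$ as claimed. The main delicate point throughout is tracking the $\veps$-uniformity of the coercivity estimates; this is precisely what the pointwise bounds on $\wtilde\vrho_\veps$ from Section~\ref{sss:equilibrium} are designed to deliver. \qed
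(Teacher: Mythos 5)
Your treatment of part (a) has a genuine gap. You rewrite $\mc E$ as the integral Taylor remainder of $H_{\oline\vtheta}$ at $(\wtilde\vrho_\veps(x),\oline\vtheta)$ — which is legitimate, since $\d_\vtheta H_{\oline\vtheta}(\rho,\oline\vtheta)=0$ makes the linear term in $\theta$ vanish — but this reduces part (a) to showing that the \emph{full} Hessian $\nabla^2 H_{\oline\vtheta}(\rho_s,\theta_s)$ is uniformly positive definite along the segment joining $(\wtilde\vrho_\veps(x),\oline\vtheta)$ to $(\rho,\theta)$, i.e.\ at points with $\theta_s\neq\oline\vtheta$. You only compute the Hessian on the line $\theta=\oline\vtheta$, where it is indeed diagonal with entries $\d_\vrho p(\rho,\oline\vtheta)/\rho$ and $\rho\,\d_\vtheta e(\rho,\oline\vtheta)/\oline\vtheta$. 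Away from that line the mixed partial $\d^2_{\vrho\vtheta}H_{\oline\vtheta}$ is no longer zero and the diagonal entries acquire extra terms proportional to $(\theta-\oline\vtheta)$ of indeterminate sign; positive definiteness of $\nabla^2 H_{\oline\vtheta}$ on all of $\mc O_{\ess}$ is \emph{not} a consequence of the structural hypotheses \eqref{p_comp}, \eqref{pp3}, and your claim ``Hence on the compact set \dots\ the Hessian is uniformly positive definite'' does not follow from the displayed formulas.

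This is precisely the difficulty that the paper's argument (following Lemma 5.1 and formulas (2.49)--(2.50) of \cite{F-N}) is designed to sidestep. Instead of a joint second-order expansion, the paper splits the increment as
$$\mc E\,=\,\bigl[H_{\oline\vtheta}(\rho,\theta)-H_{\oline\vtheta}(\rho,\oline\vtheta)\bigr]\,+\,\bigl[H_{\oline\vtheta}(\rho,\oline\vtheta)-H_{\oline\vtheta}(\wtilde\vrho_\veps,\oline\vtheta)-(\rho-\wtilde\vrho_\veps)\,\d_\vrho H_{\oline\vtheta}(\wtilde\vrho_\veps,\oline\vtheta)\bigr]\,.$$
The first bracket is a pure $\theta$-variation at fixed $\rho$: the explicit formula $\d_\vtheta H_{\oline\vtheta}(\rho,\theta)=\bigl(4a\theta^2+\rho\,\d_\vtheta e_M(\rho,\theta)/\theta\bigr)(\theta-\oline\vtheta)$ shows it is comparable to $(\theta-\oline\vtheta)^2$ on $\mc O_{\ess}$. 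The second bracket is a pure $\rho$-variation evaluated at $\theta=\oline\vtheta$, controlled via $\d^2_{\vrho\vrho}H_{\oline\vtheta}(\,\cdot\,,\oline\vtheta)=\d_\vrho p(\,\cdot\,,\oline\vtheta)/\rho>0$. No joint convexity is needed, and the $\veps$-uniformity reduces, exactly as you observe, to the inclusion $\wtilde\vrho_\veps(x)\in[\rho_*,3/2]\subset\mc O_{\ess}$ on $\oline{\mbb B}_l$ from Lemma~\ref{l:target-rho_pos} and Proposition~\ref{p:target-rho_bound}. To repair your proof, either adopt this decomposition, or supply an independent argument for the positive definiteness of $\nabla^2 H_{\oline\vtheta}$ away from $\theta=\oline\vtheta$, which you currently do not have. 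Your part (b) and your handling of the $\veps$-uniformity and the case $F=0$ are in the right spirit, though the assertion that the residual infima ``depend continuously'' on $\wtilde\vrho$ also deserves a word of justification (uniform coercivity over the compact parameter interval), whereas the paper instead invokes $\inf_{\mc O_\res}\mc E\geq\inf_{\d\mc O_\ess}\mc E$.
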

\begin{proof}
Let us start by considering the case $F\neq0$. Fix $m\geq1$. In view of Lemma \ref{l:target-rho_pos} and Proposition \ref{p:target-rho_bound}, for all $l>0$ fixed, there exists $\veps(l)$ such that,
for all $\veps\leq\veps(l)$, we have $\wtilde\vrho_\veps(x)\,\in\,[\rho_*,3/2]\,\subset\,\mc O_{\ess}$ for all $x\in \oline{\mbb B}_{l}$.
With this inclusion at hand, the first inequality is an immediate consequence of the decomposition
\begin{align*}
\mc E\left(\rho,\theta\;|\;\wtilde\vrho_\veps,\oline\vtheta\right)\,&=\,\Bigl(H_{\tems}(\rho,\theta)-H_{\tems}(\rho,\oline\vtheta)\Bigr)\,+\,
\Bigl(H_{\tems}(\rho,\oline\vtheta) - H_{\tems}(\wtilde\vrho_\veps,\oline\vtheta) - (\rho - \vret)\,\partial_\vrho H_{\tems}(\vret,\tems)\Bigr) \\
&=\,\d_\vtheta H_{\oline\vtheta}(\rho,\eta)\,\bigl(\vtheta-\oline\vtheta\bigr)\,+\, \frac{1}{2}\d^2_{\vrho\vrho}H_{\oline\vtheta}(z_\veps,\oline\vtheta)\,\bigl(\rho-\wtilde\vrho_\veps\bigr)^2\,,
\end{align*}
for some suitable $\eta$ belonging to the interval connecting $\theta$ and $\oline\vtheta$, and $z_\veps$ belonging to the interval connecting $\rho$ and $\wtilde\vrho_\veps$. Indeed,
it is enough to use formulas (2.49) and (2.50) of \cite{F-N}, together with the fact that we are in the essential set.

Next, thanks again to the property $\wtilde\vrho_\veps(x)\,\in\,[\rho_*,3/2]\,\subset\,\mc O_{\ess}$, we can conclude, exactly as in relation (6.69) of \cite{F-N}, that
$$
\inf_{(\rho,\theta)\in\mc O_\res}\mc E\left(\rho,\theta\;|\;\wtilde\vrho_\veps,\oline\vtheta\right)\,\geq\,
\inf_{(\rho,\theta)\in\d\mc O_\ess}\mc E\left(\rho,\theta\;|\;\wtilde\vrho_\veps,\oline\vtheta\right)\,\geq\,c\,>\,0\,.
$$

The case $F=0$ follows by similar arguments, using that the various constants in Lemma \ref{l:target-rho_bound} and Proposition \ref{p:target-rho_bound} are uniform in $\Omega$.
This completes the proof of the lemma.
\qed
\end{proof}

\subsubsection{Uniform estimates for the family of weak solutions} \label{sss:uniform}

With the total dissipation balance \eqref{est:dissip} and Lemma \ref{l:H} at hand, we can derive uniform bounds for our family of weak solutions.
Since this derivation is somehow classical, we limit ourselves to recall the main inequalities and sketch the proofs; we refer the reader to Chapters 5, 6 and 8 of \cite{F-N} for details.

To begin with, we remark that, owing to the assumptions fixed in Paragraph \ref{sss:data-weak} on the initial data and to the structural hypotheses of Paragraphs \ref{sss:primsys} and \ref{sss:structural},
the right-hand side of \eqref{est:dissip} is \emph{uniformly bounded} for all $\veps\in\,]0,1]$.
\begin{lemma} \label{l:initial-bound}
Under the assumptions fixed in Paragraphs \ref{sss:primsys}, \ref{sss:structural} and \ref{sss:data-weak}, there exists an absolute constant $C>0$ such that, for all $\veps\in\,]0,1]$,
one has
$$
\int_{\Omega_\veps} \frac{1}{2}\vrez|\uez|^2\,\dx + \frac{1}{\ep^{2m}}\int_{\Omega_\veps}\mc E\left(\vrho_{0,\veps},\vtheta_{0,\veps}\;|\;\wtilde\vrho_\veps,\oline\vtheta\right)\,\dx\,\leq\,C\,.
$$
\end{lemma}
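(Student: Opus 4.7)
The plan is to bound the two summands in the claimed inequality separately, relying on the decomposition $\vrez = \wtilde\vrho_\veps + \veps^m\vrez^{(1)}$ and $\temz = \oline\vtheta + \veps^m\Theta_{0,\veps}$ fixed in \eqref{in_vr}, together with the uniform controls \eqref{hyp:ill_data}, \eqref{hyp:ill-vel} and Remark \ref{r:ill_data} on the initial data, and the properties of the equilibrium states collected in Paragraph \ref{sss:equilibrium}. For the kinetic energy, I would split
$$
\int_{\Omega_\veps}\vrez|\uez|^2\,\dx\,=\,\int_{\Omega_\veps}\wtilde\vrho_\veps\,|\uez|^2\,\dx\,+\,\veps^m\int_{\Omega_\veps}\vrez^{(1)}|\uez|^2\,\dx\,.
$$
The first integral equals $\|\sqrt{\wtilde\vrho_\veps}\,\uez\|_{L^2(\Omega_\veps)}^2$, uniformly bounded by \eqref{hyp:ill-vel}, while the second is controlled by $\veps^m\,\|\vrez^{(1)}\|_{L^\infty(\Omega_\veps)}\,\|\uez\|_{L^2(\Omega_\veps)}^2$, with the $L^\infty$ bound coming from \eqref{hyp:ill_data} and the $L^2$ bound from Remark \ref{r:ill_data}.

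For the relative entropy, the key point is that $(\wtilde\vrho_\veps,\oline\vtheta)$ is a (local) minimum of the map
$(\rho,\theta)\,\mapsto\,H_{\oline\vtheta}(\rho,\theta) - (\rho-\wtilde\vrho_\veps)\,\d_\rho H_{\oline\vtheta}(\wtilde\vrho_\veps,\oline\vtheta) - H_{\oline\vtheta}(\wtilde\vrho_\veps,\oline\vtheta)$, as a consequence of the static identity \eqref{prF} combined with Gibbs' relation \eqref{gibbs} and the convexity ensured by \eqref{p_comp} and \eqref{pp3}. A second-order Taylor expansion around $(\wtilde\vrho_\veps,\oline\vtheta)$ therefore gives
$$
\mc{E}\bigl(\vrez,\temz\,|\,\wtilde\vrho_\veps,\oline\vtheta\bigr)\,=\,\tfrac{1}{2}\,D^2 H_{\oline\vtheta}(\xi_\veps,\eta_\veps)\cdot\bigl((\veps^m\vrez^{(1)},\,\veps^m\Theta_{0,\veps}),(\veps^m\vrez^{(1)},\,\veps^m\Theta_{0,\veps})\bigr)\,,
$$
for some intermediate point $(\xi_\veps,\eta_\veps)$; the explicit factor $\veps^{2m}$ thus cancels the prefactor $\veps^{-2m}$ in the claim.

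To close the estimate, I would split $\Omega_\veps\,=\,\mc{M}^0_{\ess}\,\cup\,\mc{M}^0_{\res}$ according to whether $\bigl(\vrez(x),\temz(x)\bigr)$ lies in $\mc{O}_{\ess}$ or not. On $\mc{M}^0_{\ess}$, Lemma \ref{l:H}(a) provides the quadratic bound
$$
\mc{E}\bigl(\vrez,\temz\,|\,\wtilde\vrho_\veps,\oline\vtheta\bigr)\,\leq\,c_2\,\veps^{2m}\,\bigl(|\vrez^{(1)}|^2\,+\,|\Theta_{0,\veps}|^2\bigr)\,,
$$
which, once divided by $\veps^{2m}$ and integrated, is controlled by $\|\vrez^{(1)}\|_{L^2(\Omega_\veps)}^2\,+\,\|\Theta_{0,\veps}\|_{L^2(\Omega_\veps)}^2$, both uniformly bounded thanks to \eqref{hyp:ill_data} and Remark \ref{r:ill_data}. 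The uniform lower bound $\wtilde\vrho_\veps\geq\rho_*$ (Lemma \ref{l:target-rho_pos}) together with the $L^\infty$ smallness of $\veps^m\vrez^{(1)}$ and $\veps^m\Theta_{0,\veps}$ rule out the lower endpoints of $\mc{O}_{\ess}$ being hit; hence $\mc{M}^0_{\res}$ can only contain points where $\wtilde\vrho_\veps$ approaches $2$. In the case $F=0$, Proposition \ref{p:target-rho_bound} yields the \emph{global} estimate $|\wtilde\vrho_\veps-1|\leq C\veps^m$, so $\mc{M}^0_{\res}$ is empty for $\veps$ small and the proof is complete.

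The main obstacle lies in the case $F\neq 0$ (with $m\geq 2$), where $\wtilde\vrho_\veps$ is only \emph{locally} bounded (Lemma \ref{l:target-rho_bound}) and can grow on the expanding domain $\Omega_\veps$. On the resulting residual set, I would estimate $\mc{E}$ directly by means of the explicit form of $H_{\oline\vtheta}$ and the growth assumption \eqref{pp4}: the latter ensures that $\d^2_{\rho\rho}H_{\oline\vtheta}(\rho,\oline\vtheta)=\d_\rho p(\rho,\oline\vtheta)/\rho\sim\rho^{-1/3}$ remains bounded as $\rho\to+\infty$, so that the $\veps^{2m}$ factor produced by the Taylor expansion still dominates, and the remaining integrals are absorbed by the $L^2\cap L^\infty$ controls on $\vrez^{(1)}$ and $\Theta_{0,\veps}$. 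This is the delicate point since it requires combining the structural assumptions on the thermodynamic potentials with the precise behaviour of $\wtilde\vrho_\veps$ coming from \eqref{eq:target-rho} and \eqref{assFG}.
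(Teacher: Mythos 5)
Your proposal takes a genuinely different route from the paper: you attempt a full second-order Taylor expansion of $\mc E$ at $(\wtilde\vrho_\veps,\oline\vtheta)$ and an essential/residual split, whereas the paper performs a ``mixed'' expansion --- first-order in $\vtheta$ around $\oline\vtheta$ (with the explicit formula \eqref{eq:d_th-H_th} for $\d_\vtheta H_{\oline\vtheta}$) combined with a separate second-order expansion in $\vrho$ at the fixed temperature $\oline\vtheta$ --- and then estimates each piece directly on all of $\Omega_\veps$. The choice is not cosmetic: it is exactly what makes the proof go through when $F\neq0$, and your plan has a gap there.

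Concretely, when $F\neq0$ the essential/residual split buys you nothing: Lemma~\ref{l:H}(a) gives constants $c_2(l)$ that depend on the cylinder $\oline{\mbb B}_l$, so it cannot be applied uniformly on the expanding $\Omega_\veps$, and the residual set $\mc M^0_\res$ is not small --- it contains the whole far field where $\wtilde\vrho_\veps$ is large, which is precisely where the work is. More seriously, your ``direct estimate via the Hessian'' singles out $\d^2_{\vrho\vrho}H_{\oline\vtheta}$, which indeed decays like $\vrho^{-1/3}$, but the full Hessian of $H_{\oline\vtheta}$ also contains $\d^2_{\vtheta\vtheta}H_{\oline\vtheta}$, which \emph{grows linearly} in $\vrho$ (through the contribution $\vrho\,\d_\vtheta e_M/\vtheta$). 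After dividing by $\veps^{2m}$, the corresponding term in the integral is $\int\wtilde\vrho_\veps\,|\Theta_{0,\veps}|^2$, and the plain $L^2\cap L^\infty$ controls on $\Theta_{0,\veps}$ you invoke are not sufficient; one needs the \emph{weighted} hypothesis $\|\sqrt{\wtilde\vrho_\veps}\,\Theta_{0,\veps}\|_{L^2(\Omega_\veps)}\leq c$ in \eqref{hyp:ill_data}, which is present in the paper precisely to absorb this growth. The mixed expansion used in the paper makes this $\vrho$-linear growth explicit and separable (via \eqref{eq:d_th-H_th}) and keeps the second-order $\vrho$-remainder at temperature $\oline\vtheta$, where $\d^2_{\vrho\vrho}H_{\oline\vtheta}(z,\oline\vtheta)=\d_\vrho p_M(z,\oline\vtheta)/z$ is bounded via \eqref{pp3}--\eqref{pp4}. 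If you wish to keep the full-Hessian approach, you must verify explicitly that each component is at most $C(1+\vrho)$ on the relevant segment, that the cross term carries a factor $(\eta_\veps-\oline\vtheta)=\mc O(\veps^m)$ (and is thus harmless), and then invoke the weighted bound; as written, this step is missing.
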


\begin{proof}
The boundedness of the first term in the left-hand side is an obvious consequence of \eqref{hyp:ill-vel} and \eqref{hyp:ill_data} for the density. So, let us show how to control the term containing
$\mc E\left(\vrho_{0,\veps},\vtheta_{0,\veps}\;|\;\wtilde\vrho_\veps,\oline\vtheta\right)$. Owing to Taylor formula, one has
\begin{align*}
\mc E\left(\vrho_{0,\veps},\vtheta_{0,\veps}\;|\;\wtilde\vrho_\veps,\oline\vtheta\right)\,&=\,
\d_\vtheta H_{\oline\vtheta}(\vrho_{0,\veps},\eta_{0,\veps})\,\bigl(\vtheta_{0,\veps}-\oline\vtheta\bigr)\,+\,\frac{1}{2}\,
\d^2_{\vrho\vrho}H_{\oline\vtheta}(z_{0,\veps},\oline\vtheta)\,\bigl(\vrho_{0,\veps}-\wtilde\vrho_\veps\bigr)^2\,,
\end{align*}
where we can write $\eta_{0,\veps}(x)\,=\,\oline\vtheta\,+\,\veps^m\,\lambda_\veps(x)\,\Theta_{0,\veps}$ and $z_{0,\veps}\,=\,\wtilde\vrho_\veps\,+\,\veps^m\,\z_\veps(x)\,\vrho^{(1)}_{0,\veps}$,
with both the families $\bigl(\lambda_\veps\bigr)_\veps$ and $\bigl(\z_\veps\bigr)_\veps$ belonging to $L^\infty(\Omega_\veps)$, uniformly in $\veps$ (in fact,
$\lambda_\veps(x)$ and $\z_{\veps}(x)$ belong to the interval $\,]0,1[\,$ for all $x\in\Omega_\veps$).
Notice that $\eta_{0,\veps}\,\in\,L^\infty(\Omega_\veps)$ uniformly in $\veps$, and that $\eta_{0,\veps}\geq c_1>0$ and $z_{0,\veps}\geq c_2>0$ (at least for $\veps$ small enough).

By the structural hypotheses fixed in Paragraph \ref{sss:structural} (and in particular Gibbs' law), we get (see also formula (2.50) in \cite{F-N})
\begin{align} \label{eq:d_th-H_th}
\d_\vtheta H_{\oline\vtheta}(\vrho_{0,\veps},\eta_{0,\veps})\,&=\,4\,a\,\eta_{0,\veps}^2\,\bigl(\eta_{0,\veps}-\oline\vtheta\bigr)\,+\,
\frac{\vrho_{0,\veps}}{\eta_{0,\veps}}\,\bigl(\eta_{0,\veps}-\oline\vtheta\bigr)\,\d_\vtheta e_M(\rho_{0,\veps},\eta_{0,\veps})\,.
\end{align}
In view of condition \eqref{pp3}, we gather that $\left|\d_\vtheta e_M\right|\,\leq\,c$; therefore, from hypotheses \eqref{hyp:ill_data} and Remark \ref{r:ill_data}
it is easy to deduce that
$$
\frac{1}{\veps^{2m}}\int_{\Omega_\veps}\d_\vtheta H_{\oline\vtheta}(\vrho_{0,\veps},\eta_{0,\veps})\,\bigl(\vtheta_{0,\veps}-\oline\vtheta\bigr)\,dx\,\leq\,C\,.
$$

Moreover, by \eqref{pp1} we get (keep in mind formula (2.49) of \cite{F-N})
$$
\d^2_{\vrho\vrho}H_{\oline\vtheta}(z_{0,\veps},\oline\vtheta)\,=\,\frac{1}{z_{0,\veps}}\,\d_\vrho p_M(z_{0,\veps},\oline\vtheta)\,=\,\frac{1}{\sqrt{\oline\vtheta}}\,\frac{1}{Z_{0,\veps}}\,P'(Z_{0,\veps})\,,
$$
where we have set $Z_{0,\veps}\,=\,z_{0,\veps}\,\oline\vtheta^{-3/2}$. Now, thanks to \eqref{pp3} again and to the fact that $z_{0,\veps}$ is strictly positive, we can estimate,
for some positive constants which depend also on $\oline\vtheta$,
$$
\frac{1}{Z_{0,\veps}}\,P'(Z_{0,\veps})\,\leq\,C\,\frac{P(Z_{0,\veps})}{Z_{0,\veps}^2}\,\leq\,C\left(\frac{P(Z_{0,\veps})}{Z_{0,\veps}^2}\,\bbbone_{\{0\leq Z_{0,\veps}\leq1\}}\,+\,
\frac{P(Z_{0,\veps})}{Z_{0,\veps}^{5/3}}\,\bbbone_{\{Z_{0,\veps}\geq1\}}\right)\,\leq\,C\,,
$$
where we have used also \eqref{pp4}. Hence, it is now easy to check that
$$
\frac{1}{2\veps^{2m}}\int_{\Omega_\veps}\d^2_{\vrho\vrho}H_{\oline\vtheta}(z_{0,\veps},\oline\vtheta)\,\bigl(\vrho_{0,\veps}-\wtilde\vrho_\veps\bigr)^2\,dx\,\leq\,C\,.
$$
This inequality completes the proof of the lemma.
\qed
\end{proof}

\medbreak

Owing to the previous lemma, from \eqref{est:dissip} we gather, for any $T>0$, the estimates
\begin{align}
	\sup_{t\in[0,T]} \| \sqrt{\vre}\ue\|_{L^2(\Omega_\veps;\R^3)}\, &\leq\,c \label{est:momentum} \\
	\| \sigma_\ep\|_{{\mathcal{M}}^+ ([0,T]\times\oline\Omega_\veps )}\, &\leq \,\ep^{2m}\, c\,. \label{est:sigma}
\end{align}
Fix now any $l>0$. Employing Lemma~\ref{l:H} (and keeping track of the dependence of constants only on $l$), we deduce
	\begin{align}
	\sup_{t\in[0,T]} \left\| \left[ \dfrac{\vre - \vret}{\ep^m}\right]_\ess (t) \right\|_{L^2(\mbb B_{l})}\,&\leq\, c(l) \label{est:rho_ess} \\
	\sup_{t\in[0,T]} \left\| \left[ \dfrac{\tem - \tems}{\ep^m}\right]_\ess (t) \right\|_{L^2(\mbb B_{l})}\, &\leq\, c(l)\,. \label{est:theta-ess} 
	\end{align}
In addition, we infer also that the measure of the ``residual set'' is small: more precisely, we have
\begin{equation}\label{est:M_res-measure}
	\sup_{t\in[0,T]} \int_{\mbb B_{l}}	\bbbone_{\mc{M}^\veps_\res[t]} \,dx\,\leq \,\ep^{2m}\, c(l)\,.
	\end{equation}

{\begin{remark}\label{rmk:cut-off}
When $F=0$, thanks to Lemma \ref{l:target-rho_bound} and Proposition \ref{p:target-rho_bound},
one can see that estimates \eqref{est:rho_ess}, \eqref{est:theta-ess} and \eqref{est:M_res-measure} hold on the whole $\Omega_\veps$, without any need of taking the localisation on the cylinders $\B_l$.
From this observation, it is easy to see that, when $F=0$, we can replace $\mathbb{B}_l$ with the whole $\Omega_\veps$ in all the following estimates.
\end{remark}}
		
Now, we fix $l>0$. We estimate
$$
\int_{\B_{l}}\left|\left[\vrho_\veps\,\log\vrho_\veps\right]_\res\right|\,dx\,=\,
\int_{\B_{l}}\left|\vrho_\veps\,\log\vrho_\veps\right|\,\bbbone_{\{0\leq\vrho_\veps\leq2\rho_*/3\}}\,dx\,+\,
\int_{\B_{l}}\left|\vrho_\veps\,\log\vrho_\veps\right|\,\bbbone_{\{\vrho_\veps\geq2\}}\,dx\,.
$$
Thanks to \eqref{est:M_res-measure}, the former term in the right-hand side is easily controlled by $\veps^{2m}$, up to a suitable multiplicative constant also depending on $l$.
As for the latter term, we have to argue in a different way. Owing to inequalities from \eqref{pp2} 
to \eqref{pp4}, we get that $\d^2_\vrho H_{\oline\vtheta}(\vrho,\oline\vtheta)\geq C/\vrho$; therefore, by direct integration we find
\begin{align*}
C\,\vrho_\veps\,\log\vrho_\veps\,-\,C\left(\vrho_\veps-1\right)\,&\leq\,H_{\oline\vtheta}(\vrho_\veps,\oline\vtheta)\,-\,H_{\oline\vtheta}(1,\oline\vtheta)\,-\,
\d_\vrho H_{\oline\vtheta}(1,\oline\vtheta)(\vrho_\ep-1) \\
&\leq\,\mc E\left(\vrho_\veps,\vtheta_\veps\;|\;\wtilde\vrho_\veps,\oline\vtheta\right)\,+\,\mc E\left(\wtilde\vrho_\veps,\oline\vtheta\;|\;1,\oline\vtheta\right)\,+\,
\Big( \d_\vrho H(\wtilde\vrho_\veps,\oline\theta) - \d_\vrho H(1,\oline\theta)\Big) \big( \vrho_\veps - \wtilde\vrho_\veps \big)\,,
\end{align*}
because an expansion analogous to \eqref{eq:d_th-H_th} allows to gather that $H_{\oline\vtheta}(\vrho_\veps,\oline\vtheta)\,-\,H_{\oline\vtheta}(\vrho_\veps,\vtheta_\veps)\,\leq\,0$.
On the one hand, using \eqref{est:dissip}, Proposition \ref{p:target-rho_bound} and \eqref{est:M_res-measure} one deduces
$$
\left|\int_{\B_{l}\cap\mc O_\res}\left(\mc E\left(\vrho_\veps,\vtheta_\veps\;|\;\wtilde\vrho_\veps,\oline\vtheta\right)\,+\,
\mc E\left(\wtilde\vrho_\veps,\oline\vtheta\;|\;1,\oline\vtheta\right)\,+\,\Big( \d_\vrho H(\wtilde\vrho_\veps,\oline\theta) - \d_\vrho H(1,\oline\theta)\Big)
\big( \vrho_\veps - \wtilde\vrho_\veps \big)\,\right)\right|\,\leq\,C\,\veps^{2m}\,.
$$
On the other hand, $\vrho_\veps\log\vrho_\veps-\left(\vrho_\veps-1\right)\,\geq\,\vrho_\veps\left(\log\vrho_\veps-1\right)\,\geq\,(1/2)\,\vrho_\veps\,\log\vrho_\veps$ whenever
$\vrho_\veps\geq e^2$. Hence,  since we have
$$
\int_{\B_{l}}\left|\vrho_\veps\,\log\vrho_\veps\right|\,\bbbone_{\{2\leq\vrho_\veps\leq e^2\}}\,dx\,\leq\,C\,\veps^{2m}
$$
owing to \eqref{est:M_res-measure} again, we finally infer that
\begin{equation} \label{est:rho*log-rho}
\sup_{t\in[0,T]}\int_{\B_{l}}\left|\left[\vrho_\veps\,\log\vrho_\veps\right]_\res(t)\right|\,dx\,\leq\,c(l)\,\veps^{2m}\,.
\end{equation}


Owing to inequality \eqref{est:rho*log-rho}, we deduce (exactly as in \cite{F-N}, see estimates (6.72) and (6.73) therein) that
	\begin{align}
	\sup_{t\in [0,T]} \int_{\B_{l}}
	\bigl(
	\left| [ \vre e(\vre,\tem)]_\res\right| +  \left| [ \vre s(\vre,\tem)]_\res\right|\bigr)\,\dx\,&\leq\,\ep^{2m}\, c (l)\,, \label{est:e-s_res}
	\end{align}
which in particular implies (again, we refer to Section 6.4.1 of \cite{F-N} for details) the following bounds:
\begin{align}
	\sup_{t\in [0,T]} \int_{\B_{l}} [ \vre]^{5/3}_\res (t)\, \dx \,&\leq\,\ep^{2m}\, c (l) \label{est:rho_res} \\
	\sup_{t\in [0,T]} \int_{\B_{l}} [ \tem]^{4}_\res (t)\, \dx\, &\leq\,\ep^{2m}\, c(l)\,. \label{est:theta_res}
\end{align}

Let us move further. In view of \eqref{S}, \eqref{ss}, \eqref{q} and \eqref{mu}, \eqref{est:sigma} implies
	\begin{align}
	\int_0^T \left\| \nabla_x \ue + \nabla_x^T \ue  - \frac{2}{3} \div \ue \tens{Id} \right\|^2_{L^2(\Omega_\veps;\R^{3\times3})}\, \dt\,
	&\leq\, c \label{est:Du}  \\
	\int_0^T \left\| \nabla_x \left(\frac{\tem - \tems}{\ep^m}\right) \right\|^2_{L^2(\Omega_\veps;\R^3)}\, \dt\, +\,
	\int_0^T \left\| \nabla_x \left(\frac{\log(\tem) - \log(\tems)}{\ep^m}\right) \right\|^2_{L^2(\Omega_\veps;\R^3)} \,\dt\,
	&\leq\, c\,. \label{est:D-theta}
	\end{align}

{Thanks to the previous inequalities and \eqref{est:M_res-measure}, we can argue as in Subsection 8.2 of \cite{F-N}:
by generalizations of respectively Poincar\'e and Korn inequalities, for all $l>0$ we gather also}
\begin{align}
\int_0^T \left\| \frac{\tem - \tems}{\ep^m} \right\|^2_{W^{1,2}({\B_l};\R^3)}\, \dt \,+\,
\int_0^T \left\| \frac{\log(\tem) - \log(\tems)}{\ep^m} \right\|^2_{W^{1,2}({\B_l};\R^3)}\, \dt\,&\leq\,c(l) \label{est:theta-Sob} \\
\int_0^T \left\| \ue \right\|^2_{W^{1,2}(\B_l; \R^3)} \dt\,&\leq\,c(l)\,. \label{est:u-H^1}
\end{align}

Finally, we discover that
\begin{align}
\int^T_0\left\|\left[\frac{\vrho_\veps\,s(\vrho_\veps,\vtheta_\veps)}{\veps^m}\right]_{\res}\right\|^{2}_{L^{30/23}(\B_{l})}\,dt\,+\,
\int^T_0\left\|\left[\frac{\vrho_\veps\,s(\vrho_\veps,\vtheta_\veps)}{\veps^m}\right]_{\res}\,
\vec{u}_\veps\right\|^{2}_{L^{30/29}(\B_{l})}\,dt\,&\leq\,c(l)  \label{est:rho-s_res} \\
\int^T_0\left\|\frac{1}{\veps^m}\,\left[\frac{\kappa(\vtheta_\veps)}{\vtheta_\veps}\right]_{\res}\,
\nabla_{x}\vtheta_\veps(t)\right\|^{2}_{L^{1}(\B_l)}\,dt\,&\leq\,c(l)\,.  \label{est:Dtheta_res}
\end{align}
Indeed, arguing similarly as in the proof of Proposition 5.1 of \cite{F-N}, we have that
\begin{equation} \label{5.58_book}
\left[\vrho_\veps\,s(\vrho_\veps,\vtheta_\veps)\right]_{\res}\leq C\, \left[ \vrho_\veps +\vrho_\veps \, |\log \vrho_\veps |\, +\vrho_\veps \, |\log \vtheta_\veps -\log \oline \vtheta|+\vtheta_{\veps}^{3}\, \right]_{\res} 
\end{equation}
and thanks to the previous uniform bounds \eqref{est:rho_res}, \eqref{est:theta_res} and \eqref{est:theta-Sob}, one has that
$\big(\left[\vrho_\veps \right]_{\res}\big)_\veps\subset L_{T}^{\infty}( L_{\rm loc}^{5/3})$,
$\big(\left[\vrho_\veps \, |\log \vrho_\veps |\,\right]_{\res}\big)_\veps\subset L_{T}^{\infty}( L_{\rm loc}^{q})$ for all $1\leq q< 5/3$ (see relation (5.60) in \cite{F-N}),
$\big(\left[\vrho_\veps \, |\log \vtheta_\veps -\log \oline \vtheta|\, \right]_{\res}\big)_\veps\subset L_{T}^{2}( L_{\rm loc}^{30/23})$ and finally
$\big(\left[\vtheta_{\veps}^{3}\, \right]_{\res}\big)_\veps\subset L_{T}^{\infty}( L_{\rm loc}^{4/3})$.
Let us recall that, as stipulated at the end of the introduction, the inclusion symbol means that the sequences are uniformly boundeed in the respective spaces.
Then, it follows that the first term in \eqref{est:rho-s_res}  is in $L_T^{2}(L_{\rm loc}^{30/23})$. 
Next, taking \eqref{5.58_book} we obtain
\begin{equation*}
\left[\vrho_\veps\,s(\vrho_\veps,\vtheta_\veps)\ue\right]_{\res}\leq C\, \left[ \vrho_\veps\ue +\vrho_\veps \, |\log \vrho_\veps |\, \ue\, +\vrho_\veps \, |\log \vtheta_\veps -\log \oline \vtheta|\,\ue +\vtheta_{\veps}^{3}\ue \, \right]_{\res} 
\end{equation*}
and using the uniform bounds \eqref{est:rho_res} and \eqref{est:u-H^1}, we have that $\big(\left[\vrho_\veps\ue\right]_{\res}\big)_\veps\subset L_T^{2}(L_{\rm loc}^{30/23})$. 
Now, we look at the second term. We know that $\big(\left[\vrho_\veps \, |\log \vrho_\veps |\, \right]_{\res}\big)_\veps\subset L_{T}^{\infty}( L_{\rm loc}^{q})$ for all $1\leq q< 5/3$ and
$\ue \in L_T^{2}(L_{\rm loc}^{6})$ (thanks to Sobolev embeddings). Then, we take $q$ such that $1/p:=1/q+1/6<1$ and so
$$ \big(\left[\vrho_\veps \, |\log \vrho_\veps |\, \ue\,\right]_{\res}\big)_\veps\subset L_T^{2}(L_{\rm loc}^{p})\, . $$
Keeping \eqref{est:rho_res}, \eqref{est:theta-Sob} and \eqref{est:momentum} in mind and using that
$$\left[\vrho_\veps \, |\log \vtheta_\veps -\log \oline \vtheta|\, \vec{u}_\veps\, \right]_{\res}=
\left[\sqrt{\vrho_\veps}\, |\log \vtheta_\veps -\log \oline \vtheta|\, \sqrt{\vrho_\veps}\, \vec{u}_{\veps}\, \right]_{\res}\,,$$
we obtain that the third term is uniformly bounded in $L_{T}^{2}( L_{\rm loc}^{30/29})$.
Using again the uniform bounds, we see that the last term is in $L_T^{\infty}(L_{\rm loc}^{12/11})$.
Thus, we obtain \eqref{est:rho-s_res}.  \\
To get \eqref{est:Dtheta_res}, we use instead the following estimate (see Proposition 5.1 of \cite{F-N}):
\begin{equation*}
\left[\frac{k(\vtheta_\veps )}{\vtheta_\veps}\right]_{\res}\left|\frac{\nabla_{x}\vtheta_\veps}{\veps^{m}}\right| \leq C
\left(\left|\frac{\nabla_{x}(\log \vtheta_\veps )}{\veps^m}\right|+\left[\vtheta_{\veps}^{2}\right]_{\res}\left|\frac{\nabla_{x}\vtheta_{\veps}}{\veps^m}\right|\right)\,.
\end{equation*} 
Owing to the previous uniform bounds, the former term is uniformly bounded in $L_{T}^{2}( L_{\rm loc}^{2})$ and the latter one is uniformly bounded in $L_{T}^{2}( L_{\rm loc}^{1})$.
So, we obtain the estimate \eqref{est:Dtheta_res}.

\begin{remark} \label{r:bounds}
We underline that, differently from \cite{F-N}, here we have made the integrability indices in \eqref{est:rho-s_res} and \eqref{est:Dtheta_res} explicit.
In particular, having the $L^2$ norm in time will reveal to be fundamental for the compensated compactness argument, see Lemma \ref{l:source_bounds} below.
\end{remark}

\subsection{Constraints on the limit}\label{ss:ctl1}

In this section, we establish some properties that the limit points of the fixed family of weak solutions $\bigl(\vrho_\veps,\vec u_\veps,\vtheta_\veps\bigr)_\veps$ have to satisfy.
These are static relations, which allow us to identify the limit density, velocity and temperature profiles; they do not characterise the limit dynamics yet.

\subsubsection{Preliminary considerations} \label{sss:constr_prelim}
To begin with, let us propose an extension of Proposition 5.2 of \cite{F-N}, which will be heavily used in the sequel. Two are the novelties here: firstly,
for the sake of generality we will consider a non-constant density profile $\wtilde\vrho$ in the limit (although this property is not used in our analysis); in addition, due to the
centrifugal force, when $F\neq0$ our result needs a localization on compact sets. 

\begin{proposition} \label{p:prop_5.2}
Let $m\geq1$ be fixed. Let $\wtilde\vrho_\veps$ and $\oline\vtheta$ be the static solutions identified and studied in Paragraph \ref{sss:equilibrium}, and take $\wtilde\vrho$
to be the pointwise limit of the family $\left(\wtilde\vrho_\veps\right)_\veps$ (in particular, $\wtilde\vrho\equiv1$ if $m>1$ or $m=1$ and $F=0$).
Let $(\varrho_\ep)_\ep$ and $(\vartheta_\ep)_\ep$ be sequences of non-negative measurable functions, and define
$$
R_\veps\,:=\,\frac{\varrho_\ep - \wtilde\vrho}{\ep^m}\qquad\mbox{ and }\qquad
\Theta_\veps\,:=\,\frac{\vartheta_\ep - \oline\vartheta}{\ep^m}\,.
$$
Suppose that, in the limit $\veps\ra0$, one has the convergence properties
\begin{equation}\label{hyp_p5.2:1}
\left[R_\veps\right]_{\rm ess}\, \weakstar\, R \quad\mbox{ and }\quad
\left[\Theta_\veps\right]_{\rm ess}\, \weakstar\, \Theta\qquad\quad \mbox{ in the weak-$*$ topology of} \ L^\infty\bigl([0,T];L^2(K)\bigr)\,,
\end{equation}
for any compact $K\subset \Omega$, and that, for any $L>0$, one has
\begin{equation}\label{hyp_p5.2:2}
\sup_{t\in[0,T]} \int_{\mbb B_{L}} \bbbone_{\mathcal{M}^\ep_{\rm res} [t] }\,dx\, \leq \,c(L)\,\ep^{2m}\,.
\end{equation}

Then, for any given function $G \in C^1(\overline{\mathcal{O}}_{\rm ess})$,  one has the convergence
$$
	\frac{[G(\varrho_\ep,\vartheta_\ep)]_{\ess} - G(\wtilde\vrho,\oline\vartheta)}{\ep^m}\,
	\weakstar\,\partial_\vrho G(\wtilde\vrho,\oline\vartheta)\,R\, +\,\partial_\vtheta G(\wtilde\vrho,\oline\vartheta)\,\Theta
	\qquad \mbox{ in the weak-$*$ topology of} \ L^\infty\bigl([0,T];L^2(K)\bigr)\,,
$$
for any compact $K\subset \Omega$.
\end{proposition}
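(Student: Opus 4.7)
My plan is to establish the statement by Taylor-expanding $G$ at $(\wtilde\vrho,\oline\vtheta)$ to first order, and then handling the essential and residual contributions separately. The starting point is the decomposition
\[
[G(\varrho_\ep,\vartheta_\ep)]_\ess - G(\wtilde\vrho,\oline\vtheta) = \mf b(\varrho_\ep,\vartheta_\ep)\bigl(G(\varrho_\ep,\vartheta_\ep)-G(\wtilde\vrho,\oline\vtheta)\bigr) + \bigl(\mf b(\varrho_\ep,\vartheta_\ep)-1\bigr)G(\wtilde\vrho,\oline\vtheta).
\]
For the second summand, which is supported in $\mc M^\ep_\res$, I would use $|\mf b-1|\le 1$ together with hypothesis \eqref{hyp_p5.2:2} to bound its $L^\infty_T L^q(K)$ norm, for any compact $K\subset\Omega$ and any $q\in[1,2)$, by
\[
\Bigl\|\tfrac{1}{\ep^m}\bigl(\mf b(\varrho_\ep,\vartheta_\ep)-1\bigr)G(\wtilde\vrho,\oline\vtheta)\Bigr\|_{L^\infty_T L^q(K)}^q \,\le\, \|G(\wtilde\vrho,\oline\vtheta)\|^q_{L^\infty(K)}\, c(K)\, \ep^{m(2-q)}\longrightarrow 0.
\]
The same expression taken with $q=2$ is uniformly bounded in $L^\infty_T L^2(K)$, so by density of $L^1_T L^{q'}(K)$ in $L^1_T L^2(K)$ this already yields vanishing in the weak-$*$ topology of $L^\infty_T L^2(K)$.

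For the essential contribution, the integral form of the Taylor formula gives
\[
\frac{\mf b(\varrho_\ep,\vartheta_\ep)\,\bigl(G(\varrho_\ep,\vartheta_\ep)-G(\wtilde\vrho,\oline\vtheta)\bigr)}{\ep^m} = \int_0^1 \mf b(\varrho_\ep,\vartheta_\ep)\,\nabla G\bigl(\wtilde\vrho+s(\varrho_\ep-\wtilde\vrho),\,\oline\vtheta+s(\vartheta_\ep-\oline\vtheta)\bigr)\cdot(R_\ep,\Theta_\ep)\,ds.
\]
Next I would split the inner $\nabla G$ as $\nabla G(\wtilde\vrho,\oline\vtheta)$ plus the difference. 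Since $\mf b(\varrho_\ep,\vartheta_\ep)R_\ep=[R_\ep]_\ess$ and likewise for $\Theta_\ep$, the leading contribution equals $\nabla G(\wtilde\vrho,\oline\vtheta)\cdot([R_\ep]_\ess,[\Theta_\ep]_\ess)$; given that $\nabla G(\wtilde\vrho,\oline\vtheta)$ is a bounded continuous multiplier on any compact $K\subset\Omega$, assumption \eqref{hyp_p5.2:1} delivers the announced weak-$*$ limit $\partial_\vrho G(\wtilde\vrho,\oline\vtheta)\,R+\partial_\vtheta G(\wtilde\vrho,\oline\vtheta)\,\Theta$.

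The main difficulty, and the step requiring the most care, is showing that the error
\[
E_\ep := \int_0^1 \mf b(\varrho_\ep,\vartheta_\ep)\Bigl(\nabla G\bigl(\wtilde\vrho+s(\varrho_\ep-\wtilde\vrho),\,\oline\vtheta+s(\vartheta_\ep-\oline\vtheta)\bigr)-\nabla G(\wtilde\vrho,\oline\vtheta)\Bigr)\cdot(R_\ep,\Theta_\ep)\,ds
\]
vanishes in the weak-$*$ topology of $L^\infty_T L^2(K)$. For any threshold $M>0$ I would split pointwise according to whether $|R_\ep|+|\Theta_\ep|\le M$ or not. On the favourable set, convexity of $\mc O_\ess=[2\rho_*/3,2]\times[\oline\vtheta/2,2\oline\vtheta]$ keeps the argument of $\nabla G$ inside $\overline{\mc O}_\ess$ and at distance at most $M\ep^m$ from $(\wtilde\vrho,\oline\vtheta)$; uniform continuity of $\nabla G$ on the compact set $\overline{\mc O}_\ess$ then bounds the integrand by $\omega_{\nabla G}(M\ep^m)(|R_\ep|+|\Theta_\ep|)$, whose $L^q(K)$ norm is controlled thanks to the $L^2$ bounds on $[R_\ep]_\ess$ and $[\Theta_\ep]_\ess$. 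On the unfavourable set, Chebyshev's inequality bounds its measure by $C/M^2$, and a H\"older argument with exponent $q\in[1,2)$ contributes a term of order $M^{q-2}$. Choosing first $M$ large and then $\ep$ small makes both contributions arbitrarily small, yielding $E_\ep\to 0$ strongly in $L^\infty_T L^q(K)$ for some $q\in[1,2)$; combined with the uniform bound on $E_\ep$ in $L^\infty_T L^2(K)$ and the same density argument used above, this upgrades to $E_\ep\weakstar 0$ in the weak-$*$ topology of $L^\infty_T L^2(K)$, which concludes the proof.
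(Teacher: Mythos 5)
Your proof is correct and reaches the same conclusion, but it takes a genuinely different route from the one in the paper. Both arguments begin by peeling off the contribution supported on the residual set (your $(\mf b-1)\,G(\wtilde\vrho,\oline\vtheta)/\ep^m$ term corresponds to the paper's use of the $L^1$ and $L^2$ estimates on $[G(\wtilde\vrho,\oline\vtheta)]_{\res}/\ep^m$), and both then localise to a compact $K$ and work with the essential part. The difference lies in how the $C^1$ hypothesis is exploited. The paper first proves the result for $G\in C^2(\overline{\mc O}_\ess)$, using the pointwise Taylor bound with the $L^\infty$ norm of $\mathrm{Hess}(G)$, which yields a strong $O(\ep^m)$ estimate in $L^\infty_T(L^1(K))$, and then passes to $G\in C^1$ by approximating $G$ uniformly in $C^1$ by smooth $G_n$. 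You instead handle $C^1$ directly: you Taylor-expand in integral form and control the error via the modulus of continuity $\omega_{\nabla G}$ of $\nabla G$ on the compact essential set, combined with a threshold (Chebyshev) argument in $M$. Your route avoids the density/approximation step at the cost of the two-parameter limit (first $M\to\infty$, then $\ep\to 0$); the paper's route gives a cleaner quantitative rate $O(\ep^m)$ when $G\in C^2$, which the modulus-of-continuity argument does not. Both are standard devices and both are valid here.

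One small point you should make explicit: your threshold split is stated in terms of $|R_\ep|+|\Theta_\ep|$, but the available $L^2$ control (hypothesis \eqref{hyp_p5.2:1}) is on $[R_\ep]_\ess$, $[\Theta_\ep]_\ess$, which coincide with $R_\ep$, $\Theta_\ep$ only where $\mf b\equiv 1$, i.e.\ on $\mc M^\ep_\ess$, not on the whole of $\{\mf b>0\}$. The transition zone $\{0<\mf b<1\}$ is contained in $\mc M^\ep_\res$, whose measure on $\mbb B_L$ is $O(\ep^{2m})$ by \eqref{hyp_p5.2:2}; there the integrand is already $L^2$-bounded and a H\"older argument disposes of it. With this additional observation both your Chebyshev bound and your modulus-of-continuity bound apply on the correct set, and the argument closes. (The same implicit assumption that $G$ and $\nabla G$ extend continuously to a neighbourhood of $\overline{\mc O}_\ess$ containing $\mathrm{supp}\,\mf b$ is also present, tacitly, in the paper's proof.)
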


\begin{proof}
The case $\wtilde\vrho\equiv1$ follows by a straightforward adaptation of the proof of Proposition 5.2 of \cite{F-N}. So, let us immediately focus on the case $m=1$ and $F\neq0$,
so that the target profile $\wtilde\vrho$ is non-constant.

We start by observing that, by virtue of \eqref{hyp_p5.2:2} and Lemma \ref{l:target-rho_bound}, the estimates
$$
\frac{1}{\veps}\,\left\|\left[G(\wtilde\vrho,\oline\vtheta)\right]_{\rm res}\right\|_{L^1(\mbb B_L)}\,\leq\,C(L)\,\veps\qquad\mbox{ and }\qquad
\frac{1}{\veps}\,\left\|\left[G(\wtilde\vrho,\oline\vtheta)\right]_{\rm res}\right\|_{L^2(\mbb B_L)}\,\leq\,C(L)
$$
hold true, for any $L>0$ fixed. Combining those bounds with hypothesis \eqref{hyp_p5.2:1}, after taking $L>0$ so large that $K\subset\mbb B_L$, we see that it is enough to prove the convergence
\begin{equation} \label{conv:to-prove}
\int_{K}\left[\frac{G(\varrho_\ep,\vartheta_\ep)- G(\wtilde\vrho,\oline\vartheta)}{\ep}\,-\,
\partial_\vrho G(\wtilde\vrho,\oline\vartheta)\,R_\veps\,-\,\partial_\vtheta G(\wtilde\vrho,\oline\vartheta)\,\Theta_\veps\right]_{\ess}\,\psi\,\dx\,\longrightarrow\,0
\end{equation}
for any compact $K$ fixed and any $\psi\in L^1\bigl([0,T];L^2(K)\bigr)$.

Next, we remark that, whenever $G\in C^2(\oline{\mc O}_{\rm ess})$, we have
\begin{align}
&\left|\left[\frac{G(\varrho_\ep,\vartheta_\ep)- G(\wtilde\vrho,\oline\vartheta)}{\ep}\,-\,
\partial_\vrho G(\wtilde\vrho,\oline\vartheta)\,R_\veps-\partial_\vtheta G(\wtilde\vrho,\oline\vartheta)\,\Theta_\veps\right]_{\ess}\right|\,\leq \label{est:prelim_5.2} \\
&\qquad\qquad\qquad\qquad\qquad\qquad\qquad\quad
\leq\,C\,\veps\,\left\|{\rm Hess}(G)\right\|_{L^\infty(\oline{\mc O}_{\rm ess})}\left(\left[R_\veps\right]_{\rm ess}^2+\left[\Theta_\veps\right]_{\rm ess}^2\right), \nonumber
\end{align}
where we have denoted by ${\rm Hess}(G)$ the Hessian matrix of the function $G$ with respect to its variables $(\vrho,\vtheta)$.
In particular, \eqref{est:prelim_5.2} implies the estimate
\begin{align}
\left\|\left[\frac{G(\varrho_\ep,\vartheta_\ep)- G(\wtilde\vrho,\oline\vartheta)}{\ep}\,-\,
\partial_\vrho G(\wtilde\vrho,\oline\vartheta)\,R_\veps\,-\,\partial_\vtheta G(\wtilde\vrho,\oline\vartheta)\,\Theta_\veps\right]_{\ess}\right\|_{L^\infty_T(L^1(K))}\,
\leq\,C\,\veps\,. \label{est:prop_5.2}
\end{align}
Property \eqref{conv:to-prove} then follows from \eqref{est:prop_5.2}, after noticing that both the terms $\left[G(\varrho_\ep,\vartheta_\ep)- G(\wtilde\vrho,\oline\vartheta)\right]_\ess/\veps$
and $\left[\partial_\vrho G(\wtilde\vrho,\oline\vartheta)\,R_\veps+\partial_\vtheta G(\wtilde\vrho,\oline\vartheta)\,\Theta_\veps\right]_{\ess}$
are uniformly bounded in $L_T^\infty\bigl(L^2(K)\bigr)$.

Finally, when $G$ is just $C^1(\oline{\mc O}_{\rm ess})$, we approximate it by a family of smooth functions $\bigl(G_n\bigr)_{n\in\N}$, uniformly in $C^1(\oline{\mc O}_{\rm ess})$.
Obviously, for each $n$, convergence \eqref{conv:to-prove} holds true for $G_n$. Moreover, we have
$$
\left|\left[\frac{G(\varrho_\ep,\vartheta_\ep)- G(\wtilde\vrho,\oline\vartheta)}{\ep}\right]_{\rm ess}-
\left[\frac{G_n(\varrho_\ep,\vartheta_\ep)- G_n(\wtilde\vrho,\oline\vartheta)}{\ep}\right]_{\rm ess}\right|\,\leq\,C\,
\left\|G\,-\,G_n\right\|_{C^1(\oline{\mc O}_{\rm ess})}\left(\left[R_\veps\right]_{\rm ess}+\left[\Theta_\veps\right]_{\rm ess}\right)\,,
$$
and a similar bound holds for the terms presenting partial derivatives of $G$. In particular, these controls entail that the remainders, created replacing $G$ by $G_n$ in
\eqref{conv:to-prove}, are uniformly small in $\veps$, whenever $n$ is sufficiently large.
This completes the proof of the proposition.
\qed
\end{proof}

\medbreak
From now on, we will focus on the two cases \eqref{eq:choice-m}: either $m\geq2$ and possibly $F\neq0$, or $m\geq1$ and $F=0$.
Indeed, if $1<m<2$ and $F\neq 0$, the convergence of $\wtilde{\varrho}_\veps$ to $1$ is too ``slow'' (keep in mind Proposition \ref{p:target-rho_bound})
and it is not possible to pass to the limit in Proposition \ref{p:prop_5.2} above with order $\veps^m$ (in this respect, see also Remark \ref{r:F-G} below).



Recall that, in both cases \eqref{eq:choice-m}, the limit density profile is always constant, say $\wtilde\vrho\equiv1$. Let us fix an arbitrary positive time
$T>0$, which we keep fixed until the end of this paragraph.
Thanks to \eqref{est:rho_ess}, \eqref{est:rho_res} and Proposition \ref{p:target-rho_bound}, we get
	\begin{equation}\label{rr1}
\| \vre - 1 \|_{L^\infty_T(L^2 + L^{5/3}(K))}\,\leq\, \ep^m\,c(K) \qquad \mbox{ for all }\;
K \subset \Omega\quad\mbox{ compact.}
	\end{equation}
In particular, keeping in mind the notations introduced in \eqref{in_vr} and \eqref{eq:in-dens_dec}, we can define
\begin{equation} \label{def_deltarho}
R_\veps\,:= \frac{\varrho_\ep -1}{\ep^m} = \,\vrho_\veps^{(1)}\,+\,\wtilde{r}_\veps\;,\qquad\quad\mbox{ where }\quad
\vrho_\veps^{(1)}(t,x)\,:=\,\frac{\vre-\wtilde{\vrho}_\veps}{\ep^m}\quad\mbox{ and }\quad
\wtilde{r}_\veps(x)\,:=\,\frac{\wtilde{\vrho}_\veps-1}{\ep^m}\,.
\end{equation}
Thanks to \eqref{est:rho_ess}, \eqref{est:rho_res} and Proposition \ref{p:target-rho_bound}, the previous quantities verify the following bounds:
\begin{equation}\label{uni_varrho1}
\sup_{\veps\in\,]0,1]}\left\|\vrho_\veps^{(1)}\right\|_{L^\infty_T(L^2+L^{5/3}({\B_{l}}))}\,\leq\, c \qquad\qquad\mbox{ and }\qquad\qquad
\sup_{\veps\in\,]0,1]}\left\| \wtilde{r}_\veps \right\|_{L^{\infty}(\B_{l})}\,\leq\, c \,.
\end{equation}
As usual, here above the radius $l>0$ is fixed (and the constants $c$ depend on it). In addition, in the case $F=0$, there is no need of localising in $\B_l$, and one gets instead
\begin{equation*}
\sup_{\veps\in\,]0,1]}\left\|\vrho_\veps^{(1)}\right\|_{L^\infty_T(L^2+L^{5/3}(\Omega_\veps))}\,\leq\, c \qquad\qquad\mbox{ and }\qquad\qquad
\sup_{\veps\in\,]0,1]}\left\| \wtilde{r}_\veps \right\|_{L^{\infty}(\Omega_\veps)}\,\leq\,\sup_{\veps\in\,]0,1]}\left\| \wtilde{r}_\veps \right\|_{L^{\infty}(\Omega)}\,\leq\, c \,.
\end{equation*}
In view of the previous properties, there exist $\vrho^{(1)}\in L^\infty_T(L^{5/3}_{\rm loc})$ and
$\wtilde{r}\in L^\infty_{\rm loc}$ such that (up to the extraction of a suitable subsequence)
\begin{equation} \label{conv:rr}
\vrho_\veps^{(1)}\,\weakstar\,\vrho^{(1)}\qquad\quad \mbox{ and }\qquad\quad \wtilde{r}_\veps\,\weakstar\,\wtilde{r}\,,
\end{equation}
where we understand that limits are taken in the weak-$*$ topology of the respective spaces.
Therefore
	\begin{equation}\label{conv:r}
	R_\veps\, \weakstar\,R\,:=\,\vrho^{(1)}\,+\,\wtilde{r}\qquad\qquad\qquad \mbox{ weakly-$*$ in }\quad L^\infty\bigl([0,T]; L^{5/3}_{\rm loc}(\Omega)\bigr)\,.
	\end{equation}
Observe that $\wtilde r$ can be interpreted as a datum of our problem.
Moreover, owing to Proposition~\ref{p:target-rho_bound} and \eqref{est:rho_ess}, we also get
$$
	\left[R_\veps \right]_{\rm  ess}\weakstar R\qquad\qquad \mbox{ weakly-$*$ in }\quad L^\infty\bigl([0,T]; L^2_{\rm loc}(\Omega)\bigr)\,.
$$

In a pretty similar way, we also find that
\begin{align}
\Theta_\veps\,:=\,\frac{\vtheta_\veps\,-\,\oline{\vtheta}}{\veps^m}\,&\rightharpoonup\,\Theta
\qquad\qquad\mbox{ in }\qquad L^2\bigl([0,T];W^{1,2}_{\rm loc}(\Omega)\bigr) \label{conv:theta} \\
\vec{u}_\veps\,&\weak\,\vec{U}\qquad\qquad\mbox{ in }\qquad L^2\bigl([0,T];W_{\rm loc}^{1,2}(\Omega)\bigr)\,. \label{conv:u}
\end{align}

Let us infer now some properties that these weak limits have to satisfy, starting with the case of anisotropic scaling, namely, in view of \eqref{eq:choice-m}, either $m\geq2$, or $m>1$ and $F=0$.

\subsubsection{The case of anisotropic scaling} \label{ss:constr_2}

When $m\geq 2$, or $m>1$ and $F=0$, the system presents multiple scales, which act (and interact) at the same time; however, the low Mach number limit has a predominant effect.
As established in the next proposition, this fact imposes some rigid constraints on the target profiles.


\begin{proposition} \label{p:limitpoint}
Let $m\geq2$, or $m>1$ and $F=0$ in (NSF)$_\veps$.
Let $\left( \vre, \ue, \tem\right)_{\veps}$ be a family of weak solutions, related to initial data $\left(\vrho_{0,\veps},\vec u_{0,\veps},\vtheta_{0,\veps}\right)_\veps$
verifying the hypotheses of Paragraph \ref{sss:data-weak}. Let $(\vrho^{(1)},R, \vec{U},\Theta )$ be a limit point of the sequence
$\left(\vrho_\veps^{(1)}, R_\veps, \ue,\Theta_\veps\right)_{\veps}$, as identified in Subsection \ref{sss:constr_prelim}. Then
\begin{align}
&\vec{U}\,=\,\,\Big(\vec{U}^h\,,\,0\Big)\,,\qquad\qquad \mbox{ with }\qquad \vec{U}^h\,=\,\vec{U}^h(t,x^h)\quad \mbox{ and }\quad \div_{\!h}\,\vec{U}^h\,=\,0\,, \label{eq:anis-lim_1} \\[1ex]
&\nabla_x\Big(\d_\varrho p(1,\oline{\vtheta})\,R\,+\,\d_\vtheta p(1,\oline{\vtheta})\,\Theta\Big)\,=\,\nabla_x G\,+\,\delta_2(m)\nabla_x F
\qquad\qquad\mbox{ a.e. in }\;\,\R_+\times \Omega\,, \label{eq:anis-lim_2} \\[1ex]
&\d_{t} \Upsilon +\div_{h}\left( \Upsilon \vec{U}^{h}\right) -\frac{\kappa(\oline\vtheta)}{\oline\vtheta} \Delta \Theta =0\,,\qquad\qquad
\mbox{ with }\qquad \Upsilon\,:=\,\d_\vrho s(1,\oline{\vtheta})R + \d_\vtheta s(1,\oline{\vtheta})\,\Theta\,,
\label{eq:anis-lim_3}
\end{align}
where 
the last equation is supplemented with the initial condition
$\Upsilon_{|t=0}=\d_\vrho s(1,\oline\vtheta)\,R_0\,+\,\d_\vtheta s(1,\oline\vtheta)\,\Theta_0$.
\end{proposition}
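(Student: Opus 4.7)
\proof[Proof proposal]

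The plan is to extract the three constraints by letting $\varepsilon\to 0$ in the weak formulations \eqref{weak-con}, \eqref{weak-mom} and \eqref{weak-ent}, after multiplying each of them by a suitable power of $\varepsilon$ to expose the underlying singular balances. The uniform bounds of Subsection \ref{ss:unif-est}, combined with Proposition \ref{p:prop_5.2}, provide the weak convergence of the relevant non-linear functionals of $(\varrho_\varepsilon,\vartheta_\varepsilon)$ on the essential set, whereas residual contributions are $O(\varepsilon^{2m})$ and hence negligible.

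The incompressibility $\div\vec U=0$ follows at once by rewriting \eqref{ceq} as $\varepsilon^m\partial_tR_\varepsilon+\div(\varrho_\varepsilon\vec u_\varepsilon)=0$ and letting $\varepsilon\to 0$ in $\mc D'$. For \eqref{eq:anis-lim_2} I multiply \eqref{meq} by $\varepsilon^m$: the time-derivative, convective and viscous terms are $O(\varepsilon^m)$, the Coriolis term is $O(\varepsilon^{m-1})$, and since $m>1$ all of them vanish. Taylor-expanding $p(\varrho_\varepsilon,\vartheta_\varepsilon) = p(1,\oline\vtheta) + \varepsilon^m\bigl(\partial_\vrho p(1,\oline\vtheta)\,R_\varepsilon + \partial_\vtheta p(1,\oline\vtheta)\,\Theta_\varepsilon\bigr) + O(\varepsilon^{2m})$ on $\mc O_{\rm ess}$, which is rigorous via Proposition \ref{p:prop_5.2}, makes $\varepsilon^{-m}\nabla_x[p(\varrho_\varepsilon,\vartheta_\varepsilon)-p(1,\oline\vtheta)]$ converge in $\mc D'$ to $\nabla_x[\partial_\vrho p(1,\oline\vtheta)R + \partial_\vtheta p(1,\oline\vtheta)\Theta]$, while the source terms become $\varepsilon^{m-2}\varrho_\varepsilon\nabla_xF + \varrho_\varepsilon\nabla_xG \to \delta_2(m)\nabla_xF + \nabla_xG$ thanks to the local convergence $\varrho_\varepsilon\to 1$. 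This yields \eqref{eq:anis-lim_2}.

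For the Taylor--Proudman structure \eqref{eq:anis-lim_1} I instead multiply \eqref{meq} by $\varepsilon$ and test against $\vec\psi\in C^\infty_c([0,T[\,\times\overline\Omega;\R^3)$ with $\div\vec\psi=0$ and $\vec\psi\cdot\vec n|_{\partial\Omega}=0$. The singular pressure term disappears after integration by parts; using $\int_\Omega\nabla_xF\cdot\vec\psi = \int_\Omega\nabla_xG\cdot\vec\psi = 0$, the potential-force contributions reduce to $\varepsilon^{m-1}\int R_\varepsilon\nabla_xF\cdot\vec\psi + \varepsilon\int R_\varepsilon\nabla_xG\cdot\vec\psi$, which vanishes for $m>1$, while the remaining inertial, convective and viscous terms carry a factor $\varepsilon$ and drop out. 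What survives is
\[
\int_0^T\!\!\int_\Omega\bigl(\vec e_3\times\vec U\bigr)\cdot\vec\psi\,\dxdt\,=\,0
\]
for every such $\vec\psi$, hence $\vec e_3\times\vec U = \nabla_x\Phi$ in $\mc D'$. Taking the $\curl$ and using $\curl(\vec e_3\times\vec U) = -\partial_3\vec U + \vec e_3\,\div\vec U$ together with $\div\vec U=0$ yields $\partial_3\vec U = 0$; the boundary condition $\vec u_\varepsilon\cdot\vec n_\varepsilon = 0$ on $\{x^3\in\{0,1\}\}$ passes to the limit as $U^3|_{x^3\in\{0,1\}}=0$, and combined with $\partial_3U^3=0$ this forces $U^3\equiv 0$. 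Therefore $\vec U=(\vec U^h,0)$ with $\vec U^h = \vec U^h(t,x^h)$ and $\divh\vec U^h = 0$.

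Finally, for \eqref{eq:anis-lim_3} set $\bar s := s(1,\oline\vtheta)$ and subtract $\bar s$ times the continuity equation from \eqref{weak-ent} to recast the flux part as $\partial_t[\varrho_\varepsilon(s-\bar s)] + \div[\varrho_\varepsilon(s-\bar s)\vec u_\varepsilon] + \div(\vec q/\vartheta_\varepsilon) = \sigma_\varepsilon$. Dividing by $\varepsilon^m$ and applying Proposition \ref{p:prop_5.2} to the function $(\vrho,\vtheta)\mapsto\vrho(s(\vrho,\vtheta)-\bar s)$ give
\[
\Upsilon_\varepsilon \,:=\, \frac{\varrho_\varepsilon\bigl(s(\varrho_\varepsilon,\vartheta_\varepsilon)-\bar s\bigr)}{\varepsilon^m} \;\weakstar\; \Upsilon \,=\, \partial_\vrho s(1,\oline\vtheta)\,R + \partial_\vtheta s(1,\oline\vtheta)\,\Theta
\]
on the essential set, the residual part being $O(\varepsilon^m)$ by \eqref{est:rho-s_res} and \eqref{est:M_res-measure}. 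Since $\vec q/\vartheta_\varepsilon = -\varepsilon^m(\kappa(\vartheta_\varepsilon)/\vartheta_\varepsilon)\nabla_x\Theta_\varepsilon$, the heat flux contribution $-\varepsilon^{-m}\div(\vec q/\vartheta_\varepsilon)$ converges to $-(\kappa(\oline\vtheta)/\oline\vtheta)\Delta\Theta$; the entropy production is $O(\varepsilon^m)$ by \eqref{est:sigma} and drops out; and the initial datum $\Upsilon_{|t=0} = \partial_\vrho s(1,\oline\vtheta)R_0 + \partial_\vtheta s(1,\oline\vtheta)\Theta_0$ is identified by the same Taylor expansion on the data. The delicate point, and the main obstacle, is the convective flux $\Upsilon_\varepsilon\vec u_\varepsilon$: both factors are only weakly convergent, and strong compactness on either cannot be read off the static bounds because of the fast oscillations carried by the acoustic--Poincar\'e waves. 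This is precisely where the compensated-compactness argument announced in the introduction (based on the special algebraic structure of the wave system satisfied by $\vec u_\varepsilon$ and a suitable combination $Z_\varepsilon$ of density and temperature variations) must be invoked to identify the product as $\Upsilon\vec U$; once this is granted, the Taylor--Proudman reduction gives $\div(\Upsilon\vec U) = \divh(\Upsilon\vec U^h)$ and completes the proof.
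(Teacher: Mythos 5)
Your treatment of \eqref{eq:anis-lim_1} and \eqref{eq:anis-lim_2} is essentially the paper's. Passing to the limit in the continuity equation gives $\div\vec U=0$; testing the momentum equation against $\veps^m\vec\phi$ and using Proposition \ref{p:prop_5.2} together with the static relation \eqref{prF} gives the Boussinesq relation; testing against $\veps\vec\phi$ with $\vec\phi$ divergence-free (the paper uses $\vec\phi=\curl\vec\psi$, you use $\div\vec\psi=0$ with $\vec\psi\cdot\vec n|_{\partial\Omega}=0$, which is cosmetically different but equivalent) yields $\e_3\times\vec U=\nabla_x\Phi$ and the Taylor--Proudman structure. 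Those steps are fine.

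The genuine error is in the last paragraph, concerning the convective flux $\Upsilon_\veps\vec u_\veps$. You claim that strong compactness is unavailable ``because of the fast oscillations carried by the acoustic--Poincar\'e waves'' and that the wave-system compensated-compactness machinery (the $\gamma_\veps$/$Z_\veps$ construction of Section~\ref{s:proof}) must be invoked. This misidentifies the difficulty. The entropy fluctuation $\Upsilon_\veps$ is precisely the \emph{slow} variable of the system: the fast acoustic oscillations live on the pressure-like combination $\d_\vrho p\,R_\veps+\d_\vtheta p\,\Theta_\veps$, not on the entropy-like combination $\d_\vrho s\,R_\veps+\d_\vtheta s\,\Theta_\veps$. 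Concretely, the entropy balance itself, once divided by $\veps^m$, says that $\d_t\Upsilon_\veps$ is bounded \emph{uniformly in $\veps$} in a suitable negative Sobolev space --- there is no $\veps^{-m}\div\vec V_\veps$ on the right-hand side as there is for $\d_t\vrho^{(1)}_\veps$. Hence $\Upsilon_\veps$ is compact in time, $\vec u_\veps$ is compact in space (controlled gradients), and the product $\Upsilon_\veps\vec u_\veps$ converges to $\Upsilon\vec U$ by the Div-Curl Lemma (or an Aubin--Lions argument). This is exactly what the paper does, citing Paragraph~5.3.2 of \cite{F-N}. The wave-system compensated-compactness argument of Section~\ref{s:proof} is a much heavier tool, reserved for the momentum convective term $\vrho_\veps\vec u_\veps\otimes\vec u_\veps$, where \emph{both} factors genuinely carry the fast oscillations and no such complementary compactness is available; invoking it here is neither needed nor sufficient as written, since you would still have to relate $Z_\veps$ to the product $\Upsilon_\veps\vec u_\veps$, which is not what the wave analysis produces.

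One more minor point: your residual estimates claim $O(\veps^m)$ for $[\vrho_\veps(s-\bar s)]_{\res}/\veps^m$, but the bounds \eqref{est:rho-s_res} and \eqref{est:M_res-measure} only give that the residual contributions tend to $0$ (not a rate $\veps^m$). This does not affect the conclusion, but the claim as stated is stronger than what the uniform estimates deliver.
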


\begin{proof}
Let us focus here on the case $m\geq 2$ and $F\neq 0$. A similar analysis yields the result also in the case $m>1$, provided we take $F=0$.

First of all, let us consider the weak formulation of the mass equation \eqref{ceq}: for any test function $\varphi\in C_c^\infty\bigl(\R_+\times\Omega\bigr)$, denoting $[0,T]\times K\,=\,{\rm supp} \, \varphi$, with $\vphi(T,\cdot)\equiv0$, we have
$$
-\int^T_0\int_K\bigl(\vrho_\veps-1\bigr)\,\d_t\varphi \dxdt\,-\,\int^T_0\int_K\vrho_\veps\,\vec{u}_\veps\,\cdot\,\nabla_{x}\varphi \dxdt\,=\,
\int_K\bigl(\vrho_{0,\veps}-1\bigr)\,\varphi(0,\,\cdot\,)\dx\,.
$$
We can easily pass to the limit in this equation, thanks to the strong convergence $\vrho_\veps\longrightarrow1$ provided by \eqref{rr1} and the weak convergence of
$\vec{u}_\veps$ in $L_T^2\bigl(L^6_{\rm loc}\bigr)$ (by \eqref{conv:u} and Sobolev embeddings): we find
$$
-\,\int^T_0\int_K\vec{U}\,\cdot\,\nabla_{x}\varphi \dxdt\,=\,0
$$
for any test function $\varphi \, \in C_c^\infty\bigl([0,T[\,\times\Omega\bigr)$, which in particular implies
\begin{equation} \label{eq:div-free}
\div \U = 0 \qquad\qquad\mbox{ a.e. in }\; \,\R_+\times \Omega\,.
\end{equation}

Let us now focus on the momentum equation \eqref{meq}, or rather on its weak formulation \eqref{weak-mom}.
First of all, we test the momentum equation on $\veps^m\,\vec\phi$, for a smooth compactly supported $\vec\phi$.
By use of the uniform bounds we got in Subsection \ref{ss:unif-est}, it is easy to see that the only terms which do not converge to $0$ are the ones involving the pressure and
the gravitational force; in the endpoint case $m=2$, we also have the contribution of the centrifugal force. Hence, let us focus on them, and more precisely on the quantity
$$
\Xi\,:=\,\frac{\nabla_x p(\vrho_\veps,\vtheta_\veps)}{\veps^m}\,-\,\veps^{m-2}\,\vrho_\veps\nabla_x F\,-\,\vrho_\veps\nabla_x G\,.
$$
Owing to relation \eqref{prF}, we can write 
\begin{equation}\label{eq:mom_rest_1}
\Xi\,=\,
\frac{1}{\veps^m}\nabla_x\left(p(\vrho_\veps,\vtheta_\veps)\,-\,p(\wtilde{\vrho}_\veps,\oline{\vtheta})\right)\,-\,
\veps^{m-2}\,\left(\vrho_\veps-\wtilde{\vrho}_\veps\right)\nabla_x F\,-\,\left(\vrho_\veps-\wtilde{\vrho}_\veps\right)\nabla_x G\,.
\end{equation}
By uniform bounds and \eqref{conv:r}, it is easy to see that the second and third terms in the right-hand side of the previous relation converge to $0$,
whenever tested against any smooth compactly supported $\vec\phi$; notice that this is true actually for any $m>1$.
On the other hand, for the first item we can use the decomposition
$$
\frac{1}{\veps^m}\,\nabla_x\left(p(\vrho_\veps,\vtheta_\veps)\,-\,p(\wtilde{\vrho}_\veps,\oline{\vtheta})\right)\,=\,
\frac{1}{\veps^m}\,\nabla_x\left(p(\vrho_\veps,\vtheta_\veps)\,-\,p(1,\oline{\vtheta})\right)\,-\,
\frac{1}{\veps^m}\,\nabla_x\left(p(\wtilde{\vrho}_\veps,\oline{\vtheta})\,-\,p(1,\oline{\vtheta})\right)\,.
$$

Due to the smallness of the residual set \eqref{est:M_res-measure} and to estimates \eqref{est:rho_res} and \eqref{est:theta_res}, decomposing $p$ into essential and residual part
and then applying Proposition \ref{p:prop_5.2}, give us the convergence
$$
\frac{1}{\veps^m}\,\nabla_x\left(p(\vrho_\veps,\vtheta_\veps)\,-\,p(1,\oline{\vtheta})\right)\;\stackrel{*}{\rightharpoonup}\;
\nabla_x\left(\d_\varrho p(1,\oline{\vtheta})\,R\,+\,\d_\vtheta p(1,\oline{\vtheta})\,\Theta\right)
$$
in $L_T^\infty(H^{-1}_{\rm loc})$, for any $T>0$.
On the other hand, a Taylor expansion of $p(\,\cdot\,,\oline{\vtheta})$ up to the second order around $1$ gives, together with Proposition \ref{p:target-rho_bound}, the bound
$$
\left\|\frac{1}{\veps^m}\,\left(p(\wtilde{\vrho}_\veps,\oline{\vtheta})\,-\,p(1,\oline{\vtheta})\right)\,-\,
\d_\varrho p(1,\oline{\vtheta})\,\wtilde{r}_\veps\right\|_{L^\infty(K)}\,\leq\,C(K)\,\veps^m
$$
for any compact set $K\subset\Omega$. From the previous estimate we deduce that
$\left(p(\wtilde{\vrho}_\veps,\oline{\vtheta})\,-\,p(1,\oline{\vtheta})\right)/\veps^m\,\longrightarrow\,
\d_\varrho p(1,\oline{\vtheta})\,\wtilde{r}$ in e.g. $\mc{D}'\bigl(\R_+\times\Omega\bigr)$.

Putting all these facts together and keeping in mind relation \eqref{conv:r}, thanks to \eqref{eq:mom_rest_1} we finally find the celebrated \emph{Boussinesq relation}
	\begin{equation} \label{eq:rho-theta}
	\nabla_x\left(\d_\varrho p(1,\oline{\vtheta})\,\vrho^{(1)}\,+\,\d_\vtheta p(1,\oline{\vtheta})\,\Theta\right)\,=\,0
	\qquad\qquad\mbox{ a.e. in }\; \R_+\times \Omega\,.
	\end{equation}

\begin{remark} \label{r:F-G}
Notice that, dividing \eqref{prF} by $\veps^m$ and passing to the limit in it, one gets the identity
$$
\d_\varrho p(1,\oline{\vtheta})\,\nabla_x\wtilde{r} \,=\,\nabla_x G\,+\,\delta_2(m)\nabla_x F\,,
$$
where we have set $\delta_2(m)=1$ if $m=2$, $\delta_2(m)=0$ otherwise.

Therefore, the previous relation \eqref{eq:rho-theta} is actually equivalent to equality \eqref{eq:anis-lim_2}, 
which might be more familiar to the reader (see formula (5.10), Chapter 5 in \cite{F-N}).
\end{remark}



Up to now, the contribution of the fast rotation in the limit has not been seen: this is due to the fact that the incompressible
limit takes place faster than the high rotation limit, because $m>1$. Roughly speaking, the rotation term enters into the singular
perturbation operator as a ``lower order'' part; nonetheless, being singular, it will impose some conditions on the limit dynamics,
and one has to deal with it in the convergence process.

Let us make rigorous what we have just said. We test \eqref{meq} on $\veps\,\vec\phi$, where this time we take $\vec\phi\,=\,\curl\vec\psi$, for some smooth compactly supported $\vec\psi\,\in C^\infty_c\bigl([0,T[\,\times\Omega\bigr)$.
Once again, by uniform bounds we infer that the $\d_t$ term, the convective term and the viscosity term all converge to $0$ when $\veps\ra0$.
As for the pressure and the external forces, we repeat the same manipulations as before: making use of relation \eqref{prF} again, we are reconducted to work on
$$
\int^T_0\int_K\left(\frac{1}{\veps^{2m-1}}\nabla_x\left(p(\vrho_\veps,\vtheta_\veps)\,-\,p(\wtilde{\vrho}_\veps,\oline{\vtheta})\right)\,-\,
\frac{\vrho_\veps-\wtilde{\vrho}_\veps}{\veps}\,\nabla_x F\,-\,\frac{\vrho_\veps-\wtilde{\vrho}_\veps}{\veps^{m-1}}\nabla_x G\right)\cdot\vec\phi\,\dx\,dt\,,
$$
where we have supposed that $\Supp\vec\phi\subset[0,T[\,\times K$, for some compact set $K\subset\Omega$, and $\veps>0$ is small enough.
According  to \eqref{rr1}, the two forcing terms converge to $0$, in the limit for $\veps\ra0$; on the other hand, the first term (which has no chance to be bounded uniformly in $\veps$)
simply vanishes, due to the fact that $\vec\phi\,=\,{\rm curl}\,\vec\psi$.

Finally, using a priori bounds and properties \eqref{conv:r} and
\eqref{conv:u}, it is easy to see that the rotation term converges to $\int^T_0\int_K\e_3\times\vec{U}\cdot\vec\phi$.
In the end, passing to the limit for $\veps\ra0$ we find
$$
\mbb{H}\left(\e_3\times\vec{U}\right)\,=\,0\qquad\qquad\Longrightarrow\qquad\qquad \e_3\times\vec{U}\,=\,\nabla_x\Phi\,
$$
for some potential function $\Phi$. From this relation, it is standard to deduce that $\Phi=\Phi(t,x^h)$, i.e. $\Phi$ does not depend
on $x^3$, and that the same property is inherited by $\vec{U}^h\,=\,\bigl(U^1,U^2\bigr)$, i.e. $\vec{U}^h\,=\,\vec{U}^h(t,x^h)$. Furthermore, it is also easy to see that
the $2$-D flow given by $\vec{U}^h$ is incompressible, namely $\div_{\!h}\,\vec{U}^h\,=\,0$.
Combining this fact with \eqref{eq:div-free}, we infer that $\d_3 U^3\,=\,0$; on the other hand, thanks to the boundary condition
\eqref{bc1-2} we must have $\bigl(\vec{U}\cdot\vec{n}\bigr)_{|\d\Omega}\,=\,0$. Keeping in mind that
$\d\Omega\,=\,\bigl(\R^2\times\{0\}\bigr)\cup\bigl(\R^2\times\{1\}\bigr)$, we finally get $U^3\,\equiv\,0$,
whence \eqref{eq:anis-lim_1} finally follows.

Next, we observe that we can by now pass to the limit in the weak formulation \eqref{weak-ent} of equation \eqref{eiq}.
The argument being analogous to the one used in \cite{F-N} (see Paragraph 5.3.2), we will only sketch it.
First of all, testing \eqref{eiq} on $\varphi/\veps^m$, for some $\varphi\in C^\infty_c\bigl([0,T[\,\times\Omega\bigr)$, and using \eqref{ceq}, for $\veps>0$ small enough we get
\begin{align}
&-\int^T_0\!\!\int_K\vrho_\veps\left(\frac{s(\vrho_\veps,\vtheta_\veps)-s(1,\oline{\vtheta})}{\veps^m}\right)\d_t\varphi -
\int^T_0\!\!\int_K\vrho_\veps\left(\frac{s(\vrho_\veps,\vtheta_\veps)-s(1,\oline{\vtheta})}{\veps^m}\right)\vec{u}_\veps\cdot\nabla_x\varphi \label{weak:entropy}  \\
&+\int^T_0\!\!\int_K\frac{\kappa(\vtheta_\veps)}{\vtheta_\veps}\,\frac{1}{\veps^m}\,\nabla_x\vtheta_\veps\cdot\nabla_x\varphi-
\frac{1}{\veps^m}\,\langle\sigma_\veps,\varphi\rangle_{[\mc{M}^+,C]([0,T]\times K)} =
\int_K\vrho_{0,\veps}\left(\frac{s(\vrho_{0,\veps},\vtheta_{0,\veps})-s(1,\oline{\vtheta})}{\veps^m}\right)\varphi(0)\,.  \nonumber
\end{align}

To begin with, let us decompose
\begin{align}
&\vrho_\veps\left(\frac{s(\vrho_\veps,\vtheta_\veps)-s(1,\oline{\vtheta})}{\veps^m}\right) =  \label{eq:dec_rho-s} \\
&=[\vrho_\veps]_{\ess}\left(\frac{[s(\vrho_\veps,\vtheta_\veps)]_{\ess}-s(1,\oline{\vtheta})}{\veps^m}\right) + 
\left[\frac{\vrho_\veps}{\veps^m}\right]_{\res}\left([s(\vrho_\veps,\vtheta_\veps)]_{\ess}-s(1,\oline{\vtheta})\right) +
\left[\frac{\vrho_\veps\,s(\vrho_\veps,\vtheta_\veps)}{\ep^m}\right]_{\res}\,.  \nonumber
\end{align}
Thanks to \eqref{est:rho_res}, we discover that the second term in the right-hand side strongly converges to $0$ in
$L_T^\infty(L^{5/3}_{\rm loc})$. 
Also the third term converges to $0$ in the space $ L_T^2(L^{30/23}_{\rm loc})$, as a consequence of \eqref{est:M_res-measure} and \eqref{est:rho-s_res}.
Notice that these terms converge to $0$ even when multiplied by $\vec{u}_\veps$: to see this, it is enough to put \eqref{est:M_res-measure},
\eqref{est:rho-s_res}, \eqref{est:u-H^1} and the previous properties together.

As for the first term in the right-hand side of \eqref{eq:dec_rho-s}, Propositions \ref{p:prop_5.2} and \ref{p:target-rho_bound} and estimate \eqref{rr1} imply that it
weakly converges to $\d_\vrho s(1,\oline{\vtheta})\,R\,+\,\d_\vtheta s(1,\oline{\vtheta})\,\Theta$, where $R$ and $\Theta$ are defined respectively in \eqref{conv:r}
and \eqref{conv:theta}. On the other hand, an application of the Div-Curl Lemma (we refer to Paragraph 5.3.2 of \cite{F-N} for details) gives
$$
[\vrho_\veps]_{\ess}\left(\frac{[s(\vrho_\veps,\vtheta_\veps)]_{\ess}-s(1,\oline{\vtheta})}{\veps^m}\right)\,\vec{u}_\veps\,\rightharpoonup\,
\Bigl(\d_\vrho s(1,\oline{\vtheta})\,R\,+\,\d_\vtheta s(1,\oline{\vtheta})\,\Theta\Bigr)\,\vec{U}
$$
in the space $L_T^2(L^{3/2}_{\rm loc})$.  
In addition, from estimate \eqref{est:sigma} we deduce the convergence
$$
\frac{1}{\veps^m}\,\langle\sigma_\veps\,,\,\varphi\rangle_{[\mc{M}^+,C]([0,T]\times\Omega)}\,\longrightarrow\,0\,.
$$
Finally, a separation into essential and residual part of the coefficient $\kappa(\vtheta_\veps)/\vtheta_\veps$, together with \eqref{mu}, \eqref{est:theta-ess},
\eqref{est:theta_res},   \eqref{est:theta-Sob}  and \eqref{est:Dtheta_res} gives
$$
\frac{\kappa(\vtheta_\veps)}{\vtheta_\veps}\,\frac{1}{\veps^m}\,\nabla_x\vtheta_\veps\,\rightharpoonup\,
\frac{\kappa(\oline\vtheta)}{\oline\vtheta}\,\nabla_x\Theta\qquad\qquad\mbox{ in }\qquad L^2\bigl([0,T];L^{1}_{\rm loc}(\Omega)\bigr)\,.
$$

In the end, we have proved that equation \eqref{weak:entropy} converges, for $\veps\ra0$, to equation
\begin{align}
&-\int^T_0\int_\Omega\Bigl(\d_\vrho s(1,\oline{\vtheta})R + \d_\vtheta s(1,\oline{\vtheta})\,\Theta\Bigr)\left(\d_t\varphi + 
\vec{U}\cdot\nabla_x\varphi\right)\dxdt \,+ \label{eq:ent_bal_lim_1} \\
&\qquad\qquad+ \int^T_0\int_\Omega\frac{\kappa(\oline\vtheta)}{\oline\vtheta} \nabla_x\Theta\cdot\nabla_x\varphi\dxdt =
\int_\Omega\Bigl(\d_\vrho s(1,\oline{\vtheta})\,R_0\,+\,\d_\vtheta s(1,\oline{\vtheta})\,\Theta_0\Bigr)\,\varphi(0)\dx \nonumber
\end{align}
for all $\varphi \in C_c^\infty([0,T[\,\times\Omega)$, with $T>0$ any arbitrary time.
Relation \eqref{eq:ent_bal_lim_1} means that the quantity $\Upsilon$, defined in \eqref{eq:anis-lim_3}, is a weak solution of that equation,
related to the initial datum $\Upsilon_0:=\d_\vrho s(1,\oline\vtheta)\,R_0\,+\,\d_\vtheta s(1,\oline\vtheta)\,\Theta_0$.
Equation \eqref{eq:anis-lim_3} is in fact an equation for $\Theta$ only, keep in mind Remark \ref{r:lim delta theta}.
\qed
\end{proof}

\subsubsection{The case of isotropic scaling} \label{ss:constr_1}

We focus now on the case of isotropic scaling, namely $m=1$. Recall that, in this instance, we also set $F=0$. In this case, the fast rotation and weak compressibility
effects are of the same order; in turn, this allows to reach the so-called \emph{quasi-geostrophic balance} in the limit (see equation \eqref{eq:for q} below).
\begin{proposition}  \label{p:limit_iso}
Take $m=1$ and $F=0$ in system (NSF)$_\veps$.
Let $\left( \vre, \ue, \tem\right)_{\veps}$ be a family of weak solutions to (NSF)$_\veps$, associated with initial data
$\left(\vrho_{0,\veps},\vec u_{0,\veps},\vtheta_{0,\veps}\right)$ verifying the hypotheses fixed in Paragraph \ref{sss:data-weak}.
Let $(R, \vec{U},\Theta )$ be a limit point of the sequence $\left(R_{\veps} , \ue,\Theta_\veps\right)_{\veps}$, as identified in Subsection
\ref{sss:constr_prelim}.
Then
\begin{align}
&\vec{U}\,=\,\,\Big(\vec{U}^h\,,\,0\Big)\,,\qquad\qquad \mbox{ with }\qquad \vec{U}^h\,=\,\vec{U}^h(t,x^h)\quad \mbox{ and }\quad \div_{\!h}\,\vec{U}^h\,=\,0\,, \nonumber \\[1ex] 
&\vec{U}^h\,=\,\nabla^\perp_hq
\;\mbox{ a.e. in }\;\,]0,T[\, \times \Omega\,,
\quad\mbox{ with }\quad q\,=\,q(t,x^h)\,:=\,\d_\varrho p(1,\oline{\vtheta})R+\d_\vtheta p(1,\oline{\vtheta})\Theta-G-1/2\,,  \label{eq:for q} \\[1ex]
&\d_{t} \Upsilon +\divh\left( \Upsilon \vec{U}^{h}\right) -\frac{\kappa(\oline\vtheta)}{\oline\vtheta} \Delta \Theta =0\,,\qquad\quad\mbox{ with }\qquad 
\Upsilon_{|t=0}\,=\,\Upsilon_0\,, \nonumber 
\end{align}
where $ \Upsilon$ and $\Upsilon_0$ are the same quantities defined in Proposition \ref{p:limitpoint}.
\end{proposition}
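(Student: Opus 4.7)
The plan is to adapt the strategy used for Proposition \ref{p:limitpoint} to the isotropic scaling $m=1$, $F=0$. The key conceptual difference is that when $m=1$ the Coriolis term, the rescaled pressure term and the gravity term all become singular at the \emph{same} order $1/\veps$, so a single testing with $\veps\,\vec\psi$ captures the full quasi-geostrophic balance, rather than decoupling into a Boussinesq relation plus a Taylor--Proudman condition as in the anisotropic case.

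First I would dispose of the easy parts. Passing to the limit in the weak formulation \eqref{weak-con} of the continuity equation, exactly as in the proof of Proposition \ref{p:limitpoint}, yields $\div\vec U=0$ in $\mc D'$. Then, for the momentum equation \eqref{weak-mom}, I test against $\veps\,\vec\psi$ with $\vec\psi\in C^\infty_c([0,T[\,\times\Omega;\R^3)$ satisfying $(\vec\psi\cdot\vec n)_{|\d\Omega}=0$. The uniform bounds of Subsection \ref{ss:unif-est} make the $\d_t$ contribution, the convective term and the viscous term vanish, while
\[
\veps\cdot\frac{1}{\veps}\int_0^T\!\!\int\e_3\times(\vre\ue)\cdot\vec\psi\;\longrightarrow\;\int_0^T\!\!\int\e_3\times\vec U\cdot\vec\psi\,.
\]
For the singular pressure, I subtract the constant state $p(1,\oline\vtheta)$ (which disappears against $\div\vec\psi$) and apply Proposition \ref{p:prop_5.2} to deduce
\[
-\,\frac{1}{\veps}\int_0^T\!\!\int\bigl(p(\vre,\tem)-p(1,\oline\vtheta)\bigr)\div\vec\psi\;\longrightarrow\;-\int_0^T\!\!\int\bigl(\d_\vrho p(1,\oline\vtheta)R+\d_\vtheta p(1,\oline\vtheta)\Theta\bigr)\div\vec\psi\,,
\]
while $\int\vre\,\nabla_x G\cdot\vec\psi\to\int\nabla_xG\cdot\vec\psi$ by the strong convergence $\vre\to1$ of \eqref{rr1}. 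Collecting everything and integrating by parts back yields, in $\mc D'$,
\[
\e_3\times\vec U\,+\,\nabla_x\bigl(\d_\vrho p(1,\oline\vtheta)\,R+\d_\vtheta p(1,\oline\vtheta)\,\Theta\bigr)\,-\,\nabla_xG\;=\;0\,.
\]
Recalling the definition of $q$, this rewrites as $\e_3\times\vec U=-\nabla_xq$. Reading the third component gives $\d_3q=0$, hence $q=q(t,x^h)$; the horizontal components give $U^1=-\d_2q$ and $U^2=\d_1q$, so $\vec U^h=\nabla_h^\perp q$ depends only on $(t,x^h)$ and automatically satisfies $\div_h\vec U^h=0$. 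Combined with $\div\vec U=0$ this forces $\d_3U^3=0$, and the impermeability condition \eqref{bc1-2} at $x^3\in\{0,1\}$ finally yields $U^3\equiv0$.

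For the transport-diffusion equation on $\Upsilon$, I would repeat verbatim the argument used at the end of the proof of Proposition \ref{p:limitpoint}: test the weak entropy balance \eqref{weak-ent} against $\varphi/\veps$ with $\varphi\in C^\infty_c([0,T[\,\times\Omega)$, split $\vre\bigl(s(\vre,\tem)-s(1,\oline\vtheta)\bigr)/\veps$ into essential and residual parts as in \eqref{eq:dec_rho-s}, use Proposition \ref{p:prop_5.2} to identify the weak-$*$ limit of the essential part as $\Upsilon=\d_\vrho s(1,\oline\vtheta)R+\d_\vtheta s(1,\oline\vtheta)\Theta$, and apply the Div-Curl lemma to pass to the limit in the product with $\ue$. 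The heat-flux term converges to $(\kappa(\oline\vtheta)/\oline\vtheta)\nabla_x\Theta$ by \eqref{mu}, \eqref{est:theta-Sob} and \eqref{est:Dtheta_res}, while $\sigma_\veps/\veps\to0$ in $\mc M^+$ thanks to \eqref{est:sigma} (here the scaling is favourable since $2m=2>1$). Since $\vec U=(\vec U^h,0)$, the drift term becomes $\div_h(\Upsilon\vec U^h)$, yielding exactly \eqref{eq:anis-lim_3} with initial datum $\Upsilon_0$.

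The main obstacle is checking that no new singular contribution survives in the momentum limit beyond the three terms identified above: one must verify that the convective term $\vre\ue\otimes\ue$, tested against $\nabla\vec\psi$ and multiplied by $\veps$, indeed vanishes despite $\ue$ being only bounded in $L^2_T(W^{1,2}_{\rm loc})$ and $\vre\ue$ in $L^\infty_T(L^2+L^{5/3})$; a standard interpolation combined with Sobolev embedding $H^1\hookrightarrow L^6$ handles this. Likewise, one must ensure that the contribution $(\vre-\wtilde\vrho_\veps)\nabla_xG$ produced in the decomposition via \eqref{prF} is absorbed correctly: by \eqref{conv:r} and Proposition \ref{p:target-rho_bound}, $(\vre-\wtilde\vrho_\veps)/\veps=\vrho^{(1)}_\veps\weakstar\vrho^{(1)}$, so multiplying by $\veps$ gives a vanishing contribution, confirming that the only surviving gravity contribution comes from $\vre\to1$ in $L^\infty_T(L^2+L^{5/3}_{\rm loc})$ against $\nabla_x G\in L^\infty$.
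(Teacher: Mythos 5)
Your proof is correct and follows essentially the same strategy as the paper: test \eqref{weak-mom} against $\veps\vec\psi$, show the Coriolis, pressure and gravity contributions are the only non-vanishing terms, apply Proposition \ref{p:prop_5.2} (plus the residual smallness estimates) to identify the pressure limit, and then read off $q=q(t,x^h)$, $\vec U^h=\nabla_h^\perp q$ and $U^3\equiv0$ from $\e_3\times\vec U+\nabla_xq=0$; the entropy balance is handled verbatim as in Proposition \ref{p:limitpoint}. The only (cosmetic) difference is in the bookkeeping: you subtract the constant $p(1,\oline\vtheta)$ and keep the gravity term $\vre\nabla_x G\to\nabla_xG$ directly, whereas the paper subtracts the static state $p(\wtilde\vrho_\veps,\oline\vtheta)$ using \eqref{prF} (so gravity reduces to the $\mc O(\veps)$ remainder $(\vre-\wtilde\vrho_\veps)\nabla_xG$) and then invokes Remark \ref{r:F-G} to trade $\vrho^{(1)}$ for $R$ and re-insert $\nabla_xG$; both arrive at the same identity, as you yourself verify in your final paragraph.
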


\begin{proof}
Arguing as in the proof of Proposition \ref{p:limitpoint}, it is easy to pass to the limit in the continuity equation and in the entropy balance. In particular, we obtain again
equations \eqref{eq:div-free} and \eqref{eq:ent_bal_lim_1}.

The only changes concern the analysis of the momentum equation, written in its weak formulation \eqref{weak-mom}.
We start by testing it on $\veps\,\vec\phi$, for a smooth compactly supported $\vec\phi$. Similarly to what done above, the uniform bounds of Subsection \ref{ss:unif-est}
allow us to say that the only quantity which does not vanish in the limit is the sum of the terms involving the Coriolis force, the pressure and the gravitational force:
$$
\vec{e}_{3}\times \vrho_{\veps}\ue\,+\frac{\nabla_x \left( p(\vrho_\veps,\vtheta_\veps)-p(\widetilde{\vrho}_\veps,\vtheta_\veps)\right)}{\veps}\,-\,
\left(\vrho_\veps-\widetilde{\vrho}_\veps \right)\nabla_x G\,=\,\mc O(\veps)\,.
$$
From this relation, following the same computations performed in the proof of Proposition \ref{p:limitpoint}, in the limit $\veps\ra0$ we obtain that
$$ 
\vec{e}_{3}\times \vec{U}+\nabla_x\left(\d_\varrho p(1,\oline{\vtheta})\,\vrho^{(1)}\,+\,\d_\vtheta p(1,\oline{\vtheta})\,\Theta\right)\,=\,0 \qquad\qquad\mbox{ a.e. in }\; \R_+\times \Omega\,.
$$ 
After defining $q$ as in \eqref{eq:for q} and keeping Remark \ref{r:F-G} in mind, this equality can be equivalently written in the following way:
$$ 
\vec{e}_{3}\times \vec{U}+\nabla_xq\,=\,0 \qquad\qquad\mbox{ a.e. in }\; \R_+ \times \Omega\,.
$$ 
As done in the proof to Proposition \ref{p:limitpoint}, from this relation we immediately deduce that $q=q(t,x^h)$ and $\vec{U}^h=\vec{U}^h(t,x^h)$.
In addition, we get $\vec{U}^h=\nabla^\perp_hq$, whence we gather that $q$ can be viewed as a stream function for $\vec U^h$. 
Using \eqref{eq:div-free}, we infer that $\d_{3}U^{3}=0$, which in turn implies that $U^{3}\equiv0$, thanks to \eqref{bc1-2}.
The proposition is thus proved.
\qed
\end{proof}

\begin{remark} \label{r:q}
Notice that $q$ is defined up to an additive constant. We fix it to be $-1/2$, in order to compensate the vertical mean of $G$ and have a cleaner expression for $\lan q\ran$ (see 
Theorem \ref{th:m=1_F=0}). As a matter of fact, it is $\lan q\ran$ the natural quantity to look at, see also Subsection \ref{ss:limit_1} in this respect.
\end{remark}

\section{Convergence in presence of the centrifugal force}\label{s:proof}

In this section we complete the proof of Theorem \ref{th:m-geq-2}, in the case when $m\geq2$ and $F\neq0$. In the case $m>1$ and $F=0$, some arguments of the proof slightly change,
due to the absence of the (unbounded) centrifugal force: we refer to Section \ref{s:proof-1} below for more details.

\medbreak
The uniform bounds established in Subsection \ref{ss:unif-est} allow us to pass to the limit in the mass and entropy equations. Nonetheless, they are
not enough for proving convergence in the weak formulation of the momentum equation, the main problem relying on identifying the weak limit of the convective term
$\vrho_\veps\,\vec u_\veps\otimes\vec u_\veps$.
For this, we need to control the strong time oscillations of the solutions: this is the aim of Subsection \ref{ss:acoustic}. 
In the following subsection, by using a compensated compactness argument together with Aubin-Lions Lemma, we establish strong convergence of suitable quantities related to the velocity fields.
This property, which deeply relies on the structure of the wave system, allows us to pass to the limit in our equations (see Subsection \ref{ss:limit}).

\subsection{Analysis of the acoustic waves} \label{ss:acoustic}

The goal of the present subsection is to describe oscillations of solutions. First of all, we recast our equations into a wave system; there we also implement a localisation
procedure, due to the presence of the centrifugal force. Then, we establish uniform bounds for the quantities appearing in the wave system. Finally, we apply a regularisation
in space for all the quantities, which is preparatory in view of the computations in Subsection \ref{ss:convergence}.

\subsubsection{Formulation of the acoustic equation} \label{sss:wave-eq}

Let us define 
$$
\vec{V}_\veps\,:=\,\vrho_\veps\vec{u}_\veps\,.
$$
We start by writing the continuity equation in the form
\begin{equation} \label{eq:wave_mass}
\veps^m\,\d_t\vrho^{(1)}_\veps\,+\,\div\vec{V}_\veps\,=\,0\,.
\end{equation}
Of course, this relation, as well as the other ones which will follow, has to be read in the weak form.

Using continuity equation and resorting to the time lifting \eqref{lift0} of the measure $\sigma_\veps$, straightforward computations
lead us to the following form of the entropy balance:
$$
\veps^m\d_t\!\left(\vrho_\veps\,\frac{s(\vrho_\veps,\vtheta_\veps)-s(\wtilde{\vrho}_\veps,\oline{\vtheta})}{\veps^m}-
\frac{1}{\veps^m}\Sigma_\veps\right)\,=\,\veps^m\,\div\!\!\left(\frac{\kappa(\vtheta_\veps)}{\vtheta_\veps}
\frac{\nabla_x\vtheta_\veps}{\veps^m}\right)+s(\wtilde{\vrho}_\veps,\oline{\vtheta})\div\!\!\left(\vrho_\veps\,\vec{u}_\veps\right)-
\div\!\!\left(\vrho_\veps s(\vrho_\veps,\vtheta_\veps)\vec{u}_\veps\right),
$$
where, with a little abuse of notation, we use the identification
$\int_{\Omega_\veps}\Sigma_\veps\,\varphi\,dx\,=\,\langle\Sigma_\veps,\varphi\rangle_{[\mc{M}^+,C]}$. Next, since $\wtilde{\vrho}_\veps$
is smooth (recall relation \eqref{eq:target-rho} above), the previous equation can be finally written as
\begin{align}
&\veps^m\,\d_t\left(\vrho_\veps\,\frac{s(\vrho_\veps,\vtheta_\veps)-s(\wtilde{\vrho}_\veps,\oline{\vtheta})}{\veps^m}\,-\,
\frac{1}{\veps^m}\Sigma_\veps\right)\,=  \label{eq:wave_entropy} \\
&\qquad\quad=\,
\veps^m\,\biggl(\div\!\left(\frac{\kappa(\vtheta_\veps)}{\vtheta_\veps}\,\frac{\nabla_x\vtheta_\veps}{\veps^m}\right)\,-\,
\vrho_\veps\,\vec{u}_\veps\,\cdot\,\frac{1}{\veps^m}\,\nabla_x s(\wtilde{\vrho}_\veps,\oline{\vtheta})\,-\,
\div\!\left(\vrho_\veps\,\frac{s(\vrho_\veps,\vtheta_\veps)-s(\wtilde{\vrho}_\veps,\oline{\vtheta})}{\veps^m}\,\vec{u}_\veps\right)\biggr)\,.
\nonumber
\end{align}

Now, we turn our attention to the momentum equation. By \eqref{prF} we easily find
\begin{align}
&\veps^m\,\d_t\vec{V}_\veps\,+\,\nabla_x\left(\frac{p(\vrho_\veps,\vtheta_\veps)-p(\wtilde{\vrho}_\veps,\oline{\vtheta})}{\veps^m}\right)\,+\,\veps^{m-1}\,\e_3\times \vec V_\veps\,=\,
\veps^{2(m-1)}\frac{\vrho_\veps-\wtilde{\vrho}_\veps}{\veps^m}\nabla_x F\,+ \label{eq:wave_momentum} \\
&\qquad\qquad\qquad\qquad\qquad
+\,\veps^m\left(\div\mbb{S}\!\left(\vtheta_\veps,\nabla_x\vec{u}_\veps\right)\,-\,\div\!\left(\vrho_\veps\vec{u}_\veps\otimes\vec{u}_\veps\right)\,+\,
\frac{\vrho_\veps-\wtilde{\vrho}_\veps}{\veps^m}\nabla_x G\right)\,. \nonumber
\end{align}

At this point, let us introduce two real numbers $\mc{A}$ and $\mc{B}$, such that the following relations are satisfied:
\begin{equation} \label{relnum}
\mc{A}\,+\,\mc{B}\,\d_\vrho s(1,\oline{\vtheta})\,=\,\d_\vrho p(1,\oline{\vtheta})\qquad\mbox{ and }\qquad
\mc{B}\,\d_\vtheta s(1,\oline{\vtheta})\,=\,\d_\vtheta p(1,\oline{\vtheta})\,.
\end{equation}
Due to Gibbs' law \eqref{gibbs} and the structural hypotheses of Paragraph \ref{sss:structural} (see also Chapter 8 of \cite{F-N} and \cite{F-Scho}), we notice that
$\mc A$ is given by formula \eqref{def:A}, and $\mc A>0$.

Taking a linear combination of \eqref{eq:wave_mass} and \eqref{eq:wave_entropy}, with coefficients respectively $\mc{A}$ and $\mc{B}$,
and keeping in mind equation \eqref{eq:wave_momentum}, we finally get the wave system
\begin{equation} \label{eq:wave_syst}
\left\{\begin{array}{l}
       \veps^m\,\d_tZ_\veps\,+\,\mc{A}\,\div\vec{V}_\veps\,=\,\veps^m\,\left(\div\vec{X}^1_\veps\,+\,X^2_\veps\right) \\[1ex]
       \veps^m\,\d_t\vec{V}_\veps\,+\,\nabla_x Z_\veps\,+\,\veps^{m-1}\,\e_3\times \vec V_\veps\,=\,\veps^m\,\left(\div\mbb{Y}^1_\veps\,+\,\vec{Y}^2_\veps\,+\,\nabla_x Y^3_\veps\right)\,,
       \qquad\big(\vec{V}_\veps\cdot\vec n\big)_{|\d\Omega_\veps}\,=\,0\,,
       \end{array}
\right.
\end{equation}
where we have defined the quantities
\begin{eqnarray*}
Z_\veps & := & \mc{A}\,\vrho^{(1)}_\veps\,+\,\mc{B}\,\left(\vrho_\veps\,
\frac{s(\vrho_\veps,\vtheta_\veps)-s(\wtilde{\vrho}_\veps,\oline{\vtheta})}{\veps^m}\,-\,\frac{1}{\veps^m}\Sigma_\veps\right) \\
\vec{X}^1_\veps & := & \mc{B}\left(\frac{\kappa(\vtheta_\veps)}{\vtheta_\veps}\,\frac{\nabla_x\vtheta_\veps}{\veps^m}\,-\,
\vrho_\veps\,\frac{s(\vrho_\veps,\vtheta_\veps)-s(\wtilde{\vrho}_\veps,\oline{\vtheta})}{\veps^m}\,\vec{u}_\veps\right) \\
X^2_\veps & := & -\,\mc{B}\,\vrho_\veps\,\vec{u}_\veps\,\cdot\,\frac{1}{\veps^m}\,\nabla_x s(\wtilde{\vrho}_\veps,\oline{\vtheta}) \\
\mbb{Y}^1_\veps & := & \mbb{S}\!\left(\vtheta_\veps,\nabla\vec{u}_\veps\right)\,-\,\vrho_\veps\vec{u}_\veps\otimes\vec{u}_\veps \\
\vec{Y}^2_\veps & := & \frac{\vrho_\veps-\wtilde{\vrho}_\veps}{\veps^m}\nabla_x G\,+\,
\veps^{m-2}\,\frac{\vrho_\veps-\wtilde{\vrho}_\veps}{\veps^m}\nabla_x F \\
Y^3_\veps & := &\frac{1}{\veps^{m}}\left( \mc{A}\,\frac{\vrho_\veps-\wtilde{\vrho}_\veps}{\veps^m}\,+\mc{B}\,\vrho_\veps\,
\frac{s(\vrho_\veps,\vtheta_\veps)-s(\wtilde{\vrho}_\veps,\oline{\vtheta})}{\veps^m}\,-\,\mc{B}\,\frac{1}{\veps^m}\Sigma_\veps\,-\,
\frac{p(\vrho_\veps,\vtheta_\veps)-p(\wtilde{\vrho}_\veps,\oline{\vtheta})}{\veps^m}\right)\,.
\end{eqnarray*}

We remark that system \eqref{eq:wave_syst} has to be read in the weak sense: for any $\varphi\in C_c^\infty\bigl([0,T[\,\times\oline\Omega_\veps\bigr)$, one has
$$
-\,\veps^m\,\int^T_0\int_{\Omega_\veps} Z_\veps\,\d_t\varphi\,-\,\mc{A}\,\int^T_0\int_{\Omega_\veps} \vec{V}_\veps\cdot\nabla_x\varphi\,=\,
\veps^{m}\int_{\Omega_\veps} Z_{0,\veps}\,\varphi(0)\,+\,\veps^m\,\int^T_0\int_{\Omega_\veps}\left(-\,\vec{X}^1_\veps\cdot\nabla_x\varphi\,+\,X^2_\veps\,\varphi\right)\,,
$$
and also, for any $\vec{\psi}\in C_c^\infty\bigl([0,T[\,\times\oline\Omega_\veps;\R^3\bigr)$ such that $\big(\vec\psi \cdot \n_\veps\big)_{|\partial {\Omega_\veps}} = 0$, one has
\begin{align*}
&-\,\veps^m\,\int^T_0\int_{\Omega_\veps}\vec{V}_\veps\cdot\d_t\vec{\psi}\,-\,\int^T_0\int_{\Omega_\veps} Z_\veps\,\div\vec{\psi}\,+\,\veps^{m-1}\int^T_0\int_{\Omega_\veps} \e_3\times\vec V_\veps\cdot\vec\psi \\
&\qquad\qquad\qquad\qquad
=\,\veps^{m}\int_{\Omega_\veps}\vec{V}_{0,\veps}\cdot\vec{\psi}(0)\,+\,\veps^m\,\int^T_0\int_{\Omega_\veps}\left(-\,\mbb{Y}^1_\veps:\nabla_x\vec{\psi}\,+\,\vec{Y}^2_\veps\cdot\vec{\psi}\,-\,
Y^3_\veps\,\div\vec{\psi}\right)\,,
\end{align*}
where we have set
\begin{equation} \label{def:wave-data}
Z_{0,\veps}\,=\,\mc{A}\,\vrho^{(1)}_{0,\veps}\,+\,\mc{B}\,\left(\vrho_{0,\veps}\,
\frac{s(\vrho_{0,\veps},\vtheta_{0,\veps})-s(\wtilde{\vrho}_\veps,\oline{\vtheta})}{\veps^m}\right) \qquad\mbox{ and }\qquad
\vec{V}_{0,\veps}\,=\,\vrho_{0,\veps}\,\vec{u}_{0,\veps}\,.
\end{equation}

At this point, analogously to \cite{F-G-GV-N}, for any fixed $l>0$, let us introduce a smooth cut-off
\begin{align} 
&\chi_l\in C^\infty_c(\R^2)\quad \mbox{ radially decreasing}\,,\qquad \mbox{ with }\quad 0\leq\chi_l\leq1\,, \label{eq:cut-off} \\
&\quad\mbox{ such that }\quad \chi_l\equiv1 \ \mbox{ on }\ \B_l\,, \quad \chi_l\equiv0 \ \mbox{ out of }\ \B_{2l}\,,\quad \left|\nabla_{h}\chi_l(x^h)\right|\,\leq\,C(l)\ \ \forall\,x^h\in\R^2\,. \nonumber
\end{align}
Then we define
\begin{equation} \label{def:L-W}
\Lambda_{\veps,l}\,:=\,\chi_l\,Z_\veps\,=\,\chi_l\,\mc{A}\,\vrho^{(1)}_\veps\,+\,\chi_l\,\mc{B}\,\left(\vrho_\veps\,
\frac{s(\vrho_\veps,\vtheta_\veps)-s(\wtilde{\vrho}_\veps,\oline{\vtheta})}{\veps^m}\,-\,\frac{1}{\veps^m}\Sigma_\veps\right)
\quad\mbox{ and }\quad \vec W_{\veps,l}\,:=\,\chi_l\,\vec V_\veps\,.
\end{equation}
For notational convenience, in what follows we keep using the notation
$\Lambda_{\veps}$ and $\vec W_\veps$ instead of $\Lambda_{\veps,l}$ and $\vec W_{\veps,l}\, $, tacitly meaning the dependence on $l$.  
So system \eqref{eq:wave_syst} becomes
\begin{equation} \label{eq:wave_syst2}
\left\{\begin{array}{l}
       \veps^m\,\d_t\Lambda_\veps\,+\,\mc{A}\,\div\vec{W}_\veps\,=\,
       \veps^m f_\veps \\[1ex]
       \veps^m\,\d_t\vec{W}_\veps\,+\,\nabla_x \Lambda_\veps\,+\,\veps^{m-1}\,\e_3\times \vec W_\veps\,=\, 
       \veps^m \vec{G}_\veps\,,
       \qquad\big(\vec{W}_\veps\cdot\vec n\big)_{|\d\Omega_\veps}\,=\,0\,,
       \end{array}
\right.
\end{equation}
where we have defined $f_\veps\,:=\,\div\vec{F}^1_\veps\,+\,F^2_\veps\;$ and $\;\vec G_\veps\,:=\,\div\mbb{G}^1_\veps\,+\,\vec{G}^2_\veps\,+\,\nabla_x G^3_\veps$, with
\begin{align*}
 & \vec{F}^1_\veps\,=\,\chi_l\,\vec{X}^1_\veps\qquad\qquad\mbox{ and }\qquad\qquad
F^2_\veps\,=\,\chi_l X^2_\veps\,-\,\vec{X}^1_\veps\cdot\nabla_{x}\chi_l\,+\,\mc{A}\,\vec V_\veps\cdot \nabla_{x}\chi_l\,; \\
& \mbb{G}^1_\veps\,=\,\chi_l\,\mbb{Y}^1_\veps\;,\qquad
\vec G^2_\veps\,=\,\chi_l\,\vec Y^2_\veps\,+\,\left(\frac{Z_\veps}{\veps^{m}}-Y^3_\veps \right)\,\nabla_{x}\chi_l\,-\,^t\mbb{Y}^1_\veps\cdot \nabla_{x}\chi_l\qquad\mbox{ and }\qquad
G^3_\veps\,=\,\chi_l\,Y^3_\veps\,.
\end{align*}

\subsubsection{Uniform bounds} \label{sss:w-bounds}

Here we use estimates of Subsection \ref{ss:unif-est} in order to show uniform bounds for the solutions and the data in the wave equation \eqref{eq:wave_syst2}.
We start by dealing with the ``unknowns'' $\Lambda_\veps$ and $\vec W_\veps$.

\begin{lemma} \label{l:S-W_bounds}
Let $\bigl(\Lambda_\veps\bigr)_\veps$  and $\bigl(\vec W_\veps\bigr)_\veps$ be defined as above. Then, for any $T>0$ and all $\ep \in \, ]0,1]$, one has
$$
	\| \Lambda_\veps\|_{L^\infty_T(L^2+L^{5/3}+L^1+\mc{M}^+)} \leq c(l)\, ,\quad\quad
	 \| \vec W_\veps\|_{L^2_T(L^2+L^{30/23})} \leq c(l) \, .
$$
\end{lemma}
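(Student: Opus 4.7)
The plan is to bound $\Lambda_\veps$ and $\vec W_\veps$ by carefully decomposing each into essential and residual parts and then applying the uniform estimates of Subsection \ref{ss:unif-est} on the support of the cut-off $\chi_l$, which is contained in $\B_{2l}$. The localisation is what allows us to use the $l$-dependent bounds coming from the non-uniformity caused by the centrifugal force.

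For $\Lambda_\veps$, I split $Z_\veps$ into its three constituents. The first, $\chi_l\,\mc A\,\vrho^{(1)}_\veps$, is controlled in $L^\infty_T(L^2+L^{5/3})$ directly by \eqref{uni_varrho1}, since $\Supp\chi_l\subset\B_{2l}$. For the second, the entropy-type term, I decompose it as
$$
\chi_l\,\mc B\,\vrho_\veps\,\frac{s(\vrho_\veps,\vtheta_\veps)-s(\wtilde\vrho_\veps,\oline\vtheta)}{\veps^m}\,=\,\chi_l\,\mc B\,[\vrho_\veps]_\ess\,\frac{[s(\vrho_\veps,\vtheta_\veps)]_\ess-s(\wtilde\vrho_\veps,\oline\vtheta)}{\veps^m}\,+\,\chi_l\,\frac{\mc B}{\veps^m}\,\left[\vrho_\veps\,s(\vrho_\veps,\vtheta_\veps)\right]_\res\,-\,\chi_l\,\frac{\mc B\,s(\wtilde\vrho_\veps,\oline\vtheta)}{\veps^m}\,[\vrho_\veps]_\res\,.
$$
The essential piece is controlled in $L^\infty_T(L^2(\B_{2l}))$ by Proposition \ref{p:prop_5.2} together with the bounds \eqref{est:rho_ess} and \eqref{est:theta-ess} (applied to $G(\vrho,\vtheta)=\vrho\,s(\vrho,\vtheta)$, which is $C^1$ on $\overline{\mc O}_\ess$). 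The residual pieces are bounded in $L^\infty_T(L^1(\B_{2l}))$ by \eqref{est:rho_res}, \eqref{est:e-s_res} and \eqref{est:rho*log-rho}, after recalling the local boundedness of $\wtilde\vrho_\veps$ from Lemma \ref{l:target-rho_bound} and of $s(\wtilde\vrho_\veps,\oline\vtheta)$ as a consequence. Finally, the time-lifted measure satisfies $\|\chi_l\,\mc B\,\Sigma_\veps/\veps^m\|_{L^\infty_T(\mc M^+(\B_{2l}))}\leq c$ by \eqref{est:sigma} and the definition \eqref{lift0}.

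For $\vec W_\veps=\chi_l\,\vrho_\veps\,\vec u_\veps$, I split again into essential and residual parts:
$$
\chi_l\,\vrho_\veps\,\vec u_\veps\,=\,\chi_l\,\sqrt{[\vrho_\veps]_\ess}\,\sqrt{\vrho_\veps}\,\vec u_\veps\,+\,\chi_l\,[\vrho_\veps]_\res\,\vec u_\veps\,.
$$
The essential part is bounded in $L^\infty_T(L^2(\B_{2l}))$ by combining the uniform $L^\infty$ bound on $[\vrho_\veps]_\ess$ with \eqref{est:momentum}. For the residual piece, I combine \eqref{est:rho_res}, which gives $[\vrho_\veps]_\res\in L^\infty_T(L^{5/3}(\B_{2l}))$, with $\vec u_\veps\in L^2_T(L^6_{\mathrm{loc}})$, which follows from \eqref{est:u-H^1} by Sobolev embedding. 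Hölder's inequality with $1/(5/3)+1/6=23/30$ yields $[\vrho_\veps]_\res\,\vec u_\veps\in L^2_T(L^{30/23}(\B_{2l}))$, and the claimed bound follows.

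The only mildly delicate point is the treatment of the residual entropy term $[\vrho_\veps\,s(\vrho_\veps,\vtheta_\veps)]_\res$, for which one cannot pass through Proposition \ref{p:prop_5.2}: the bound \eqref{5.58_book} together with the uniform controls \eqref{est:rho_res}, \eqref{est:theta_res}, \eqref{est:rho*log-rho} and \eqref{est:theta-Sob} is what produces the desired $L^\infty_T(L^1)$ (in fact $L^\infty_T(L^1)\cap L^2_T(L^{30/23})$) estimate. Once each of the pieces is placed in its natural space, the sums fit into $L^\infty_T(L^2+L^{5/3}+L^1+\mc M^+)$ for $\Lambda_\veps$ and $L^2_T(L^2+L^{30/23})$ for $\vec W_\veps$, with constants depending only on the localisation radius $l$.
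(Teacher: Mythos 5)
Your argument follows the paper's own proof very closely: localise by $\chi_l$, split $Z_\veps$ into its three constituents and treat each by essential--residual separation and the uniform bounds of Paragraph~\ref{sss:uniform}, control the time-lifted measure via \eqref{est:sigma}, and split $\vec W_\veps$ into essential and residual parts with the same H\"older exponents $3/5+1/6=23/30$. The only genuine difference of substance is in the essential part of $\vec W_\veps$: you invoke \eqref{est:momentum} (the bound on $\sqrt{\vrho_\veps}\,\vec u_\veps$), whereas the paper uses \eqref{est:u-H^1}. Both deliver the claim; your choice even gives the slightly stronger $L^\infty_T(L^2)$ for that piece, and is arguably more natural since it avoids passing through the Korn-type inequality.

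Two algebraic slips should be repaired, although they do not endanger the estimates. First, since $[\vrho_\veps]_\ess=\mf b\,\vrho_\veps$ and $\mf b$ is a smooth cut-off rather than an indicator, one has $\sqrt{[\vrho_\veps]_\ess}\,\sqrt{\vrho_\veps}=\sqrt{\mf b}\,\vrho_\veps$, so your displayed decomposition of $\chi_l\vrho_\veps\vec u_\veps$ is not an identity: the two pieces sum to $(\sqrt{\mf b}+1-\mf b)\vrho_\veps\vec u_\veps$. Write instead $\chi_l\vrho_\veps\vec u_\veps=\chi_l[\vrho_\veps]_\ess\vec u_\veps+\chi_l[\vrho_\veps]_\res\vec u_\veps$ and factor the essential piece as $(\mf b\sqrt{\vrho_\veps})\,\sqrt{\vrho_\veps}\,\vec u_\veps$, where $\mf b\sqrt{\vrho_\veps}$ is bounded because $\mf b$ has compact support. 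Second, $[\vrho_\veps]_\ess\,[s(\vrho_\veps,\vtheta_\veps)]_\ess=\mf b^2\vrho_\veps s\neq[\vrho_\veps s]_\ess=\mf b\vrho_\veps s$, so your entropy decomposition misses the contribution $\mf b(1-\mf b)\vrho_\veps s$; this extra term is supported in $\mc M^\veps_\res$ and pointwise dominated by $[\vrho_\veps s]_\res$, hence absorbed by the same residual estimate, but the identity as written is false. Lastly, note that Proposition~\ref{p:prop_5.2} is a weak-$*$ convergence statement; what you actually need for the $L^\infty_T(L^2)$ bound on the essential piece is the first-order Taylor expansion combined with \eqref{est:rho_ess} and \eqref{est:theta-ess}, which is the argument the paper appeals to directly.
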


\begin{proof}
We start by writing $\vec W_\veps\,=\,\vec W^1_\veps\,+\,\vec W_\veps^2$, where
$$
\vec W^1_\veps\,:=\,\chi_l\,[\vrho_\veps]_{\ess}\,\vec u_\veps\qquad\qquad\mbox{ and }\qquad\qquad
\vec W^2_\veps\,:=\,\chi_l\,[\vrho_\veps]_{\res}\,\vec u_\veps\,.
$$
Since the density and temperature are uniformly bounded on the essential set, by property \eqref{est:u-H^1} we immediately infer that $\vec W_\veps^1$ is uniformly bounded in
$L_T^2(L^2)$. On the other hand, by \eqref{est:rho_res} and \eqref{est:u-H^1} again, we easily deduce that $\vec W_\veps^2$ is uniformly bounded in
$L_T^2(L^p)$, where $3/5+1/6\,=\,1/p$. The claim about $\vec W_\veps$ is hence proved.

Let us now consider $\Lambda_\veps$, defined in \eqref{def:L-W}.
First of all, owing to the bounds $\left\|\Sigma_\veps\right\|_{L^\infty_T(\mc{M}^+)}\,\leq\,C\,\|\sigma_\veps\|_{\mc{M}^+_{t,x}}$ and \eqref{est:sigma}, we have that
$$
\left\|\frac{1}{\veps^{2m}}\,\chi_l\,\Sigma_\veps\right\|_{L^\infty_T(\mc{M}^+)} \leq c(l)\,,
$$
uniformly in $\veps>0$. Next, we can write the following decomposition:
$$
\vrho_\veps\,\chi_l\,\frac{s(\vrho_\veps,\vtheta_\veps)-s(\wtilde{\vrho}_\veps,\oline{\vtheta})}{\veps^m}\,=\,
\frac{1}{\veps^m}\,\chi_l\,\left(\vrho_\veps\,s(\vrho_\veps,\vtheta_\veps)\,-\,\wtilde{\vrho}_\veps\,s(\wtilde{\vrho}_\veps,\oline{\vtheta})\right)\,-\,
\chi_l\,\vrho_\veps^{(1)}\,s(\wtilde{\vrho}_\veps,\oline{\vtheta})\,,
$$
where the latter term in the right-hand side is bounded in $L^\infty_T(L^2+L^{5/3})$ in view of \eqref{uni_varrho1} and Proposition~\ref{p:target-rho_bound}. Concerning the former term,
we can write it as
\begin{equation}\label{eq:ub_1}
\frac{1}{\veps^m}\chi_l \left(\varrho_\ep s(\vrho_\veps,\vtheta_\veps)-\varrho_\ep s( \wtilde{\vrho}_\veps,\oline\vtheta)\right)=
\frac{1}{\veps^m}\chi_l \bigl[\varrho_\ep s(\vrho_\veps,\vtheta_\veps)-\varrho_\ep s(\wtilde{\vrho}_\veps,\oline\vtheta)\bigr]_{\ess}+
\frac{1}{\veps^m}\chi_l \bigl[\varrho_\ep s(\vrho_\veps,\vtheta_\veps)\bigr]_{\res}\,,
\end{equation}
since the support of $\chi_l\varrho_\ep s(\wtilde{\vrho}_\veps,\oline\vtheta)$ is contained in the essential set by Proposition \ref{p:target-rho_bound}, for small enough $\veps$
(depending on the fixed $l>0$).
By \eqref{est:e-s_res}, the last term on the r.h.s. is uniformly bounded in $L^\infty_T(L^1)$; as  for the first term on the r.h.s., a Taylor expansion at the first order, together with
inequalities \eqref{est:rho_ess}, \eqref{est:theta-ess} and the structural restrictions on $s$, immediately yields its uniform boundedness in $L^\infty_T(L^2)$.

The lemma is hence completely proved.
\qed
\end{proof}

\medbreak
In the next lemma, we establish bounds for the source terms in the system of acoustic waves \eqref{eq:wave_syst2}.
\begin{lemma} \label{l:source_bounds}
For any $T>0$ fixed, let us define the following spaces:
\begin{itemize}
 \item $
\mc X_1\,:=\,L^2\Bigl([0,T];\big(L^2+L^{1}+L^{3/2}+L^{30/23}+L^{30/29}\big)(\Omega)\Bigr)$;
\item  $\mc X_2\,:=\,L^2\Bigl([0,T];\big(L^2+L^1+L^{4/3}\big)(\Omega)\Bigr)$;
\item  $\mc X_3\,:=\,\mc X_2\,+\,L^\infty\Bigl([0,T];\big(L^2+L^{5/3}+L^1\big)(\Omega)\Bigr)$;
\item $\mc X_4\,:=\,L^\infty\Bigl([0,T];\big(L^2+L^{5/3}+L^1+\mc{M}^+\big)(\Omega)\Bigr)$.
\end{itemize}
Then, for any $l>0$ fixed, one has the following bounds, uniformly in $\veps\in\,]0,1]$:
$$
\left\|\vec F^1_\veps\right\|_{\mc X_1}\,+\,\left\|F^2_\veps\right\|_{\mc X_1}\,+\,\left\|\mbb{G}^1_\veps\right\|_{\mc X_2}\,+\,\left\|\vec G^2_\veps\right\|_{\mc X_3}\,+\,
\left\|G^3_\veps\right\|_{\mc X_4}\,\leq\,C(l)\,,
$$
where the constant $C(l)>0$ depends only on $l$, but not on $\veps$.

In particular, the sequences $\bigl( f_\veps\bigr)_\veps$ and
$\bigl(\vec{G}_\veps\bigr)_\veps$, defined in system \eqref{eq:wave_syst2}, are uniformly bounded in the space $L^{2}\big([0,T];W^{-1,1}(\Omega)\big)$, 
thus 
in $L^{2}\big([0,T];H^{-s}(\Omega)\big)$, for all $s>5/2$.
\end{lemma}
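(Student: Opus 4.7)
The plan is to estimate the five quantities $\vec F^1_\veps$, $F^2_\veps$, $\mbb{G}^1_\veps$, $\vec G^2_\veps$ and $G^3_\veps$ one by one, systematically splitting each into essential and residual parts and invoking the uniform estimates of Paragraph \ref{sss:uniform}. Since every term is multiplied by $\chi_l$ or $\nabla_{x}\chi_l$, all integrals take place on the compact set $\oline{\mbb B}_{2l}$; Proposition \ref{p:target-rho_bound} then gives a uniform bound on $\wtilde\vrho_\veps$ there, and, thanks to the restriction $m\geq 2$, differentiating \eqref{eq:target-rho} yields also the pointwise estimate $|\nabla_{x}\wtilde\vrho_\veps|\leq C(l)\,\veps^m$ on $\oline{\mbb B}_{2l}$ (this is where $2(m-1)\geq m$ is used). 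All constants below may depend on~$l$.

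The terms $\vec F^1_\veps$, $F^2_\veps$, $\mbb G^1_\veps$ and $\vec G^2_\veps$ are then rather direct. For $\vec F^1_\veps$ the heat-flux piece is controlled by splitting $\kappa(\vtheta_\veps)/\vtheta_\veps$ into essential and residual parts, bounded respectively by \eqref{est:theta-Sob} (giving an $L^2_T(L^2_{\rm loc})$ bound) and by \eqref{est:Dtheta_res} (giving an $L^2_T(L^1_{\rm loc})$ bound); the entropy-convective piece is exactly the one estimated in the bounds leading to \eqref{est:rho-s_res}. For $F^2_\veps$ the key observation is that $\veps^{-m}\nabla_{x}s(\wtilde\vrho_\veps,\oline\vtheta)=\veps^{-m}\,\d_\vrho s(\wtilde\vrho_\veps,\oline\vtheta)\,\nabla_{x}\wtilde\vrho_\veps$ is uniformly bounded on $\oline{\mbb B}_{2l}$ by the above remark, so that multiplication by $\vrho_\veps\vec u_\veps$ (controlled in $L^\infty_T(L^2)+L^2_T(L^p_{\rm loc})$ by \eqref{est:momentum}, \eqref{est:rho_res}, \eqref{est:u-H^1}) gives the claim; the commutator contributions $\vec X^1_\veps\cdot\nabla_{x}\chi_l$ and $\vec V_\veps\cdot\nabla_{x}\chi_l$ are absorbed into the previous bounds. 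For $\mbb G^1_\veps$ the viscous stress is estimated using \eqref{mu}, \eqref{est:Du} and \eqref{est:theta_res} after essential/residual splitting of $\mu,\eta$, while $\vrho_\veps\vec u_\veps\otimes\vec u_\veps\in L^2_T(L^{3/2}_{\rm loc})$ follows from $\sqrt{\vrho_\veps}\vec u_\veps\in L^\infty_T(L^2)$ and $\vec u_\veps\in L^2_T(L^6_{\rm loc})$. For $\vec G^2_\veps$, the gravity contribution $\veps^{-m}(\vrho_\veps-\wtilde\vrho_\veps)\nabla_{x}G$ sits in $L^\infty_T(L^2+L^{5/3}_{\rm loc})$ by \eqref{uni_varrho1} and $\nabla_{x}G\in L^\infty$; the centrifugal analogue carries the prefactor $\veps^{m-2}$, which is $\leq 1$ precisely because $m\geq 2$; the remaining commutator pieces reduce to the quantities treated in Lemma \ref{l:S-W_bounds} and in the analysis of $G^3_\veps$ below.

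The delicate step, and the main obstacle of the proof, is the control of $G^3_\veps$, where $\vrho_\veps-\wtilde\vrho_\veps$, $\vtheta_\veps-\oline\vtheta$ and $p(\vrho_\veps,\vtheta_\veps)-p(\wtilde\vrho_\veps,\oline\vtheta)$ all appear with the singular prefactor $\veps^{-2m}$. On the essential set, a second-order Taylor expansion of $p$ and of $\vrho\,s$ around $(\wtilde\vrho_\veps,\oline\vtheta)$ shows that the quantity
\[
\frac{1}{\veps^{2m}}\Bigl[\mc A\,(\vrho_\veps-\wtilde\vrho_\veps)\,+\,\mc B\,\vrho_\veps\bigl(s(\vrho_\veps,\vtheta_\veps)-s(\wtilde\vrho_\veps,\oline\vtheta)\bigr)\,-\,\bigl(p(\vrho_\veps,\vtheta_\veps)-p(\wtilde\vrho_\veps,\oline\vtheta)\bigr)\Bigr]_{\rm ess}
\]
equals, modulo bounded remainders, the linear combination
\[
\bigl(\mc A+\mc B\,\d_\vrho s(1,\oline\vtheta)-\d_\vrho p(1,\oline\vtheta)\bigr)[R_\veps]_{\rm ess}\,+\,\bigl(\mc B\,\d_\vtheta s(1,\oline\vtheta)-\d_\vtheta p(1,\oline\vtheta)\bigr)[\Theta_\veps]_{\rm ess}\,,
\]
which vanishes identically by the very definition \eqref{relnum} of $\mc A,\mc B$. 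The remainder consists of the quadratic Taylor terms, bounded in $L^\infty_T(L^1_{\rm loc})$ via \eqref{est:rho_ess}--\eqref{est:theta-ess}, and of the corrections arising from replacing derivatives at $(\wtilde\vrho_\veps,\oline\vtheta)$ by derivatives at $(1,\oline\vtheta)$: these cost exactly one factor $\wtilde\vrho_\veps-1=O(\veps^m)$ that compensates one power of $\veps^{-m}$. The residual part is handled by \eqref{est:e-s_res}, \eqref{est:rho_res}, \eqref{est:theta_res} together with the smallness \eqref{est:M_res-measure} of the residual set, while the entropy-production lifting contributes the measure-valued bound $\|\veps^{-2m}\Sigma_\veps\|_{L^\infty_T(\mc M^+)}\leq c$ by \eqref{est:sigma}. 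Collecting all these pieces yields the desired estimate on $G^3_\veps$ in $\mc X_4$.

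Finally, since each source term lies in $L^2_T$ with values in some $L^p(\oline{\mbb B}_{2l})$ with $p\geq 1$ (or in the stronger $L^\infty_T$), the compact support provides the continuous embedding $L^p(\oline{\mbb B}_{2l})\hookrightarrow L^1(\Omega)\hookrightarrow W^{-1,1}(\Omega)$, so that $f_\veps=\div\vec F^1_\veps+F^2_\veps$ and $\vec G_\veps=\div\mbb G^1_\veps+\vec G^2_\veps+\nabla_{x}G^3_\veps$ are uniformly bounded in $L^2\bigl([0,T];W^{-1,1}(\Omega)\bigr)$. Dualizing the Sobolev embedding $H^{s-1}(\Omega)\hookrightarrow L^\infty(\Omega)$, valid in dimension three for $s-1>3/2$, gives $L^1(\Omega)\hookrightarrow H^{-(s-1)}(\Omega)$ and hence $W^{-1,1}(\Omega)\hookrightarrow H^{-s}(\Omega)$ for all $s>5/2$, completing the proof.
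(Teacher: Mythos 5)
Your proof follows essentially the same route as the paper: essential/residual splitting of each source term, the key differential identity $\veps^{-m}\nabla_x\wtilde\vrho_\veps = \veps^{-m}\,\wtilde\vrho_\veps\,(\d_\vrho p(\wtilde\vrho_\veps,\oline\vtheta))^{-1}(\veps^{2(m-1)}\nabla_xF + \veps^m\nabla_xG)$ with $m\geq2$ for $F^2_\veps$, and a second-order Taylor expansion combined with the algebraic cancellation from \eqref{relnum} for $G^3_\veps$, followed by the duality embedding $W^{-1,1}\hookrightarrow H^{-s}$. One small remark on bookkeeping: in the displayed identity for $G^3_\veps$, the quantity $\veps^{-2m}[\,\cdots\,]_{\rm ess}$ is not equal to the stated linear combination modulo bounded remainders, but rather to $\veps^{-m}$ times that linear combination (which is identically zero by \eqref{relnum}) plus $O(1)$ quadratic remainders; since the coefficient vanishes the conclusion is unaffected, but the displayed equality as written is not dimensionally consistent.
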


\begin{proof}
We start by dealing with $\vec F^1_\veps$. By relations \eqref{est:D-theta} and \eqref{est:Dtheta_res}, it is easy to see that
$$
\left\|\frac{1}{\veps^m}\,\chi_l\,\frac{\kappa(\vtheta_\veps)}{\vtheta_\veps}\,\nabla_{x}\vtheta_\veps\right\|_{L^2_T(L^2+L^1)}\,\leq\,c(l)\,.
$$
On the other hand, the analysis of the term
$$
\vrho_\veps\,\chi_l\,\frac{s(\vrho_\veps,\vtheta_\veps)-s(\wtilde{\vrho}_\veps,\oline{\vtheta})}{\veps^m}\,\vec u_\veps
$$
is based on an analogous decomposition as used in the proof of Lemma \ref{l:S-W_bounds} and on uniform bounds of Paragraph \ref{sss:uniform}: these facts allow us to bound
it in $L^2_T(L^{3/2}+L^{30/23}+L^{30/29})$.  

According to its definition, the bounds for $F^2_\veps$ easily follow from the previous ones and Lemma \ref{l:S-W_bounds} (indeed, the analysis performed therein for $\vec{W}_\veps$
applies also to the terms in $\vec{V}_\veps=\vrho_\veps\vec{u}_\veps$ which appear in the definition of $F^2_\veps$), provided we show that
$$
\frac{1}{\veps^m}\,\left|\chi_l\,\nabla_{x}\wtilde{\vrho}_\veps\right|\,\leq\,C(l)\,.
$$
The previous bound immediately follows from the equation
$$
\nabla_{x}\wtilde{\vrho}_\veps\,=\,\frac{\wtilde{\vrho}_\veps}{\d_\vrho p(\wtilde{\vrho}_\veps,\oline\vtheta)}\,\left(\veps^{2(m-1)}\,\nabla_{x} F\,+\,\veps^m\,\nabla_{x} G\right)\,,
$$
which derives from \eqref{prF}, together with Proposition \ref{p:target-rho_bound} and the definitions given in \eqref{assFG}.

The bound on  $\mbb{G}^1_\veps$ is an immediate consequence of \eqref{est:Du} and \eqref{est:momentum}.

Let us focus now on the term $\vec G^2_\veps$. The control of the term $\,^t\mbb{Y}^1_\veps\cdot\nabla_{x}\chi_l$ is the same as above. The control of $\chi_l\vec Y^2_\veps$, instead,
gives rise to a bound in $L^\infty_T(L^2+L^{5/3})$: this is easily seen once we write
$$
\chi_l\,\vec Y^2_\veps\,=\,\chi_l\,\vrho_\veps^{(1)}\,\nabla_{x} G\,+\,\veps^{m-2}\,\chi_l\,\vrho_\veps^{(1)}\,\nabla_{x}F
$$
and we use \eqref{uni_varrho1} and \eqref{assFG}. Finally, we have the equality
\begin{equation*}
\begin{split}
\nabla_x \chi_l\,\left( \frac{Z_\veps}{\veps^{m}}-Y^3_\veps \right)&=\nabla_x \chi_{l}\,\left(\frac{p(\vrho_\veps,\vtheta_\veps)-p(\wtilde{\vrho}_\veps,\oline\vtheta)}{\veps^m}\right)\\
&=\nabla_x \chi_{l}\left[\frac{p(\vrho_\veps,\vtheta_\veps)-p(\wtilde{\vrho}_\veps,\oline\vtheta)}{\veps^m}\right]_\ess+\nabla_x \chi_{l}\left[\frac{p(\vrho_\veps,\vtheta_\veps)}{\veps^m}\right]_\res.
\end{split}
\end{equation*}
The second term in the last line is uniformly bounded in $L^\infty_T(L^1)$, in view of \eqref{est:rho_res} and \eqref{est:theta_res}.
For the first term, instead,
we can proceed as in \eqref{eq:ub_1}.

We switch our attention to the term $G^3_\veps$, whose analysis is more involved. 
%
By definition, we have
\begin{equation*}
\begin{split}
\chi_l\,Y^3_\veps  &:= \frac{1}{\veps^{m}}\,\chi_l\,\left( \mc{A}\,\frac{\vrho_\veps-\wtilde{\vrho}_\veps}{\veps^m}\,+\mc{B}\,\vrho_\veps\,
\frac{s(\vrho_\veps,\vtheta_\veps)-s(\wtilde{\vrho}_\veps,\oline{\vtheta})}{\veps^m}\,-\,\mc{B}\,\frac{1}{\veps^m}\Sigma_\veps\,-\,
\frac{p(\vrho_\veps,\vtheta_\veps)-p(\wtilde{\vrho}_\veps,\oline{\vtheta})}{\veps^m}\right) \\
&=\frac{1}{\veps^{m}}\,\chi_l\,\left( \mc{A}\,\frac{\vrho_\veps-\wtilde{\vrho}_\veps}{\veps^m}\,+\mc{B}\, \frac{s(\vrho_\veps,\vtheta_\veps)-s(\wtilde{\vrho}_\veps,\oline{\vtheta})}{\veps^m}\,-
\frac{p(\vrho_\veps,\vtheta_\veps)-p(\wtilde{\vrho}_\veps,\oline{\vtheta})}{\veps^m}\right)\\
&-\,\mc{B}\,\frac{1}{\veps^{2m}}\,\chi_l\,\Sigma_\veps\,+\,\mc{B}\,\chi_l\,\left(\frac{\vrho_\veps -1}{\veps^{m}}\right)\,
\frac{s(\vrho_\veps,\vtheta_\veps)-s(\wtilde{\vrho}_\veps,\oline{\vtheta})}{\veps^m}\,,
\end{split}
\end{equation*}
with $\mc A$ and $\mc B$ defined in \eqref{relnum}. 
Next, we write
\begin{equation*}
\begin{split}
s(\vrho_\veps,\vtheta_\veps)-s(\widetilde{\vrho}_\veps,\oline{\vtheta})&=s(\vrho_\veps,\vtheta_\veps)-s(1,\oline{\vtheta})+s(1,\oline \vtheta)-s(\widetilde{\vrho}_\veps,\oline{\vtheta})\\
&=\d_\vrho \, s(1,\oline \vtheta )\, (\vrho_\veps -1)+\d_\vtheta \, s(1,\oline \vtheta )\, (\vtheta_\veps -\oline \vtheta)+\frac{1}{2}\,{\rm Hess}(s)[\xi_1 ,\eta_1]\begin{pmatrix}
\vrho_\veps -1 \\ 
\vtheta_\veps -\oline\vtheta
\end{pmatrix}\cdot\begin{pmatrix}
\vrho_\veps -1 \\ 
\vtheta_\veps -\oline\vtheta
\end{pmatrix} \\
&+\d_\vrho \, s(1,\oline \vtheta )\, (1-\widetilde{\vrho}_\veps )+\frac{1}{2}\, \d_{\vrho \vrho}\,s(\xi_{2},\oline\vtheta)\, (\widetilde{\vrho}_\veps -1)^{2}\\
&=\d_\vrho \, s(1,\oline \vtheta )\, (\vrho_\veps - \widetilde{\vrho}_\veps)+\d_\vtheta \, s(1,\oline \vtheta )\, (\vtheta_\veps -\oline \vtheta)\\
&+\frac{1}{2}\left({\rm Hess}(s)[\xi_1 ,\eta_1]\begin{pmatrix}
\vrho_\veps -1 \\ 
\vtheta_\veps -\oline\vtheta
\end{pmatrix}\cdot\begin{pmatrix}
\vrho_\veps -1 \\ 
\vtheta_\veps -\oline\vtheta
\end{pmatrix}+\d_{\vrho \vrho}\,s(\xi_{2},\oline \vtheta)\, (\widetilde{\vrho}_\veps -1)^{2}\right)\,,
\end{split}
\end{equation*}
where $\xi_1,\xi_2,\eta_1$ are suitable points between $1$ and $\vrho_\veps$, $1$ and $\widetilde{\vrho}_\veps$, $\oline \vtheta$ and $ \vtheta_\veps $ respectively, and
we have denoted by ${\rm Hess}(s)[\xi,\eta]$ the Hessian matrix of the function $s$ with respect to its variables $\big(\vrho,\vtheta\big)$, computed at the point $(\xi,\eta)$.
Analogously, for the pressure term we have
\begin{equation*}
\begin{split}
p(\vrho_\veps,\vtheta_\veps)-p(\widetilde{\vrho}_\veps,\oline{\vtheta}) 
&=\d_\vrho \, p(1,\oline \vtheta )\, (\vrho_\veps - \widetilde{\vrho}_\veps)+\d_\vtheta \, p(1,\oline \vtheta )\, (\vtheta_\veps -\oline \vtheta)\\
&+\frac{1}{2}\left({\rm Hess}(p)[\xi_3 ,\eta_2]\begin{pmatrix}
\vrho_\veps -1 \\ 
\vtheta_\veps -\oline\vtheta
\end{pmatrix}\cdot\begin{pmatrix}
\vrho_\veps -1 \\ 
\vtheta_\veps -\oline\vtheta
\end{pmatrix}+\d_{\vrho \vrho}\,p(\xi_{4},\oline \vtheta)\, (\widetilde{\vrho}_\veps -1)^{2}\right)\,,
\end{split}
\end{equation*}
where $\xi_3,\xi_4,\eta_2$ are still between $1$ and $\vrho_\veps$, $1$ and $\widetilde{\vrho}_\veps$, $\oline \vtheta$ and $ \vtheta_\veps $ respectively.
Using now \eqref{relnum}, we find that the first order terms cancel out, and we are left with
\begin{equation*}
\begin{split}
\chi_l\,Y^3_\veps  &=\frac{\mc{B}}{2\veps^{2m}}\,\chi_l\,\left( \,{\rm Hess}(s)[\xi_1 ,\eta_1]\begin{pmatrix}
\vrho_\veps -1 \\ 
\vtheta_\veps -\oline\vtheta
\end{pmatrix}\cdot\begin{pmatrix}
\vrho_\veps -1 \\ 
\vtheta_\veps -\oline\vtheta
\end{pmatrix}+\d_{\vrho \vrho}\,s(\xi_{2},\oline \vtheta)\, (\widetilde{\vrho}_\veps -1)^{2}\right)\\
&-\,\frac{1}{2\veps^{2m}}\,\chi_l\,\left({\rm Hess}(p)[\xi_3 ,\eta_2]\begin{pmatrix}
\vrho_\veps -1 \\ 
\vtheta_\veps -\oline\vtheta
\end{pmatrix}\cdot\begin{pmatrix}
\vrho_\veps -1 \\ 
\vtheta_\veps -\oline\vtheta
\end{pmatrix}+\d_{\vrho \vrho}\,p(\xi_{4},\oline \vtheta)\, (\widetilde{\vrho}_\veps -1)^{2}\right)\\
&-\,\frac{\mc B}{\veps^{2m}}\,\chi_l\,\Sigma_\veps\,+\,\mc{B}\,\chi_l\,\left(\frac{\vrho_\veps -1}{\veps^{m}}\right)\,
\frac{s(\vrho_\veps,\vtheta_\veps)-s(\wtilde{\vrho}_\veps,\oline{\vtheta})}{\veps^m}\, .
\end{split}
\end{equation*} 
Thanks to the uniform bounds establish in Paragraph \ref{sss:uniform} and the decomposition into essential and residual parts, the claimed control in the space $\mc X_4$ follows.
\qed
\end{proof}

\subsubsection{Regularization and description of the oscillations}\label{sss:w-reg}

Before going on, following \cite{F-N_AMPA} and \cite{F-N_CPDE} (see also \cite{Ebin}), it is convenient to reformulate our problem (NSF)$_\veps$, supplemented with complete slip boundary
conditions \eqref{bc1-2} and \eqref{bc3}, in a completely equivalent way, in the domain
$$
\wtilde{\Omega}_\veps\,:=\,{B}_{L_\veps} (0) \times \mbb{T}^1\,,\qquad\qquad\mbox{ with }\qquad\mbb{T}^1\,:=\,[-1,1]/\sim\,,
$$
where $\sim$ denotes the equivalence relation which identifies $-1$ and $1$.
For this, it is enough to extend $\varrho_\veps$, $\vtheta_\veps$, and $\vec u_\veps^h$ as even functions with respect to $x^{3}$, $u_\veps^3$ and $G$ as odd functions.

Correspondingly, we consider also the wave system \eqref{eq:wave_syst2} to be satisfied in the new domain $\wtilde\Omega_\veps$. It goes without saying that the uniform bounds
established above hold true also when replacing $\Omega$ with $\wtilde\Omega$, where we have set
$$\wtilde\Omega\,:=\,\R^2 \times \mbb{T}^1\,.$$
Notice that the wave speed in \eqref{eq:wave_syst2} is proportional to $\veps^{-m}$, while, in view of assumption \eqref{dom}, the domains $\wtilde\Omega_\veps$ are expanding at speed proportional
to $\veps^{-m-\de}$, for some $\delta>0$.
Therefore, no interactions of the acoustic-Poincar\'e waves with the boundary of $\wtilde\Omega_\veps$ take place (see also Remark \ref{r:speed-waves} in this respect),
for any finite time $T>0$ and sufficiently small $\ep>0$.
Thanks to this fact and the spatial localisation given by the cut-off function $\chi_l$, we can assume that the wave system \eqref{eq:wave_syst2} is satisfied
(still in a weak sense) on the whole $\wtilde\Omega$.

Now, for any $M\in\N$ let us consider the low-frequency cut-off operator ${S}_{M}$ of a Littlewood-Paley decomposition, as introduced in \eqref{eq:S_j} below. We define 
\begin{equation*}
\Lambda_{\varepsilon ,M}={S}_{M}\Lambda_{\veps}\qquad\qquad \text{ and }\qquad\qquad \vec{W}_{\veps ,M}={S}_{M}\vec{W}_{\veps}\, .
\end{equation*} 
The following result holds true.
Recall that we are omitting from the notation the dependence of all quantities on $l>0$, due to multiplication by the cut-off function $\chi_l$ fixed above.
\begin{proposition} \label{p:prop approx}
For any $T>0$, we have the following convergence properties, in the limit $M\rightarrow +\infty $:
\begin{equation}\label{eq:approx var}
\begin{split}
&\sup_{0<\veps\leq1}\, \left\|\Lambda_{\varepsilon }-\Lambda_{\varepsilon ,M}\right\|_{L^{\infty}([0,T];H^{s})}\longrightarrow 0\quad  \forall s<-3/2-\delta \\
&\sup_{0<\veps\leq1}\, \left\|\vec{W}_{\varepsilon }-\vec{W}_{\varepsilon ,M}\right\|_{L^{\infty}([0,T];H^{s})}\longrightarrow 0\quad \forall s<-4/5-\delta\,,
\end{split}
\end{equation}
for any $\delta >0$.
Moreover, for any $M>0$, the couple $(\Lambda_{\veps ,M},W_{\veps ,M})$ satisfies the approximate wave equations
\begin{equation}\label{eq:approx wave}
\left\{\begin{array}{l}
       \veps^m\,\d_t\Lambda_{\veps ,M}\,+\,\mc{A}\,\div\vec{W}_{\veps ,M}\,=\,\veps^m\,f_{\veps ,M} \\[1ex]
       \veps^m\,\d_t\vec{W}_{\veps ,M}\,+\veps^{m-1}\,e_{3}\times \vec{W}_{\veps ,M}+\,\nabla_x \Lambda_{\veps,M}\,=\,\veps^m\,\vec G_{\veps ,M}\, ,
       \end{array}
\right.
\end{equation}
where $(f_{\veps ,M})_{\veps}$ and $(\vec G_{\veps ,M})_{\veps}$ are families of smooth (in the space variables) functions satisfying, for any $s\geq0$, the uniform bounds
\begin{equation}\label{eq:approx force}
\sup_{0<\veps\leq1}\, \left\|f_{\veps ,M}\right\|_{L^{2}([0,T];H^{s})}\,+\,\sup_{0<\veps\leq1}\,\left\|\vec G_{\veps ,M}\right\|_{L^{2}([0,T];H^{s})}\,\leq\, C(l,s,M)\,,
\end{equation}
where the constant $C(l,s,M)$ depends on the fixed values of $l>0$, $s\geq 0$ and $M>0$, but not on $\veps>0$.
\end{proposition}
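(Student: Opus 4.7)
The plan is to derive Proposition \ref{p:prop approx} as a fairly direct consequence of the uniform bounds established in Lemmas \ref{l:S-W_bounds} and \ref{l:source_bounds}, combined with the two standard Bernstein-type inequalities for the Littlewood-Paley truncation $S_M$: for any $s'<s$,
$$
\|f - S_M f\|_{H^{s'}}\,\leq\,C\,2^{-M(s-s')}\,\|f\|_{H^{s}}\,, \qquad \|S_M f\|_{H^{s}}\,\leq\,C\,2^{M(s-s_0)}\,\|f\|_{H^{s_0}}\quad(s\geq s_0)\,.
$$
Since $\Lambda_\veps$ and $\vec W_\veps$ are compactly supported in the horizontal variables (via the cut-off $\chi_l$) and have been extended through a reflection procedure to the periodic geometry $\wtilde\Omega = \R^2\times\mathbb{T}^1$, the operator $S_M$ commutes with $\partial_t$ and $\nabla_x$. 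Thus applying $S_M$ to system \eqref{eq:wave_syst2} yields \eqref{eq:approx wave} with $f_{\veps,M}:=S_M f_\veps$ and $\vec G_{\veps,M}:=S_M \vec G_\veps$.

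The first step is to transfer the Lebesgue/measure bounds into negative Sobolev spaces, using the three-dimensional Sobolev embeddings. For $\Lambda_\veps$, from Lemma \ref{l:S-W_bounds} and the inclusions $L^2\hookrightarrow H^0$, $L^{5/3}\hookrightarrow H^{-3/10}$, and $L^1,\mc M^+\hookrightarrow H^{-3/2-\delta}$ (for any $\delta>0$, where the embedding of $L^1$ and of the finite measures fails at the endpoint $-3/2$), we obtain
$$
\sup_{0<\veps\leq1}\,\|\Lambda_\veps\|_{L^\infty_T(H^{-3/2-\delta})}\,\leq\,C(l,\delta)\,.
$$
For $\vec W_\veps$, since $L^{30/23}\hookrightarrow H^{-4/5}$ (indeed $3\left(23/30-1/2\right)=4/5$), Lemma \ref{l:S-W_bounds} gives a bound in $L^2_T(H^{-4/5})$; the passage to $L^\infty_T(H^{-4/5-\delta})$ can be performed using the second equation of \eqref{eq:wave_syst2}, which expresses $\veps^m \d_t \vec W_\veps$ in terms of $\nabla_x\Lambda_\veps$, $\vec W_\veps$ and $\vec G_\veps$, all uniformly bounded in $L^2_T(H^{-N})$ for suitable $N$, so that an interpolation/Aubin--Lions type argument upgrades the temporal integrability at the expense of an arbitrarily small loss of spatial regularity.

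The convergence \eqref{eq:approx var} is then immediate from the first Bernstein inequality applied with $s=-3/2-\delta/2$, $s'<-3/2-\delta$ for $\Lambda_\veps$, and similarly with $s=-4/5-\delta/2$ for $\vec W_\veps$, yielding rates $2^{-cM}\to 0$ uniformly in $\veps$. For the source bounds \eqref{eq:approx force}, Lemma \ref{l:source_bounds} provides $f_\veps,\vec G_\veps\in L^2_T(H^{-s_0})$ uniformly for any fixed $s_0>5/2$, and the second Bernstein inequality then gives
$$
\|f_{\veps,M}\|_{L^2_T(H^s)}\,+\,\|\vec G_{\veps,M}\|_{L^2_T(H^s)}\,\leq\,C\,2^{M(s+s_0)}\bigl(\|f_\veps\|_{L^2_T(H^{-s_0})}+\|\vec G_\veps\|_{L^2_T(H^{-s_0})}\bigr)\,\leq\,C(l,s,M)\,,
$$
which is the desired estimate.

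The main technical point is not the computation itself, which follows a classical pattern, but rather the correct identification of the threshold regularities: the presence of the time-lifting $\Sigma_\veps$ of the entropy production measure inside $\Lambda_\veps$ is what forces the loss down to $H^{-3/2-\delta}$, while the worst Lebesgue index appearing in $\vec W_\veps$ (namely the contribution of the residual density through $[\vrho_\veps]_{\rm res}\vec u_\veps\in L^2_T L^{30/23}$) dictates the exponent $-4/5-\delta$. A minor but genuine subtlety is to verify that, after the localisation by $\chi_l$ and the reflection-extension to $\wtilde\Omega$, the wave system \eqref{eq:wave_syst2} is still satisfied in the distributional sense on the whole periodic domain for any $T>0$ and any sufficiently small $\veps$ (this relies on Remark \ref{r:speed-waves}: the boundary $\d B_{L_\veps}(0)\times\mathbb{T}^1$ is moving outwards faster than the acoustic speed $\veps^{-m}$, so waves emanating from $\Supp\chi_l$ do not reach it in finite time), which is what allows the Fourier-multiplier $S_M$ to act cleanly.
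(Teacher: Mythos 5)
Your overall strategy---transfer the mixed Lebesgue/measure bounds from Lemmas \ref{l:S-W_bounds} and \ref{l:source_bounds} into negative Sobolev spaces via Sobolev embedding and then invoke Bernstein's inequalities for the low-frequency cut-off $S_M$---is exactly the route the paper takes; its proof is essentially the one-line statement that \eqref{eq:approx var} and \eqref{eq:approx force} follow from the uniform bounds of Subsection \ref{sss:w-bounds} combined with \eqref{eq:LP-Sob}. Your identification of the threshold $-3/2-\delta$ for $\Lambda_\veps$ (driven by the $L^1 + \mathcal{M}^+$ contribution of the time-lifted entropy production $\Sigma_\veps$) and the rest of the treatment of $\Lambda_\veps$ and the source terms are correct.

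There is, however, a genuine gap in your treatment of $\vec W_\veps$. You correctly observe that Lemma \ref{l:S-W_bounds} only furnishes $\vec W_\veps \in L^2_T(L^2+L^{30/23}) \hookrightarrow L^2_T(H^{-4/5})$, an $L^2$-in-time bound, whereas the proposition asserts an $L^\infty$-in-time convergence. But the Aubin--Lions patch you propose cannot supply the missing uniformity: the second equation of \eqref{eq:wave_syst2} gives a bound only on $\veps^m\,\d_t\vec W_\veps$, not on $\d_t\vec W_\veps$; dividing by $\veps^m$ produces a constant that blows up as $\veps\to0$, so any time-compactness or interpolation obtained this way degrades with $\veps$ and cannot produce a $\sup_{\veps}$ statement. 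The correct route to an $L^\infty_T$ bound on $\vec W_\veps$ bypasses the wave system entirely: from \eqref{est:momentum} one has $\sqrt{\vrho_\veps}\,\vec u_\veps\in L^\infty_T(L^2)$ uniformly, and from \eqref{est:rho_ess}, \eqref{est:rho_res} one has $\sqrt{\vrho_\veps}\in L^\infty_T\big(L^\infty_{\rm loc}+L^{10/3}_{\rm loc}\big)$, hence $\vec V_\veps=\sqrt{\vrho_\veps}\,(\sqrt{\vrho_\veps}\vec u_\veps)\in L^\infty_T\big(L^2_{\rm loc}+L^{5/4}_{\rm loc}\big)\hookrightarrow L^\infty_T\big(H^{-9/10}_{\rm loc}\big)$. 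After multiplication by $\chi_l$ this gives the uniform $L^\infty_T(H^{-9/10})$ bound one actually needs, and Bernstein then yields \eqref{eq:approx var} for all $s<-9/10$ (a marginally narrower range than the $s<-4/5-\delta$ written in the proposition, though this has no bearing on how the result is used later). Finally, the invocation of Remark \ref{r:speed-waves} to justify that the cut-off system is satisfied on all of $\wtilde\Omega$ is not strictly needed here: for $\veps$ small enough the support of $\chi_l$ is already contained in $\wtilde\Omega_\veps$, so the localized quantities extend by zero and the equations hold distributionally on $\wtilde\Omega$ for that reason alone; the finite-propagation-speed argument is the main tool in Section \ref{s:proof-1}, where no cut-off $\chi_l$ is employed.
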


\begin{proof}
Thanks to characterization \eqref{eq:LP-Sob} of $H^{s}$, properties \eqref{eq:approx var} are straightforward consequences of the uniform bounds establish in Subsection \ref{sss:w-bounds}.

Next, applying the operator ${S}_{M}$ to \eqref{eq:wave_syst2} immediately gives us system \eqref{eq:approx wave} where we have set 
\begin{equation*}
f_{\veps ,M}:={S}_{M}\left(\div\vec{F}^1_\veps\,+\,F^2_\veps\right)\qquad \text{ and }\qquad \vec G_{\veps ,M}:={S}_{M}\left(\div\mbb{G}^1_\veps\,+\,\vec{G}^2_\veps\,+\,\nabla_x G^3_\veps\right)\,.
\end{equation*}
By these definitions and the uniform bounds
established in Lemma \ref{l:source_bounds}, thanks to \eqref{eq:LP-Sob} it is easy to verify inequality \eqref{eq:approx force}. 
\qed
\end{proof}

\medbreak
We also have an important decomposition for the approximated velocity fields and their $\curl$.
\begin{proposition} \label{p:prop dec}
For any $M>0$ and any $\veps\in\,]0,1]$, the following decompositions hold true:
\begin{equation*}
\vec{W}_{\veps ,M}\,=\,
\veps^{m}\vec{t}_{\veps ,M}^{1}+\vec{t}_{\veps ,M}^{2}\qquad\mbox{ and }\qquad
\curl_{x}\vec{W}_{\veps ,M}=\veps^{m}\vec{T}_{\veps ,M}^{1}+\vec{T}_{\veps ,M}^{2}\,,
\end{equation*}
where, for any $T>0$ and $s\geq 0$, one has 
\begin{align*}
&\left\|\vec{t}_{\veps ,M}^{1}\right\|_{L^{2}([0,T];H^{s})}+\left\|\vec{T}_{\veps ,M}^{1}\right\|_{L^{2}([0,T];H^{s})}\leq C(l,s,M) \\
&\left\|\vec{t}_{\veps ,M}^{2}\right\|_{L^{2}([0,T];H^{1})}+\left\|\vec{T}_{\veps ,M}^{2}\right\|_{L^{2}\left([0,T];L^2\right)}\leq C(l)\,,
\end{align*}
for suitable positive constants $C(l,s,M)$ and $C(l)$, which are uniform with respect to $\veps\in\,]0,1]$.
\end{proposition}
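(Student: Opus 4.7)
The plan is to exploit the natural splitting of the momentum $\vec V_\veps = \vrho_\veps \vec u_\veps$ coming from the decomposition $\vrho_\veps = 1 + \veps^m R_\veps$ established in \eqref{def_deltarho}. Writing
\begin{equation*}
\chi_l \vec V_\veps \,=\, \chi_l \vec u_\veps \,+\, \veps^m\,\chi_l R_\veps \vec u_\veps,
\end{equation*}
and applying the Fourier multiplier $S_M$, which is linear and commutes with $\curl_x$, I would simply set
\begin{equation*}
\vec t^{\,2}_{\veps,M} \,:=\, S_M\bigl(\chi_l \vec u_\veps\bigr),\qquad \vec t^{\,1}_{\veps,M} \,:=\, S_M\bigl(\chi_l R_\veps \vec u_\veps\bigr),
\end{equation*}
and correspondingly $\vec T^{\,2}_{\veps,M} := S_M\curl_x(\chi_l \vec u_\veps)$ and $\vec T^{\,1}_{\veps,M} := S_M\curl_x(\chi_l R_\veps \vec u_\veps)$. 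The two decompositions in the statement then follow by linearity.

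The bounds on $\vec t^{\,2}_{\veps,M}$ and $\vec T^{\,2}_{\veps,M}$ form the easy half. The uniform control \eqref{est:u-H^1} of $\vec u_\veps$ in $L^2_T\bigl(W^{1,2}(\mbb B_{2l})\bigr)$, combined with the compact horizontal support of $\chi_l$ and the bound on $\nabla_h\chi_l$ fixed in \eqref{eq:cut-off}, gives
\begin{equation*}
\sup_{\veps\in\,]0,1]}\bigl\|\chi_l\vec u_\veps\bigr\|_{L^2_T(H^1(\wtilde\Omega))}\,\leq\,C(l),
\end{equation*}
so that $\vec t^{\,2}_{\veps,M}$ is uniformly bounded in $L^2_T(H^1)$ and $\vec T^{\,2}_{\veps,M} = \curl_x \vec t^{\,2}_{\veps,M}$ is uniformly bounded in $L^2_T(L^2)$, using that $S_M$ is a contraction on every $H^s$.

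For the smooth remainders $\vec t^{\,1}_{\veps,M}$ and $\vec T^{\,1}_{\veps,M}$, the key point is that once $\chi_l R_\veps \vec u_\veps$ is shown to belong, uniformly in $\veps$, to some fixed Lebesgue space, the low-frequency truncation $S_M$ converts integrability into arbitrary Sobolev regularity, at the cost of an $M$-dependent constant. Combining \eqref{uni_varrho1} (which places $R_\veps$ in $L^\infty_T(L^2+L^{5/3})(\mbb B_{2l})$) with the Sobolev embedding $\vec u_\veps\in L^2_T(L^6_{\rm loc})$ entailed by \eqref{est:u-H^1}, I obtain
\begin{equation*}
\sup_{\veps\in\,]0,1]}\bigl\|\chi_l R_\veps\vec u_\veps\bigr\|_{L^2_T\big((L^{3/2}+L^{30/23})(\wtilde\Omega)\big)}\,\leq\,C(l).
\end{equation*}
A Bernstein-type inequality then yields $\|S_M f\|_{H^s} + \|S_M \curl_x f\|_{H^s}\,\leq\,C(l,s,M)\,\|f\|_{L^{3/2}+L^{30/23}}$ for every $s\geq 0$, delivering the announced uniform bounds on $\vec t^{\,1}_{\veps,M}$ and $\vec T^{\,1}_{\veps,M}$. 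The only (minor) technical nuisance will be to verify that all estimates genuinely transfer to the extended domain $\wtilde\Omega = \R^2\times\mathbb T^1$; this reduces to the invariance of all uniform bounds under the even/odd extensions discussed at the beginning of Paragraph \ref{sss:w-reg}, which is routine since $\chi_l$ is compactly supported in the horizontal variable.
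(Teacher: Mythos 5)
Your proof is correct and follows essentially the same route as the paper: the same splitting $\chi_l\vec V_\veps = \chi_l\vec u_\veps + \veps^m\chi_l R_\veps\vec u_\veps$ (the paper writes $(\vrho_\veps-1)/\veps^m$ where you write $R_\veps$, which is the same quantity by \eqref{def_deltarho}), the same definitions of $\vec t^{\,1}_{\veps,M}$, $\vec t^{\,2}_{\veps,M}$ via $S_M$, and the same appeal to the $L^\infty_T(L^2_{\rm loc}+L^{5/3}_{\rm loc})$ bound on $R_\veps$ and the $L^2_T(H^1_{\rm loc})$ bound on $\vec u_\veps$ combined with Bernstein inequalities to control the frequency-localized pieces. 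The paper leaves the ``easy verification'' of the bounds implicit, so your write-up is actually slightly more detailed than the original.
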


\begin{proof}
We start by defining
\begin{equation} \label{eq:t-T}
\vec{t}_{\veps,M}^{1}\,:=\,{S}_{M}\left(\chi_{l}\left(\frac{\vrho_\veps -1}{\veps^{m}}\right)\vec{u}_{\veps}\right) \qquad\mbox{ and }\qquad
\vec{t}_{\veps,M}^{2}\,:=\,{S}_{M}\left(\chi_{l}\vec{u}_{\veps}\right)\,.
\end{equation}
Then, it is apparent that $\vec{W}_{\veps ,M}\,=\,\veps^m\vec t_{\veps,M}^{1}\,+\,\vec t_{\veps,M}^{2}$.
The decomposition of $\curl_x\vec W_{\veps,M}$ is also easy to get, if we set $\vec T_{\veps,M}^j\,:=\,\curl_x\vec t_{\veps,M}^j$, for $j=1,2$.
We have to prove uniform bounds for all those terms.
But this is an easy verification, thanks to the $L^\infty_T(L^{2}_{\rm loc}+L^{5/3}_{\rm loc})$ bound on $R_\veps$ and the $L^2_T(H^{1}_{\rm loc})$ bound on $\vec{u}_{\veps}$, for any fixed time $T>0$ 
(recall the estimates obtained in Subsection \ref{ss:unif-est} above).
\qed
\end{proof}


\subsection{Convergence of the convective term} \label{ss:convergence}
We are finally able to handle the convective term. The first step is to reduce the study to the case of smooth vector fields $\vec{W}_{\veps ,M}$.

\begin{lemma} \label{lem:convterm}
Let $T>0$. For any $\vec{\psi}\in C_c^\infty\bigl([0,T[\,\times\wtilde\Omega;\R^3\bigr)$, we have 
\begin{equation*}
\lim_{M\rightarrow +\infty} \limsup_{\veps \rightarrow 0}\left|\int_{0}^{T}\int_{\wtilde\Omega} \vrho_\veps\,\vec{u}_\veps\otimes \vec{u}_\veps: \nabla_{x}\vec{\psi}\, dx \, dt-
\int_{0}^{T}\int_{\wtilde\Omega} \vec{W}_{\veps ,M}\otimes \vec{W}_{\veps,M}: \nabla_{x}\vec{\psi}\, dx \, dt\right|=0\, .
\end{equation*}
\end{lemma}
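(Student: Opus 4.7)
The plan is to choose $l$ large enough that $\chi_l\equiv 1$ on the compact set $\mathrm{supp}\,\vec\psi$, so that $\vec W_\veps=\vrho_\veps\vec u_\veps$ on that support. With this choice I split
$$ \vrho_\veps\vec u_\veps\otimes\vec u_\veps-\vec W_{\veps,M}\otimes\vec W_{\veps,M} \,=\, I_\veps+II_{\veps,M},$$
where $I_\veps:=\vrho_\veps(1-\vrho_\veps)\,\vec u_\veps\otimes\vec u_\veps$ and $II_{\veps,M}:=\vec W_\veps\otimes\vec W_\veps-\vec W_{\veps,M}\otimes\vec W_{\veps,M}$. The piece $I_\veps$ is controlled by first sending $\veps\to 0$ for each fixed $M$: splitting $1-\vrho_\veps$ into essential and residual parts, I use $[1-\vrho_\veps]_{\ess}=O(\veps^m)$ in $L_T^\infty(L^2_{\rm loc})$ from \eqref{est:rho_ess}, the measure bound $|\mathcal{M}^\veps_{\res}[t]|\lesssim\veps^{2m}$ of \eqref{est:M_res-measure}, the control of $[\vrho_\veps]_{\res}$ in $L_T^\infty(L^{5/3}_{\rm loc})$ from \eqref{est:rho_res}, and the H\"older estimate $\vec u_\veps\otimes\vec u_\veps\in L_T^1(L^3_{\rm loc})$ obtained from $\vec u_\veps\in L_T^2(H^1_{\rm loc})\hookrightarrow L_T^2(L^6_{\rm loc})$. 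Together these yield $\int I_\veps:\nabla\vec\psi\,\dxdt \to 0$ as $\veps\to 0$, for every fixed $M$.

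For $II_{\veps,M}$, I use the algebraic expansion
$$ II_{\veps,M}\,=\, \vec W_{\veps,M}\otimes(\vec W_\veps-\vec W_{\veps,M})\,+\,(\vec W_\veps-\vec W_{\veps,M})\otimes\vec W_{\veps,M}\,+\,(\vec W_\veps-\vec W_{\veps,M})\otimes(\vec W_\veps-\vec W_{\veps,M}).$$
In the two ``mixed'' pieces, the smoothed factor $\vec W_{\veps,M}=S_M\vec W_\veps$ is, for each fixed $M$, uniformly (in $\veps$) bounded in $L_T^2(H^\sigma)$ for every $\sigma\geq 0$ by Bernstein's inequality applied to the low-pass $S_M$, starting from the uniform bound in Lemma \ref{l:S-W_bounds}. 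Consequently $\vec W_{\veps,M}\,\nabla\vec\psi$ is uniformly compactly supported and bounded in $L_T^2(H^{\sigma})$ for every $\sigma\geq 0$, and pairing it with $\vec W_\veps-\vec W_{\veps,M}$ via the $H^{s}\times H^{-s}$ duality, for $-s=\sigma>4/5+\delta$, makes these two contributions vanish as $M\to\infty$ uniformly in $\veps$, thanks to Proposition \ref{p:prop approx}.

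The quadratic piece $(\vec W_\veps-\vec W_{\veps,M})\otimes(\vec W_\veps-\vec W_{\veps,M})$ is the main obstacle, since both factors are rough and genuinely depend on $\veps$. Here I exploit the uniform bound $\vec W_\veps\in L_T^2(L^2+L^{30/23})$ from Lemma \ref{l:S-W_bounds}, which on compactly supported functions implies a uniform bound in $L_T^2(H^{-4/5})$ via the Sobolev embedding $L^{30/23}\hookrightarrow H^{-4/5}$ in dimension three. Interpolating this uniform bound with the smallness of $\vec W_\veps-\vec W_{\veps,M}$ in $L_T^\infty(H^s)$ for $s<-4/5-\delta$ given by Proposition \ref{p:prop approx}, and then applying a standard Littlewood--Paley paraproduct estimate, I conclude that the bilinear integral against the smooth compactly supported $\nabla\vec\psi$ tends to zero as $M\to\infty$, uniformly in $\veps$. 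The precise coincidence of the exponent $-4/5$ appearing both in Proposition \ref{p:prop approx} and in the Sobolev embedding of $L^{30/23}$ is what makes this last estimate close, and it is the technical heart of the lemma.
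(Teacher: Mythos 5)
The decomposition $I_\veps+II_{\veps,M}$ you propose replaces the genuine difference by passing through the intermediate quantity $\vec W_\veps\otimes\vec W_\veps$, and this is precisely what the paper's proof is engineered to avoid. On the support of $\vec\psi$ one has $\vec W_\veps\otimes\vec W_\veps=\vrho_\veps^2\,\vec u_\veps\otimes\vec u_\veps$, and the available bounds ($\vrho_\veps\in L^\infty_T(L^{5/3}_{\rm loc})$, $\sqrt{\vrho_\veps}\,\vec u_\veps\in L^\infty_T(L^2)$) only place $\vrho_\veps^2\,\vec u_\veps\otimes\vec u_\veps$ in quasi-Banach $L^p_{t,x}$ spaces with $p<1$; there is no a priori reason this object, and hence your $I_\veps$ and $II_{\veps,M}$, are well-defined distributions. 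The paper's argument never squares $\vrho_\veps$: it inserts $\chi_l\vec u_\veps$ and uses the momentum $\vrho_\veps\vec u_\veps$ only once per term.

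The more fundamental gap is in the estimation of $II_{\veps,M}$. All your bounds are derived from Lemma \ref{l:S-W_bounds} (only $\vec W_\veps\in L^2_T(L^2+L^{30/23})$) together with Bernstein and Proposition \ref{p:prop approx}, and these cannot close. For the mixed terms, $\|\vec W_{\veps,M}\|_{L^2_T(H^\sigma)}$ grows like $2^{M(\sigma+4/5)}$ while $\|\vec W_\veps-\vec W_{\veps,M}\|_{L^2_T(H^{-\sigma})}$ decays only like $2^{M(-\sigma+4/5)}$ (this is exactly the content of \eqref{eq:approx var} and its rate of convergence), so their product scales like $2^{8M/5}$ and blows up as $M\to+\infty$. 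For the ``high-high'' piece $(\vec W_\veps-\vec W_{\veps,M})\otimes(\vec W_\veps-\vec W_{\veps,M})$, the invocation of a paraproduct estimate is not tenable: both factors live only in $H^{s}$ with $s<-4/5$, and there is no bilinear product continuity $H^{s}\times H^{s}\to\mc S'$ for negative $s$; the ``coincidence'' of the exponent $-4/5$ is simply the Sobolev embedding of $L^{30/23}$ and does not produce any cancellation.

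What is missing from your approach is the $H^1$ regularity of the velocity, not of the momentum. The paper's proof hinges on Proposition \ref{p:prop dec}: $\vec W_{\veps,M}=\veps^m\vec t^1_{\veps,M}+\vec t^2_{\veps,M}$ with $\vec t^2_{\veps,M}=S_M(\chi_l\vec u_\veps)$ uniformly bounded in $L^2_T(H^1)$ \emph{in both $\veps$ and $M$}. The high-frequency tail $(\Id-S_M)(\chi_l\vec u_\veps)$ is then small in $L^2_T(L^2)$ like $2^{-M}$ thanks to this $H^1$ bound (coming from the viscosity estimate \eqref{est:u-H^1}), while the momentum $\vrho_\veps\vec u_\veps$ differs from $\chi_l\vec u_\veps$ by $\veps^m R_\veps\vec u_\veps$, which vanishes as $\veps\to0$ at fixed $M$. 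This double smallness (one in $\veps$, one in $M$) is what lets the iterated limit $\lim_{M}\limsup_{\veps}$ close; your argument has only the $M$-smallness from $L^{30/23}$, which is too weak and is overwhelmed by the $M$-growth of the smoothed factor.
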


\begin{proof}
Let $\vec \psi\in C_c^\infty\bigl(\R_+\times\wtilde\Omega;\R^3\bigr)$, and let $\Supp\vec\psi\subset[0,T]\times K$ for some compact set $K\subset\wtilde\Omega$. Then, we take $l>0$ in \eqref{eq:cut-off} so large that
$K\subset \wtilde{\mbb{B}}_{l}\,:=\,B_l(0)\times\T$.
Therefore, using the notation introduced in \eqref{def_deltarho}, we get
$$
\int_{0}^{T}\int_{\wtilde\Omega} \vrho_\veps\,\vec{u}_\veps\otimes \vec{u}_\veps: \nabla_{x}\vec{\psi}\,=\,
\int_{0}^{T}\int_{K}(\chi_l\,\vec{u}_\veps)\otimes\vec{u}_\veps:\nabla_{x}\vec{\psi}\,+\veps^{m}\int_{0}^{T}\int_{K}R_\veps\,\vec{u}_\veps\otimes \vec{u}_\veps:\nabla_{x}\vec{\psi}\,.
$$
Notice that, as a consequence of the uniform bounds for $\big(\vec{u}_{\veps}\big)_\veps$ in $L^{2}_{T}(L^{6}_{\rm loc})$ and for $\big(R_{\veps}\big)_\veps$ in $L^{\infty}_{T}(L_{\rm loc}^{5/3})$
(recall \eqref{uni_varrho1} above), the second integral in the right-hand side is of order $\veps^{m}$. As for the first one, 
using the definition in \eqref{eq:t-T}, we can write
$$
\int_{0}^{T}\int_{K}(\chi_l\,\vec{u}_\veps)\otimes \vec{u}_\veps:\nabla_{x}\vec{\psi}\,=\,\int_{0}^{T}\int_{K}\vec{t}^2_{\veps,M}\otimes\vec{u}_\veps:\nabla_{x}\vec{\psi}
+\int_{0}^{T}\int_{K} \,(Id-{S}_{M})(\chi_l\,\vec{u}_\veps)\otimes\vec{u}_\veps: \nabla_{x}\vec{\psi}\,.
$$
Observe that, in view of characterisation \eqref{eq:LP-Sob}, one has the property
\begin{equation*}
\left\|(Id-{S}_{M})(\chi_l\,\vec{u}_\veps)\right\|_{L_{T}^{2}(L^{2})}\,\leq\,C\,2^{-M}\,\left\|\nabla_{x}(\chi_l\,\vec{u}_\veps)\right\|_{L_{T}^{2}(L^{2})}\,\leq\,C(l)\,2^{-M}\,.
\end{equation*}
Therefore, it is enough to consider the first term in the right-hand side of the last relation: we have
$$
\int_{0}^{T}\int_{K}\vec{t}^2_{\veps,M}\otimes \vec{u}_\veps:\nabla_{x}\vec{\psi}\,=\,\int_{0}^{T}\int_K\vec{t}^2_{\veps,M}\otimes\vec{t}^2_{\veps,M}:\nabla_{x}\vec{\psi}\,+\,
\int_{0}^{T}\int_{K} \,\vec{t}^2_{\veps,M}\otimes(Id-{S}_{M})(\chi_l\,\vec{u}_\veps): \nabla_{x}\vec{\psi}\,,
$$
where, for the same reason as before, we gather that
\begin{equation*}
\lim_{M\rightarrow +\infty}\limsup_{\veps \rightarrow 0}\left|\int_{0}^{T}\int_{K}\vec{t}^2_{\veps,M}\otimes (Id-{S}_{M})(\chi_l\,\vec{u}_\veps): \nabla_{x}\vec{\psi}\right|=0\, .
\end{equation*}

It remains us to consider the integral 
$$
\int_{0}^{T}\int_K\vec{t}^2_{\veps,M}\otimes\vec{t}^2_{\veps,M}:\nabla_{x}\vec{\psi}\,=\,\int_{0}^{T}\int_{K} \vec{W}_{\veps ,M}\otimes \vec t^2_{\veps,M}: \nabla_{x}\vec{\psi}
-\veps^{m}\int_{0}^{T}\int_{K}\vec t^1_{\veps,M}\otimes \vec t^2_{\veps,M}: \nabla_{x}\vec{\psi}\,,
$$
where we notice that, owing to Proposition \ref{p:prop dec}, the latter term in the right-hand side is of order $\veps^{m}$, so it vanishes at the limit.
As a last step, we write
$$
\int_{0}^{T}\int_{K} \vec{W}_{\veps ,M}\otimes \vec t^2_{\veps,M}: \nabla_{x}\vec{\psi}\,=\,
\int_{0}^{T}\int_{K} \vec{W}_{\veps ,M}\otimes \vec W_{\veps,M}: \nabla_{x}\vec{\psi}\,-\,\veps^m\int_{0}^{T}\int_{K} \vec{W}_{\veps ,M}\otimes \vec t^1_{\veps,M}: \nabla_{x}\vec{\psi}\,.
$$
Using Lemma \ref{l:S-W_bounds} together with Bernstein's inequalities of Lemma \ref{l:bern}, we see that the latter integral in the right-hand side is of order $\veps^{m}$.
This concludes the proof of the lemma.
\qed
\end{proof}

\medbreak
From now on, in order to avoid the appearance of (irrelevant) multiplicative constants everywhere, we suppose that the torus $\T$ has been normalised so that its Lebesgue measure is equal to $1$.

In view of the previous lemma and of Proposition \ref{p:limitpoint}, for any test-function
\begin{equation} \label{eq:test-f}
\vec\psi\in C_c^\infty\big([0,T[\,\times\wtilde\Omega;\R^3\big)\qquad\qquad \mbox{ such that }\qquad \div\vec\psi=0\quad\mbox{ and }\quad \d_3\vec\psi=0\,,
\end{equation}
we have to pass to the limit in the term 
\begin{align*}
-\int_{0}^{T}\int_{\wtilde\Omega} \vec{W}_{\veps ,M}\otimes \vec{W}_{\veps ,M}: \nabla_{x}\vec{\psi}\,&=\,\int_{0}^{T}\int_{\wtilde\Omega} \div\left(\vec{W}_{\veps ,M}\otimes
\vec{W}_{\veps ,M}\right) \cdot \vec{\psi}\,.
\end{align*}

Notice that the integration by parts above is well-justified, since all the quantities inside the integrals are smooth with respect to the space variable. At this point, we observe that,
resorting to the notation introduced in \eqref{eq:decoscil}, we can write
$$
\int_{0}^{T}\int_{\wtilde\Omega} \div\left(\vec{W}_{\veps ,M}\otimes \vec{W}_{\veps ,M}\right) \cdot \vec{\psi}\,=\,
\int_{0}^{T}\int_{\R^2} \left(\mc{T}_{\veps ,M}^{1}+\mc{T}_{\veps, M}^{2}\right)\cdot\vec{\psi}^h\,,
$$
where we have defined the terms
\begin{equation} \label{def:T1-2}
\mc T^1_{\veps,M}\,:=\, \divh\left(\langle \vec{W}_{\veps ,M}^{h}\rangle\otimes \langle \vec{W}_{\veps ,M}^{h}\rangle\right)\qquad \mbox{ and }\qquad
\mc T^2_{\veps,M}\,:=\, \divh\left(\langle \widetilde{\vec{W}}_{\veps ,M}^{h}\otimes \widetilde{\vec{W}}_{\veps ,M}^{h}\rangle \right)\,.
\end{equation}
So, it is enough to focus on each of them separately.
For notational convenience, from now on we will generically denote by $\mc{R}_{\veps ,M}$ any remainder term, that is any term satisfying the property
\begin{equation} \label{eq:reminder}
\lim_{M\rightarrow +\infty}\limsup_{\veps \rightarrow 0}\left|\int_{0}^{T}\int_{\wtilde\Omega}\mc{R}_{\veps ,M}\cdot \vec{\psi}\, dx \, dt\right|=0
\end{equation}
for all test functions $\vec{\psi}\in C_c^\infty\bigl([0,T[\,\times\wtilde\Omega;\R^3\bigr)$ as in \eqref{eq:test-f}. 

\subsubsection{The analysis of the $\mc{T}_{\veps ,M}^{1}$ term}\label{sss:term1}
We start by dealing with $\mc T^1_{\veps,M}$. Standard computations give
\begin{align}
\mc{T}_{\veps ,M}^{1}\,&=\,\divh\left(\langle \vec{W}_{\veps ,M}^{h}\rangle\otimes \langle \vec{W}_{\veps ,M}^{h}\rangle\right)=
\divh\langle \vec{W}_{\veps ,M}^{h}\rangle\, \langle \vec{W}_{\veps ,M}^{h}\rangle+\langle \vec{W}_{\veps ,M}^{h}\rangle \cdot \nabla_{h}\langle \vec{W}_{\veps ,M}^{h}\rangle \label{eq:T1} \\
&=\divh\langle \vec{W}_{\veps ,M}^{h}\rangle\, \langle \vec{W}_{\veps ,M}^{h}\rangle+\frac{1}{2}\, \nabla_{h}\left(\left|\langle \vec{W}_{\veps ,M}^{h}\rangle\right|^{2}\right)+
\curlh\langle \vec{W}_{\veps ,M}^{h}\rangle\,\langle \vec{W}_{\veps ,M}^{h}\rangle^{\perp}\,. \nonumber
\end{align}
Notice that we can forget about the second term because it is a perfect gradient and we are testing against divergence-free test functions.

For the first term, we take advantage of system \eqref{eq:approx wave}: averaging the first equation with respect to $x^{3}$ and multiplying it by $\langle \vec{W}_{\veps ,M}\rangle$, we arrive at
$$
\divh\langle \vec{W}_{\veps ,M}^{h}\rangle\,\langle \vec{W}_{\veps ,M}^{h}\rangle\,=\,-\frac{\veps^{m}}{\mc{A}}\d_t\langle \Lambda_{\veps ,M}\rangle \langle \vec{W}_{\veps ,M}^{h}\rangle+
\frac{\veps^{m}}{\mc{A}} \langle f_{\veps ,M}^{h}\rangle \langle \vec{W}_{\veps ,M}^{h}\rangle\,=\,
\frac{\veps^{m}}{\mc{A}}\langle \Lambda_{\veps ,M}\rangle \d_t \langle \vec{W}_{\veps ,M}^{h}\rangle +\mc{R}_{\veps ,M}\,.
$$
We remark that the term presenting the total time derivative is in fact a remainder.
We use now the horizontal part of \eqref{eq:approx wave}, where we take the vertical average and then multiply by $\langle \Lambda_{\veps ,M}\rangle$: we gather
\begin{align*}
\frac{\veps^{m}}{\mc{A}}\langle \Lambda_{\veps ,M}\rangle \d_t \langle \vec{W}_{\veps ,M}^{h}\rangle &=
-\frac{1}{\mc{A}} \langle \Lambda_{\veps ,M}\rangle \nabla_{h}\langle \Lambda_{\veps ,M}\rangle+\frac{\veps^{m}}{\mc{A}}\langle \Lambda_{\veps ,M}\rangle \langle \vec G_{\veps ,M}^{h}\rangle-
\frac{\veps^{m-1}}{\mc{A}}\langle \Lambda_{\veps ,M}\rangle\langle \vec{W}_{\veps ,M}^{h}\rangle^{\perp}\\
&=-\frac{\veps^{m-1}}{\mc{A}}\langle \Lambda_{\veps ,M}\rangle\langle \vec{W}_{\veps ,M}^{h}\rangle^{\perp}-\frac{1}{2\mc{A}}  \nabla_{h}\left( \left| \langle \Lambda_{\veps ,M}\rangle \right|^{2}\right)+\mc{R}_{\veps ,M}\\
&=-\frac{\veps^{m-1}}{\mc{A}}\langle \Lambda_{\veps ,M}\rangle\langle \vec{W}_{\veps ,M}^{h}\rangle^{\perp}+\mc{R}_{\veps ,M}\, ,
\end{align*}
where we repeatedly exploited the properties proved in Proposition \ref{p:prop approx} and we included in the remainder term also the perfect gradient.
Inserting this relation into \eqref{eq:T1}, we find
\begin{equation*}
\mc{T}_{\veps ,M}^{1}= \g_{\veps,M}\,\langle\vec{W}_{\veps,M}^{h}\rangle^{\perp}+\mc{R}_{\veps,M}\,,
\qquad\qquad\mbox{ with }\qquad \gamma_{\veps, M}:=\curlh\langle \vec{W}_{\veps ,M}^{h}\rangle\,-\,\frac{\veps^{m-1}}{\mc{A}}\langle \Lambda_{\veps ,M}\rangle\,.
\end{equation*}


We observe that, for passing to the limit in $\mc{T}_{\veps ,M}^{1}$, there is no other way than finding some strong convergence property for $\vec W_{\veps,M}$. 
Such a property is in fact hidden in the structure of the wave system \eqref{eq:approx wave}: in order to exploit it, some work on the term $\g_{\veps,M}$ is needed.
We start by rewriting the vertical average of the first equation in \eqref{eq:approx wave} as
\begin{equation*}
\frac{\veps^{2m-1}}{\mc{A}}\,\d_t \langle \Lambda_{\veps ,M} \rangle\,+\,\veps^{m-1}\div_{h} \langle \vec{W}_{\veps ,M}^{h}\rangle\,=\,\frac{\veps^{2m-1}}{\mc{A}}\, \langle f_{\veps ,M}^{h}\rangle\,.
\end{equation*}
On the other hand, taking the vertical average of the horizontal components of \eqref{eq:approx wave} and then applying $\curlh$, we obtain the relation
\begin{equation*}
\veps^m\,\d_t\curlh\langle \vec{W}_{\veps ,M}^{h}\rangle\,+\veps^{m-1}\,\divh\langle \vec{W}_{\veps ,M}^{h}\rangle\, =\,\veps^m \curlh\langle\vec G_{\veps ,M}^{h}\rangle\, .
\end{equation*}
Summing up the last two equations, we discover that
\begin{equation} \label{eq:gamma}
\d_{t}\gamma_{\veps,M}\,=\,\curlh\langle \vec G_{\veps ,M}^{h}\rangle\,-\,\frac{\veps^{m-1}}{\mc{A}}\,\langle f_{\veps ,M}^{h}\rangle \, .
\end{equation}
Thanks to estimate \eqref{eq:approx force} in Proposition \ref{p:prop approx}, we discover that (for any $M>0$ fixed) the family 
$\left(\d_{t}\,\gamma_{\veps,M}\right)_{\veps}$
is uniformly bounded (with respect to $\veps$) in e.g. $L_{T}^{2}(L^{2})$. 
On the other hand, thanks to Lemma \ref{l:S-W_bounds} and Sobolev embeddings, we have that (for any $M>0$ fixed)
the sequence $(\gamma_{\veps,M})_{\veps}$ is uniformly bounded (with respect to $\veps$) in the space $L_{T}^{2}(H^{1})$.
Since the embedding $H_{\rm loc}^{1}\hookrightarrow L_{\rm loc}^{2}$ is compact, the Aubin-Lions Lemma implies that, for any $M>0$ fixed, the family $(\gamma_{\veps,M})_{\veps}$ is compact
(in $\veps$) in $L_{T}^{2}(L_{\rm loc}^{2})$. Then, it converges strongly (up to extracting a subsequence) to a tempered distribution $\gamma_M$ in the same space. 
Of course, by definition of $\g_{\veps,M}$, this tells us that also $\big(\curlh\lan\vec W_{\veps,M}^h\ran\big)_\veps$ is compact in $L^2_T(L^2_{\rm loc})$.

Now, we have that $\gamma_{\veps ,M}$ converges strongly to $\gamma_M$ in $L_{T}^{2}(L_{\rm loc}^{2})$ and $\langle \vec{W}_{\veps ,M}^{h}\rangle$ converges weakly to
$\langle \vec{W}_{M}^{h}\rangle$ in $L_{T}^{2}(L_{\rm loc}^{2})$ (owing to Proposition \ref{p:prop dec}, for instance). Then, we deduce that
\begin{equation*}
\gamma_{\veps,M}\langle \vec{W}_{\veps ,M}^{h}\rangle^{\perp}\longrightarrow \gamma_M \langle \vec{W}_{M}^{h}\rangle^{\perp}\qquad \text{ in }\qquad \mc{D}^{\prime}\big(\R_+\times\R^2\big)\,.
\end{equation*}
Observe that, by definition of $\g_{\veps,M}$, we must have $\gamma_M=\curlh\langle \vec{W}_{M}^{h}\rangle$. On the other hand, by Proposition \ref{p:prop dec} and the definitions given in \eqref{eq:t-T},
one has $\langle \vec{W}_{M}^{h}\rangle= \lan{S}_{M}(\chi_l\vec{U}^{h})\ran$.

In the end, we have proved that, for any $T>0$ and any test-function $\vec \psi$ as in \eqref{eq:test-f},
one has the convergence (at any $M\in\N$ fixed, when $\veps\ra0$)
\begin{equation} \label{eq:limit_T1}
\int_{0}^{T}\int_{\R^2}\mc{T}_{\veps ,M}^{1}\cdot\vec{\psi}^h\,dx^h\,dt\,\longrightarrow\,
\int^T_0\int_{\R^2}\curlh\lan{S}_{M}(\chi_l\vec{U}^{h})\ran\; \lan{S}_{M}\big(\chi_l(\vec{U}^{h})^{\perp}\big)\ran\cdot\vec\psi^h\,dx^h\,dt\,.
\end{equation}

\subsubsection{Dealing with the term $\mc{T}_{\veps ,M}^{2}$}\label{sss:term2}
Let us now consider the term $\mc{T}_{\veps ,M}^{2}$, defined in \eqref{def:T1-2}. By the same computation as above, we infer that
\begin{align}
\mc{T}_{\veps ,M}^{2}\,
&=\,\langle \divh (\widetilde{\vec{W}}_{\veps ,M}^{h})\;\;\widetilde{\vec{W}}_{\veps ,M}^{h}\rangle+\frac{1}{2}\, \langle \nabla_{h}| \widetilde{\vec{W}}_{\veps ,M}^{h}|^{2} \rangle+
\langle \curlh\widetilde{\vec{W}}_{\veps ,M}^{h}\,\left( \widetilde{\vec{W}}_{\veps ,M}^{h}\right)^{\perp}\rangle\, . \label{eq:T2} 
\end{align}

Let us now introduce now the quantities
$$
\widetilde{\Phi}_{\veps ,M}^{h}\,:=\,( \widetilde{\vec{W}}_{\veps ,M}^{h})^{\perp}-\d_{3}^{-1}\nabla_{h}^{\perp}\widetilde{\vec{W}}_{\veps ,M}^{3}\qquad\mbox{ and }\qquad
\widetilde{\omega}_{\veps ,M}^{3}\,:=\,\curlh \widetilde{\vec{W}}_{\veps ,M}^{h}\,.
$$
Then we can write
\begin{equation*}
\left( \curl \widetilde{\vec{W}}_{\veps ,M}\right)^{h}\,=\,\d_3 \widetilde{\Phi}_{\veps ,M}^{h}\qquad \text{ and }\qquad
\left( \curl \widetilde{\vec{W}}_{\veps ,M}\right)^{3}\,=\,\widetilde{\omega}_{\veps ,M}^{3}\,.
\end{equation*}
In addition, from the momentum equation in \eqref{eq:approx wave}, where we take the mean-free part and then the $\curl$, we deduce the equations
\begin{equation} \label{eq:eq momentum term2}
\begin{cases}
\veps^{m}\d_t\widetilde{\Phi}_{\veps ,M}^{h}-\veps^{m-1}\widetilde{\vec{W}}_{\veps ,M}^{h}=\veps^m\left(\d_{3}^{-1}\curl\widetilde{\vec G}_{\veps ,M} \right)^{h}\\[1ex]
\veps^{m}\d_t\widetilde{\omega}_{\veps ,M}^{3}-\veps^{m-1}\divh\widetilde{\vec{W}}_{\veps ,M}^{h}=\veps^m\,\curlh\widetilde{\vec G}_{\veps ,M}^{h}\, .
\end{cases}
\end{equation}
Making use of the relations above and of Propositions \ref{p:prop approx} and \ref{p:prop dec}, we get
\begin{equation*}
\begin{split}
\curlh\widetilde{\vec{W}}_{\veps ,M}^{h}\;\left(\widetilde{\vec{W}}_{\veps ,M}^{h}\right)^{\perp}&=\widetilde{\omega}_{\veps ,M}^{3}\left(\widetilde{\vec{W}}_{\veps ,M}^{h}\right)^{\perp}\,=\,
\veps \d_t\!\left( \widetilde{\Phi}_{\veps ,M}^{h}\right)^{\perp}\widetilde{\omega}_{\veps ,M}^{3}-
\veps\widetilde{\omega}_{\veps ,M}^{3}\left(\left(\d_{3}^{-1}\curl\widetilde{\vec G}_{\veps ,M}\right)^{h}\right)^\perp  \\
&=-\veps \left( \widetilde{\Phi}_{\veps ,M}^{h}\right)^{\perp}\d_t\widetilde{\omega}_{\veps ,M}^{3}+\mc{R}_{\veps ,M}=
\left( \widetilde{\Phi}_{\veps ,M}^{h}\right)^{\perp}\,\divh\widetilde{\vec{W}}_{\veps ,M}^{h}+\mc{R}_{\veps ,M}\, .
\end{split}
\end{equation*}
Hence, including also the gradient term into the remainders, from \eqref{eq:T2} we arrive at 
\begin{align*}
\mc{T}_{\veps ,M}^{2}\,&=\,\langle \divh\widetilde{\vec{W}}_{\veps ,M}^{h}\,\left(\widetilde{\vec{W}}_{\veps ,M}^{h}+\left(\widetilde{\Phi}_{\veps ,M}^{h}\right)^{\perp}\right) \rangle+\mc{R}_{\veps ,M} \\
&=\,\langle \div \widetilde{\vec{W}}_{\veps ,M}\left(\widetilde{\vec{W}}_{\veps ,M}^{h}+\left(\widetilde{\Phi}_{\veps ,M}^{h}\right)^{\perp}\right) \rangle -
\langle \d_3 \widetilde{\vec{W}}_{\veps ,M}^{3}\left(\widetilde{\vec{W}}_{\veps ,M}^{h}+\left(\widetilde{\Phi}_{\veps ,M}^{h}\right)^{\perp}\right) \rangle+\mc{R}_{\veps ,M}\, .
\end{align*}
The second term on the right-hand side of the last line is actually another remainder. Indeed, using the definition of the function $\widetilde{\Phi}_{\veps ,M}^{h}$ and the fact
that the test function $\vec\psi$ does not depend on $x^3$, one has
\begin{equation*}
\begin{split}
\d_3 \widetilde{\vec{W}}_{\veps ,M}^{3}\left(\widetilde{\vec{W}}_{\veps ,M}^{h}+\left(\widetilde{\Phi}_{\veps ,M}^{h}\right)^{\perp}\right)&=\d_3 \left(\widetilde{\vec{W}}_{\veps ,M}^{3}\left(\widetilde{\vec{W}}_{\veps ,M}^{h}+\left(\widetilde{\Phi}_{\veps ,M}^{h}\right)^{\perp}\right)\right) - \widetilde{\vec{W}}_{\veps ,M}^{3}\, \d_3\left(\widetilde{\vec{W}}_{\veps ,M}^{h}+\left(\widetilde{\Phi}_{\veps ,M}^{h}\right)^{\perp}\right)\\
&=\mc{R}_{\veps ,M}-\frac{1}{2}\nabla_{h}\left|\wtilde{\vec{W}}_{\veps ,M}^{3}\right|^{2}=\mc{R}_{\veps ,M}\, .
\end{split}
\end{equation*}
As for the first term, instead, we use the first equation in \eqref{eq:approx wave} to obtain
\begin{equation*}
\begin{split}
\div \widetilde{\vec{W}}_{\veps ,M}\left(\widetilde{\vec{W}}_{\veps ,M}^{h}+\left(\widetilde{\Phi}_{\veps ,M}^{h}\right)^{\perp}\right)&=-\frac{\veps^{m}}{\mc{A}} \d_t \widetilde{\Lambda}_{\veps ,M}\left(\widetilde{\vec{W}}_{\veps ,M}^{h}+\left(\widetilde{\Phi}_{\veps ,M}^{h}\right)^{\perp}\right)+\mc{R}_{\veps ,M}\\
&=\frac{\veps^{m}}{\mc{A}} \widetilde{\Lambda}_{\veps ,M}\, \d_t\left(\widetilde{\vec{W}}_{\veps ,M}^{h}+\left(\widetilde{\Phi}_{\veps ,M}^{h}\right)^{\perp}\right)+\mc{R}_{\veps ,M}\, .
\end{split}
\end{equation*}
Now, equations \eqref{eq:approx wave} and \eqref{eq:eq momentum term2} immediately yield that
\begin{equation*}
\frac{\veps^{m}}{\mc{A}} \widetilde{\Lambda}_{\veps ,M}\, \d_t\left(\widetilde{\vec{W}}_{\veps ,M}^{h}+\left(\widetilde{\Phi}_{\veps ,M}^{h}\right)^{\perp}\right)=
\mc{R}_{\veps ,M}-\frac{1}{\mc{A}}\widetilde{\Lambda}_{\veps ,M}\, \nabla_{h}\left(\widetilde{\Lambda}_{\veps ,M}\right)=
\mc{R}_{\veps ,M}-\frac{1}{2\mc{A}}\nabla_{h}\left|\widetilde{\Lambda}_{\veps ,M}\right|^{2}=\mc{R}_{\veps ,M}\,,
\end{equation*}
and this relation finally implies that $\mc{T}_{\veps ,M}^{2}\,=\,\mc R_{\veps,M}$ is a remainder, in the sense of relation \eqref{eq:reminder}.

To sum up, we have proved that, for any $T>0$ and any test-function $\vec \psi$ as in \eqref{eq:test-f},
one has the convergence
(at any $M\in\N$ fixed, when $\veps\ra0$)
\begin{equation} \label{eq:limit_T2}
\int_{0}^{T}\int_{\R^2}\mc{T}_{\veps ,M}^{2}\cdot\vec{\psi}^h\,dx^h\,dt\,\longrightarrow\,0\,.
\end{equation}

We conclude this part with a remark.
\begin{remark}\label{r:T1-T2}
Due to the presence of the term $\vec Y^2_\veps$ in \eqref{eq:wave_syst}, the choice $m\geq2$ is fundamental.
However, as soon as $F=0$, our analysis applies also in the case when $1<m<2$.
\end{remark}
 
\subsection{The limit system} \label{ss:limit} 
With the convergence established in \eqref{conv:r} to \eqref{conv:u} and in Subsection \ref{ss:convergence}, we can pass to the limit in equation \eqref{weak-mom}.
Since all the integrals will be made on $\R^2$ (in view of the choice of the test functions in \eqref{eq:test-f} above), we can safely come back to the notation on $\Omega$ instead of $\wtilde\Omega$.

To begin with, we take a test-function $\vec\psi$ as in \eqref{eq:test-f}, specifically
$$
\vec{\psi}=\big(\nabla_{h}^{\perp}\phi,0\big)\,,\qquad\qquad\mbox{ with }\qquad \phi\in C_c^\infty\big([0,T[\,\times\R^2\big)\,,\quad \phi=\phi(t,x^h)\,.
$$
For such a $\vec\psi$, all the gradient terms vanish identically, as well as all the contributions
due to the vertical component of the equation. Hence, after using also \eqref{prF}, equation \eqref{weak-mom} becomes
\begin{align}
\int_0^T\!\!\!\int_{\Omega}  
& \left( -\vre \ue^h \cdot \partial_t \vec\psi^h -\vre \ue^h\otimes\ue^h  : \nabla_h \vec\psi^h
+ \frac{1}{\ep}\vre\big(\ue^{h}\big)^\perp\cdot\vec\psi^h\right)\, dx \, dt \label{eq:weak_to_conv}\\
& =\int_0^T\!\!\!\int_{\Omega} 
\left(-\mbb{S}(\vtheta_\veps,\nabla_x\vec\ue): \nabla_x \vec\psi+\frac{1}{\veps^{2}}(\vrho_\veps -\widetilde{\vrho}_\veps)\, \nabla_x F\cdot \vec\psi\right)\,dx\,dt+
\int_{\Omega}\vrez \uez  \cdot \vec\psi(0,\cdot)\,dx\,. \nonumber
\end{align}

Making use of the uniform bounds established in Subsection \ref{ss:unif-est}, it is an easy matter to pass to the limit in the $\d_t$ term, in the viscosity term and in the centrifugal force.
Moreover, thanks to the assumptions on the initial data, we have that $\vrho_{0,\veps}\vec{u}_{0,\veps}\rightharpoonup \vec{u}_0$ in $L_{\rm loc}^2$. 

Let us consider now the Coriolis term. We can write
\begin{align*}
\int_0^T\!\!\!\int_{\Omega}\frac{1}{\ep}\vre\big(\ue^{h}\big)^\perp\cdot\vec\psi^h\,&=\,\int_0^T\!\!\!\int_{\mbb{R}^2}\frac{1}{\ep}\langle\vre \ue^{h}\rangle \cdot \nabla_{h}\phi\,=\,
-\veps^{m-1}\int_0^T\!\!\!\int_{\mbb{R}^2}\langle R_\veps\rangle\, \d_t\phi\,-\,\veps^{m-1}\int_{\mbb{R}^2}\langle  R_{0,\veps}\rangle\, \phi(0,\cdot )\,, 
\end{align*}
which of course converges to $0$ when $\veps\ra0$. Notice that the second equality derives from the mass equation \eqref{weak-con}, tested against $\phi$: namely,
\begin{equation*}
-\veps^m\int_0^T\!\!\!\int_{\mbb{R}^2}\langle\frac{\vrho_\veps-1}{\veps^m}\rangle\, \d_t\phi\,-\,\int_0^T\!\!\!\int_{\mbb{R}^2}\langle \vrho_\veps \ue^h\rangle \cdot \nabla_{h}\phi\,=\,
\veps^m\int_{\mbb{R}^2}\langle\frac{\vrho_{0,\veps}-1}{\veps^m}\rangle\, \phi(0,\cdot )\,.
\end{equation*}

It remains to deal with the convective term $\vrho_\veps \ue^h \otimes \ue^h$. For this, we take  advantage of
the analysis of Subsection \ref{ss:convergence}, and in particular of Lemma \ref{lem:convterm} and relations \eqref{eq:limit_T1} and \eqref{eq:limit_T2}.
Next, we remark that, since $\vec U^h\in L^2_T(H_{\rm loc}^1)$ by \eqref{conv:u}, from \eqref{eq:LP-Sob} we gather the strong convergence
$S_M(\chi_l\vec U^h)\longrightarrow\chi_l\vec{U}^{h}$ in $L_{T}^{2}(H^{s})$ for any $s<1$ and any $l>0$ fixed, in the limit for $M\rightarrow +\infty$.
Therefore,  in the term on the right-hand side of \eqref{eq:limit_T1}, we can perform equalities \eqref{eq:T1} backwards, and then pass to the limit also for $M\ra+\infty$.
Using that $\chi_l\equiv1$ on $\Supp\vec\psi$ by construction, we finally get the convergence (for $\veps\ra0$)
\begin{equation*}
\int_0^T\int_{\Omega} \vre \ue^h\otimes\ue^h  : \nabla_h \vec\psi^h\, \longrightarrow\, \int_0^T\int_{\R^2}\vec{U}^h\otimes\vec{U}^h  : \nabla_h \vec\psi^h\,.
\end{equation*}

In the end, letting $\varepsilon \rightarrow 0$ in \eqref{eq:weak_to_conv}, we may infer that 
\begin{align*}
&\int_0^T\!\!\!\int_{\R^2} \left(\vec{U}^{h}\cdot \d_{t}\vec\psi^h+\vec{U}^{h}\otimes \vec{U}^{h}:\nabla_{h}\vec\psi^h\right)\, dx^h\, dt\\
&\qquad\qquad= \int_0^T\!\!\!\int_{\R^2} \left(\mu(\oline\vartheta )\nabla_{h}\vec{U}^{h}:\nabla_{h}\vec\psi^h-\delta_2(m)\lan\varrho^{(1)}\ran\nabla_{h}F\cdot \vec\psi^h\right)\, dx^h\, dt-
\int_{\R^2}\lan\vec{u}_{0}^{h}\ran\cdot \vec\psi^h(0,\cdot)\,dx^h\,,
\end{align*}
where $\delta_2(m)=1$ if $m=2$, $\delta_2(m)=0$ otherwise. At this point, Remark \ref{r:F-G} applied to the case $m=2$ yields the equality
$\d_\vrho p(1,\oline\vtheta)\,\nabla_h\lan\wtilde r\ran\,=\,\nabla_hF$. Therefore, keeping in mind that $R=\vrho^{(1)}+\wtilde r$, we get
$$
\lan\varrho^{(1)}\ran\nabla_{h}F\,=\,\lan R\ran\nabla_{h}F\,-\,\lan\wtilde r\ran\nabla_{h}F\,=\,\lan R\ran\nabla_{h}F\,-\,\frac{\d_\vrho p(1,\oline\vtheta)}{2}\,\nabla_h\left|\lan\wtilde r\ran\right|^2\,.
$$
Of course, the perfect gradient disappears from the weak formulation. Using this observation in the target momentum equation written above, we finally deduce \eqref{eq_lim_m:momentum}.
This completes the proof of Theorem \ref{th:m-geq-2}, in the case when $m\geq2$ and $F\neq0$.

When $m>1$ and $F=0$, most of the arguments above still apply. We refer to the next section for more details.

\section{Proof of the convergence in the case when $F=0$} \label{s:proof-1}
In the present section we will prove the convergence result in the case $F=0$. For the sake of brevity, we will focus on the case $m=1$, completing in this way the proof to Theorem \ref{th:m=1_F=0}.
The case $m>1$ follows by a similar analysis, using at the end the compensated compactness argument depicted in Subsection \ref{ss:convergence} (recall also Remark \ref{r:T1-T2} above).



\subsection{Analysis of the acoustic-Poincar\'e waves}\label{ss:unifbounds_1} 

We start by remarking that system (NSF)$_{\veps}$ can be recasted in the form \eqref{eq:wave_syst}, with $m=1$: with the same notation introduced in Paragraph \ref{sss:wave-eq},
and after setting $X_\veps\,:=\,\div\vec{X}^1_\veps\,+\,X^2_\veps$ and $\vec Y_\veps\,:=\,\div\mbb{Y}^1_\veps\,+\,\vec{Y}^2_\veps\,+\,\nabla_x Y^3_\veps$,
we have
\begin{equation} \label{eq:wave_m=1}
\left\{\begin{array}{l}
       \veps\,\d_tZ_\veps\,+\,\mc{A}\,\div\vec{V}_\veps\,=\,\veps\,X_\veps \\[1ex]
       \veps\,\d_t\vec{V}_\veps\,+\,\nabla_x Z_\veps\,+\,\,\e_3\times \vec V_\veps\,=\,\veps\,\vec Y_\veps\,,\qquad\qquad\big(\vec{V}_\veps\cdot\vec n\big)_{|\d\Omega_\veps}\,=\,0\,.
       \end{array}
\right.
\end{equation}
This system is supplemented with the initial datum $\big(Z_{0,\veps},\vec V_{0,\veps}\big)$, where these two functions are defined as in relation \eqref{def:wave-data} above.


\subsubsection{Uniform bounds and regularisation}

In the next lemma, we establish uniform bounds for $Z_\veps$ and $\vec V_\veps$. Its proof is an easy adaptation of the one given in Lemma \ref{l:S-W_bounds}, hence omitted. One has
to use the fact that, since $F=0$, all the bounds obtained in the previous sections hold now on the whole $\Omega_\veps$, with constants which are uniform in $\veps\in\,]0,1]$;
therefore, one can abstain from using the cut-off functions $\chi_l$.
\begin{lemma} \label{l:S-W_bounds_1}
Let $\bigl(Z_\veps\bigr)_\veps$  and $\bigl(\vec V_\veps\bigr)_\veps$ be defined as in Paragraph \ref{sss:wave-eq}. Then, for any $T>0$ and all $\ep \in \, ]0,1]$, we have
$$
\sup_{\veps\in\,]0,1]}\| Z_\veps\|_{L^\infty_T((L^2+L^{5/3}+L^1+\mc{M}^+)(\Omega_\veps))} \leq c\, ,\quad\quad
\sup_{\veps\in\,]0,1]}\| \vec V_\veps\|_{L^2_T((L^2+L^{30/23})(\Omega_\veps))} \leq c \, .
$$

\end{lemma}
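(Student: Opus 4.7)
The plan is to adapt the proof of Lemma \ref{l:S-W_bounds} essentially verbatim, simply dropping the spatial cut-off $\chi_l$ at every occurrence. What makes this adaptation work is the crucial observation recorded in Remark \ref{rmk:cut-off}: when $F=0$, Lemma \ref{l:target-rho_bound} and Proposition \ref{p:target-rho_bound} yield $|\wtilde\vrho_\veps - 1| \leq C\veps^m$ uniformly on the whole $\Omega$, so the estimates \eqref{est:rho_ess}, \eqref{est:theta-ess}, \eqref{est:M_res-measure}, \eqref{est:rho_res}, \eqref{est:theta_res}, \eqref{est:e-s_res}, \eqref{est:u-H^1} and \eqref{uni_varrho1} all hold on the whole $\Omega_\veps$ with constants that are uniform in $\veps$. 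In particular, for $\veps$ small enough, $\wtilde\vrho_\veps$ lies inside the essential range at every point of $\Omega_\veps$.

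First I would handle $\vec V_\veps = \vrho_\veps\vec u_\veps$. Splitting $\vec V_\veps = [\vrho_\veps]_{\ess}\vec u_\veps + [\vrho_\veps]_{\res}\vec u_\veps$, the essential piece is bounded in $L^2_T(L^2(\Omega_\veps))$ by pointwise control of $[\vrho_\veps]_{\ess}$ and the global version of \eqref{est:u-H^1}; the residual piece is bounded by H\"older's inequality (with exponents $5/3$ and $6$) combined with \eqref{est:rho_res} and Sobolev embedding $H^1 \hookrightarrow L^6$, giving the $L^2_T(L^{30/23}(\Omega_\veps))$ bound. This yields the announced control on $\vec V_\veps$.

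Next I would turn to $Z_\veps$. The term $\mc{A}\vrho_\veps^{(1)}$ belongs to $L^\infty_T(L^2+L^{5/3}(\Omega_\veps))$ by the global version of \eqref{uni_varrho1}, while $(\mc B/\veps^m)\,\Sigma_\veps$ lies in $L^\infty_T(\mc M^+(\Omega_\veps))$ thanks to the time-lifting \eqref{lift0} and \eqref{est:sigma}. The remaining entropy contribution I would rewrite, as in \eqref{eq:ub_1}, as
\begin{equation*}
\vrho_\veps\,\frac{s(\vrho_\veps,\vtheta_\veps)-s(\wtilde\vrho_\veps,\oline\vtheta)}{\veps^m}
=\frac{1}{\veps^m}\bigl[\vrho_\veps s(\vrho_\veps,\vtheta_\veps)-\vrho_\veps s(\wtilde\vrho_\veps,\oline\vtheta)\bigr]_{\ess}
+\frac{1}{\veps^m}\bigl[\vrho_\veps s(\vrho_\veps,\vtheta_\veps)\bigr]_{\res}
-\vrho_\veps^{(1)}\,s(\wtilde\vrho_\veps,\oline\vtheta),
\end{equation*}
which is legitimate globally in $x$ precisely because $\wtilde\vrho_\veps$ is, uniformly in $\Omega_\veps$, inside the essential range. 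The last term is in $L^\infty_T(L^2+L^{5/3}(\Omega_\veps))$ by \eqref{uni_varrho1} and the uniform bound on $s(\wtilde\vrho_\veps,\oline\vtheta)$; the residual term is in $L^\infty_T(L^1(\Omega_\veps))$ by \eqref{est:e-s_res}; and the essential term is controlled in $L^\infty_T(L^2(\Omega_\veps))$ by a first-order Taylor expansion of $s$ around $(\wtilde\vrho_\veps,\oline\vtheta)$, combined with \eqref{est:rho_ess} and \eqref{est:theta-ess}.

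The only conceptual step at which one might expect trouble is exactly this last one: in the $F\neq 0$ proof, the cut-off $\chi_l$ was indispensable because the bounds on $\wtilde\vrho_\veps$ in Proposition \ref{p:target-rho_bound} were only local, and one could not guarantee globally that $s(\wtilde\vrho_\veps,\oline\vtheta)$ was bounded nor that $\wtilde\vrho_\veps$ sat in the essential range. Once this localization issue is removed by the $F=0$ hypothesis, no real analytic difficulty remains and the proof reduces to a direct transcription of the earlier argument, with $\chi_l\equiv 1$ and $\mathbb{B}_l$ replaced by $\Omega_\veps$ everywhere. \qed
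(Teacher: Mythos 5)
Your proof is correct and matches the paper's own approach: the paper explicitly states that the proof of Lemma \ref{l:S-W_bounds_1} ``is an easy adaptation of the one given in Lemma \ref{l:S-W_bounds}'' relying on the fact that, for $F=0$, the uniform bounds hold on all of $\Omega_\veps$ so the cut-off $\chi_l$ can be dropped, which is exactly what you carry out. The essential/residual splitting of $\vec V_\veps$, the identification of the relevant Hölder exponents giving $L^{30/23}$, and the three-term decomposition of the entropy contribution (including the first-order Taylor expansion on the essential part) are the same as in the paper's proof of Lemma \ref{l:S-W_bounds}, so no further comment is needed.
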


\medbreak
Now, we state the analogous of Lemma \ref{l:source_bounds} for $m=1$ and $F=0$.
\begin{lemma} \label{l:source_bounds_1}
For $\veps\in\,]0,1]$, let us introduce the following spaces:
\begin{enumerate}[(i)]
 \item $\mc X^\veps_1\,:=\,L^2_{\rm loc}\Bigl(\R_+;\big(L^2+L^{1}+L^{3/2}+L^{30/23}+L^{30/29}\big)(\Omega_\veps)\Bigr)$;
\item $\mc X^\veps_2\,:=\,L^2_{\rm loc}\Bigl(\R_+;\big(L^2+L^1+L^{4/3}\big)(\Omega_\veps)\Bigr)$;
\item $\mc X^\veps_3\,:=\,L^\infty_{\rm loc}\Bigl(\R_+;\big(L^2+L^{5/3}\big)(\Omega_\veps)\Bigr)$;
\item $\mc X^\veps_4\,:=\,L^\infty_{\rm loc}\Bigl(\R_+;\big(L^2+L^{5/3}+L^1+\mc{M}^+\big)(\Omega_\veps)\Bigr)$.
\end{enumerate}

Then, one has the following uniform bound, for a constant $C>0$ independent of $\veps\in\,]0,1]$:
$$
\left\|\vec X^1_\veps\right\|_{\mc X_1^\veps}\,+\,\left\|X^2_\veps\right\|_{\mc X_1^\veps}\,+\,\left\|\mbb Y^1_\veps\right\|_{\mc X_2^\veps}\,+\,
\left\|\vec{Y}^2_\veps\right\|_{\mc X_3^\veps}\,+\,\left\|Y^3_\veps\right\|_{\mc X_4^\veps}\,\leq\,C\,.
$$
In particular, one has that the sequences $(X_\veps)_\veps$ and $(\vec Y_\veps)_\veps$, defined in system \eqref{eq:wave_m=1}, verify\footnote{For any $s\in\R$, we denote by $[s]$ the entire part of $s$,
i.e. the greatest integer smaller than or equal to $s$.}
$$
\left\|X_\veps\right\|_{L^2_T(H^{-[s]-1}(\Omega_\veps))}\,+\,\left\|\vec Y_\veps\right\|_{L^2_T(H^{-[s]-1}(\Omega_\veps))}\,\leq\,C
$$
for all $s>5/2$, for a constant $C>0$ independent of $\veps\in\,]0,1]$.
\end{lemma}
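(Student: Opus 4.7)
The plan is to mimic the proof of Lemma \ref{l:source_bounds}, exploiting the crucial simplification that comes from setting $F=0$. Namely, as already pointed out in Remark \ref{rmk:cut-off}, when $F=0$ all the uniform estimates collected in Paragraph \ref{sss:uniform} (essential/residual bounds, Sobolev control of $\vtheta_\veps$ and $\vec u_\veps$, measure bound on $\sigma_\veps$, \ldots) hold on the whole $\Omega_\veps$ with constants independent of $\veps$. In particular, one can repeat verbatim the arguments of Lemma \ref{l:source_bounds}, but with $\chi_l\equiv 1$: this automatically eliminates every remainder term generated by derivatives of $\chi_l$, so that the bounds no longer depend on a localisation parameter.

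The term-by-term analysis then proceeds exactly as before. For $\vec X^1_\veps$, the heat-flux contribution is handled by \eqref{est:D-theta} and \eqref{est:Dtheta_res}, and the entropy-transport contribution by splitting $\vrho_\veps(s(\vrho_\veps,\vtheta_\veps)-s(\wtilde\vrho_\veps,\oline\vtheta))/\veps^m$ into essential and residual parts and applying \eqref{est:rho-s_res} combined with Sobolev embedding of $\vec u_\veps$. For $X^2_\veps$, the key observation (inherited from the cited estimate on $\nabla_x\wtilde\vrho_\veps$, now in the simpler form $\nabla_x\wtilde\vrho_\veps=(\wtilde\vrho_\veps/\d_\vrho p(\wtilde\vrho_\veps,\oline\vtheta))\,\veps^m\nabla_x G$ obtained from \eqref{prF} with $F=0$) is that $\nabla_x s(\wtilde\vrho_\veps,\oline\vtheta)/\veps^m$ is uniformly bounded in $L^\infty(\Omega_\veps)$, so that the bound reduces to the available controls on $\vrho_\veps\vec u_\veps$. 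The bound on $\mbb Y^1_\veps$ is a direct consequence of \eqref{est:Du}, \eqref{est:momentum} and Sobolev embeddings, as in the previous lemma.

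The genuine novelty appears in the bound for $\vec Y^2_\veps$, which with $F=0$ reduces to $R_\veps\nabla_x G$: since $\nabla_x G\in L^\infty(\Omega)$ and $R_\veps\in L^\infty_T(L^2+L^{5/3}(\Omega_\veps))$ by the global version of \eqref{uni_varrho1}, we obtain precisely the space $\mc X_3^\veps$ stated (no need for a further $\mc X_2^\veps$ contribution, which previously came solely from $\mbb Y^1_\veps\cdot\nabla_x\chi_l$). The bound for $Y^3_\veps$ is obtained exactly as in Lemma \ref{l:source_bounds}: a second-order Taylor expansion of $p(\cdot,\cdot)$ and $s(\cdot,\cdot)$ around $(1,\oline\vtheta)$, combined with the algebraic identities \eqref{relnum}, kills the first-order contributions, leaving a quadratic remainder that is controlled by the essential/residual uniform bounds, a measure-type term $\mc B\,\Sigma_\veps/\veps^{2m}$ controlled by \eqref{est:sigma}, and a cross term estimated through \eqref{rr1} and \eqref{est:rho-s_res}.

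The main (minor) obstacle is verifying that no step of Lemma \ref{l:source_bounds} relied on the compactness of the support of $\chi_l$ in an essential way rather than on the $L^\infty$ control of $\nabla_x\wtilde\vrho_\veps/\veps^m$; once this is checked, the first part of the statement follows at once. The bounds for $X_\veps=\div\vec X^1_\veps+X^2_\veps$ and $\vec Y_\veps=\div\mbb Y^1_\veps+\vec Y^2_\veps+\nabla_xY^3_\veps$ in $L^2_T(H^{-[s]-1}(\Omega_\veps))$ then follow by a soft functional-analytic argument: in space dimension three, one has $L^p(\Omega_\veps)\hookrightarrow H^{-[s]}(\Omega_\veps)$ for every $p\in[1,\infty]$ as soon as $[s]>3/2$, and likewise $\mc M^+(\Omega_\veps)\hookrightarrow H^{-[s]}(\Omega_\veps)$ by duality with $H^{[s]}\hookrightarrow C_0$; since $s>5/2$ ensures $[s]\geq 2$, one gains one more derivative to absorb the $\div$ and $\nabla_x$, with constants that are independent of $\veps$ because $\Omega_\veps$ expands to $\Omega$.
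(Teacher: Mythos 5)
Your proposal is correct and follows essentially the same approach as the paper: the paper's own proof is a one-liner deferring back to Lemma \ref{l:source_bounds} and merely flagging that $\vec Y^2_\veps$ now lands in the cleaner space $\mc X_3^\veps$ because of $F=0$, which is exactly the point you single out. One small notational slip: $\vec Y^2_\veps = \vrho_\veps^{(1)}\,\nabla_x G$ with $\vrho_\veps^{(1)}=(\vrho_\veps-\wtilde\vrho_\veps)/\veps^m$, not $R_\veps\,\nabla_x G$; it is $\vrho_\veps^{(1)}$, not $R_\veps=\vrho_\veps^{(1)}+\wtilde r_\veps$, that is uniformly bounded in $L^\infty_T(L^2+L^{5/3}(\Omega_\veps))$ by the global ($F=0$) version of \eqref{uni_varrho1} (the $\wtilde r_\veps$ piece is only controlled in $L^\infty$), so one should keep $\vrho_\veps^{(1)}$ throughout this step.
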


\begin{proof}
The proof follows the main lines of the proof to Lemma \ref{l:source_bounds}. Here, we limit ourselves to point out that we have a slightly better control on
$\vec Y^2_\veps\,=\,\vrho_\veps^{(1)}\,\nabla_{x} G$, whose boundedness in $\mc X^\veps_3$ follows from \eqref{assFG} and the estimate analogous to \eqref{uni_varrho1} for the case $F=0$.
\qed
\end{proof}

 \medbreak
The next step is to regularise all the terms appearing in the wave system \eqref{eq:wave_m=1}. Here we have to pay attention: since the domains $\Omega_\veps$ are bounded,
we cannot use the Littlewood-Paley operators $S_M$ directly.
Rather than multiplying by a cut-off function $\chi_l$ as done in the previous section (a procedure which would create more complicated forcing terms in the wave system), we use here the arguments
of Chapter 8 of \cite{F-N} (see also \cite{F-Scho}, \cite{WK}), based on finite propagation speed properties for \eqref{eq:wave_m=1}.

First of all, similarly to Paragraph \ref{sss:w-reg} above, we extend our domains $\Omega_\veps$ and $\Omega$ by periodicity in the third variable and denote
$$
\wtilde\Omega_\veps\,:=\,{B}_{L_\veps} (0) \times \mbb{T}^1\qquad\qquad\mbox{ and }\qquad\qquad
\wtilde\Omega\,:=\,\R^2 \times \mbb{T}^1\,.
$$
Thanks to complete slip boundary conditions \eqref{bc1-2} and \eqref{bc3}, system (NSF)$_\veps$ can be equivalently reformulated in $\wtilde\Omega_\veps$. 
Analogously, the wave system \eqref{eq:wave_m=1} can be recasted in $\wtilde\Omega_\veps$ in a completely equivalent way. From now on, we will focus on
the equations satisfied on the domain $\wtilde\Omega_\veps$.

Next, we fix a smooth radially decreasing function $\omega\in{C}^\infty_c(\mbb{R}^3)$, such that $0\leq\omega\leq1$, $\omega(x)=0$ for $|x|\geq1$ and
$\int_{\R^3}\omega(x)dx=1$. Next, we define the mollifying kernel $\big(\omega_M\big)_{M\in\N}$ by the formula
$$
\omega_M(x)\,:=\,2^{3M}\,\,\omega\!\left(2^Mx\right)\qquad\qquad\forall\,M\in\N\,,\quad\forall\,x\in\R^3\,.
$$
Then, for any tempered distribution $\mf S=\mf S(t,x)$ on $\R_+\times\wtilde\Omega$ and any $M\in\N$, we define
$$
\mf S_M\,:=\,\omega_M\,*\,\mf S\,,
$$
where the convolution is taken only with respect to the space variables.

Applying the mollifier $\omega_M$ to \eqref{eq:wave_m=1}, we deduce that $Z_{\veps,M}\,:=\,\omega_M*Z_\veps$ and $\vec V_{\veps,M}\,:=\,\omega_M*\vec V_\veps$ satisfy the regularised
wave system
\begin{equation} \label{eq:reg-wave}
\left\{\begin{array}{l}
       \veps\,\d_tZ_{\veps,M}\,+\,\mc{A}\,\div\vec{V}_{\veps,M}\,=\,\veps\,X_{\veps,M} \\[1ex]
       \veps\,\d_t\vec{V}_{\veps,M}\,+\,\nabla_x Z_{\veps,M}\,+\,\,\e_3\times \vec V_{\veps,M}\,=\,\veps\,\vec Y_{\veps,M}
       \end{array}
\right.
\end{equation}
in the domain $\R_+\times\wtilde\Omega_{\veps,M}$, where we have defined
\begin{equation} \label{def:O_e-M}
\wtilde\Omega_{\veps,M}\,:=\,\left\{x\in\wtilde\Omega_\veps\;:\quad{\rm dist}(x,\d\wtilde\Omega_\veps)\geq2^{-M} \right\}\,.
\end{equation}
Since the mollification commutes with standard derivatives, we notice that
$X_{\veps,M}\,=\,\div\vec{X}^1_{\veps,M}\,+\,X^2_{\veps,M}$ and $\vec Y_{\veps,M}\,=\,\div\mbb{Y}^1_{\veps,M}\,+\,\vec{Y}^2_{\veps,M}\,+\,\nabla_x Y^3_{\veps,M}$. 
Moreover, system \eqref{eq:reg-wave} is supplemented with the initial data
$$
Z_{0,\veps,M}\,:=\,\omega_M*Z_{0,\veps}\qquad\qquad\mbox{ and }\qquad\qquad \vec V_{0,\veps,M}\,:=\,\omega_M*\vec V_{0,\veps}\,.
$$

In accordance with Lemmas \ref{l:S-W_bounds_1} and \ref{l:source_bounds_1}, by standard properties of mollifying kernels (see e.g. Section 10.1 in \cite{F-N}), we get the following properties: for all
$k\in\N$, one has
\begin{align*}
\left\|Z_{\veps,M}\right\|_{L^\infty_T(H^k(\wtilde\Omega_{\veps,M}))}\,+\,\left\|\vec V_{\veps,M}\right\|_{L^2_T(H^k(\wtilde\Omega_{\veps,M}))}\,\leq\,C(k,M) \\
\left\|X_{\veps,M}\right\|_{L^2_T(H^k(\wtilde\Omega_{\veps,M}))}\,+\,\left\|\vec Y_{\veps,M}\right\|_{L^2_T(H^k(\wtilde\Omega_{\veps,M}))}\,\leq\,C(k,M)\,,
\end{align*}
for some positive constants $C(k,M)$, only depending on the fixed $k$ and $M$. Of course, the constants blow up when $M\ra+\infty$, but they are uniform for $\veps\in\,]0,1]$.

We also have the following statement, analogous to Proposition \ref{p:prop dec} above. Its proof is also similar, hence omitted; notice that the strong convergence result easily follows
from standard properties of the mollifying kernel.
\begin{proposition} \label{p:dec_1}
For any $M>0$ and any $\veps\in\,]0,1]$, we have
\begin{equation*}
\vec{V}_{\veps ,M}\,=\,
\veps\,\vec{v}_{\veps ,M}^{(1)}\,+\,\vec{v}_{\veps ,M}^{(2)}\,,
\end{equation*}
together with the following bounds: for any $T>0$, any compact set $K\subset\wtilde\Omega$ and any $s\in\N$, one has 
(for $\veps>0$ small enough, depending only on $K$) the bounds
\begin{align*}
\left\|\vec{v}_{\veps ,M}^{(1)}\right\|_{L^{2}([0,T];H^{s}(K))}\,\leq\,C(K,s,M) \qquad\qquad\mbox{ and }\qquad\qquad
\left\|\vec{v}_{\veps ,M}^{(2)}\right\|_{L^{2}([0,T];H^{1}(K))}\,\leq\,C(K)\,,
\end{align*}
for suitable positive constants $C(K,s,M)$ and $C(K)$ depending only on the quantities in the brackets, but uniform with respect to $\veps\in\,]0,1]$.
\end{proposition}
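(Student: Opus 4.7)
The plan is to define the two pieces of the decomposition in the spirit of Proposition \ref{p:prop dec}, only replacing the Littlewood--Paley truncation $S_M$ by the mollifier $\omega_M *$ and dropping the horizontal cut-off (which is no longer needed since, thanks to $F=0$, all the uniform estimates of Paragraph \ref{sss:uniform} hold on the whole $\Omega_\veps$ rather than on compact cylinders $\mathbb{B}_l$; see Remark \ref{rmk:cut-off}). Concretely, I would set
$$
\vec v_{\veps,M}^{(1)}\,:=\,\omega_M*\bigl(R_\veps\,\vec u_\veps\bigr)\qquad\text{and}\qquad
\vec v_{\veps,M}^{(2)}\,:=\,\omega_M*\vec u_\veps\,,
$$
where $R_\veps=(\vrho_\veps-1)/\veps$. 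The decomposition claimed in the statement is then a direct algebraic consequence of $\vrho_\veps=1+\veps R_\veps$ and the linearity of the convolution:
$$
\vec V_{\veps,M}\,=\,\omega_M*(\vrho_\veps\vec u_\veps)\,=\,\omega_M*\vec u_\veps\,+\,\veps\,\omega_M*(R_\veps\vec u_\veps)\,=\,\vec v_{\veps,M}^{(2)}\,+\,\veps\,\vec v_{\veps,M}^{(1)}\,.
$$

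Next I would establish the two bounds. Fix a compact set $K\subset\wtilde\Omega$ and take $\veps$ small enough (depending only on $K$) so that the $2^{-M}$-enlargement $K'$ of $K$ is contained in $\wtilde\Omega_{\veps,M}$ (see definition \eqref{def:O_e-M}); this is possible because the domains $\wtilde\Omega_\veps$ invade $\wtilde\Omega$ as $\veps\to 0$. The estimate on $\vec v_{\veps,M}^{(2)}$ is immediate: by standard properties of mollifiers,
$$
\bigl\|\omega_M*\vec u_\veps\bigr\|_{L^2_T(H^1(K))}\,\leq\,C\,\bigl\|\vec u_\veps\bigr\|_{L^2_T(H^1(K'))}\,,
$$
with $C$ absolute, and the right-hand side is uniformly bounded in $\veps$ thanks to \eqref{est:u-H^1} (which, when $F=0$, holds with $K'$ in place of $\B_l$, uniformly). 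For $\vec v_{\veps,M}^{(1)}$, I use that $R_\veps$ is uniformly bounded in $L^\infty_T(L^2+L^{5/3})(\Omega_\veps)$ (by the analogue of \eqref{uni_varrho1} with $F=0$) and that $\vec u_\veps\in L^2_T(L^6_{\rm loc})$ by Sobolev embedding of $H^1$. Hölder gives $R_\veps\vec u_\veps\in L^2_T(L^{3/2}+L^{30/23})(K')$ uniformly in $\veps$. The crucial point is then that convolution with $\omega_M$ is a smoothing operator: for any $s\in\N$ and any $p\in[1,\infty]$,
$$
\bigl\|\omega_M*f\bigr\|_{H^s(K)}\,\leq\,\bigl\|\omega_M\bigr\|_{W^{s,p'}(\R^3)}\,\bigl\|f\bigr\|_{L^p(K')}\,\leq\,C(s,M)\,\bigl\|f\bigr\|_{L^p(K')}\,,
$$
by Young's inequality (with $1/p+1/p'=1+1/2$), and $\|\omega_M\|_{W^{s,p'}}$ is finite for every $s$ and $M$. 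Applied with $p=3/2$ and $p=30/23$, this yields
$$
\bigl\|\vec v_{\veps,M}^{(1)}\bigr\|_{L^2_T(H^s(K))}\,\leq\,C(K,s,M)\,,
$$
uniformly in $\veps$, as required.

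There is essentially no serious obstacle in the argument: the whole point is that the mollifier trades integrability for arbitrary spatial smoothness at a cost which depends only on $M$ (and $s$), while leaving the $\veps$-dependence untouched. The only care needed is the geometric one, namely choosing $\veps$ small enough (in terms of $K$ and $M$) to ensure that the convolution of functions defined on $\wtilde\Omega_\veps$ is pointwise well-defined on $K$; this is harmless because $M$ is fixed first and $\veps\to 0$ afterwards in all applications of the proposition. Once these points are written down, the conclusion follows, and the strong convergence $\omega_M*\vec u_\veps\to\vec u_\veps$ (used implicitly in the analogue of Lemma \ref{lem:convterm}) is a classical property of approximate identities.
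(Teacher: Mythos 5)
Your proposal is correct and follows the same route the paper sketches: you define the two pieces exactly as in Proposition \ref{p:prop dec}, with $S_M$ replaced by $\omega_M*$ and the horizontal cut-off $\chi_l$ dropped (which is allowed since $F=0$ makes the bounds of Paragraph \ref{sss:uniform} global in $\Omega_\veps$), so that $\vec v^{(1)}_{\veps,M}=\omega_M*(R_\veps\vec u_\veps)$, $\vec v^{(2)}_{\veps,M}=\omega_M*\vec u_\veps$, and the decomposition is just $\vrho_\veps=1+\veps R_\veps$ plus linearity. The two bounds come out exactly as in the paper: the $H^1$ bound on $\vec v^{(2)}_{\veps,M}$ uniform in $M$ from the $L^1$-normalisation of $\omega_M$, and the arbitrary-$H^s$ bound on $\vec v^{(1)}_{\veps,M}$ (with constant depending on $s$ and $M$) from Young's inequality using $\omega_M\in W^{s,p'}$ together with $R_\veps\vec u_\veps\in L^2_T(L^{3/2}+L^{30/23})$. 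The geometric care about $K$ lying well inside $\wtilde\Omega_\veps$ is the right point to flag; a safe, $M$-independent choice is to require the unit enlargement of $K$ to lie inside $\wtilde\Omega_\veps$, which fixes $\veps_K$ as a function of $K$ alone as the statement requires. This is precisely the argument the authors leave to the reader.
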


In particular, we deduce the following fact: for any $T>0$ and any compact $K\subset\wtilde\Omega$, there exist $\veps_K>0$ and $M_K\in\N$ such that, for all $\veps\in\,]0,\veps_K]$ and all $M\geq M_K$,
there are positive constants $C(K)$ and $C(K,M)$ for which
\begin{equation} \label{est:V_e-M_conv}
\left\|\vec V_\veps\,-\,\vec V_{\veps,M}\right\|_{L^2_T(L^2(K))}\,\leq\,C(K,M)\,\veps\,+\,C(K)\,2^{-M}\,.
\end{equation}

\subsubsection{Finite propagation speed and consequences}
Take now smooth enough initial data $\mc Z_0$ and $\vec{\mc V_0}$ and external forces $\mf X$ and $\vec{\mc Y}$. Consider, in $\R_+\times\wtilde\Omega$, the wave system
\begin{equation} \label{eq:wave_Omega}
\left\{\begin{array}{l}
       \veps\,\d_t\mc Z\,+\,\mc{A}\,\div\vec{\mc V}\,=\,\veps\,\mf X \\[1ex]
       \veps\,\d_t\vec{\mc V}\,+\,\nabla_x\mc Z\,+\,\,\e_3\times \vec{\mc V}\,=\,\veps\,\vec{\mc Y}\,, 
       \end{array}
\right.
\end{equation}
supplemented with initial data  $\mc Z_{|t=0}\,=\,\mc Z_0$ and $\vec{\mc V}_{|t=0}\,=\,\vec{\mc V_0}$.

System \eqref{eq:wave_Omega} is a symmetrizable (in the sense of Friedrichs) first-order
hyperbolic system with a skew-symmetric $0$-th order term. Therefore, 
classical arguments based on energy methods (see e.g. Chapter 3 of \cite{M-2008} and Chapter 7 of \cite{Ali}) allow to establish finite propagation speed and domain of dependence properties
for solutions to \eqref{eq:wave_Omega}.

Namely, set $\lambda\,:=\,\sqrt{\mc A}/\veps$ to be the propagation speed of acoustic-Poincar\'e waves.
Let $\mf B$ be a cylinder 
included in $\wtilde\Omega$.
Then one has the following two properties.
\begin{enumerate}[(i)]
 \item \emph{Domain of dependence}: assume that 
$$
\Supp\mc Z_0\,,\;\Supp\vec{\mc V_0}\,\subset\,\mf B\,,\qquad\qquad\qquad \Supp\mf X(t)\,,\;\Supp\vec{\mc Y}(t)\,\subset\,\mf B\quad\mbox{ for a. a. }t\in[0,T]\,;
$$
then the corresponding solution $\big(\mc Z,\vec{\mc V}\big)$ to \eqref{eq:wave_Omega} is \emph{identically zero} outside the cone
$$
\Big\{(t,x)\in\,]0,T[\,\times\,\wtilde\Omega\;:\quad {\rm dist}\big(x,\mf B\big)\,<\,\lambda\,t\Big\}\,.
$$
\item \emph{Finite propagation speed}: define the set
$$
\mf B_{\lambda T}\,:=\,\Big\{x\in\wtilde\Omega\;:\quad {\rm dist}\big(x,\mf B\big)\,<\,\lambda\,T\Big\}
$$
and assume that
$$
\Supp\mc Z_0\,,\;\Supp\vec{\mc V_0}\,\subset\,\mf B_{\lambda T}\,,\qquad\qquad\qquad \Supp\mf X(t)\,,\;\Supp\vec{\mc Y}(t)\,\subset\,\mf B_{\lambda T}\quad\mbox{ for a. a. }t\in[0,T]\,;
$$
then the solution $\big(\mc Z,\vec{\mc V}\big)$ is \emph{uniquely determined} by the data inside the cone
$$
\mc C_{\lambda T}\,:=\,\Big\{(t,x)\in\,]0,T[\,\times\mf B_{\lambda T}\;:\quad {\rm dist}\big(x,\d\mf B_{\lambda T}\big)\,>\,\lambda\,t\Big\}\,,
$$
and in particular in the space-time cylinder $\,]0,T[\,\times\,\mf B$.
\end{enumerate}

\medbreak
Next, fix any test-function $\vec\psi\in C^\infty_c\big(\R_+\times\wtilde\Omega;\R^3\big)$, and let $T>0$ and the compact set $K\subset\wtilde\Omega$ be such that $\Supp\vec\psi\subset[0,T[\,\times K$.
Take a cylindrical neighborhood $\mf B$ of $K$ in $\wtilde\Omega$. 
It goes without saying that there exist an $\veps_0=\veps_0(\mf B)\in\,]0,1]$ and a $M_0=M_0(\mf B)\in\N$ such that
$$
\mf B\,\subset\,\wtilde\Omega_{\veps,M}\qquad\qquad \mbox{ for all }\qquad 0<\veps\leq\veps_0\quad\mbox{ and }\quad M\geq M_0\,,
$$
where the set $\wtilde\Omega_{\veps,M}$ has been defined in \eqref{def:O_e-M} above. Take now a cut-off function $\mf h\in C^\infty_c(\wtilde\Omega)$ such that
$\mf h\equiv1$ on $\mf B$ (and hence on $K$), and solve problem \eqref{eq:wave_Omega} with data
$$
\mc Z_0\,=\,\mf h\,Z_{0,\veps,M}\,,\qquad \vec{\mc V_0}\,=\,\mf h\,\vec V_{0,\veps,M}\,,\qquad
\mf X\,=\,\mf h\,X_{\veps,M}\,,\qquad \vec{\mc Y}\,=\,\mf h\,\vec{Y}_{\veps,M}\,.
$$
In view of assumption \eqref{dom}, the previous discussion implies that, up to take a smaller $\veps_0$, for any $\veps\leq\veps_0$ the corresponding solution
$\big(\mc Z,\vec{\mc V}\big)$ of \eqref{eq:wave_Omega}
has support in a cylinder $\wtilde\B_L\,:=\,B_L(0)\times\T\subset\wtilde\Omega_\veps$, for some $L=L(T,K,\lambda)>0$, and it must coincide with the solution $\big(Z_{\veps,M},\vec V_{\veps,M}\big)$
of \eqref{eq:reg-wave} on the set $\,]0,T[\,\times\,\mf B$, for all $0<\veps\leq \veps_0$ and all $M\geq M_0$.
In particular, for all $0<\veps\leq \veps_0$ and all $M\geq M_0$ we have
$$
\mc Z\,\equiv\,Z_{\veps,M}\quad\mbox{ and }\quad \vec{\mc V}\,\equiv\,\vec V_{\veps,M}\qquad\qquad\mbox{ on }\qquad \Supp\vec\psi\,.
$$

The previous argument shows that, without loss of generality, we can assume that the regularised wave system \eqref{eq:reg-wave} is verified on the whole $\wtilde\Omega$,
with compactly supported initial data and forces and with solutions supported on some cylinder $\wtilde\B_L$.
In particular, we can safely work with system \eqref{eq:reg-wave} and its smooth solutions $\big(Z_{\veps,M},\vec V_{\veps,M}\big)$ in the computations below.

\subsection{Convergence of the convective term} \label{ss:convergence_1}
Here we tackle the convergence of the convective term. The first step is to reduce the study to the case of smooth vector fields $\vec{V}_{\veps ,M}$.
Arguing as in Lemma \ref{lem:convterm}, and using Proposition \ref{p:dec_1} and property \eqref{est:V_e-M_conv}, one can easily prove the following approximation result.
Again, the proof is omitted.

\begin{lemma} \label{lem:convterm_1}
Let $T>0$. For any $\vec{\psi}\in C_c^\infty\bigl([0,T[\,\times\wtilde\Omega;\R^3\bigr)$, we have 
\begin{equation*}
\lim_{M\rightarrow +\infty} \limsup_{\veps \rightarrow 0}\left|\int_{0}^{T}\int_{\wtilde\Omega} \vrho_\veps\,\vec{u}_\veps\otimes \vec{u}_\veps: \nabla_{x}\vec{\psi}\, dx \, dt-
\int_{0}^{T}\int_{\wtilde\Omega} \vec{V}_{\veps ,M}\otimes \vec{V}_{\veps,M}: \nabla_{x}\vec{\psi}\, dx \, dt\right|=0\, .
\end{equation*}
\end{lemma}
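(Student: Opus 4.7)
\smallskip\noindent\textbf{Proof plan for Lemma \ref{lem:convterm_1}.}
The strategy is exactly that of Lemma \ref{lem:convterm}, with two modifications: thanks to the assumption $F=0$, the uniform bounds of Subsection \ref{ss:unif-est} hold without localisation on $\B_l$, so we may dispense with the cut-off $\chi_l$; on the other hand, the regularisation is now performed by the spatial mollifier $\omega_M$ rather than by a Littlewood--Paley truncation. The algebraic core of the argument is the identity
\begin{equation*}
\vrho_\veps\vec{u}_\veps\otimes\vec{u}_\veps\,-\,\vec{V}_{\veps,M}\otimes\vec{V}_{\veps,M}\,=\,
(\vec{V}_\veps-\vec{V}_{\veps,M})\otimes\vec{u}_\veps\,+\,\vec{V}_{\veps,M}\otimes(\vec{V}_\veps-\vec{V}_{\veps,M})\,-\,\veps\,\vec{V}_{\veps,M}\otimes(R_\veps\vec{u}_\veps)\,,
\end{equation*}
which follows from $\vec{V}_\veps=\vrho_\veps\vec{u}_\veps$ together with the decomposition $\vrho_\veps=1+\veps R_\veps$ (recall $m=1$). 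The task is to show that each of the three terms in the right-hand side, tested against the bounded quantity $\nabla_x\vec\psi$, is a remainder in the sense of the iterated limit $\lim_{M\to+\infty}\limsup_{\veps\to0}$.

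First I would fix $T>0$ and a compact $K\subset\wtilde\Omega$ containing $\Supp\vec\psi$, and enlarge it to a cylindrical neighbourhood. By the finite-propagation-speed discussion given before the statement of the lemma, one may work directly with a smooth pair $(Z_{\veps,M},\vec{V}_{\veps,M})$ solving \eqref{eq:reg-wave} on the whole $\wtilde\Omega$, with $\vec{V}_{\veps,M}$ compactly supported in some cylinder $\wtilde\B_L$; in particular, by Proposition \ref{p:dec_1} and Sobolev embedding, $\vec{V}_{\veps,M}$ is bounded in $L^2_T(L^\infty(K))$ with a constant depending on $M$ but uniform in $\veps$. The approximation estimate \eqref{est:V_e-M_conv},
\begin{equation*}
\|\vec{V}_\veps-\vec{V}_{\veps,M}\|_{L^2_T(L^2(K))}\,\leq\,C(K,M)\,\veps\,+\,C(K)\,2^{-M}\,,
\end{equation*}
then allows one to handle the first two terms of the identity by simple H\"older estimates: for the first, one uses the uniform bound for $\vec{u}_\veps$ in $L^2_T(L^2(K))$ coming from \eqref{est:u-H^1}; for the second, the $L^2_T(L^\infty(K))$ bound on $\vec{V}_{\veps,M}$ just recalled. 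For the third term, the prefactor $\veps$, combined with Proposition \ref{p:dec_1} and the uniform controls on $R_\veps$ in $L^\infty_T(L^2+L^{5/3})$ and on $\vec{u}_\veps$ in $L^2_T(L^6_{\rm loc})$, yields an $O(\veps)$ contribution at $M$ fixed. Hence each of the three contributions is bounded by $C(K,M)\,\veps+C(K)\,2^{-M}$, which vanishes in the prescribed double limit.

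The only subtle point, and the main obstacle I foresee, is that \eqref{est:V_e-M_conv} is \emph{not} a classical mollification estimate: $\vec{V}_\veps$ lies only in $L^2_T(L^2+L^{30/23})$, so one cannot apply the standard inequality $\|f-\omega_M*f\|_{L^2}\lesssim 2^{-M}\|\nabla f\|_{L^2}$ directly. The way around is to split $\vec{V}_\veps=\vec{u}_\veps+\veps\,R_\veps\vec{u}_\veps$: on the one hand, $\|\vec{u}_\veps-\omega_M*\vec{u}_\veps\|_{L^2_T(L^2(K))}$ is bounded by $C(K)\,2^{-M}$ via \eqref{est:u-H^1} and the standard mollifier estimate; on the other hand, $\|\veps R_\veps\vec{u}_\veps-\veps\,\omega_M*(R_\veps\vec{u}_\veps)\|_{L^2_T(L^2(K))}$ is bounded by $C(K,M)\,\veps$ by the uniform control on $(R_\veps\vec{u}_\veps)_\veps$ in e.g. $L^2_T(L^{3/2}+L^{30/23})$ together with Young's inequality for the mollifier, at the price of a constant depending on $M$. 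Once this is in place, combining the three bounds above produces the desired double-limit vanishing and closes the proof.
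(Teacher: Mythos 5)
Your algebraic identity and overall strategy are sound; in particular your reconstruction of \eqref{est:V_e-M_conv} via the splitting $\vec{V}_\veps=\vec{u}_\veps+\veps\,R_\veps\vec{u}_\veps$ is exactly how the paper obtains that estimate from Proposition \ref{p:dec_1}, and the first and third contributions of your decomposition are handled correctly. The gap lies in the second term $\vec{V}_{\veps,M}\otimes(\vec{V}_\veps-\vec{V}_{\veps,M})$. You invoke an $L^2_T(L^\infty(K))$ bound on $\vec{V}_{\veps,M}$ with an $M$-dependent constant, but such a bound cannot close the iterated limit: the only $M$-uniform control on $\vec{V}_\veps$ is in $L^2_T\big((L^2+L^{30/23})(K)\big)$, and Young's inequality then forces the constant in $\|\omega_M*\vec{V}_\veps\|_{L^\infty}$ to grow at least like $2^{3M/2}$ (and worse, like $2^{69M/30}$, for the $L^{30/23}$ component). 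In both cases the growth rate exceeds $2^{M}$, so after the inner limit $\veps\to0$ the bound $C(K,M)\big(C(K,M)\veps+C(K)2^{-M}\big)$ leaves a quantity $C(K,M)\,C(K)\,2^{-M}$ which diverges as $M\to+\infty$. Note also that Sobolev embedding does not give $L^\infty$ control from the uniform $L^2_T(H^1(K))$ bound on $\vec{v}^{(2)}_{\veps,M}$ in three space dimensions, so Proposition \ref{p:dec_1} does not rescue the $L^\infty$ route either.

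The fix is to avoid $L^\infty$ entirely and insert the decomposition $\vec{V}_{\veps,M}=\veps\,\vec{v}^{(1)}_{\veps,M}+\vec{v}^{(2)}_{\veps,M}$ of Proposition \ref{p:dec_1} directly into the second term. The contribution of $\veps\,\vec{v}^{(1)}_{\veps,M}$ carries a prefactor $\veps$ and is therefore killed by the inner limit no matter how the $M$-dependent constants behave; for $\vec{v}^{(2)}_{\veps,M}\otimes(\vec{V}_\veps-\vec{V}_{\veps,M})$, a H\"older pairing $L^2_T(L^6(K))\times L^2_T(L^2(K))\to L^1_T(L^{3/2}(K))\subset L^1([0,T]\times K)$, exploiting the $M$- and $\veps$-uniform bound $\vec{v}^{(2)}_{\veps,M}\in L^2_T(H^1(K))\subset L^2_T(L^6(K))$, yields a remainder of size $C(K)\big(C(K,M)\veps+C(K)2^{-M}\big)$ whose iterated limit vanishes. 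With this modification your argument is complete and is structurally equivalent to the (omitted) proof the authors have in mind, which transplants the telescoping scheme of Lemma \ref{lem:convterm} to the mollifier setting.
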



\medbreak
Assume now $\vec\psi\in C_c^\infty\big([0,T[\,\times\wtilde\Omega;\R^3\big)$ is such that $\div\vec\psi=0$ and $\d_3\vec\psi=0$.
Thanks to the previous lemma, it is enough to pass to the limit in the smooth term 
\begin{align*}
-\int_{0}^{T}\int_{\wtilde\Omega} \vec{V}_{\veps ,M}\otimes \vec{V}_{\veps ,M}: \nabla_{x}\vec{\psi}\,&=\,
\int_{0}^{T}\int_{\wtilde\Omega}\div\left(\vec{V}_{\veps ,M}\otimes \vec{V}_{\veps ,M}\right) \cdot \vec{\psi}\,=\,
\int_{0}^{T}\int_{\R^2} \left(\mc{T}_{\veps ,M}^{1}+\mc{T}_{\veps, M}^{2}\right)\cdot\vec{\psi}^h\,,
\end{align*}
where, for simplicity, we agree that the torus $\T$ has been normalised so that its Lebesgue measure is equal to $1$ and, analogously to \eqref{def:T1-2}, we have introduced the quantities
$$ 
\mc T^1_{\veps,M}\,:=\, \divh\left(\langle \vec{V}_{\veps ,M}^{h}\rangle\otimes \langle \vec{V}_{\veps ,M}^{h}\rangle\right)\qquad \mbox{ and }\qquad
\mc T^2_{\veps,M}\,:=\, \divh\left(\langle \widetilde{\vec{V}}_{\veps ,M}^{h}\otimes \widetilde{\vec{V}}_{\veps ,M}^{h}\rangle \right)\,.
$$ 

We notice that the analysis of $\mc{T}_{\veps ,M}^{2}$ is similar to the one performed in Paragraph \ref{sss:term2}, up to taking $m=1$ and replacing $\vec{W}_{\veps ,M}$ and
$\Lambda_{\veps ,M}$ by $\vec{V}_{\veps ,M}$ and $Z_{\veps ,M}$ respectively.
Indeed, it deeply relies on system \eqref{eq:eq momentum term2}, which remains unchanged when $m=1$.
Also in this case, we find \eqref{eq:limit_T2}.

Therefore, we can focus on the term $\mc{T}_{\veps ,M}^{1}$ only. Its study presents some differences with respect to Paragraph \ref{sss:term1}, so let us give the full details.
To begin with, like in \eqref{eq:T1}, we have
\begin{equation*}
\mc{T}_{\veps ,M}^{1}\,=\,
\divh\langle \vec{V}_{\veps ,M}^{h}\rangle\;\; \langle \vec{V}_{\veps ,M}^{h}\rangle+\frac{1}{2}\, \nabla_{h}\left(\left|\langle \vec{V}_{\veps ,M}^{h}\rangle\right|^{2}\right)+
\curlh\langle \vec{V}_{\veps ,M}^{h}\rangle\;\;\langle \vec{V}_{\veps ,M}^{h}\rangle^{\perp}\,.
\end{equation*}

Of course, we can forget about the second term, because it is a perfect gradient.
For the first term, we use system \eqref{eq:reg-wave}: averaging the first equation with respect to $x^{3}$ and multiplying it by $\langle \vec{V}_{\veps ,M}\rangle$, we get
\begin{equation*}
\divh\langle \vec{V}_{\veps ,M}^{h}\rangle\;\;\langle \vec{V}_{\veps ,M}^{h}\rangle\,=\,-\frac{\veps}{\mc{A}}\d_t\langle Z_{\veps ,M}\rangle \langle \vec{V}_{\veps ,M}^{h}\rangle+
\frac{\veps}{\mc{A}} \langle X_{\veps ,M}\rangle \langle \vec{V}_{\veps ,M}^{h}\rangle\,=\,
\frac{\veps}{\mc{A}}\langle Z_{\veps ,M}\rangle \d_t \langle \vec{V}_{\veps ,M}^{h}\rangle +\mc{R}_{\veps ,M}\,.
\end{equation*}
We now use the horizontal part of \eqref{eq:reg-wave}, multiplied by $\langle Z_{\veps ,M}\rangle$, and we gather
\begin{equation*}
\begin{split}
\frac{\veps^{m}}{\mc{A}}\langle Z_{\veps ,M}\rangle \d_t \langle \vec{V}_{\veps ,M}^{h}\rangle &=-\frac{1}{\mc{A}} \langle Z_{\veps ,M}\rangle \nabla_{h}\langle Z_{\veps ,M}\rangle-
\frac{1}{\mc{A}}\langle Z_{\veps ,M}\rangle\langle \vec{V}_{\veps ,M}^{h}\rangle^{\perp}+\frac{\veps}{\mc{A}}\langle Z_{\veps ,M}\rangle \langle \vec Y_{\veps ,M}^{h}\rangle\\
&=-\frac{1}{\mc{A}}\langle Z_{\veps ,M}\rangle\langle \vec{V}_{\veps ,M}^{h}\rangle^{\perp}+\mc{R}_{\veps ,M}\, .
\end{split}
\end{equation*}
This latter relation yields that
\begin{equation*}
\mc{T}_{\veps ,M}^{1}\,=\,\left(\curlh\langle \vec{V}_{\veps ,M}^{h}\rangle-\frac{1}{\mc{A}}\langle Z_{\veps ,M}\rangle \right)\langle \vec{V}_{\veps ,M}^{h}\rangle^{\perp}+\mc{R}_{\veps ,M} .
\end{equation*}

Now we use the horizontal part of \eqref{eq:reg-wave}: 
averaging it with respect to the vertical variable and
applying the operator $\curlh$, we find
\begin{equation*}
\veps\,\d_t\curlh\langle \vec{V}_{\veps ,M}^{h}\rangle\,+\,\divh\langle \vec{V}_{\veps ,M}^{h}\rangle \,=\,\veps\, \curlh\langle \vec Y_{\veps ,M}^{h}\rangle\, .
\end{equation*}
Taking the difference of this equation with the first one in \eqref{eq:reg-wave}, we discover that
\begin{equation*}
\d_t\g_{\veps,M}
\,=\,\curlh\langle \vec Y_{\veps ,M}^{h}\rangle\,-\,\frac{1}{\mc{A}}\,\langle X_{\veps ,M}\rangle\,,\qquad\qquad \mbox{ with }\qquad
\gamma_{\veps, M}:=\curlh\langle \vec{V}_{\veps ,M}^{h}\rangle\,-\,\frac{1}{\mc{A}}\langle Z_{\veps ,M}\rangle\,.
\end{equation*}
An argument analogous to the one used after \eqref{eq:gamma} above, based on Aubin-Lions Lemma, shows also in this case that
$(\gamma_{\veps,M})_{\veps}$ is compact (in $\veps$) in $L_{T}^{2}(L_{\rm loc}^{2})$. Then, this sequence converges strongly (up to extraction of a suitable subsequence)
to a tempered distribution $\gamma_M$ in the same space. 

At this point, since $\gamma_{\veps ,M}\rightarrow\gamma_M$ strongly in $L_{T}^{2}(L_{\rm loc}^{2})$ and
$\langle \vec{V}_{\veps ,M}^{h}\rangle\rightharpoonup\langle \vec{V}_{M}^{h}\rangle$ weakly in $L_{T}^{2}(L_{\rm loc}^{2})$, we deduce that
\begin{equation*}
\gamma_{\veps,M}\,\langle \vec{V}_{\veps ,M}^{h}\rangle^{\perp}\,\longrightarrow\, \gamma_M\, \langle \vec{V}_{M}^{h}\rangle^{\perp}\qquad \text{ in }\qquad \mc{D}^{\prime}\big(\R_+\times\R^2\big),
\end{equation*}
where $\langle \vec{V}_{M}^{h}\rangle=\lan{\omega}_{M}*\vec{U}^{h}\ran$ and $\gamma_M=\curlh\lan{\omega}_{M}*\vec{U}^{h}\ran-(1/\mc{A})\langle Z_{M}\rangle$.
Notice that, in view of \eqref{conv:rr}, \eqref{conv:theta}, \eqref{est:sigma}, Proposition \ref{p:prop_5.2} and the definitions given in \eqref{relnum}, we have
$$
Z_M\,=\,\d_\vrho p(1,\oline\vtheta)\,\omega_M*\vrho^{(1)}\,+\,\d_\vtheta p(1,\oline\vtheta)\,\omega_M*\Theta\,=\,\omega_M*q\,,
$$
where $q$ is the quantity defined in \eqref{eq:for q}. 
Owing to the regularity of the target velocity $\vec U^h$, we can pass to the limit also for $M\ra+\infty$, thus finding that
\begin{equation} \label{eq:limit_T1-1}
\int^T_0\!\!\!\int_{\wtilde\Omega}\vrho_\veps\,\vec{u}_\veps\otimes \vec{u}_\veps: \nabla_{x}\vec{\psi}\, dx \, dt\,\longrightarrow\,
\int^T_0\!\!\!\int_{\R^2}\big(\vec U^h\otimes\vec U^h:\nabla_h\vec\psi^h\,+\,\frac{1}{\mc A}\,q\,(\vec U^h)^\perp\cdot\vec\psi^h\big)\,dx^h\,dt
\end{equation}
for all test functions $\vec\psi$ such that $\div\vec\psi=0$ and $\d_3\vec\psi=0$. Recall the convention $|\T|=1$.
Notice that, since $\vec U^h=\nabla_h^\perp q$, the last term in the integral on the right-hand side is actually zero.

\subsection{End of the proof} \label{ss:limit_1}
Thanks to the previous analysis, we are now ready to pass to the limit in equation \eqref{weak-mom}.
As done above, we take a test-function $\vec\psi$ such that
$$
\vec{\psi}=\big(\nabla_{h}^{\perp}\phi,0\big)\,,\qquad\qquad\mbox{ with }\qquad \phi\in C_c^\infty\big([0,T[\,\times\R^2\big)\,,\quad \phi=\phi(t,x^h)\,.
$$
Notice that $\div\vec\psi=0$ and $\d_3\vec\psi=0$. Then, all the gradient terms and all the contributions coming from the vertical
component of the momentum equation vanish identically, when tested against such a $\vec\psi$. So, equation \eqref{weak-mom} reduces
to\footnote{Remark that, in view of our choice of the test-functions, we can safely come back to the notation on $\Omega$ instead of $\wtilde\Omega$.}
\begin{align*}
\int_0^T\!\!\!\int_{\Omega}  \left( -\vre \ue \cdot \partial_t \vec\psi -\vre \ue\otimes\ue  : \nabla \vec\psi
+ \frac{1}{\ep}\vre\big(\ue^{h}\big)^\perp\cdot\vec\psi^h+\mbb{S}(\vtheta_\veps,\nabla_x\vec\ue): \nabla_x \vec\psi\right)
 =\int_{\Omega}\vrez \uez  \cdot \vec\psi(0,\cdot)\,.
\end{align*}

As done in Subsection \ref{ss:limit}, we can limit ourselves to consider the rotation and convective terms only.
As for the former term, we start by using the first equation in \eqref{eq:wave_m=1} against $\phi$: we get
\begin{equation*} 
\begin{split}
-\int_0^T\!\!\!\int_{\R^2} \left( \lan Z_{\varepsilon}\ran\, \d_{t}\phi +\frac{\mc{A}}{\veps}\, \lan\vrho_{\veps}\ue^{h}\ran\cdot \nabla_{h}\phi\right)=
\int_{\R^2}\lan Z_{0,\varepsilon }\ran\, \phi (0,\cdot ) +\varepsilon \int_0^T\!\!\!\int_{\R^2} \lan X_{\varepsilon}\ran\cdot \phi\, ,
\end{split}
\end{equation*}
whence we deduce that
\begin{align*}
\int_0^T\!\!\!\int_{\Omega}\frac{1}{\ep}\vre\big(\ue^{h}\big)^\perp\cdot\vec\psi^h\,&=\,\int_0^T\!\!\!\int_{\mbb{R}^2}\frac{1}{\ep}\langle\vre \ue^{h}\rangle \cdot \nabla_{h}\phi\, \\
&=\,-\,\frac{1}{\mc A}\int_0^T\!\!\!\int_{\mbb{R}^2}\langle Z_\veps\rangle\, \d_t\phi\,-\,\frac{1}{\mc A}\int_{\mbb{R}^2}\langle Z_{0,\veps}\rangle\, \phi(0,\cdot )\,-\,
\frac{\veps}{\mc A}\int_0^T\!\!\!\int_{\R^2}\lan X_{\varepsilon}\ran\cdot \phi\,. 
\end{align*}

Letting now $\varepsilon \rightarrow 0$, thanks to the previous relation and \eqref{eq:limit_T1-1}, we finally gather
\begin{align*}
&-\int_0^T\!\!\!\int_{\R^2} \left(\vec{U}^{h}\cdot \d_{t}\nabla_{h}^{\perp} \phi+ \vec{U}^{h}\otimes \vec{U}^{h}:\nabla_{h}(\nabla_{h}^{\perp}\phi )+\frac{1}{\mc{A}}q\, \d_t \phi \right)\, dx^h\, dt\\
&=-\int_0^T\!\!\!\int_{\R^2} \mu (\oline\vartheta )\nabla_{h}\vec{U}^{h}:\nabla_{h}(\nabla_{h}^{\perp}\phi ) \, dx^h\, dt+\int_{\R^2}\left(\lan\vec{u}_{0}^{h}\ran\cdot \nabla _{h}^{\perp}\phi (0,\cdot )+
\frac{1}{\mc{A}}\lan q_{0}\ran\phi (0,\cdot )\right) \, dx^h\, dt\, ,
\end{align*}
where $q$ is defined as in \eqref{eq:for q} and we have set $q_0\,=\,\d_\vrho p(1,\oline\vtheta)R_0+\d_\vtheta p(1,\oline\vtheta)\Theta_0-G-1/2$
(keep in mind \eqref{conv:in_data} above). Theorem \ref{th:m=1_F=0} is finally proved.

\appendix

\section{Appendix -- A few tools from Littlewood-Paley theory} \label{app:LP}

This appendix is devoted to present some tools from Littlewood-Paley theory, which we have exploited in our analysis.
We refer e.g. to Chapter 2 of \cite{B-C-D} for details.
For simplicity of exposition, let us deal with the $\R^d$ case, with $d\geq1$; however, the whole construction can be adapted also to the $d$-dimensional torus $\TT^d$, and to the ``hybrid'' case
$\R^{d_1}\times\TT^{d_2}$.

\medbreak
First of all, let us introduce the \emph{Littlewood-Paley decomposition}. For this
we fix a smooth radial function $\chi$ such that $\Supp\chi\subset B(0,2)$, $\chi\equiv 1$ in a neighborhood of $B(0,1)$
and the map $r\mapsto\chi(r\,e)$ is non-increasing over $\R_+$ for all unitary vectors $e\in\R^d$.
Set $\varphi\left(\xi\right)=\chi\left(\xi\right)-\chi\left(2\xi\right)$ and $\vphi_j(\xi):=\vphi(2^{-j}\xi)$ for all $j\geq0$.
The dyadic blocks $(\Delta_j)_{j\in\Z}$ are defined by\footnote{We agree  that  $f(D)$ stands for 
the pseudo-differential operator $u\mapsto\mc{F}^{-1}[f(\xi)\,\what u(\xi)]$.} 
$$
\Delta_j\,:=\,0\quad\mbox{ if }\; j\leq-2,\qquad\Delta_{-1}\,:=\,\chi(D)\qquad\mbox{ and }\qquad
\Delta_j\,:=\,\varphi(2^{-j}D)\quad \mbox{ if }\;  j\geq0\,.
$$
For any $j\geq0$ fixed, we  also introduce the \emph{low frequency cut-off operator}
\begin{equation} \label{eq:S_j}
S_j\,:=\,\chi(2^{-j}D)\,=\,\sum_{k\leq j-1}\Delta_{k}\,.
\end{equation}
Note that $S_j$ is a convolution operator. More precisely, after defining
$$
K_0\,:=\,\mc F^{-1}\chi\qquad\qquad\mbox{ and }\qquad\qquad K_j(x)\,:=\,\mathcal{F}^{-1}[\chi (2^{-j}\cdot)] (x) = 2^{jd}K_0(2^j x)\,,
$$
for all $j\in\N$ and all tempered distributions $u\in\mc S'$ we have that $S_ju\,=\,K_j\,*\,u$.
Thus the $L^1$ norm of $K_j$ is independent of $j\geq0$, hence $S_j$ maps continuously $L^p$ into itself, for any $1 \leq p \leq +\infty$.

The following classical property holds true: for any $u\in\mc{S}'$, then one has the equality $u=\sum_{j}\Delta_ju$ in the sense of $\mc{S}'$.
Let us also recall the so-called \emph{Bernstein inequalities}.
  \begin{lemma} \label{l:bern}
Let  $0<r<R$.   A constant $C$ exists so that, for any non-negative integer $k$, any couple $(p,q)$ 
in $[1,+\infty]^2$, with  $p\leq q$,  and any function $u\in L^p$,  we  have, for all $\lambda>0$,
$$
\displaylines{
{\Supp}\, \widehat u \subset   B(0,\lambda R)\quad
\Longrightarrow\quad
\|\nabla^k u\|_{L^q}\, \leq\,
 C^{k+1}\,\lambda^{k+d\left(\frac{1}{p}-\frac{1}{q}\right)}\,\|u\|_{L^p}\;;\cr
{\Supp}\, \widehat u \subset \{\xi\in\R^d\,:\, \lambda r\leq|\xi|\leq \lambda R\}
\quad\Longrightarrow\quad C^{-k-1}\,\lambda^k\|u\|_{L^p}\,
\leq\,
\|\nabla^k u\|_{L^p}\,
\leq\,
C^{k+1} \, \lambda^k\|u\|_{L^p}\,.
}$$
\end{lemma}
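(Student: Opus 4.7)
The statement to prove is the classical \emph{Bernstein inequalities} for tempered distributions whose Fourier transform is localised in a ball or in an annulus. My plan is to reduce both estimates to a convolution with a rescaled Schwartz kernel and then to track the dependence on $\lambda$, on the Lebesgue exponents and on $k$ by means of Young's convolution inequality.

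For the first inequality, I would fix once and for all an auxiliary cut-off $\phi\in C^\infty_c(\R^d)$ with $\phi\equiv1$ on $B(0,R)$ and $\Supp\phi\subset B(0,2R)$. Since $\Supp\widehat u\subset B(0,\lambda R)$, we have $\widehat u(\xi)=\phi(\xi/\lambda)\,\widehat u(\xi)$, so that
$$
\nabla^k u\,=\,h_\lambda\,*\,u\,,\qquad
h_\lambda(x)\,:=\,\mc F^{-1}\bigl[(i\xi)^k\,\phi(\xi/\lambda)\bigr](x)\,=\,\lambda^{d+k}\,h(\lambda\,x)\,,
$$
where $h:=\mc F^{-1}\bigl[(i\xi)^k\,\phi(\xi)\bigr]\in\mc S(\R^d)$. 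Choosing $r\in[1,+\infty]$ so that $1+1/q=1/p+1/r$, Young's convolution inequality gives $\|\nabla^k u\|_{L^q}\leq\|h_\lambda\|_{L^r}\,\|u\|_{L^p}$, and a direct scaling computation yields $\|h_\lambda\|_{L^r}=\lambda^{k+d(1/p-1/q)}\,\|h\|_{L^r}$. This provides the desired estimate, provided one shows that $\|h\|_{L^r}$ is bounded by a constant of the form $C^{k+1}$, uniformly with respect to $k$.

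The upper bound in the second inequality is immediate from the first one applied with $p=q$. For the lower bound (the \emph{reverse Bernstein inequality}), I would choose a smooth cut-off $\psi\in C^\infty_c(\R^d\setminus\{0\})$, identically equal to $1$ on the closed annulus $\{r\leq|\xi|\leq R\}$ and supported in a slightly larger annulus still bounded away from the origin. Since $\widehat u=\psi(\cdot/\lambda)\,\widehat u$ and $\psi(\xi)/|\xi|^{2k}$ is a smooth compactly supported function, we may write, in the sense of Fourier multipliers,
$$
u\,=\,\sum_{|\alpha|=k}g^{(\alpha)}_\lambda\,*\,\partial^\alpha u\,,
\qquad g^{(\alpha)}_\lambda(x)\,:=\,\lambda^{d-k}\,g^{(\alpha)}(\lambda\,x)\,,
$$
with kernels $g^{(\alpha)}:=\mc F^{-1}\bigl[(-i\xi)^\alpha\,\psi(\xi)/|\xi|^{2k}\bigr]\in\mc S(\R^d)$. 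Young's inequality applied with exponents $1$ and $p$, together with the scaling identity $\|g^{(\alpha)}_\lambda\|_{L^1}=\lambda^{-k}\|g^{(\alpha)}\|_{L^1}$, then delivers $\|u\|_{L^p}\leq C^{k+1}\,\lambda^{-k}\,\|\nabla^k u\|_{L^p}$, which is the desired bound.

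The main obstacle in this plan is \textbf{controlling the dependence on $k$}, namely to show that the $L^r$ norm of $h$ (respectively, the $L^1$ norms of the $g^{(\alpha)}$) can be bounded by $C^{k+1}$ with an \emph{absolute} constant $C$. The naive use of Leibniz's rule produces combinatorial factors such as $k!$, which are too large. The standard way out is to bound the kernel $h$ by writing, for any fixed integer $N>d/r$,
$$
\|h\|_{L^r}\,\leq\,C_N\,\sup_{x\in\R^d}\bigl(1+|x|\bigr)^{N}\,\bigl|h(x)\bigr|\,\leq\,C_N\,\sum_{|\beta|\leq N}\bigl\|\partial^\beta\bigl[(i\xi)^k\,\phi(\xi)\bigr]\bigr\|_{L^1}\,,
$$
and to control each term by $C^{k+1}$ using that $\phi$ is fixed, compactly supported, and that only the binomial factors $\binom{k}{j}\leq 2^k$ appear when the derivatives $\partial^\beta$ hit the monomial $(i\xi)^k$. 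Once this is done uniformly in $\lambda$ and $k$, the rest of the argument is a routine scaling computation.
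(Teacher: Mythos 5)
Your strategy is the standard one from Bahouri, Chemin and Danchin, which is precisely what the paper invokes here (the paper gives no proof of its own, only the citation): reduce to a convolution with a rescaled Schwartz kernel, track the scaling and exponents via Young's inequality, and control the $k$-dependence of the constant by dualising the polynomial weight $(1+|x|)^N$ into $\xi$-derivatives on the Fourier side. Two points of detail need fixing, though neither changes the conclusion.

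First, the identity you write for the reverse inequality is false as stated: one has $|\xi|^{2k}=\bigl(\sum_{i=1}^{d}\xi_i^2\bigr)^{k}=\sum_{|\alpha|=k}\frac{k!}{\alpha!}\,\xi^{2\alpha}$, not $\sum_{|\alpha|=k}\xi^{2\alpha}$. The correct decomposition is therefore
$$
u\,=\,\sum_{|\alpha|=k}\frac{k!}{\alpha!}\;g^{(\alpha)}_\lambda\,*\,\partial^\alpha u\,,\qquad
\widehat{g^{(\alpha)}}(\xi)\,=\,\frac{(-i\xi)^\alpha\,\psi(\xi)}{|\xi|^{2k}}\,,
$$
and after Young's inequality the multinomial coefficients contribute the extra factor $\sum_{|\alpha|=k}\frac{k!}{\alpha!}=d^{k}\leq C^{k+1}$, which your plan omits. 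Second, the account of the combinatorics controlling $\|\partial^\beta[(i\xi)^\alpha\phi(\xi)]\|_{L^1}$ is inaccurate (note also that $(i\xi)^k$ with $\xi\in\mathbb{R}^d$ should read $(i\xi)^\alpha$ for a multi-index with $|\alpha|=k$). Leibniz' rule produces $\sum_{\gamma\leq\beta}\binom{\beta}{\gamma}\,\partial^\gamma(\xi^\alpha)\,\partial^{\beta-\gamma}\phi(\xi)$; the coefficients $\binom{\beta}{\gamma}\leq 2^{|\beta|}\leq 2^{N}$ are absolute constants (no $\binom{k}{j}$ ever appears), while $\bigl|\partial^\gamma\xi^\alpha\bigr|\leq\tfrac{\alpha!}{(\alpha-\gamma)!}\,(2R)^{k-|\gamma|}\leq k^{|\gamma|}(2R)^{k-|\gamma|}$. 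The genuine $k$-dependence is the exponential $(2R)^{k}$ times a polynomial $k^{N}$, which is indeed $\leq C^{k+1}$, so the argument closes; but it is not the binomial coefficients that drive the growth. With these two repairs (and a change of letter for the conjugate Young exponent, which clashes with the radius $r$ of the lemma) the proof is complete and coincides with the reference cited by the paper.
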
   

By use of Littlewood-Paley decomposition, we can define the class of Besov spaces.
\begin{definition} \label{d:B}
  Let $s\in\R$ and $1\leq p,r\leq+\infty$. The \emph{non-homogeneous Besov space}
$B^{s}_{p,r}$ is defined as the subset of tempered distributions $u$ for which
$$
\|u\|_{B^{s}_{p,r}}\,:=\,
\left\|\left(2^{js}\,\|\Delta_ju\|_{L^p}\right)_{j\geq -1}\right\|_{\ell^r}\,<\,+\infty\,.
$$
\end{definition}
Besov spaces are interpolation spaces between Sobolev spaces. In fact, for any $k\in\N$ and~$p\in[1,+\infty]$
we have the chain of continuous embeddings $ B^k_{p,1}\hookrightarrow W^{k,p}\hookrightarrow B^k_{p,\infty}$,
which, when $1<p<+\infty$, can be refined to
$$
 B^k_{p, \min (p, 2)}\hookrightarrow W^{k,p}\hookrightarrow B^k_{p, \max(p, 2)}\,.
$$
In particular, for all $s\in\R$ we deduce that $B^s_{2,2}\equiv H^s$, with equivalence of norms:
\begin{equation} \label{eq:LP-Sob}
\|f\|_{H^s}\,\sim\,\left(\sum_{j\geq-1}2^{2 j s}\,\|\Delta_jf\|^2_{L^2}\right)^{\!\!1/2}\,.
\end{equation}

As an immediate consequence of the first Bernstein inequality, one gets the following embedding result, which generalises Sobolev embeddings.
\begin{proposition}\label{p:embed}
The space $B^{s_1}_{p_1,r_1}$ is continuously embedded in the space $B^{s_2}_{p_2,r_2}$ for all indices satisfying $p_1\,\leq\,p_2$ and either
$s_2\,<\,s_1-d\big(1/p_1-1/p_2\big)$, or $s_2\,=\,s_1-d\big(1/p_1-1/p_2\big)$ and $r_1\leq r_2$.
\end{proposition}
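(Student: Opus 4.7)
The plan is to obtain the embedding by estimating the $B^{s_2}_{p_2,r_2}$ norm block by block, using Bernstein's inequality (Lemma \ref{l:bern}) to upgrade the Lebesgue integrability at the cost of a factor depending on the frequency size, and then comparing the resulting weighted $\ell^{r_2}$ norm with the $\ell^{r_1}$ norm available from the source space.

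More precisely, fix $u\in B^{s_1}_{p_1,r_1}$ and let $j\geq-1$. For $j\geq 0$ the spectrum of $\Delta_j u$ is contained in an annulus of size proportional to $2^j$, while for $j=-1$ it is contained in a fixed ball; in either case, the first Bernstein inequality of Lemma \ref{l:bern}, applied with $k=0$ and the couple $(p_1,p_2)$ (which is admissible since $p_1\leq p_2$), gives
\begin{equation*}
\|\Delta_j u\|_{L^{p_2}}\,\leq\,C\,2^{jd\left(\frac{1}{p_1}-\frac{1}{p_2}\right)}\,\|\Delta_j u\|_{L^{p_1}}\,.
\end{equation*}
Multiplying by $2^{j s_2}$ and setting $\sigma:=s_1-s_2-d(1/p_1-1/p_2)$, I would rewrite the previous inequality as
\begin{equation*}
2^{j s_2}\,\|\Delta_j u\|_{L^{p_2}}\,\leq\,C\,2^{-j\sigma}\,\cdot\,2^{j s_1}\,\|\Delta_j u\|_{L^{p_1}}\,,
\end{equation*}
so that the problem reduces to a purely sequential inequality between the families $\bigl(2^{j s_2}\|\Delta_j u\|_{L^{p_2}}\bigr)_j$ and $\bigl(2^{j s_1}\|\Delta_j u\|_{L^{p_1}}\bigr)_j$ in $\ell^{r_2}$ and $\ell^{r_1}$ respectively.

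Two cases then have to be handled. In the endpoint case $\sigma=0$, i.e. $s_2=s_1-d(1/p_1-1/p_2)$, we simply use the embedding $\ell^{r_1}\hookrightarrow\ell^{r_2}$ valid for $r_1\leq r_2$ to conclude $\|u\|_{B^{s_2}_{p_2,r_2}}\leq C\|u\|_{B^{s_1}_{p_1,r_1}}$. In the strict case $\sigma>0$, the factor $(2^{-j\sigma})_{j\geq-1}$ belongs to $\ell^{\rho}$ for every $\rho\in[1,+\infty]$; if $r_1\leq r_2$ one concludes as before (dominating by $\ell^\infty\cdot\ell^{r_2}\hookrightarrow\ell^{r_2}$), while if $r_1>r_2$ one applies H\"older's inequality for sequences with exponents $r_1$ and $\rho$ determined by $1/r_2=1/r_1+1/\rho$, using that $(2^{-j\sigma})_j\in\ell^\rho$. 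The only mildly delicate point, which I would state carefully but expect to be routine, is to make sure that Bernstein applies uniformly in $j\geq-1$, including the low-frequency block $\Delta_{-1}=\chi(D)$, whose spectrum sits in a fixed ball rather than a dyadic annulus; but this is covered by the first Bernstein inequality with $\lambda=2$ (so the constants remain uniform), and does not affect the conclusion.
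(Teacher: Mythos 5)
Your proof is correct and follows exactly the route the paper indicates (apply the first Bernstein inequality of Lemma \ref{l:bern} block by block, including $j=-1$ with a uniform constant, then compare the weighted $\ell^{r}$ norms using $\ell^{r_1}\hookrightarrow\ell^{r_2}$ at the endpoint and H\"older against the summable factor $(2^{-j\sigma})_j$ in the strict case). The paper itself only remarks that the proposition is ``an immediate consequence of the first Bernstein inequality'' and refers to \cite{B-C-D} for details, so your argument supplies precisely the intended proof.
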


{\small

}

\end{document}